\newtheorem{thm}{Theorem}[section] 
\newtheorem{prop}[thm]{Proposition}
\newtheorem{lem}[thm]{Lemma}
\newtheorem{cor}[thm]{Corollary}
\newcommand{\tg}       {{\rm t}}
\newcommand{\s}        {{\rm s}}
\newcommand{\rr}       {\rightrightarrows}
\theoremstyle{definition}
\newtheorem{df}[thm]{Definition} 
\newtheorem{rem}[thm]{Remark}
\newtheorem{ex}[thm]{Example}
\newcommand{\LAgpds}{$\mathscr{L}\!\mathscr{A}$--groupoids\xspace}
\newcommand{\inv}{^{-1}}
\newcommand{\add}[1]{\mathbin{\lower 5pt%
    \hbox{${\stackrel{\textstyle +}{\scriptscriptstyle #1}}$}}}
\renewcommand{\Check}[1]{\widecheck{#1}}
\renewcommand{\Hat}[1]{\widehat{#1}}
\renewcommand{\phi}{\varphi}
\newcommand{\chigh}{{\raise1.5pt\hbox{$\chi$}}}
\newcommand{\da}{\partial}
\newcommand{\sfn}[1]{C^{\infty}(#1)}
\DeclareMathOperator{\Der}{Der}
\DeclareMathOperator{\CD}{trCD}
\DeclareMathOperator{\DLA}{trDLA}
\DeclareMathOperator{\CDG}{trCDG}
\DeclareMathOperator{\DLG}{trDLG}
\DeclareMathOperator{\D}{\mathcal D}
\newcommand{\Derb}{\Der_{[\cdot,\cdot]}}
\DeclareMathOperator{\Hom}{Hom}
\newcommand{\duer}{%
\mathbin{\raisebox{3pt}{\varhexstar}\kern-3.70pt{\rule{0.15pt}{4pt}}}\,}
\newcommand{\id}{\text{\upshape id}}
\newcommand{\R}{\mathbb{R}}
\newcommand{\dr}{\mathbf{d}}
\newcommand{\an}[1]{\arrowvert_{#1}}
\newcommand{\mx}{\mathfrak{X}}
\newcommand{\tmtc}{\Gamma_{TM}(TC)}
\newcommand{\thbr}[2]%
{\rule[-1pt]{1pt}{10pt}\hspace{2pt} #1,\, #2\hspace{1pt}\rule[-1pt]{1pt}{10pt}}
\newcommand{\newpair}[2]%
{\talloblong\hspace{1pt} #1,\, #2\hspace{0.5pt}\talloblong}
\newcommand{\ldr}[1]{{{\pounds}}_{#1}}
\newcommand{\Ga}{\Gamma}
\newcommand{\co}{\colon\thinspace}
\newcommand{\act}{\mathbin{\hbox{$<\kern-.4em\mapstochar\kern.4em$}}}
\newcommand{\ract}{\mathbin{\hbox{$\mapstochar\kern-.3em>$}}}
\newcommand{\pf}{\textsc{Proof:\ }}
\newcommand{\epf}{\hspace*{\fill}\lower0pt\hbox{$\Box$}\medskip}
\newcommand{\ldbl}{R} 
\begin{document}

\title{Transitive double Lie algebroids via core diagrams
}

\author{M.~Jotz Lean\footnote{Mathematisches Institut, Georg-August
    Universit\"at
    G\"ottingen. \texttt{madeleine.jotz-lean@mathematik.uni-goettingen.de}}
  \,\, and K.~C.~H.~Mackenzie\footnote{In memoriam -- School of
    Mathematics and Statistics, The University of Sheffield.}}


\maketitle

\begin{abstract}
  The core diagram of a double Lie algebroid consists in the core of
  the double Lie algebroid, together with the two core-anchor maps to
  the sides of the double Lie algebroid. If these two core anchors are
  surjective, then the double Lie algebroid and its core diagram are
  called \emph{transitive}. This paper establishes an equivalence
  between transitive double Lie algebroids, and transitive core
  diagrams over a fixed base manifold. In other words, it proves that
  a transitive double Lie algebroid is completely determined by its
  core diagram.
  
  The comma double Lie algebroid associated to a morphism of Lie
  algebroids is defined. If the latter morphism is one of the
  core-anchors of a transitive core diagram, then the comma double
  algebroid can be quotiented out by the second core-anchor, yielding
  a transitive double Lie algebroid, which is the one that is
  equivalent to the transitive core diagram.

  Brown's and Mackenzie's equivalence of transitive core diagrams (of
  Lie groupoids) with transitive double Lie groupoids is then used in
  order to show that a transitive double Lie algebroid with integrable
  sides and core is automatically integrable to a transitive double
  Lie groupoid.

  \medskip \emph{This research was a joint project with the sadly
    deceased second author. This paper is dedicated to his memory.}
  \end{abstract}

\tableofcontents

\section{Introduction}
Double structures in geometry where first studied by the school of
Ehresmann and later extensively by the second author, see
\cite{Mackenzie05}. In particular, \cite{Mackenzie92,Mackenzie00}
define \emph{double Lie algebroids} and prove that they are the
infinitesimal objects associated to \emph{double Lie
  groupoids}. Differentiating one side of a double Lie groupoid yields
an \emph{LA-groupoid} \cite{Mackenzie92}, and a second differentiation
process applied to this LA-groupoid yields the tangent double Lie
algebroid of the double Lie groupoid \cite{Mackenzie00}. The converse
integration of double Lie algebroids to double Lie groupoids has not
been completely solved yet. Stefanini proposes in \cite{Stefanini09}
an integration of LA-groupoids with integrable top Lie algebroid and
strongly transitive source and target maps. Burzstyn, Cabrera and del
Hoyo integrate in \cite{BuCaHo16} double Lie algebroids with one
integrable (top) side to LA-groupoids.  This paper integrates
\emph{transitive} double Lie algebroids \emph{in one step} to
\emph{transitive} double Lie groupoids, provided the side and core Lie
algebroids are all integrable. This integration is the motivation
behind the results in this work. It is indeed very simple using the
equivalence of transitive double Lie groupoids with their core
diagrams in \cite{BrMa92}, and the equivalence of transitive double Lie
algebroids and their core diagrams proved here.

\medskip

A double groupoid is a \emph{groupoid object in the category of
  groupoids}.  That is, a double groupoid consists of a set $\Gamma$
that has two groupoid structures over two bases $G$ and $H$, which are
themselves groupoids over a base $M$, such that the structure maps of
each groupoid structure on $S$ are morphisms with respect to the
other. \[\begin{tikzcd}
    \Gamma & G \\
    H & M \arrow[shift left=1, from=1-2, to=2-2] \arrow[shift right=1,
    from=1-2, to=2-2] \arrow[shift right=1, from=2-1, to=2-2]
    \arrow[shift left=1, from=2-1, to=2-2] \arrow[shift right=1,
    from=1-1, to=1-2] \arrow[shift right=1, from=1-1, to=2-1]
    \arrow[shift left=1, from=1-1, to=2-1] \arrow[shift left=1,
    from=1-1, to=1-2]
  \end{tikzcd}\] The groupoids $G$ and $H$ are the side groupoids and
the set $M$ is the double base. The core $K$ of a double groupoid
$(\Gamma,G,H,M)$ is the set of elements which project under the two
sources of $\Gamma$ to units of $G$ and $H$. It inherits from the
double structure of $\Gamma$ a groupoid structure over $M$, and the
two targets of $\Gamma$ induce two groupoid morphisms
$\tg_G\arrowvert_K\colon K\to G$ and $\tg_H\arrowvert_K\colon K\to H$
over the identity on $M$. Elements of the kernel of the morphism
$K\to G$ commutes with elements of the kernel of $K\to H$. These two
groupoid morphisms build together the \emph{core diagram} of the
double groupoid \cite{BrMa92}.
   \[\begin{tikzcd}
	K & G \\
	H & M
	\arrow[shift left=1, from=1-2, to=2-2]
	\arrow[shift right=1, from=1-2, to=2-2]
	\arrow[shift right=1, from=2-1, to=2-2]
	\arrow[shift left=1, from=2-1, to=2-2]
	\arrow[shift right=1, from=1-1, to=2-2]
	\arrow[shift left=1, from=1-1, to=2-2]
	\arrow["{\tg_G\arrowvert_K}", from=1-1, to=1-2]
	\arrow["{\tg_H\arrowvert_K}"', from=1-1, to=2-1]
      \end{tikzcd}\]

    A \emph{double Lie groupoid} is a double groupoid
    $(\Gamma, G, H, M)$ such that all four groupoids $\Gamma\rr G$,
    $\Gamma\rr H$, $G\rr M$ and $H\rr M$ are Lie groupoids and such
    that the double source map
    $(\s_G, \s_H)\colon \Gamma\to G\times_\s H=\{(g,h)\in G\times
    H\mid \s(g)=\s(h)\}$ is a smooth surjective submersion. In that
    case, the core diagram is a core diagram of Lie groupoids.  A
    double Lie groupoid is \emph{locally trivial} if the Lie groupoid
    $K\rr M$ is locally trivial and $\da_G:=\tg_G\arrowvert_K$ and
    $\da_H:=\tg_H\arrowvert_K$ are both surjective submersions. It is
    \emph{transitive} if only $\da_G$ and $\da_H$ are both surjective
    submersions.  Brown and Mackenzie proved in \cite{BrMa92} that
    locally trivial double Lie groupoids are completely determined by
    their core diagrams. Section \ref{expand_BrMa92} reviews their
    construction and shows that $K\rr M$ does not need to be locally
    trivial for the equivalence to work. Hence, transitive double Lie
    groupoids are completely determined by their core diagrams.

    Mackenzie proved then in \cite{Mackenzie92} that locally trivial
    LA-groupoids are completely determined by their core
    diagrams. This paper completes this series of results by proving
    that \emph{transitive double Lie algebroids are completely defined
      by their core diagrams}.

    \medskip

Double Lie groupoids are described infinitesimally by double Lie
algebroids \cite{Mackenzie92,Mackenzie00}.
A double Lie algebroid with core $C$ is a
double vector bundle $(D;A,B;M)$
\begin{equation*}
  \begin{xy}
    \xymatrix{
      D \ar[r]^{\pi_A}\ar[d]_{\pi_B}&   A\ar[d]^{q_A}\\
       B\ar[r]_{q_B}                   &  M\\
    }
  \end{xy}
\end{equation*}
with core $C$ and four Lie algebroid structures on $A\to M$, $B\to M$,
$D\to A$ and $D\to B$ such that $(\pi_B,q_A)$ and $(\pi_A,q_B)$ are
Lie algebroid morphisms, and the induced Lie algebroids
$D\duer A\to C^*$ and $D\duer B\to C^*$ form a VB-Lie bialgebroid
\cite{Mackenzie00}. The anchor $\Theta_A\co D\to TA$ is a morphism of
double vector bundles and its core morphism is denoted by
$\da_A\colon C\to A$. Likewise the core morphism of the linear anchor
$\Theta_B\co D\to TB$ is written $\da_B\colon C\to B$.  The core $C$
of the double Lie algebroid inherits a Lie algebroid structure over
$M$, such that the two \emph{core-anchors} $\da_A$ and $\da_B$ are Lie
algebroid morphisms over $M$ \cite{Mackenzie11,GrJoMaMe18}.  The
compatiblity of the Lie algebroid structures on $D\to A$ and $D\to B$
implies that $[c_1,c_2]=0$ for all $c_1\in\Gamma(\ker\da_A)$ and
$c_2\in\Gamma(\ker\da_B)$.  In other words, \emph{$\ker\da_A$ and
  $\ker\da_B$ commute in $C$}.  Hence, the core of a double Lie
algebroid defines a diagram of Lie algebroids morphisms as in Figure
\ref{fig:cd_simple}, with commuting kernels. It is called the
\emph{core diagram of the double Lie algebroid}.
\begin{figure}[h]
$$ 
\xymatrix{
&C\ar[r]_{\da_B}\ar[d]^{\da_A}&B\\
&A&
}
$$ 
\caption{The core diagram of a double Lie algebroid.\label{fig:cd_simple}}
\end{figure}

If both $\da_A$ and $\da_B$ are surjective, then the double Lie
algebroid and its core diagram are both called \emph{transitive}. 
It is easy
to see that the map sending a double Lie algebroid to its core diagram
defines a functor from the category of double Lie algebroids with
double base $M$, to the category of core diagrams over $M$. It turns
out that the restriction of this functor to transitive double Lie
algebroids versus transitive core diagrams has an inverse, which is
constructed in this paper. In particular, it proves a more precise version of the following
theorem (see Theorem \ref{eq_cat}).

\begin{thm}\label{main_vague}
  A transitive double Lie algebroid $(D,A,B,M)$ is uniquely determined
  (up to isomorphism) by its core diagram.
\end{thm}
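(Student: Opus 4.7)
The plan is to exhibit an explicit inverse to the functor that sends a transitive double Lie algebroid $(D;A,B;M)$ with core $C$ to its core diagram $(\da_A\co C\to A,\ \da_B\co C\to B)$, which must satisfy $[\ker\da_A,\ker\da_B]=0$. Once such an inverse is constructed and shown to be natural, the uniqueness claim follows.

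\textbf{Step 1 (comma construction).} Given an arbitrary morphism $f\co E\to F$ of Lie algebroids over $M$, I first define the \emph{comma double Lie algebroid} associated to $f$, as signalled in the abstract. This is a double vector bundle whose sides are natural variants of $F$ and $E$, equipped with four Lie algebroid structures extracted from the brackets on $E$, on $F$ and from the morphism $f$. The verification of all compatibilities — the two linear structures being VB-algebroids, the side projections being morphisms, and the VB-Lie bialgebroid condition on the duals over $C^*$ — reduces directly to the axioms satisfied by $f$.

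\textbf{Step 2 (quotient).} Applying Step~1 to $\da_A\co C\to A$ produces a comma double Lie algebroid $D_0$. The second core-anchor $\da_B$ then enters: its kernel $\ker\da_B\subseteq C$ is an ideal of $C$ which acts on $D_0$ via a natural action built from the Lie algebroid structure of $D_0$ over one of its sides, using the commuting-kernels hypothesis. Transitivity of $\da_B$ ensures this action is free and proper, so that the quotient $D:=D_0/\ker\da_B$ is a smooth double vector bundle. The commuting-kernels hypothesis is then used again to show that all four Lie algebroid structures on $D_0$ descend to $D$, yielding a transitive double Lie algebroid with sides $A$, $B$ and core $C$ whose core-anchors are precisely $\da_A$ and $\da_B$.

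\textbf{Step 3 (recovering any $D'$).} Given any other transitive double Lie algebroid $D'$ with the prescribed core diagram, I construct a morphism $D_0\to D'$ of double Lie algebroids restricting to the identity on $A$, $B$ and $C$: the linear anchor $\Th_A\co D'\to TA$, whose core map is $\da_A$ by hypothesis, together with the Lie algebroid structure of $D'\to A$, provides the map. This morphism is constant on $\ker\da_B$-orbits — again because $\ker\da_A$ commutes with $\ker\da_B$ in $C$ — so it descends to a morphism $D\to D'$, and the surjectivity of both $\da_A$ and $\da_B$ forces this descent to be an isomorphism of double Lie algebroids.

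\textbf{Main obstacle.} The hardest point is Step~2, specifically verifying that the VB-Lie bialgebroid compatibility on $D_0$ passes to the quotient $D_0/\ker\da_B$. Quotients of Lie bialgebroids are generally delicate, and the reason the quotient works here is precisely the commuting-kernels hypothesis, which must be exploited carefully and symmetrically on both linear Lie algebroid structures. A secondary technical difficulty is establishing smoothness of the quotient, which relies on the regularity (surjectivity and constant rank) supplied by the transitivity of $\da_B$.
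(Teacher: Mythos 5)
Your overall architecture (comma double Lie algebroid for $\da_A\co C\to A$, then a quotient determined by $\da_B$, then an identification with any given $D'$) is exactly the paper's strategy, and your Steps 1--2 are essentially right in outline: the comma object is $R=TC\oplus_{TM}A$ with the pullback structure over $C$ and the action structure over $A$, and the quotient is taken not by "$\ker\da_B$ acting freely and properly" but by a sub-double vector bundle $\Pi\subseteq R$ with sides $A$ and $C^A=\ker\da_B$ and \emph{trivial core}, namely the graph of the canonical flat $A$-connection $\nabla^A$ on $C^A$ (defined by $\nabla^A_a\gamma=[c,\gamma]$ for any $c$ with $\da_A c=a$; well-definedness is where the commuting-kernels hypothesis enters, and surjectivity of $\da_A$ is needed for $\nabla^A$ to exist at all). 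Smoothness of $R/\Pi$ is then automatic vector-bundle linear algebra; no properness is involved. The core of the quotient must remain $C$ (only the $C$-side collapses to $C/C^A\simeq B$), which your statement gets right, but your phrasing "$D:=D_0/\ker\da_B$" obscures what is actually being divided out. The descent of the double Lie algebroid condition is handled in the paper by rewriting both sides as matched pairs of $2$-representations and checking (M1)--(M7) for the reduced data; your instinct that this is delicate is correct, but it is $\Pi$ being simultaneously an ideal of $R\to A$ and an infinitesimal ideal system in $R\to C$ that makes it go through.

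The genuine gap is Step 3, which is the part of the argument that actually proves the stated theorem. The anchor $\Theta_A\co D'\to TA$ maps \emph{out of} $D'$, so it cannot "provide" a morphism $D_0=TC\oplus_{TM}A\to D'$; no canonical, choice-free morphism of that kind is constructed (nor is one available in any obvious way in the infinitesimal setting, unlike the groupoid formula $\langle k_2,g,k_1\rangle\mapsto k_2\cdot_H 1^S_g\cdot_H k_1^{-1,H}$). The paper instead proves directly that any two transitive double Lie algebroids $D_1,D_2$ with the same core diagram are canonically isomorphic: one chooses linear splittings of $D_1$ and $D_2$ whose induced $A$-connections on $C$ coincide (possible because the difference of two such connections kills $C^A$, hence lies in $\Gamma(A^*\otimes B^*\otimes C)$ -- this uses surjectivity of $\da_B$), and then shows that \emph{all} remaining data of the two matched $2$-representations ($\nabla^{AB}$, $\nabla^{BA}$, $\nabla^{BC}$, $R_A$, $R_B$) is forced by the core diagram via (R2), (R3), \eqref{bracket_on_C} and (M1)--(M4); independence of the choices again uses surjectivity of $\da_B$. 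Without this splitting-and-matched-pair argument (or a genuinely correct substitute for your anchor-based map), the uniqueness claim is not established.
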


This work relies heavily on Gracia-Saz and Mehta's equivalence of
2-representations with decomposed VB-algebroids \cite{GrMe10a} and
shows how powerful the tools developped in
\cite{GrMe10a,DrJoOr15,GrJoMaMe18} are in the study of VB-algebroids
and double Lie algebroids. Along the way, the authors define the
\emph{comma-double Lie algebroid} defined by a Lie algebroid morphism,
which is an interesting structure in its own right. The dg-Lie
algebroid defined \cite{Voronov12} by the comma double Lie algebroid
associated to a Lie pair $A\hookrightarrow L$ is used by Sti\'enon,
Vitagliano and Xu \cite{StViXu21} in their extension to arbitrary Lie
pairs of the Kontsevich-Duflo type theorem for matched pairs in
\cite{LiStXu19}.

The following two
sections explain comma double Lie groupoids, 
as well as the generalised version of Brown's and Mackenzie's
equivalence of locally trivial double Lie algebroids with locally
trivial core diagrams \cite{BrMa92}. The construction steps of the
inverse functor in that setting are basically the same as for
the inverse functor in this paper.

\medskip
The equivalence of the category of transitive double Lie algebroids
with the one of transitive core diagrams, and the equivalence of
categories of locally trivial double Lie groupoids with the one of
locally trivial core diagrams of Lie groupoids established in
\cite{BrMa92} yield finally a very simple integration method for transitive
double Lie algebroids. Recall however that the notion of transitivity
discussed here corresponds to a weaker notion of local triviality than
the one used in \cite{BrMa92}, see Section
\ref{expand_BrMa92}. Section \ref{integration} explains this
integration of transitive double Lie algebroids to transitive double
Lie groupoids, which is now a straightforward exercise using many of
Mackenzie's prior results with coauthors
\cite{BrMa92,Mackenzie92,MaXu00}. It proves the following theorem (see
Section \ref{integration} and Theorem \ref{integration_thm} for the proof and more details).
\begin{thm}
  Let $(D,A,B,M)$ be a transitive double Lie algebroid with a
  transitive core diagram $A\mapsfrom C\mapsto B$ where $A$, $B$ and
  $C$ are all integrable Lie algebroids over $M$.
  Then $(D,A,B,M)$ integrates to a transitive double Lie groupoid,
  the core diagram of which integrates $A\mapsfrom C\mapsto B$.
  \end{thm}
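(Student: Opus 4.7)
The strategy is to reduce integration of $D$ to integration of its core diagram, then use the known equivalences on both sides. Concretely, let $(D,A,B,M)$ be transitive with core diagram $A\xleftarrow{\da_A}C\xrightarrow{\da_B}B$, and assume $A$, $B$, $C$ are integrable. The plan is: integrate the core diagram to a transitive core diagram of Lie groupoids, apply the (generalised) Brown--Mackenzie equivalence from Section~\ref{expand_BrMa92} to get a transitive double Lie groupoid $\Gamma$, and then identify the double Lie algebroid of $\Gamma$ with $D$ via Theorem~\ref{eq_cat}.

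First I would choose the source-simply-connected Lie groupoids $\mathcal{A}\rr M$, $\mathcal{B}\rr M$ and $\mathcal{K}\rr M$ integrating $A$, $B$ and $C$. By the Mackenzie--Xu integration of Lie algebroid morphisms \cite{MaXu00}, the core-anchor morphisms $\da_A\colon C\to A$ and $\da_B\colon C\to B$ lift uniquely to Lie groupoid morphisms $D_A\colon\mathcal{K}\to\mathcal{A}$ and $D_B\colon\mathcal{K}\to\mathcal{B}$ over $\id_M$. Next I would verify that $D_A$ and $D_B$ are surjective submersions: since $\da_A$ and $\da_B$ are surjective fibrewise (transitivity of the core diagram), the differentials of $D_A$, $D_B$ are surjective on tangent spaces at the units, hence everywhere by left-translation; surjectivity onto source-simply-connected groupoids then follows from a standard monodromy/connectedness argument along $\s$-fibres. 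The commuting-kernels condition $[c_1,c_2]=0$ for $c_1\in\Gamma(\ker\da_A)$, $c_2\in\Gamma(\ker\da_B)$ integrates to commutativity of the (connected components of the) kernel subgroupoids $\ker D_A$ and $\ker D_B$ inside $\mathcal{K}$, since the kernels are wide Lie subgroupoids whose Lie algebroids are $\ker\da_A$ and $\ker\da_B$.

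With these checks, $\mathcal{A}\xleftarrow{D_A}\mathcal{K}\xrightarrow{D_B}\mathcal{B}$ is a transitive core diagram of Lie groupoids over $M$. The extension in Section~\ref{expand_BrMa92} of Brown--Mackenzie's theorem \cite{BrMa92} — where local triviality of $\mathcal{K}$ is dropped — then produces a transitive double Lie groupoid $(\Gamma;\mathcal{A},\mathcal{B};M)$ whose core diagram is exactly the integrated one. Differentiating $\Gamma$ twice (as in \cite{Mackenzie92,Mackenzie00}) yields a transitive double Lie algebroid $(D',A,B,M)$, and by naturality of the differentiation functor its core diagram is $A\xleftarrow{\da_A}C\xrightarrow{\da_B}B$, the same as that of $D$. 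Theorem~\ref{eq_cat} (the main equivalence established in this paper) then forces $D'\cong D$, so $\Gamma$ integrates $D$.

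The main obstacle is the middle step: verifying that the integration of the core-anchors yields surjective submersions whose kernel subgroupoids still commute. The infinitesimal data gives these properties only at the Lie algebroid level, and passing them to the source-simply-connected integrations requires a careful monodromy argument along $\s$-fibres and control of connected components of $\ker D_A$ and $\ker D_B$. Once the integrated core diagram is known to be transitive and to satisfy the commuting kernels condition, the rest is an invocation of two previously established equivalences, one from \cite{BrMa92} (in the form recalled in Section~\ref{expand_BrMa92}) and Theorem~\ref{eq_cat} of the present paper.
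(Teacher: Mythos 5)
Your route is the same as the paper's: integrate the three Lie algebroids to source-simply-connected groupoids, integrate the two core-anchors by Lie's second theorem (the appendix of \cite{MaXu00}), check that the result is a transitive core diagram of Lie groupoids, feed it into the generalised Brown--Mackenzie construction of Section \ref{expand_BrMa92}, and identify the resulting double Lie algebroid with $(D,A,B,M)$ via the uniqueness statement of the equivalence. However, you leave the one genuinely nontrivial step open. You write that the commuting-kernels condition integrates to commutativity of ``the (connected components of the)'' kernel subgroupoids and then list the control of those components as ``the main obstacle'' requiring ``a careful monodromy argument'' --- but you never give that argument. Without it the proof is incomplete: the Brown--Mackenzie input needs the \emph{full} kernels $\ker D_A$ and $\ker D_B$ to commute in $\mathcal K$, and the infinitesimal condition $[c_1,c_2]=0$ only controls what happens on the identity components.

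The paper closes exactly this gap, and the argument is short. For $m\in M$, the restriction $D_A\an{\mathcal K_m}\colon \mathcal K_m\to \mathcal A_m$ of the integrated morphism to the source fibres is a principal bundle with structure group $F_m=\ker(D_A)\cap \mathcal K_m$ (the group acts freely and properly by right multiplication, with orbit space $\mathcal A_m$). The tail of the homotopy long exact sequence, $\pi_1(\mathcal A_m)\to\pi_0(F_m)\to\pi_0(\mathcal K_m)$, together with $\pi_1(\mathcal A_m)=0$ and $\mathcal K_m$ connected, forces $F_m$ to be connected. Hence $\ker D_A$ (and symmetrically $\ker D_B$) is source-connected, so it is generated by flows of sections of its Lie algebroid $\ker\da_A$, and the infinitesimal commutation relation does integrate to commutation of the whole kernels. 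You should supply this (or an equivalent) argument; once it is in place, the remaining steps of your proposal --- including the final identification $D'\cong D$, which rests on the canonical isomorphism of two transitive double Lie algebroids sharing a core diagram (Theorem \ref{iso_dvbs}) --- go through as you describe.
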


\subsection{Comma double Lie groupoid and comma double Lie algebroid}
Given categories and functors
\[ \mathcal D\overset{F}{\longrightarrow} \mathcal C\overset{G}{\longleftarrow}\mathcal E,
\]
the \emph{comma category} $(F,G)$ has as objects the triples
$(d,e,\varphi)$ with $d\in{\rm Obj}(\mathcal D)$,
$e\in{\rm Obj}(\mathcal E)$ and
$\varphi\in{\rm Mor}_{\mathcal C}(F(d),G(e))$, see \cite{MacLane98}.
Its arrows $(d,e,\varphi)\to (d',e',\varphi')$ are the pairs
$(h,l)\in {\rm Mor}_{\mathcal D}(d,d')\times {\rm Mor}_{\mathcal
  E}(e,e')$ such that
$$ 
\xymatrix{
F(d)\ar[d]_{\varphi}\ar[r]^{F(h)}&F(d')\ar[d]^{\varphi'}\\
G(e)\ar[r]_{G(l)}&G(e')
}
$$
commutes. If defined, the composition $(h',l')\circ (h,l)$ is
$(h'h, l'l)$. If $K\rr M$ and $G\rr M$ are two (Lie) groupoids over a
common base $M$ and $\Phi\colon K\to G$ is a (Lie) groupoid morphism
fixing the base, then the diagram
\[   K\overset{\Phi}{\longrightarrow}G\overset{\Phi}{\longleftarrow}K,
\]
defines as above the comma category $(\Phi,\Phi)$, which has a
\emph{double} (Lie) groupoid structure with sides $K$ and $G$ and with core
$K$.  Denote the space of arrows of $(\Phi,\Phi)$ by $\Gamma$.  The
objects of $(\Phi,\Phi)$ are hence triples
$(m_1,g,m_2)\in M\times G\times M$ with $\s(g)=m_1$ and
$\tg(g)=m_2$. Hence, the objects of $(\Phi,\Phi)$ are just the arrows
of $G\rr M$, and the elements of $\Gamma$ (that is, the arrows of
$(\Phi,\Phi)$) are written as triples
$(k_2,g,k_1)\in K\times G\times K$ such that $\s(k_2)=\tg(g)$ and
$\s(k_1)=\s(g)$.
$$ 
\xymatrix{
  n_2&&m_2\ar[ll]_{\Phi(k_2)}\\
  &(k_2,g,k_1)&\\
n_1\ar[uu]^{\Phi(k_2)\cdot g\cdot \Phi(k_1\inv)}&&m_1\ar[ll]^{\Phi(k_1)}\ar[uu]_{g}
}
$$
This diagram is pictured in the usual manner for elements of a double
groupoid
\cite{BrMa92} 
and will just be written 
$$ 
\xymatrix{
  n_2&m_2\ar@{-}[l]_{k_2}\\
n_1\ar@{-}[u]^{\Phi(k_2)\cdot g\cdot \Phi(k_1\inv)}&m_1\ar@{-}[l]^{k_1}\ar@{-}[u]_{g}
}
$$
in the following, and the left vertical arrow will not be labeled since
it is clear from the rest of the diagram.  The source and target maps
$\Gamma\to G$ are the maps $(k_2,g,k_1)\mapsto g$ and
$(k_2,g,k_1)\mapsto \Phi(k_2)\cdot g\cdot \Phi(k_1\inv)$, respectively.
The source and target maps $\Gamma\to K$ are the maps
$(k_2,g,k_1)\mapsto k_1$ and $(k_2,g,k_1)\mapsto k_2$, respectively.  The
composition over $G$ is
\[(k_2',\Phi(k_2)\cdot g\cdot \Phi(k_1\inv),k_1')\cdot_G(k_2,g,k_1)=(k_2'k_2, g, k_1'k_1)
\]
and the composition over $K$ is
\[ (k_3,g', k_2)\cdot_K(k_2,g,k_1)=(k_3,g'g,k_1).
\]
It is easy to check that given a composable square as below,
hence with $g=\Phi(l_3)g'\Phi(l_2\inv)$ and $h=\Phi(l_2)h'\Phi(l_1\inv)$,
the order of the horizontal and vertical multiplications is not relevant, as pictured
below. Hence, $(\Phi,\Phi)$ is a double groupoid.
\[\begin{tikzcd}
	\bullet & \bullet & \bullet && \bullet & \bullet \\
	\bullet & \bullet & \bullet && \bullet & \bullet \\
	\bullet & \bullet & \bullet && \bullet & \bullet \\
	{} && {} \\
	\bullet & \bullet & \bullet && \bullet & \bullet \\
	\bullet & \bullet & \bullet && \bullet & \bullet
	\arrow["{k_3}"{description}, no head, from=1-1, to=1-2]
	\arrow["{g'}"{description}, no head, from=1-3, to=2-3]
	\arrow["{l_2}"{description}, no head, from=2-3, to=2-2]
	\arrow["g"{description}, no head, from=1-2, to=2-2]
	\arrow["{k_2}"{description}, no head, from=2-1, to=2-2]
	\arrow[no head, from=1-1, to=2-1]
	\arrow[no head, from=2-1, to=3-1]
	\arrow["{k_1}"{description}, no head, from=3-1, to=3-2]
	\arrow["{l_1}"{description}, no head, from=3-2, to=3-3]
	\arrow["{h'}"{description}, no head, from=2-3, to=3-3]
	\arrow["h"{description}, no head, from=2-2, to=3-2]
	\arrow["{l_3}"{description}, no head, from=1-2, to=1-3]
	\arrow["{g'}", no head, from=1-6, to=2-6]
	\arrow["{h'}", no head, from=2-6, to=3-6]
	\arrow["{l_3}"{description}, no head, from=5-2, to=5-3]
	\arrow["{g'h'}", no head, from=5-3, to=6-3]
	\arrow["{k_3}"{description}, no head, from=6-2, to=6-3]
	\arrow["gh"{description}, no head, from=5-2, to=6-2]
	\arrow["{k_3}"{description}, no head, from=5-1, to=5-2]
	\arrow["{k_1}"{description}, no head, from=6-1, to=6-2]
	\arrow[no head, from=5-1, to=6-1]
	\arrow["{k_3l_3}", no head, from=1-5, to=1-6]
	\arrow[no head, from=1-5, to=2-5]
	\arrow["{k_2l_2}", no head, from=2-5, to=2-6]
	\arrow[no head, from=2-5, to=3-5]
	\arrow["{k_1l_1}"', no head, from=3-5, to=3-6]
	\arrow["{k_3l_3}", no head, from=5-5, to=5-6]
	\arrow["{g'h'}", no head, from=5-6, to=6-6]
	\arrow["{k_1l_1}", no head, from=6-6, to=6-5]
	\arrow[no head, from=6-5, to=5-5]
      \end{tikzcd}\] If $\Phi\colon K\to G$ was a morphism of Lie
    groupoids, it is not difficult to check that $(\Phi,\Phi)$ is a
    double \emph{Lie} groupoid \cite{BrMa92}.

The core of $(\Gamma,K,G,M)$ consists of elements $(k,1_m,1_m)$
$$ 
\xymatrix{
  n&m\ar@{-}[l]_{k}\\
m\ar@{-}[u]^{\Phi(k)}&m\ar@{-}[l]^{1_m}\ar@{-}[u]_{1_m}
}
$$
with multiplication given by filling the top right and bottom left
squares in the diagram below and multiplying all obtained squares
together,
    \[\begin{tikzcd}
	{{\rm t}(k)} & m & {m'} \\
	m & m & {m'} \\
	{m'} & {m'} & {m'}
	\arrow["{\Phi(k)}", no head, from=2-1, to=1-1]
	\arrow["k"{description}, no head, from=1-1, to=1-2]
	\arrow["{1_m}"{description}, no head, from=1-2, to=2-2]
	\arrow["{1_m}"{description}, no head, from=2-1, to=2-2]
	\arrow["{\Phi(k')}", no head, from=3-1, to=2-1]
	\arrow["{1_{m'}}"{description}, no head, from=3-1, to=3-2]
	\arrow["{1_{m'}}"{description}, no head, from=3-2, to=3-3]
	\arrow["{k'}"{description}, no head, from=2-2, to=2-3]
	\arrow["{\Phi(k')}"', no head, from=2-2, to=3-2]
	\arrow["{1_{m'}}"{description}, no head, from=2-3, to=3-3]
	\arrow["{k'}"{description}, no head, from=1-2, to=1-3]
	\arrow["{1_{m'}}"{description}, no head, from=1-3, to=2-3]
      \end{tikzcd}\]
     hence yielding
    \[ (k,1_m,1_m)\cdot(k', 1_{m'}, 1_{m'})=(kk', 1_{m'}, 1_{m'}).
      \]
      As a consequence, the core (Lie) groupoid is isomorphic to
      $K\rr M$ as a (Lie) groupoid.  Note that $\Gamma\rr G$ is the
      action groupoid of the (Lie) groupoid action of
      $K\times K\rr M\times M$ on $(\tg,s)\colon G\to M\times M$,
      $(k_2,k_1)\cdot g=\Phi(k_2)g\Phi(k_1\inv)$.

\medskip

Section \ref{sec:comma} constructs analogously a double Lie algebroid from a
Lie algebroid morphism
$$ 
\xymatrix{
  C\ar[dr]\ar[rr]^{\da}&& A\ar[dl]\\
&M&
}
$$
The morphism $\da$ induces an action of the Lie algebroid $TC\to TM$
on the anchor $\rho_A\colon A\to TM$ of $A$. The total space
$R:=TC\oplus_{TM}A$ of the action Lie algebroid $TC\oplus_{TM}A\to A$
of this action carries as well automatically the pullback Lie algebroid
structure from $A\to M$ under the smooth map $q_C\colon C\to M$. It is
hence also a Lie algebroid over $C$, and these two Lie algebroid
structures define a double Lie algebroid
$$ 
\xymatrix{
  TC\oplus_{TM}A\ar[d]\ar[r]& A\ar[d]\\
C\ar[r]&M
}
$$
with sides $A$ and $C$ and with core $C$. This double Lie algebroid is
the \emph{comma double Lie algebroid} defined by the morphism
$\da\colon C\to A$. The details of this construction are given in
Section \ref{sec:comma}. The core diagram of this comma double Lie algebroid is
pictured in the following diagram.
$$ 
\xymatrix{
&C\ar[r]_{\id}\ar[d]^{\da}&C\\
&A&
}
$$

  \subsection{Transitive double Lie groupoids and their core
    diagrams}\label{expand_BrMa92}
  A double Lie groupoid is called here \emph{transitive} is the Lie
  groupoid morphisms in its core diagram are surjective submersions,
  hence fibrations of Lie groupoids. The core diagram is then also
  called \emph{transitive}.  In particular, locally trivial double Lie
  groupoids in the sense of \cite{BrMa92} are transitive double Lie
  groupoids.  A careful study of the proof of the main result in
  \cite{BrMa92}, establishing an equivalence between locally trivial
  core diagrams and locally trivial double Lie groupoids, reveal that
  this equivalence is true in the more general setting of transitive
  double Lie groupoids versus transitive core diagrams.

  For completeness, and because the construction of the equivalence in
  \cite{BrMa92} serves as a model for the construction of the
  equivalence of the categories of transitive double Lie algebroids
  with transitive core diagrams, it is sketched in this section.

  Consider a transitive core diagram
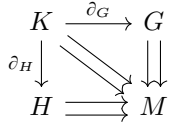
\begin{figure}[h]
  \[\begin{tikzcd}
	K & G \\
	H & M
	\arrow[shift left=1, from=1-2, to=2-2]
	\arrow[shift right=1, from=1-2, to=2-2]
	\arrow[shift right=1, from=2-1, to=2-2]
	\arrow[shift left=1, from=2-1, to=2-2]
	\arrow[shift right=1, from=1-1, to=2-2]
	\arrow[shift left=1, from=1-1, to=2-2]
	\arrow["{\partial_G}", from=1-1, to=1-2]
	\arrow["{\partial_H}"', from=1-1, to=2-1]
      \end{tikzcd}\]
    \caption{Core diagram of Lie groupoids}\label{cd_lg}
  \end{figure}
  of Lie groupoids; i.e.~a diagram as in Figure \ref{cd_lg} of Lie
  groupoid morphisms, such that $\da_G$ and $\da_H$ are surjective
  submersions and the subgroupoids $\ker(\da_G)$ and $\ker(\da_H)$
  commute in $K$.  Build the comma double Lie groupoid $(\da_G,\da_G)$
    \[\begin{tikzcd}
	\Gamma & G \\
	K & M
	\arrow[shift left=1, from=1-2, to=2-2]
	\arrow[shift right=1, from=1-2, to=2-2]
	\arrow[shift right=1, from=2-1, to=2-2]
	\arrow[shift left=1, from=2-1, to=2-2]
	\arrow[shift right=1, from=1-1, to=1-2]
	\arrow[shift right=1, from=1-1, to=2-1]
	\arrow[shift left=1, from=1-1, to=2-1]
	\arrow[shift left=1, from=1-1, to=1-2]
      \end{tikzcd}\]
    with core $K$.
    Next, $G\to M$ acts on $K^G:=\ker(\da_H)\subseteq K$ via
    \[ \rho\colon G\times_{\s, \tg} K^G\to K^G, \qquad \rho(g)(\kappa)=k\kappa k\inv
    \]
    for any $k\in K$ such that $\da_G(k)=g$.
    Consider the closed, embedded wide and normal sugbroupoid
    \begin{equation}\label{def_N}
      N:=\{(\kappa_2,g,\kappa_1)\in K^G\times G\times K^G\mid \rho(g)(\kappa_1)=\kappa_2\}
    \end{equation}
    of $\Gamma\rr G$. The quotient $\Theta=\Gamma/N$ has then a Lie
    groupoid structure over $G$. The elements of $\Theta$ are classes
    \[ \langle k_2, g, k_1\rangle=\{(k_2\kappa_2, g, k_1\kappa_1)\mid (\kappa_2,g,\kappa_1)\in N\}
    \]
    with $(k_2,g,k_1)\in\Gamma$.
    Set $\s_H,\tg_H\colon \Gamma/N\to H$ 
    \[ \s_H\langle k_2, g, k_1\rangle=\da_H(k_1), \qquad \tg_H\langle k_2, g, k_1\rangle=\da_H(k_2),
    \]
    and a partial multiplication $\cdot_H\colon \Theta\times_{\s_H,\tg_H}\Theta\to \Theta$ by 
    \[ \langle k_2', g', k_1'\rangle\cdot_H\langle k_2, g,
      k_1\rangle=\langle k_2', g', k_1'\rangle\cdot_H\langle k_1',
      g, k_1\rho(g\inv)(\kappa)\rangle=\langle k_2', g'g, k_1\rho(g\inv)(\kappa)\rangle.
    \]
    Here $\kappa\in K^G$ is such that $k_2\kappa=k_1'$. It is easy to
    check that $\Theta/N\rr H$ becomes a groupoid with these structure maps and with the inversion
    \[ \langle k_2, g, k_1\rangle\mapsto \langle k_1, g\inv,k_2\rangle
    \]
    and the unit inclusion
    \[ h\mapsto \langle k, 1_{\s(h)},k\rangle
    \]
    for any $k\in K$ such that $\da_H(k)=h$.
    Since those structure maps are defined such that
    \[\begin{tikzcd}
	\Gamma & {\Theta=\Gamma/N} \\
	K & H
	\arrow[shift left=1, from=1-2, to=2-2]
	\arrow[shift right=1, from=1-2, to=2-2]
	\arrow["{\partial_H}"', from=2-1, to=2-2]
	\arrow["\pi", from=1-1, to=1-2]
	\arrow[shift right=1, from=1-1, to=2-1]
	\arrow[shift left=1, from=1-1, to=2-1]
      \end{tikzcd}\] is a morphism of groupoids and the projections
    $\pi\colon \Gamma\to\Theta$ and $\da_H\colon K\to H$ are
    surjective submersions, it is easy to check that $\Theta\rr H$ is
    a Lie groupoid. The interchange law and the surjectivity of the
    double source map are also easily deduced from the one in
    $(\da_G,\da_G)$, and $(\Theta, G,H,M)$ is a double Lie
    groupoid. Its core consists in classes
    $\langle k, 1_{\s(k)},\kappa\rangle=\langle k\kappa\inv,
    1_{\s(k)},1_{\s(k)}\rangle$, and is obviously isomorphic to $K$ as
    a Lie groupoid. The core diagram of $(\Theta, G,H,M)$ is hence
    again \[\begin{tikzcd}
	K & G \\
	H & M \arrow[shift left=1, from=1-2, to=2-2] \arrow[shift
        right=1, from=1-2, to=2-2] \arrow[shift right=1, from=2-1,
        to=2-2] \arrow[shift left=1, from=2-1, to=2-2] \arrow[shift
        right=1, from=1-1, to=2-2] \arrow[shift left=1, from=1-1,
        to=2-2] \arrow["{\partial_G}", from=1-1, to=1-2]
        \arrow["{\partial_H}"', from=1-1, to=2-1]
      \end{tikzcd}\]
    by construction.

    Now given a transitive double Lie groupoid
     \[\begin{tikzcd}
	S& G \\
	H & M
	\arrow[shift left=1, from=1-2, to=2-2]
	\arrow[shift right=1, from=1-2, to=2-2]
	\arrow[shift right=1, from=2-1, to=2-2]
	\arrow[shift left=1, from=2-1, to=2-2]
	\arrow[shift right=1, from=1-1, to=1-2]
	\arrow[shift right=1, from=1-1, to=2-1]
	\arrow[shift left=1, from=1-1, to=2-1]
	\arrow[shift left=1, from=1-1, to=1-2]
      \end{tikzcd}\] with core $K$ and core diagram as in Figure
    \ref{cd_lg}, set $\Psi_S\colon \Theta\to S$,
    $\langle k_2, g, k_1\rangle\mapsto k_2\cdot_H 1^S_g\cdot_Hk_1^{-1,
      H}\in S$. Then $\Psi_S$ is an isomorphism of double Lie groupoids,
    see \cite{BrMa92} -- where, again, only the surjectivity of
    $\da_G$ and $\da_H$ are needed.

    A morphism $\Phi\colon \Gamma_1\to \Gamma_2$ of double Lie
    groupoids with side morphisms $\varphi_G\colon G_1\to G_2$,
    $\varphi_H\colon H_1\to H_2$ and with core morphism
    $\varphi_K\colon K_1\to K_2$ induces a morphism of the
    corresponding core diagrams as in the following diagram.
    \begin{equation}\label{cor_mor_gpd}\begin{tikzcd}
	{K_1} &&&& {K_2} \\
	&& {G_1} &&&& {G_2} \\
	{H_1} &&&& {H_2} \\
	&& M &&&& M
	\arrow["{\varphi_K}", from=1-1, to=1-5]
	\arrow["{\varphi_H}"{pos=0.7}, from=3-1, to=3-5]
	\arrow["{\partial_{H_1}}"{description}, from=1-1, to=3-1]
	\arrow["{\partial_{H_2}}"{description, pos=0.7}, from=1-5, to=3-5]
	\arrow["{\partial_{G_1}}"{description}, from=1-1, to=2-3]
	\arrow["{\partial_{G_2}}"{description}, from=1-5, to=2-7]
	\arrow[shift left=1, from=3-1, to=4-3]
	\arrow[shift right=1, from=3-1, to=4-3]
	\arrow[shift right=1, from=3-5, to=4-7]
	\arrow[shift left=1, from=3-5, to=4-7]
	\arrow[shift right=1, from=2-7, to=4-7]
	\arrow[shift left=1, from=2-7, to=4-7]
	\arrow[shift right=1, from=2-3, to=4-3]
	\arrow[shift left=1, from=2-3, to=4-3]
	\arrow[shift right=1, from=1-1, to=4-3]
	\arrow[shift left=1, from=1-1, to=4-3]
	\arrow[shift right=1, from=1-5, to=4-7]
	\arrow[shift left=1, from=1-5, to=4-7]
	\arrow["{\operatorname{id}_M}"{description}, from=4-3, to=4-7]
	\arrow["{\varphi_G}"{pos=0.3}, from=2-3, to=2-7]
      \end{tikzcd}\end{equation}  Consider conversely a morphism of
    transitive core diagrams (of Lie groupoids) as in
    \eqref{cor_mor_gpd}.  Let $\Gamma_i$ be the total space of the
    comma double Lie groupoid $(\da_{G_i},\da_{G_i})$ for $i=1,2$ and
    set $\Phi\colon \Gamma_1\to \Gamma_2$,
    $\Phi(k_1,g,k_2)=(\phi_K(k_1),\phi_G(g),\phi_K(k_2))$. The map
    $\Phi$ is obviously a morphism of double Lie groupoids, and a
    computation shows $\Phi(N_1)=N_2$ for the normal subgroupoids
    $N_i$ of $\Gamma_i\rr G_i$ defined as in \eqref{def_N} for
    $i=1,2$.  Therefore, it induces a morphism of the transitive
    double Lie groupoids $\overline{\Phi}\colon \Gamma_1/N_1\to \Gamma_2/N_2$ with core
    morphism again the one in \eqref{cor_mor_gpd}.

    Let $M$ be a smooth manifold and consider the category $\CDG(M)$
    of transitive core diagrams of Lie groupoids with base $M$, and
    the category $\DLG(M)$ of transitive double Lie groupoids with
    double base $M$. The functor $\mathcal C\colon \DLG(M)\to \CDG(M)$
    sends transitive double Lie groupoids and their morphisms to the
    corresponding transitive core diagrams of Lie groupoids and
    morphisms of core diagrams.  The above constructs a functor
    $\mathcal D\colon \CDG(M)\to \DLG(M)$ sending a transitive core
    diagram as in Figure \ref{cd_lg} to the double Lie groupoid
    $(\Theta,G,H,M)$ as above, and a morphism of core diagrams as in
    \eqref{cor_mor_gpd} to the morphism
    $\overline{\Phi}\colon \Theta_1\to \Theta_2$ as in the previous
    paragraph. Then $\mathcal C\circ \mathcal D$ is obviously the
    identity functor. The assigment sending each transitive double Lie
    groupoid $(S,G,H,M)$ to the isomorphism
    $\Psi_S\colon \mathcal D\mathcal C(S)\to S$ of double Lie
    groupoids defines a natural isomorphism
    $\Psi_\cdot\colon \mathcal D\circ \mathcal C\to
    \operatorname{id}_{\DLG}$.  Therefore, Brown and Mackenzie's
    equivalence of locally trivial core diagrams with locally trivial
    double Lie groupoids \cite{BrMa92} is extended to an equivalence
    of transitive core diagrams with transitive double Lie groupoids.

  \subsection*{Outline of the paper}
  Section \ref{sec:prelim} collects necessary knowledge on ideals in
  Lie algebroids, on VB-algebroids and representations up to homotopy
  and on double Lie algebroids and their morphisms. Section
  \ref{func_trdLA_codiag} defines transitive core diagrams and studies
  the crossed modules associated to a transitive core diagram. It
  describes the (obvious) functor from transitive double Lie
  algebroids to transitive core diagrams.

  Section \ref{sec:comma} defines the comma double Lie algebroid
  associated to a Lie algebroid morphism $C\to A$ (over a fixed
  base). It describes the two VB-algebroid structures of this double
  Lie algebroid via apropriate representations up to homotopy. Section
  \ref{sec:quotient} is the core of this paper: it factors the comma
  double Lie algebroid defined by one side of a transitive core
  diagram, by a sub-structure defined by the other side. This
  structure is an ideal in one of the VB-algebroids and an
  infinitesimal ideal system (with trivial fiber) in the other
  VB-algebroid.

  The equivalence between the categories of transitive double Lie
  algebroids and of transitive core diagrams is then established in
  Section \ref{eq_cat_la}. Section \ref{integration} uses this for
  proving an integration of transitive double Lie algebroids (with
  integrable sides and core).  The appendix collects some longer
  proofs, like the verification of the double Lie algebroid conditions
  \cite{GrJoMaMe18} for the comma double Lie algebroid and its
  quotient.
 
  \subsection*{Notation and conventions}

  All manifolds and vector bundles in this paper are smooth and real.
  Vector bundle projections are written $q_E\colon E\to M$, and
  $p_M\colon TM\to M$ for tangent bundles.  Given a section
  $\varepsilon$ of $E^*$, the map $\ell_\varepsilon\colon E\to \R$ is
  the linear function associated to it, i.e.~the function defined by
  $e_m\mapsto \langle \varepsilon(m), e_m\rangle$ for all $e_m\in E$.
  The set of global sections of a vector bundle $E\to M$ is denoted by
  $\Gamma(E)$, $\mx(M)=\Gamma(TM)$ is the space of smooth vector fields on a
  smooth manifold $M$, and
  $\Omega^\bullet(M)=\Gamma(\bigwedge^\bullet T^*M)$ is the space of smooth
  forms on $M$.

\subsection*{Acknowledgement}
This paper is the outcome of a joint project of the authors in 2012
and 2013, which had been left unfinished until the departure of the
second author in 2020. 
The studied problem was
Kirill Mackenzie's idea and this research builds on many of his
beautiful mathematical achievements. The first author dedicates to his
memory her work on finishing this paper.

\section{Preliminaries}\label{sec:prelim}
This section collects necessary background on infinitesimal ideal
systems in Lie algebroids \cite{JoOr14}, on double Lie algebroids and
their morphisms \cite{Mackenzie11}, and on the correspondence of
decomposed VB-algebroids with 2-term representations up to homotopy
\cite{GrMe10a}.
\subsection{Ideals in Lie algebroids}
\textbf{Infinitesimal ideal systems} \cite{JoOr14,Hawkins08} are
considered the right notion of ideal in Lie algebroids. They are the
infinitesimal version of \textbf{ideal systems}
\cite{HiMa90b,Mackenzie05}. These are exactly the kernels of
fibrations of Lie algebroids \cite{HiMa90b}.

\begin{df}\label{iis_def}
  Let $(q\colon A\to M, \rho,[\cdot\,,\cdot])$ be a Lie algebroid,
  $F\subseteq TM$ an involutive subbundle, $J\subseteq A$ a
  subbundle over $M$ such that $\rho(J)\subseteq F$, and $\nabla$ a
  flat $F$-connection on $A/J$ with the following
  properties:
\begin{enumerate}
\item If $a\in\Gamma(A)$ is $\nabla$-parallel\footnote{A section
    $a\in\Gamma(A)$ is said to be \textbf{$\nabla$-parallel} if
    $\nabla_X\bar a=0$ for all $X\in\Gamma(F)$. Here, $\bar a$ is
    the class of $a$ in $\Gamma(A/J)\simeq \Gamma(A)/\Gamma(J)$.},
  then $[a,j]\in\Gamma(J)$ for all $j\in\Gamma(J)$.
\item If $a,b\in\Gamma(A)$ are $\nabla$-parallel, then $[a,b]$ is also
  $\nabla$-parallel.
\end{enumerate}
Then  the triple
$(F,J,\nabla)$ is an \textbf{infinitesimal ideal system in $A$.}
\end{df}
The first axiom implies immediately that $J\subseteq A$ is a
subalgebroid of $A$, and the following property of $(F,J,\nabla)$ follows from (2) above.
\begin{enumerate}\setcounter{enumi}{2}
  \item If $a\in\Gamma(A)$ is  $\nabla$-parallel, then $\rho(a)$ is
  $\nabla^{F}$-parallel, where
  \[\nabla^{F}\colon\Gamma(F)\times\Gamma(TM/F)\to\Gamma(TM/F), \quad \nabla^{F}_X\bar Y=\overline{[X,Y]}\]
  is the Bott connection associated to $F$.
  \end{enumerate}
Infinitesimal ideal systems already appear in \cite{Hawkins08} (not
under this name) in the context of geometric quantization as the
infinitesimal version of polarizations on groupoids.

\begin{ex}[Flat connections on vector bundles]\label{geom_red}
  Let $E$ be a vector bundle over $M$ and let $K\subseteq E$ be a
  subbundle. Then any flat connection of an involutive subbundle
  $F_M\subseteq TM$ on $E/K$ defines an \emph{infinitesimal ideal
    system $(F_M, K, \nabla)$ in $E$}, i.e.~an infinitesimal ideal
  system in the trivial Lie algebroid
  $(E, \rho=0, [\cdot\,,\cdot]=0)$.  
  An infinitesimal ideal system in a Lie algebroid is therefore, by
  forgetting the Lie algebroid structure, automatically an
  infinitesimal ideal system in the underlying vector bundle.
\end{ex}

\begin{ex}
  \begin{enumerate}
  \item An \textbf{ideal} in a Lie algebroid $A\to M$ is a vector subbundle
    $I\subseteq A$ such that
    $[\Gamma(I),\Gamma(A)]\subseteq \Gamma(I)$. Given such an ideal
    $I$ in $A$, the triple $(0,I,0)$ is an infinitesimal ideal system
    in $A$.
  \item Let $A\to M$ be a Lie algebroid. Let $F_M\subseteq TM$ be an
    involutive subbundle and let
    $\nabla\colon \Gamma(F_M)\times\Gamma(A)\to\Gamma(A)$ be a flat
    connection such that $[a,b]$ is $\nabla$-parallel for
    $a,b\in\Gamma(A)$ $\nabla$-parallel.  Then
    $(F_M,0,\nabla)$ is an infinitesimal ideal system in $A$.
    \end{enumerate}
\end{ex}

\begin{df}
  A morphism
  \begin{equation*}
\begin{xy}
  \xymatrix{
    A\ar[d]_{q_E}\ar[r]^{\Phi}&B\ar[d]^{q_B}\\
    M\ar[r]_{\phi}&N }
\end{xy}
\end{equation*}
of Lie algebroids $A\to M$ and $B\to N$ is a \textbf{fibration of Lie
  algebroids} if the underlying morphism $(\Phi,\phi)$ of vector
bundles is a fibration of vector bundles, i.e.~$\phi$ is a surjective
submersion and $\Phi$ is fiberwise surjective.
\end{df}

Let $E\to M$ be a vector bundle, and let $J\subseteq E$ be a subbundle
and $F_M\subseteq TM$ an involutive subbundle. Assume that there is a
flat $F_M$-connection on $E/J$.  Then define an equivalence relation
$\sim_\nabla$ on $E/J$ as follows: for $e,e'\in E$ and their classes
$\bar e, \bar e'\in E/J$, $\bar e\sim_\nabla \bar e'$ if and only if
$q_E(e)$ and $q_E(e')$ are in the same leaf $L$ of $F_M$ in $M$ and
$\bar e'$ is the image of $\bar e$ under $\nabla$-parallel transport
along a path in the leaf $L$ joining $q_E(e)$ and $q_E(e')$.  The
quotient of $E$ by this equivalence relation is written
$(E/J)/\nabla$. The projections $E\to (E/J)/\nabla$ and $M\to M/F_M$
are written $\pi$ and $\pi_M$, repectively.  The vector bundle map
$q_E\colon E\to M$ factors to a map
$[q_E]\colon (E/J)/\nabla\to M/F_M$.

The following theorem shows that infinitesimal ideal systems in Lie
algebroids define in this manner quotients of Lie algebroids, up to
some topological obstructions \cite{JoOr14}. The paper \cite{DrJoOr15}
proves that an infinitesimal ideal system defines a sub-representation
(up to homotopy) of the adjoint representation of the Lie algebroid,
after the choice of an extension of the infinitesimal ideal system
connection.  These two results suggest that indeed, an infinitesimal
ideal systems is the right notion of ideal in a Lie algebroid.  The
following result from \cite{JoOr14} is slightly reformulated here in
order to emphasize the fact the two topological obstructions ensure
the existence of a quotient \emph{vector bundle}. The `reduced' Lie algebroid
structure then follows immediately.
\begin{thm}\label{red_lie_alg}
  \begin{enumerate}
  \item Let $q_E\colon E\to M$ be a smooth vector bundle and let
    $(F_M,J,\nabla)$ be an infinitesimal ideal system in $E$ (see
    Example \ref{geom_red}). Assume that $\bar M=M/F_M$ is a smooth
    manifold and that $\nabla$ has trivial holonomy.  Then the
    quotient $(E/J)/\nabla\to M/F_M$ carries a vector bundle structure
    such that the projection $(\pi,\pi_M)$ is a fibration of vector
    bundles:
\begin{equation}\label{reduced_vb}
\begin{xy}
  \xymatrix{
    E\ar[d]_{q_E}\ar[r]^{\pi}&(E/J)/\nabla\ar[d]^{[q_E]}\\
    M\ar[r]_{\pi_M}&M/F_M }
\end{xy}
\end{equation}
\item Let now $A\to M$ be a Lie algebroid and let $(F_M,J,\nabla)$ be
  an infinitesimal ideal system in $A$. If the quotient vector bundle
  $(A/J)/\nabla\to M/F_M$ exists as in \eqref{reduced_vb}, then it
  carries a unique Lie algebroid structure such that
  \eqref{reduced_vb} is a fibration of Lie algebroids.
  \end{enumerate}
\end{thm}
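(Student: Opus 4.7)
For part (1), the plan is to exploit the trivial holonomy of $\nabla$ to rigidly identify the fibers of $E/J$ along each leaf of $F_M$. Since $\bar M = M/F_M$ is a smooth manifold, the foliation $F_M$ is simple; around each $m \in M$ one can choose a foliation chart with a transversal $\Sigma$ mapping diffeomorphically to an open set $\bar U \subseteq \bar M$ under $\pi_M$. I would then restrict $E/J$ to $\Sigma$ and declare the candidate total space over $\bar U$ to be $(E/J)\arrowvert_\Sigma$, identifying a point of $E/J$ over another leaf point with its $\nabla$-parallel transport back to $\Sigma$. Trivial holonomy guarantees that this identification is path-independent, so the local construction globalizes unambiguously. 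Smoothness of both the quotient bundle and of the projection $\pi \colon E \to (E/J)/\nabla$ then follows because $\nabla$-parallel local sections of $E/J$ provide smooth local frames; the fiberwise linear structure is transported from that of $E/J$, and fiberwise surjectivity of $\pi$ is immediate from the construction, so \eqref{reduced_vb} is a fibration of vector bundles.

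For part (2), the strategy is that $\nabla$-parallel sections supply a distinguished local frame on which the Lie algebroid data is forced. First I would observe that, by property (3) following Definition \ref{iis_def}, for a $\nabla$-parallel section $a \in \Gamma(A)$ the vector field $\rho(a)$ is $\nabla^F$-parallel and hence projectable along $\pi_M$; this forces a unique anchor $\bar\rho \colon (A/J)/\nabla \to T\bar M$ making \eqref{reduced_vb} anchor-compatible. For the bracket I would use conditions (1) and (2) of Definition \ref{iis_def}: together they say that $[a,b]$ is again $\nabla$-parallel modulo $\Gamma(J)$ whenever $a$ and $b$ are, so the assignment $[\bar a, \bar b] := \overline{[a,b]}$ for $\nabla$-parallel lifts is well-defined on their descents. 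I would then extend by the Leibniz rule $[\bar a, f \bar b] = f[\bar a, \bar b] + \bar\rho(\bar a)(f)\, \bar b$ for $f \in \sfn{\bar M}$, using that locally every section of $(A/J)/\nabla$ is an $\sfn{\bar M}$-combination of descended parallel sections.

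The main obstacle will be checking that this bracket is well-defined under all the ambiguities in lifting a section of the quotient: changing a lift $a$ by an element of $\Gamma(J)$ must change $[a,b]$ only by a section of $J$, and replacing $a,b$ by their parallel transports along leaves must change $[a,b]$ by a parallel-transported section modulo $J$. Conditions (1) and (2) of Definition \ref{iis_def} are designed precisely for this, but careful bookkeeping with the Leibniz extension and with functions constant along $F_M$ will be required. Once the bracket is well-defined, the Lie algebroid axioms on $(A/J)/\nabla$ follow from those of $A$ applied to parallel lifts, and uniqueness is forced because the fiberwise surjective morphism $\pi$ determines $\bar\rho$ and $[\cdot,\cdot]$ on parallel descended sections, which generate the sections of $(A/J)/\nabla$ locally as an $\sfn{\bar M}$-module.
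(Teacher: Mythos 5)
Your plan is correct and matches the paper's route: the paper (citing \cite{JoOr14}) builds the quotient bundle from trivial holonomy exactly as you describe, and defines the reduced anchor and bracket on $(A/J)/\nabla$ via $\nabla$-flat lifts, using axioms (1) and (2) of Definition \ref{iis_def} for well-definedness and property (3) for projectability of the anchor, just as in your proposal. The only simplification worth noting is that the Leibniz extension is automatic rather than a separate step, since a flat lift of $f\bar b$ is $(\pi_M^*f)\,b$ and $\Gamma(A)^\nabla$ is already a $\pi_M^*C^\infty(\bar M)$-module.
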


An infinitesimal ideal system $(F_M,J,\nabla)$ is defined by the
kernel of a fibration of Lie algebroids if and only if it integrates
to an ideal system in the sense of Higgins and Mackenzie
\cite{HiMa90b, Mackenzie05}, see \cite{JoOr14}.

The reduced Lie algebroid structure in (2) of Theorem
\ref{red_lie_alg} is defined as follows.  Write $\Gamma(A)^\nabla$ for
the space of sections of $A\to M$ the class in $A/J$ of which is
$\nabla$-flat. Then $\Gamma(A)^\nabla$ is an $\mathbb R$-Lie algebra
and a $C^\infty(M)^{F_M}=\pi_M^*C^\infty(\bar M)$-module. The space
$\Gamma(J)$ is an ideal in $\Gamma(A)^\nabla$.  By definition, for
each $a\in\Gamma(A)^\nabla$ there is a section
$\bar a\in\Gamma(\bar A)$ such that $\pi\circ a=\bar a\circ \pi_M$
(this is written $a\sim_{\pi}\bar a$ for short). In particular, the
sections $j\in\Gamma(J)$ project in this manner to the zero section of
$\bar A\to \bar M$.  Conversely for each section
$\bar a\in\Gamma(\bar A)$ there is a $\nabla$-flat $a\in\Gamma(A)$
such that $a\sim_\pi\bar a$.  The Lie algebroid bracket and the anchor
on $\bar A\to \bar M$ are then defined by
\[  \Gamma(A)^\nabla\ni a_i \sim_\pi \bar a_i\in\Gamma(\bar A), \, i=1,2\qquad  \Rightarrow \qquad [a_1,a_2]\sim_\pi [\bar a_1,\bar a_2],
\]
and
\[ \overline{\rho}(\pi(a))=T\pi_M\rho(a)
\]
for all $a\in A$.

\subsection{VB-algebroids and representations up to homotopy}
This section starts by briefly recalling the definitions of double
vector bundles, of their \textbf{linear} and \textbf{core} sections,
and of their \textbf{linear splittings} and \textbf{lifts}. The reader
is referred to \cite{Pradines77,Mackenzie05,GrMe10a} for more detailed
treatments. A \textbf{double vector bundle} is a commutative square
\begin{equation*}
\begin{xy}
\xymatrix{
D \ar[r]^{\pi_B}\ar[d]_{\pi_A}& B\ar[d]^{q_B}\\
A\ar[r]_{q_A} & M}
\end{xy}
\end{equation*}
of vector bundles such that
\begin{equation}\label{add_add} (d_1+_Ad_2)+_B(d_3+_Ad_4)=(d_1+_Bd_3)+_A(d_2+_Bd_4)
\end{equation}
for $d_1,d_2,d_3,d_4\in D$ with $\pi_A(d_1)=\pi_A(d_2)$,
$\pi_A(d_3)=\pi_A(d_4)$ and $\pi_B(d_1)=\pi_B(d_3)$,
$\pi_B(d_2)=\pi_B(d_4)$.  Here, $+_A$ and $+_B$ are the additions in
$D\to A$ and $D\to B$, respectively. In particular, $(\pi_A,q_B)$ and
$(\pi_B,q_A)$ must be vector bundle morphisms.  The vector bundles $A$
and $B$ are called the \textbf{side bundles}. The \textbf{core} $C$ of
a double vector bundle is the intersection of the kernels of $\pi_A$
and of $\pi_B$. From \eqref{add_add} follows easily the existence of a
natural vector bundle structure on $C$ over $M$. The inclusion
$C \hookrightarrow D$ is denoted by
$ C_m \ni c \longmapsto \overline{c} \in \pi_A^{-1}(0^A_m) \cap
\pi_B^{-1}(0^B_m).  $

The space of sections
$\Gamma_B(D)$ is generated as a $C^{\infty}(B)$-module by two
special classes of sections (see \cite{Mackenzie11}), the
\textbf{linear} and the \textbf{core sections} which are now described.
For a section $c\colon M \rightarrow C$, the corresponding
\textbf{core section} $c^\dagger\colon B \rightarrow D$ is defined as
$c^\dagger(b_m) = 0^D_{\vphantom{1}_{b_m}} +_A \overline{c(m)}$, $m \in M$, $b_m \in B_m$.
The corresponding core section $A\to D$ is written $c^\dagger$ also,
relying on the argument to distinguish between them. The space of core
sections of $D$ over $B$ is written $\Gamma_B^c(D)$.

A section $\xi\in \Gamma_B(D)$ is called \textbf{linear} if $\xi\colon
B \rightarrow D$ is a bundle morphism from $B \rightarrow M$ to $D
\rightarrow A$ over a section $a\in\Gamma(A)$.  The space of linear
sections of $D$ over $B$ is denoted by $\Gamma^\ell_B(D)$.  Given
$\psi\in \Gamma(B^*\otimes C)$, there is a linear section
$\widetilde{\psi}\colon B\to D$ over the zero section $0^A\colon M\to A$
given by
$\widetilde{\psi}(b_m) = \tilde{0}_{b_m}+_A \overline{\psi(b_m)}$.
Such a section is called a \textbf{core-linear section}. 

\begin{ex}\label{trivial_dvb}
  Let $A, \, B, \, C$ be vector bundles over $M$ and consider
  $D=A\times_M B \times_M C$. With the vector bundle structures
  $D=q^{!}_A(B\oplus C) \to A$ and $D=q_B^{!}(A\oplus C) \to B$, one
  finds that $(D; A, B; M)$ is a double vector bundle called the
  \textbf{decomposed double vector bundle with
    core $C$}. The core sections are given by
$$
c^\dagger\colon b_m \mapsto (0^A_m, b_m, c(m)), \text{ where } m \in M, \, b_m \in
B_m, \, c \in \Gamma(C),
$$
and similarly for $c^\dagger\colon A\to D$.  The space of linear
sections $\Gamma^\ell_B(D)$ is naturally identified with
$\Gamma(A)\oplus \Gamma(B^*\otimes C)$ via
$$
(a, \psi): b_m \mapsto (a(m), b_m, \psi(b_m)), \text{ where } \psi \in
\Gamma(B^*\otimes C), \, a\in \Gamma(A).
$$

In particular, the fibered product $A\times_M B$ is a double vector
bundle over the sides $A$ and $B$ and has core $\{0\}\times M\to M$.
\end{ex}

A \textbf{linear splitting} of $(D; A, B; M)$ is an injective morphism
of double vector bundles $\Sigma\colon A\times_M B\hookrightarrow D$
over the identity on the sides $A$ and $B$.  The existence of a linear
splitting is proved e.g.~in \cite{MePi20}, see also \cite{HeJo20} for
historical remarks.  A linear splitting $\Sigma$ of $D$ is also
equivalent to a splitting $\sigma_A$ of the short exact sequence of
$C^\infty(M)$-modules
\begin{equation}\label{fat_seq_gamma}
0 \longrightarrow \Gamma(B^*\otimes C) \hookrightarrow \Gamma^\ell_B(D) 
\longrightarrow \Gamma(A) \longrightarrow 0,
\end{equation}
where the third map is the map that sends a linear section $(\xi,a)$
to its base section $a\in\Gamma(A)$.  The splitting $\sigma_A$ is
called a \textbf{horizontal lift}. Given $\Sigma$, the horizontal lift
$\sigma_A\colon \Gamma(A)\to \Gamma_B^\ell(D)$ is given by
$\sigma_A(a)(b_m)=\Sigma(a(m), b_m)$ for all $a\in\Gamma(A)$ and
$b_m\in B$.  By the symmetry of a linear splitting, a
lift $\sigma_A\colon \Gamma(A)\to\Gamma_B^\ell(D)$ is equivalent to a
lift $\sigma_B\colon \Gamma(B)\to \Gamma_A^\ell(D)$. 

  \bigskip
Let $q_E\colon E\to M$ be a vector bundle.  Then the tangent bundle
$TE$ has two vector bundle structures; one as the tangent bundle of
the manifold $E$, and the second as a vector bundle over $TM$. The
structure maps of $TE\to TM$ are the derivatives of the structure maps
of $E\to M$.
\begin{equation*}
\begin{xy}
\xymatrix{
TE \ar[d]_{Tq_E}\ar[r]^{p_E}& E\ar[d]^{q_E}\\
 TM\ar[r]_{p_M}& M}
\end{xy}
\end{equation*} 
The space $TE$ is a double vector bundle with core bundle
$E \to M$. The map $\bar{}\,\colon E\to p_E^{-1}(0^E)\cap
(Tq_E)^{-1}(0^{TM})$ sends $e_m\in E_m$ to $\bar
e_m=\left.\frac{d}{dt}\right\an{t=0}te_m\in T_{0^E_m}E$.
Hence the core vector field corresponding to $e \in \Gamma(E)$ is the
vertical lift $e^{\uparrow}\colon E \to TE$, i.e.~the vector field with
flow $\phi^{e^\uparrow}\colon E\times \R\to E$, $\phi_t(e'_m)=e'_m+te(m)$. An
element of $\Gamma^\ell_E(TE)=\mx^\ell(E)$ is called a \textbf{linear
  vector field}. It is well-known (see e.g.~\cite{Mackenzie05}) that a
linear vector field $\xi\in\mx^l(E)$ covering $X\in\mx(M)$ corresponds
to a derivation $D\colon \Gamma(E) \to \Gamma(E)$ over $X\in
\mx(M)$. The precise correspondence is
given by the following equations
\begin{equation}\label{ableitungen}
\xi(\ell_{\varepsilon}) 
= \ell_{D^*(\varepsilon)} \,\,\,\, \text{ and }  \,\,\, \xi(q_E^*f)= q_E^*(X(f))
\end{equation}
for all $\varepsilon\in\Gamma(E^*)$ and $f\in C^\infty(M)$, where
$D^*\colon\Gamma(E^*)\to\Gamma(E^*)$ is the dual derivation to $D$
\cite[\S3.4]{Mackenzie05}.  The linear vector field in $\mx^l(E)$
corresponding in this manner to a derivation $D$ of $\Gamma(E)$ is
written $\widehat D$.  Given a derivation $D$ over $X\in\mx(M)$, the
explicit formula for $\widehat D$ is
\begin{equation}\label{explicit_hat_D}
\widehat D(e_m)=T_me(X(m))+_E\left.\frac{d}{dt}\right\an{t=0}(e_m-tD(e)(m))
\end{equation}
for $e_m\in E$ and any $e\in\Gamma(E)$ such that $e(m)=e_m$.  The
choice of a linear splitting $\Sigma$ for $(TE; TM, E; M)$ is
equivalent to the choice of a linear connection $\nabla\colon \mx(M)\times\Gamma(E)\to\Gamma(E)$: 
the corresponding lift $\sigma_{TM}^\nabla\colon\mx(M)\to\mx^l(E)$ is defined by
$X\mapsto \widehat{\nabla_X}$. 
It is easy to see, using the equalities
in \eqref{ableitungen}, that
\begin{equation}\label{Lie_bracket_over_VB}
\begin{split}
\left[\widehat{\nabla_X}, \widehat{\nabla_Y}\right]&=\widehat{\nabla_{[X,Y]}}-\widetilde{R_\nabla(X,Y)},\qquad 
\left[\widehat{\nabla_X}, e^\uparrow\right]=(\nabla_Xe)^\uparrow,\qquad 
\left[e_1^\uparrow,e_2^\uparrow\right]=0
\end{split}
\end{equation}
for all $X,Y\in\mx(M)$ and $e, e_1, e_2\in\Gamma(E)$.

The splitting $\Sigma^\nabla$ defined by a linear $TM$-connection on $E$ is, as above,
equivalent as well to a linear horizontal lift
$\sigma^\nabla_E\colon\Gamma(E)\to\Gamma^l_{TM}(TE)$. It is given by
\[ \sigma^\nabla_E(e)=Te -\widetilde{\nabla_\cdot e}\qquad \text{ for all } e\in\Gamma(E).
  \]
  The core section of $TE\to TM$ defined by $e\in\Gamma(E)$ is written
  $e^\dagger\in\Gamma^c_{TM}(TE)$ and explicitly defined by
  $e^\dagger(v_m)=T_m0^Ev_m-\left.\frac{d}{dt}\right\an{t=0}0^E_m+te(m)$
  for all $v_m\in TM$.

  \bigskip
  
  The remainder of this section collects necessary background on
  VB-algebroids and representations up to homotopy.  A double vector
  bundle $(D;A,B;M)$ is a \textbf{VB-algebroid} (\cite{Mackenzie98x};
  see also \cite{GrMe10a}) if $D\to B$ is equipped with a Lie algebroid
  structure such that the anchor $\Theta_B\colon D \to TB$ is a
  morphism of double vector bundles over $\rho_A\colon A \to TM$ on
  one side and the Lie bracket on $D\to B$ is linear:
\begin{equation*} [\Gamma^\ell_B(D), \Gamma^\ell_B(D)] \subset
  \Gamma^\ell_B(D), \qquad [\Gamma^\ell_B(D), \Gamma^c_B(D)] \subset
  \Gamma^c_B(D), \qquad [\Gamma^c_B(D), \Gamma^c_B(D)]= 0.
\end{equation*}
The vector bundle $A\to M$ is then also a Lie algebroid, with anchor
$\rho_A$ and bracket defined as follows: if $\xi_1,
\xi_2\in\Gamma^\ell_B(D)$ are linear over $a_1,a_2\in\Gamma(A)$, then
the bracket $[\xi_1,\xi_2]$ is linear over $[a_1,a_2]$.

\medskip

Let $E_0,E_1$ be two vector bundles over the same base $M$ as $A$. A
\textbf{$2$-term representation up to homotopy of $A$ on $E_0\oplus
  E_1$} \cite{ArCr12,GrMe10a} is the collection of
\begin{enumerate}
\item [(R1)] a map $\partial\colon E_0\to E_1$,
\item [(R2)] two $A$-connections, $\nabla^0$ and $\nabla^1$ on $E_0$
  and $E_1$, respectively, such that $\partial \circ \nabla^0 =
  \nabla^1 \circ \partial$, \item [(R3)] an element $R \in \Omega^2(A,
  \Hom(E_1, E_0))$ such that $R_{\nabla^0} = R\circ \partial$,
  $R_{\nabla^1}=\partial \circ R$ and $\dr_{\nabla^{\Hom}}R=0$,
  where $\nabla^{\Hom}$ is the connection induced on $\Hom(E_1,E_0)$
  by $\nabla^0$ and $\nabla^1$.
\end{enumerate}
Note that Gracia-Saz and Mehta call this structure a
``superrepresentation'', following \cite{Quillen85}.  In this paper,
$2$-term representation up to homotopy are called
\textbf{$2$-representations} for short.

Consider again a VB-algebroid $(D\to B, A\to M)$ and choose a linear
splitting $\Sigma\colon A\times_MB\to D$. Since the anchor $\Theta_B$
is linear, it sends a core section $c^\dagger$, $c\in\Gamma(C)$ to a
vertical vector field on $B$.  This defines the \textbf{core-anchor}
$\partial_B\colon C\to B$ which is given by
$\Theta_B(c^\dagger)=(\partial_Bc)^\uparrow$ for all $c\in\Gamma(C)$ and
does not depend on the splitting (see \cite{Mackenzie92}). Since the
anchor $\Theta_B$ of a linear section is linear, for each $a\in
\Gamma(A)$ the vector field $\Theta_B(\sigma_A(a))\in\mx^l(B)$ defines a
derivation of $\Gamma(B)$ with symbol $\rho(a)$. This defines a linear
connection $\nabla^{AB}\colon \Gamma(A)\times\Gamma(B)\to\Gamma(B)$:
\[\Theta(\sigma_A(a))=\widehat{\nabla_a^{AB}}\]
for all $a\in\Gamma(A)$.  
Since the bracket of a linear section with a core section is again a
core section, there is a linear connection
$\nabla^{AC}\colon\Gamma(A)\times\Gamma(C)\to\Gamma(C)$ such
that \[[\sigma_A(a),c^\dagger]=(\nabla_a^{AC}c)^\dagger\] for all
$c\in\Gamma(C)$ and $a\in\Gamma(A)$.  The difference
$\sigma_A[a_1,a_2]-[\sigma_A(a_1), \sigma_A(a_2)]$ is a core-linear
section for all $a_1,a_2\in\Gamma(A)$.  This defines a vector valued
form $R\in\Omega^2(A,\operatorname{Hom}(B,C))$ such that
\[[\sigma_A(a_1), \sigma_A(a_2)]=\sigma_A[a_1,a_2]-\widetilde{R(a_1,a_2)},
\]
for all $a_1,a_2\in\Gamma(A)$. For more details on these
constructions, see \cite{GrMe10a}, where the following result is
proved.
\begin{thm}\label{rajan}
  Let $(D \to B; A \to M)$ be a VB-algebroid and choose a linear
  splitting $\Sigma\colon A\times_MB\to D$.  The triple
  $(\nabla^{AB},\nabla^{AC},R)$ defined as above is a
  $2$-representation of $A$ on $\partial_B\colon C\to B$.

  Conversely, let $(D;A,B;M)$ be a double vector bundle such that $A$
  has a Lie algebroid structure and choose a linear splitting
  $\Sigma\colon A\times_MB\to D$. Then if
  $(\nabla^{AB},\nabla^{AC},R)$ is a $2$-representation of $A$ on
  $\partial_B\colon C\to B$, then the equations above define a
  VB-algebroid structure on $(D\to B; A\to M)$.

\end{thm}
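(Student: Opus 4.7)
My plan is to decode the Lie algebroid axioms on $D\to B$ into the four axioms of a $2$-representation of $A$ on $\partial_B\colon C\to B$, by exploiting that a linear splitting identifies $\Gamma_B(D)$ with the $C^\infty(B)$-span of the generating sections $\sigma_A(a)$, $\widetilde\psi$ and $c^\dagger$, so that the anchor and the bracket are determined by their values on these generators. These values are precisely the data $(\partial_B,\nabla^{AB},\nabla^{AC},R)$ of the statement.

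For the forward direction, assuming that $D\to B$ is a VB-algebroid, well-definedness of $\nabla^{AB}$, $\nabla^{AC}$ and $R$ follows from the VB-bracket conditions together with the symbol computation for linear vector fields in \eqref{ableitungen}. To verify (R2) I apply $\Theta_B$ to $[\sigma_A(a),c^\dagger]=(\nabla^{AC}_a c)^\dagger$ and use the second identity of \eqref{Lie_bracket_over_VB} to obtain $(\nabla^{AB}_a \partial_B c)^\uparrow=(\partial_B\nabla^{AC}_a c)^\uparrow$. The first two parts of (R3) follow by applying $\Theta_B$ to the defining relation of $R$, using that for $\psi\in\Gamma(B^*\otimes C)$ the linear vector field $\Theta_B(\widetilde\psi)$ is exactly $\widehat{\partial_B\circ\psi}$, and by bracketing both sides of that relation with a core section. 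The Bianchi-type identity $\dr_{\nabla^{\Hom}}R=0$ is obtained by writing out the Jacobi identity for $[[\sigma_A(a_1),\sigma_A(a_2)],\sigma_A(a_3)]$ in $D\to B$ and reading off the core-linear component.

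For the converse, I would take the splitting and the $2$-representation data and \emph{define} the anchor on generators by $\Theta_B(\sigma_A(a))=\widehat{\nabla^{AB}_a}$ and $\Theta_B(c^\dagger)=(\partial_B c)^\uparrow$, and the bracket on generators by the three formulas $[\sigma_A(a_1),\sigma_A(a_2)]=\sigma_A[a_1,a_2]-\widetilde{R(a_1,a_2)}$, $[\sigma_A(a),c^\dagger]=(\nabla^{AC}_a c)^\dagger$ and $[c_1^\dagger,c_2^\dagger]=0$, extending uniquely by $C^\infty(B)$-bilinearity and the Leibniz rule. That $\Theta_B$ intertwines this bracket with the bracket of vector fields on $B$ is precisely the content of (R2) combined with \eqref{Lie_bracket_over_VB}, and the VB-linearity conditions are immediate since bracketing two generators of prescribed type yields a section of the prescribed type.

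The main obstacle is checking the Jacobi identity for the constructed bracket. By the Leibniz rule it suffices to test it on triples of generators chosen from $\{\sigma_A(a),c^\dagger\}$. The triples with two or three core entries are immediate from $[c^\dagger,c'^\dagger]=0$, while the triple $(\sigma_A(a_1),\sigma_A(a_2),c^\dagger)$ reduces after direct expansion to the curvature condition $R_{\nabla^{AC}}=R\circ\partial_B$. The delicate case is $(\sigma_A(a_1),\sigma_A(a_2),\sigma_A(a_3))$: its component along $\Gamma(A)$ is the Jacobi identity for the bracket on $A$, while its core-linear component is exactly $\dr_{\nabla^{\Hom}}R=0$. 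This last bookkeeping verification is the main computational hurdle, after which the two assignments are seen to be mutually inverse by construction.
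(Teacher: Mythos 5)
Your outline is correct and is essentially the standard proof: the paper itself does not prove Theorem \ref{rajan} but quotes it from Gracia-Saz and Mehta \cite{GrMe10a}, and your strategy of reading the anchor and bracket off the generators $\sigma_A(a)$, $c^\dagger$, $\widetilde\psi$ and translating the Lie algebroid axioms into (R1)--(R3) is exactly the argument given there. The only steps you compress that deserve explicit words are the well-definedness of the extension in the converse direction (the generators satisfy the relations $\sigma_A(fa)=q_B^*f\cdot\sigma_A(a)$ and $(fc)^\dagger=q_B^*f\cdot c^\dagger$, with which the Leibniz rule must be checked to be compatible) and the fact that the anchor condition on a bracket of two linear sections already uses $R_{\nabla^{AB}}=\partial_B\circ R$, not only (R2).
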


\subsection{Double Lie algebroids}
A \textbf{double Lie algebroid} with core $C$ is a double vector
bundle $(D;A,B;M)$
\begin{equation*}
  \begin{xy}
    \xymatrix{
      D \ar[r]^{\pi_A}\ar[d]_{\pi_B}&   A\ar[d]^{q_A}\\
       B\ar[r]_{q_B}                   &  M\\
    }
  \end{xy}
\end{equation*}
with core $C$ and four Lie algebroid structures on $A\to M$, $B\to M$,
$D\to A$ and $D\to B$ such that $(D\to A, B\to M)$ and
$(D\to B, A\to M)$ are VB-algebroids, and the \emph{dual} Lie
algebroids $D\duer A\to C^*$ and $D\duer B\to C^*$ form a VB-Lie
bialgebroid \cite{Mackenzie11}. 
As it is central in this paper, this section recalls the equivalent
definition given by the correspondence of decompositions of double Lie
algebroids with matched pairs of $2$-representations
\cite{GrJoMaMe18}.

Consider a double vector bundle $(D;A,B;M)$ with core $C$ and a VB-Lie
algebroid structure on each of its sides.
Since $(D\to B, A\to M)$ is a VB-Lie algebroid, it is described in any 
linear splitting $\Sigma\colon A\times_MB\to D$
by a representation up to homotopy
$(\partial_B,\nabla^{AB},\nabla^{AC},R_A)$ of $A$ on $C{[0]}\oplus B{[1]}$.
Similarly, via the linear splitting $\Sigma$, the VB-algebroid
$(D\to A, B\to M)$ is described by a representation up to homotopy
$(\partial_A,\nabla^{BA},\nabla^{BC},R_B)$ on $C{[0]}\oplus A{[1]}$.

Let $\rho_A$, respectively $\rho_B$, be the anchor of the Lie
algebroid $A\to M$, respectively $B\to M$.  \cite{GrJoMaMe18} proves
that $(D\duer A, D\duer B)$ is a Lie bialgebroid over $C^*$ if and
only if, for any linear splitting of $D$, the two induced
2-representations as above form a matched pair as in the following
definition \cite{GrJoMaMe18}. In other words, $(D,A,B,M)$ is a double
Lie algebroid if and only if for any linear splitting of $D$, the two
induced 2-representations satisfy the equations (M1) to (M7) below.

\begin{df}\label{matched_pair_2_rep}
  Let $(A\to M, \rho_A, [\cdot\,,\cdot])$ and $(B\to M, \rho_B,
  [\cdot\,,\cdot])$ be two Lie algebroids and assume that $A$ acts on
  $C[0]\oplus B[1]$ up to homotopy via
  $(\partial_B,\nabla^{AB},\nabla^{AC}, R_{A})$ and $B$ acts on
  $C[0]\oplus A[1]$ up to homotopy via
  $(\partial_A,\nabla^{BA},\nabla^{BC}, R_{B})$\footnote{For the sake of
    simplicity, all the four
    connections are denoted by $\nabla$. It is always  clear from the indexes which
    connection is meant. The connection $\nabla^A$ is the $A$-connection
    induced by $\nabla^{AB}$ and $\nabla^{AC}$ on $\wedge^2 B^*\otimes
    C$ and $\nabla^B$ is
    the $B$-connection induced on $\wedge^2
    A^*\otimes C$.  }.  Then the two representations up to
  homotopy form a \textbf{matched pair} if
\begin{enumerate}
\item[(M1)]
  $\nabla_{\partial_Ac_1}c_2-\nabla_{\partial_Bc_2}c_1=-(\nabla_{\partial_Ac_2}c_1-\nabla_{\partial_Bc_1}c_2)$,
\item[(M2)] $[a,\partial_Ac]=\partial_A(\nabla_ac)-\nabla_{\partial_Bc}a$,
\item[(M3)] $[b,\partial_Bc]=\partial_B(\nabla_bc)-\nabla_{\partial_Ac}b$,
\item[(M4)]
$\nabla_b\nabla_ac-\nabla_a\nabla_bc-\nabla_{\nabla_ba}c+\nabla_{\nabla_ab}c=
R_{B}(b,\partial_Bc)a-R_{A}(a,\partial_Ac)b$,
\item[(M5)]
  $\partial_A(R_{A}(a_1,a_2)b)=-\nabla_b[a_1,a_2]+[\nabla_ba_1,a_2]+[a_1,\nabla_ba_2]+\nabla_{\nabla_{a_2}b}a_1-\nabla_{\nabla_{a_1}b}a_2$,
\item[(M6)]
  $\partial_B(R_{B}(b_1,b_2)a)=-\nabla_a[b_1,b_2]+[\nabla_ab_1,b_2]+[b_1,\nabla_ab_2]+\nabla_{\nabla_{b_2}a}b_1-\nabla_{\nabla_{b_1}a}b_2$,
\end{enumerate}
for all $a,a_1,a_2\in\Gamma(A)$, $b,b_1,b_2\in\Gamma(B)$ and
$c,c_1,c_2\in\Gamma(C)$, and
\begin{enumerate}\setcounter{enumi}{6}
\item[(M7)] $\dr_{\nabla^A}R_{B}=\dr_{\nabla^B}R_{A}\in \Omega^2(A,
  \wedge^2B^*\otimes C)=\Omega^2(B,\wedge^2 A^*\otimes C)$, where
  $R_{B}$ is seen as an element of $\Omega^1(A, \wedge^2B^*\otimes
  C)$ and $R_{A}$ as an element of $\Omega^1(B, \wedge^2A^*\otimes
  C)$.
\end{enumerate}
\end{df}

  Recall that the Lie bialgebroid $(D\duer A, D\duer B)$ induces a
  Poisson structure (natural up to sign) on its base $C^*$; this
  Poisson structure is linear \cite[\S4]{Mackenzie11} and so induces a
  Lie algebroid structure on $C$. 
  Writing $e\co D\duer B\to T(C^*)$ and $e_*\co D\duer A\to T(C^*)$ for 
the two anchors, then the Poisson structure on $C^*$ is defined by its sharp morphism
$\pi^\#_{C^*} := e_*\circ e^*=-e\circ e_ *^*\colon T^*C^*\to TC^*$.
The sign of this Poisson structure is determined by
  the requirement that $\partial_A$ and $\partial_B$ be morphisms of
  Lie algebroids (see the next proposition).
The proof of \cite[Proposition 4.5.]{GrJoMaMe18} shows the following identities
  for $c,c_1,c_2\in\Gamma(C)$ and $f,g\in C^\infty(M)$:
  \begin{equation*}
    \begin{split}
     & (e_*\circ e^*)(\dr\ell_{c_1})(\ell_{c_2})=\ell_{\nabla_{\da_A c_1}c_2-\nabla_{\da_B c_2}c_1}=\ell_{-\nabla_{\da_A c_2}c_1+\nabla_{\da_B c_1}c_2}\\
      &(e_*\circ e^*)(\dr\ell_{c})(q_{C^*}^*f))=q_{C^*}^*(\ldr{\rho_B\da_B(c)}f)=q_{C^*}^*(\ldr{\rho_A\da_A(c)}f)\\
      &(e_*\circ e^*)(\dr q_{C^*}^*f)(q_{C^*}^*g)=0.
      \end{split}
    \end{equation*}
  Hence the 
  Lie algebroid on $C\to M$ is defined explicitly using Definition \ref{matched_pair_2_rep}:
  \begin{enumerate}
  \item The anchor is $\rho_C:=\rho_A\circ \partial_A=\rho_B\circ\partial_B$.
   This equality follows easily from (M2) and (M3), see \cite[Remark 3.2]{GrJoMaMe18}.
\item The first equation (M1) gives a simple formula for the induced Lie
  algebroid bracket on section of $C$:
\begin{equation}\label{bracket_on_C}
[c_1,c_2]=\nabla_{\partial_Ac_1}c_2-\nabla_{\partial_Bc_2}c_1.
\end{equation}
The Jacobi identity follows from (M2), (M3) and (M4), and the bracket does not depend on the choice of splitting $\Sigma\colon A\times_MB\to D$,
see \cite[Remark 3.5]{GrJoMaMe18}.
\end{enumerate}

\begin{prop}\label{das_morphisms}
  Let $(D;A,B;M)$ be a double Lie algebroid.  Then the morphisms
  $\partial_A\colon C\to A$ and $\partial_B\colon C\to B$ of vector
  bundles are morphisms of Lie algebroids.
\end{prop}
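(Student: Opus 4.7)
The plan is to work with any fixed linear splitting $\Sigma\colon A\times_M B \to D$ and translate the statement into the language of the matched pair of 2-representations of Definition \ref{matched_pair_2_rep}, then read the result off the low-order matched-pair axioms. Since $\partial_A$ and $\partial_B$ cover $\id_M$, the anchor compatibility is immediate: the definitions (applied through (M2) and (M3)) force $\rho_A\circ\partial_A = \rho_B\circ\partial_B$, and this common map is by definition the anchor $\rho_C$ of $C\to M$. So the only non-trivial check is that each $\partial$ preserves brackets.

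For $\partial_A$, I will take $c_1,c_2\in\Gamma(C)$ and expand $\partial_A[c_1,c_2]_C$ using the explicit core-bracket formula \eqref{bracket_on_C}, obtaining
$$\partial_A[c_1,c_2]_C \;=\; \partial_A\bigl(\nabla^{AC}_{\partial_A c_1} c_2\bigr) \;-\; \partial_A\bigl(\nabla^{BC}_{\partial_B c_2} c_1\bigr).$$
Now applying axiom (M2) with $a=\partial_A c_1$ and $c=c_2$ yields
$$[\partial_A c_1,\,\partial_A c_2]_A \;=\; \partial_A\bigl(\nabla^{AC}_{\partial_A c_1} c_2\bigr) \;-\; \nabla^{BA}_{\partial_B c_2}(\partial_A c_1).$$
Comparing the two expressions, everything reduces to the single identity
$$\partial_A\circ\nabla^{BC}_{b} \;=\; \nabla^{BA}_{b}\circ\partial_A \qquad (b\in\Gamma(B)),$$
which is precisely axiom (R2) for the $B$-representation up to homotopy on the complex $\partial_A\colon C\to A$. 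Hence $\partial_A$ preserves brackets.

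The argument for $\partial_B$ is completely symmetric: I first rewrite the core bracket as $[c_1,c_2]_C = \nabla^{BC}_{\partial_B c_1} c_2 - \nabla^{AC}_{\partial_A c_2} c_1$ using (M1), then apply (M3) with $b=\partial_B c_1$ and $c=c_2$, and finally invoke the (R2) compatibility $\partial_B\circ\nabla^{AC}=\nabla^{AB}\circ\partial_B$ for the $A$-representation. I do not expect any real obstacle: only the lowest-order matched-pair axioms (M1)--(M3) and the basic compatibility (R2) of each 2-representation are needed, and no splitting-independence check is required since the claim concerns intrinsic bracket preservation on $A$, $B$ and $C$. The curvature axioms (M4)--(M7) play no role at this stage.
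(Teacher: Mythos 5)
Your proposal is correct and follows essentially the same route as the paper: anchor compatibility via $\rho_C=\rho_A\circ\partial_A=\rho_B\circ\partial_B$, and bracket preservation by combining the core-bracket formula \eqref{bracket_on_C} with (M2)/(M3) and the (R2) compatibilities $\partial_A\circ\nabla^{BC}=\nabla^{BA}\circ\partial_A$, $\partial_B\circ\nabla^{AC}=\nabla^{AB}\circ\partial_B$. The only cosmetic difference is that for $\partial_B$ you first symmetrize the bracket via (M1) before applying (M3), whereas the paper applies (M3) directly to the term $\nabla^{BC}_{\partial_Bc_2}c_1$; both computations are equivalent.
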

\begin{proof}
  Since $\rho_C=\rho_A\circ \partial_A=\rho_B\circ\partial_B$, the
  compatibility of the anchors with the morphisms is immediate.

  Equation (M3) with
  $\partial_B\circ \nabla^{AC}=\nabla^{AB}\circ\partial_B$ gives
  \begin{equation*}
    \begin{split}
    \partial_B[c_1,c_2]&=\partial_B(\nabla^{AC}_{\partial_Ac_1}c_2)-\partial_B(\nabla^{BC}_{\partial_Bc_2}c_1)\\
    &=\nabla^{AB}_{\partial_Ac_1}(\partial_Bc_2)-[\partial_Bc_2,\partial_Bc_1]-\nabla^{AB}_{\partial_Ac_1}(\partial_Bc_2)=[\partial_Bc_1,\partial_Bc_2]
  \end{split}
\end{equation*}
for all $c_1,c_2\in\Gamma(C)$. Similarly, (M2) and
$\partial_A\circ \nabla^{BC}=\nabla^{BA}\circ\partial_A$ yield
together the compatibility of $\partial_A$ with the brackets on $C$
and $A$.
  \end{proof}

Finally, the following proposition is an immediate consequence of
\eqref{bracket_on_C}.
\begin{prop}
\label{prop:kercomm}
Let $(D;A,B;M)$ be a double Lie algebroid.  Let $c_1, c_2$ be sections
of the core $C$ with $\da_A(c_1) = 0$ and $\da_B(c_2) = 0$.  Then
$[c_1, c_2] = 0$. 
\end{prop}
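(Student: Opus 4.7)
The plan is extremely short, because essentially all the work has already been done in deriving the explicit formula \eqref{bracket_on_C} for the bracket on the core Lie algebroid. Indeed, having identified
\[ [c_1,c_2] = \nabla^{AC}_{\partial_A c_1} c_2 - \nabla^{BC}_{\partial_B c_2} c_1 \]
(which was obtained from the matched pair axiom (M1) applied to the 2-representations coming from any chosen linear splitting of $D$), I would simply substitute the hypotheses $\partial_A(c_1)=0$ and $\partial_B(c_2)=0$. Since an $A$-connection vanishes when its $A$-argument is the zero section, and likewise for a $B$-connection, both summands on the right-hand side are identically zero, yielding $[c_1,c_2]=0$.

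One minor point worth noting, but not really an obstacle, is that the formula \eqref{bracket_on_C} was derived by choosing a linear splitting $\Sigma\colon A\times_M B\to D$; however, as recalled in the remark just before Proposition \ref{das_morphisms}, the resulting bracket is independent of this choice, so the vanishing we obtain is intrinsic. Also, the statement is about sections $c_1,c_2\in\Gamma(C)$ satisfying $\partial_A\circ c_1 = 0$ and $\partial_B\circ c_2 = 0$ pointwise, which is exactly what is needed to make each term in the formula vanish pointwise, giving $[c_1,c_2]=0$ in $\Gamma(C)$.

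There is no real hard step: the proposition is a one-line consequence of the explicit bracket formula. The only thing to be careful about is the asymmetry of the roles of $c_1$ and $c_2$ in \eqref{bracket_on_C} (the first is paired with $\partial_A$ and the second with $\partial_B$), which matches perfectly with the asymmetric hypothesis $\partial_A(c_1)=0$, $\partial_B(c_2)=0$ in the statement.
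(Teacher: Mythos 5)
Your proposal is correct and is exactly the paper's argument: the paper states Proposition \ref{prop:kercomm} as an immediate consequence of the bracket formula \eqref{bracket_on_C}, and substituting $\da_A(c_1)=0$ and $\da_B(c_2)=0$ kills both terms by tensoriality of the connections in their algebroid argument. Your remarks on splitting-independence and on the asymmetry of the formula matching the asymmetric hypotheses are accurate but not needed beyond what the paper already records.
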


\subsection{Morphisms of double Lie algebroids}\label{sec:mor_dla}
This section quickly discusses \textbf{morphisms of matched pairs of
  $2$-representations} versus \textbf{morphisms of double Lie
  algebroids} \cite{DrJoOr15}.

Let $(D_1\to B_1, A_1\to M)$ and $(D_2\to B_2, A_2\to M)$ be two
VB-algebroids over a common double base $M$. A morphism
$\Phi\colon D_1\to D_2$ of double vector bundles with side morphisms
$\phi_A\colon A_1\to A_2$ and $\phi_B\colon B_1\to B_2$ is a
\textbf{morphism of VB-algebroids} if
\begin{equation*}
  \begin{xy}
    \xymatrix{
      D_1 \ar[r]^{\Phi}\ar[d]_{\pi_{B_1}}&   D_2\ar[d]^{\pi_{B_2}}\\
       B_1\ar[r]_{\phi_B}                   &  B_2\\
    }
  \end{xy}
\end{equation*}
is a morphism of Lie algebroids. The vector bundle morphism
$\phi_A\colon A_1\to A_2$ is consequently also a morphism of Lie algebroids,
over the identity on $M$.

Let now $(D_1, A_1, B_1, M)$ and $(D_2, A_2, B_2, M)$ be two double
Lie algebroids over a common double base $M$. A morphism
$\Phi\colon D_1\to D_2$ of double vector bundles with side morphisms
$\phi_A\colon A_1\to A_2$ and $\phi_B\colon B_1\to B_2$ is a
\textbf{morphism of double Lie algebroids} if it is a morphism of
VB-algebroids $(D_1\to B_1, A_1\to M) \to (D_2\to B_2, A_2\to M)$ and
a morphism of VB-algebroids
$(D_1\to A_1, B_1\to M) \to (D_2\to A_2, B_2\to M)$.

\medskip Let $A\to M$ be a Lie algebroid and let
$\mathcal D_E:=(\partial_E,\nabla^{E_0},\nabla^{E_1}, R_E)$ and
$\mathcal D_F:=(\partial_F,\nabla^{F_0},\nabla^{F_1}, R_F)$ be $2$-representations
of $A$ on $E_0[0]\oplus E_1[1]$ and $F_0[0]\oplus F_1[1]$,
respectively.  A \textbf{morphism of representations up to homotopy}
$(A, \mathcal D_E)\to (A, \mathcal D_F)$ is determined by a triple
$(\phi_0, \phi_1, \phi)$, where
$\phi_0\colon E_0 \longrightarrow F_0$, $\phi_1\colon E_1 \to F_1$
are vector bundle morphisms and $\phi \in \Omega^1(A, \Hom(F_1, E_0))$, satisfying
\begin{equation}\label{comp1}
\phi_1\circ \partial_E =  \partial_F \circ\phi_0,\\
\end{equation}
\begin{equation}\label{comp2}
\nabla^{\rm Hom}_a (\phi_0, \phi_1) = (\phi_{a} \circ \partial_E, \,\partial_F
\circ \phi_{a}) \quad \text{ for all } a\in\Gamma(A)\\
\end{equation}
and
\begin{equation}\label{comp3}
\dr_{\nabla^{\rm Hom}}\phi=  R_F\circ \phi_1 -\phi_0 \circ R_E,
\end{equation}
where $\nabla^{\rm Hom}$ is the $A$-connection induced on $\Hom(E_0[0]\oplus E_1[1],F_0[0]\oplus F_1[1])$
by the four connections
(see \cite{ArCr12} for more details).

\medskip

Let $A\to M$ and $A'\to M$ be two Lie algebroids over a smooth
manifold $M$ and consider a Lie algebroid morphism
$\varphi\colon A\to A'$ over the identity on $M$.  Assume that $A'$
acts on $E_0[0]\oplus E_1[1]$ up to homotopy via
$\mathcal D:=(\partial,\nabla^{0},\nabla^{1}, R)$.  For $i=0,1$ the
\textbf{pullback connection}\
$\phi^*\nabla^{i}\colon \Gamma(A) \times \Gamma(E_i) \to \Gamma(E_i)$
is the $A$-connection on $E_i$ given by
\begin{equation}\label{pull_con}
(\phi^*\nabla^{i})_a e := \nabla^i_{\phi(a)} e
\end{equation}
for $a \in \Gamma(A)$ and $e \in \Gamma(E_i)$. The pullback
$\phi^*R \in \Omega^2(A, \operatorname{Hom}(E_1, E_0))$ is similarly defined by
\begin{equation}
(\phi^*R) (a_1, a_2) = R(\phi(a_1), \phi(a_2))
\end{equation}
for $a_1,a_2\in\Gamma(A)$.  Then the triple
$\varphi^*\mathcal D:= (\partial, \phi^*\nabla^0, \phi^*\nabla^1,
\phi^*R)$ is a representation up to homotopy of $A$ on
$E_0[0]\oplus E_1[1]$, see \cite{ArCr12}.

If $\mathcal D_E$ is a $2$-representation of $A$ on
$E_0[0]\oplus E_1[1]$ and $\mathcal D_F$ is a $2$-representation of
$A'$ on $F_0[0]\oplus F_1[1]$, then a morphism of representations up
to homotopy from $\mathcal D_E$ to $\mathcal D_F$ is by definition
\cite{DrJoOr15} a morphism
\[ \mathcal D_E\to \phi^*\mathcal D_F
\]
of $2$-representations of $A$.

\medskip

Let $(D\to B, A\to M)$ and $(D'\to B', A'\to M)$ be two
VB-algebroids over a common double base $M$ and consider a morphism
$\Phi\colon D\to D'$ of double vector bundles with side morphisms
$\phi_A\colon A\to A'$ and $\phi_B\colon B\to B'$.  Choose
linear splittings
\[ \Sigma\colon A\times_MB\to D, \qquad \Sigma'\colon A'\times_MB'\to
  D',
\] of $D$ and $D'$, respectively.  The morphism $\Phi$ and the linear
splittings define a form \linebreak$\phi\in\Omega^1(A, \Hom(B,C'))\simeq \Gamma(A^*\otimes B^*\otimes C')$ by
\begin{equation}\label{eq_def_phi_interchange}
  \Phi\left(\Sigma(a,b)\right)=\Sigma'(\varphi_A(a),\varphi_B(b))+_{B'}\left(0^{D'}_{\varphi_A(a)}+_B\overline{\phi(a,b)}\right)
\end{equation}
for all $a\in\Gamma(A)$. 

Then by
\cite[Theorem 4.11]{DrJoOr15}, $\Phi$ is a VB-algebroid morphism if
and only if $\phi_A\colon A \to A'$ is a Lie algebroid morphism and
for any pair of linear splittings
\[ \Sigma\colon A\times_MB\to D, \qquad \Sigma'\colon A'\times_MB'\to D',
\]
the triple  $(\varphi_B,\varphi_C, \phi)$ is a morphism
\[ \mathcal D\to \varphi_A^*\mathcal D'
  \]
  of the $2$-representations $\mathcal D$ of $A$ and $\mathcal D'$ of
  $A'$ defined by $\Sigma$ and the VB-algebroid $(D\to B, A\to M)$, and by
  $\Sigma'$ and the  VB-algebroid $(D'\to B', A'\to M)$.

  \medskip
  
  Assume finally that $(D, A, B, M)$ and $(D', A', B', M)$ are two
  double Lie algebroids and consider a morphism $\Phi\colon D\to D'$
  as above of double vector bundles. Choose
linear splittings
\[ \Sigma\colon A\times_MB\to D, \qquad \Sigma\colon A'\times_MB'\to
  D',
\] of $D$ and $D'$, respectively, and consider the form
$\phi\in\Gamma(A^*\otimes B^*\otimes C')$ defined as in
\eqref{eq_def_phi_interchange} by $\Phi$ and the linear
splittings. Let $\mathcal D_A$ and $\mathcal D_A'$
be the $2$-representations of $A$ and  of
  $A'$ defined by $\Sigma$ and the VB-algebroid $(D\to B, A\to M)$, and by
  $\Sigma'$ and the  VB-algebroid $(D'\to B', A'\to M)$, respectively.
  Similarly, Let $\mathcal D_B$ and $\mathcal D_B'$
be the $2$-representations of $B$ and  of
  $B'$ defined by $\Sigma$ and the VB-algebroid $(D\to A, B\to M)$, and by
  $\Sigma'$ and the  VB-algebroid $(D'\to A', B'\to M)$, respectively.
 
Then $\Phi$ is a double Lie algebroid morphism if and only if
$(\phi_B,\phi_C,\phi)$ defines a morphism of $2$-representations
\begin{equation}\label{dla_mor_1}
  \mathcal D_A\to \varphi_A^*\mathcal D'_A,
\end{equation}
and
$(\phi_A,\phi_C,\phi)$ defines a morphism of $2$-representations
\begin{equation}\label{dla_mor_2}
  \mathcal D_B\to \varphi_B^*\mathcal D'_B.
\end{equation}

The following proposition is standard, but can now be proved easily
using \eqref{bracket_on_C} and \eqref{comp2}.

\begin{prop}\label{mor_dla_induces_mor_C}
  Let $(D, A, B, M)$ and $(D', A', B', M)$ be two double Lie
  algebroids and consider a morphism $\Phi\colon D\to D'$ of double
  Lie algebroids. Then the core morphism $\phi_C\colon C\to C'$ is a morphism of Lie algebroids.
\end{prop}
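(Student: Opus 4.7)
The plan is to verify the two defining properties of a Lie algebroid morphism separately: compatibility with the anchors, then with the brackets. Both follow from combining the explicit bracket formula \eqref{bracket_on_C} for $C$ with the morphism-of-$2$-representations conditions \eqref{comp1}--\eqref{comp2} applied to the two statements \eqref{dla_mor_1} and \eqref{dla_mor_2}.

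First I would record what the VB-algebroid part of the statement already gives. Since $\Phi$ is a morphism of double Lie algebroids, it is in particular a morphism of both underlying VB-algebroids, so \eqref{comp1} applied to \eqref{dla_mor_1} and to \eqref{dla_mor_2} yields
\[
\phi_B\circ\partial_B=\partial_{B'}\circ\phi_C\quad\text{and}\quad\phi_A\circ\partial_A=\partial_{A'}\circ\phi_C.
\]
Using $\rho_C=\rho_A\circ\partial_A$ (and the analogous formula for $C'$) together with the fact that $\phi_A$ is a Lie algebroid morphism (an immediate consequence of it being a VB-algebroid morphism), the anchor identity
\[
\rho_{C'}\circ\phi_C=\rho_{A'}\circ\partial_{A'}\circ\phi_C=\rho_{A'}\circ\phi_A\circ\partial_A=\rho_A\circ\partial_A=\rho_C
\]
follows.

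For the brackets I would choose linear splittings $\Sigma,\Sigma'$ of $D,D'$ and the corresponding form $\phi$ of \eqref{eq_def_phi_interchange}. Starting from \eqref{bracket_on_C},
\[
\phi_C[c_1,c_2]=\phi_C\bigl(\nabla^{AC}_{\partial_Ac_1}c_2\bigr)-\phi_C\bigl(\nabla^{BC}_{\partial_Bc_2}c_1\bigr).
\]
Unpacking \eqref{comp2} for the morphism \eqref{dla_mor_1} of $2$-representations of $A$ gives an identity of the form
\[
\phi_C(\nabla^{AC}_ac)=\nabla^{A'C'}_{\phi_A(a)}\phi_C(c)-\phi(a,\partial_Bc),
\]
while \eqref{comp2} for \eqref{dla_mor_2} (using the same underlying splitting form $\phi$, viewed now as an element contracting against $B$) gives
\[
\phi_C(\nabla^{BC}_bc)=\nabla^{B'C'}_{\phi_B(b)}\phi_C(c)-\phi(\partial_Ac,b).
\]
Substituting these into the expression for $\phi_C[c_1,c_2]$, the two $\phi$--correction terms are both $\phi(\partial_Ac_1,\partial_Bc_2)$ and cancel. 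Using then the anchor identities from the first step, what remains is exactly $\nabla^{A'C'}_{\partial_{A'}\phi_Cc_1}\phi_Cc_2-\nabla^{B'C'}_{\partial_{B'}\phi_Cc_2}\phi_Cc_1$, which equals $[\phi_Cc_1,\phi_Cc_2]$ by \eqref{bracket_on_C} applied in $(D',A',B',M)$.

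The main obstacle is really just bookkeeping: to make sure that the single form $\phi\in\Gamma(A^*\otimes B^*\otimes C')$ produced by the splittings enters the two morphism-of-$2$-representation conditions \eqref{dla_mor_1} and \eqref{dla_mor_2} with the signs and the argument order that makes the two correction terms equal. Once this is set up consistently, the cancellation of the $\phi$--terms is a direct consequence of the $(A,B)$--symmetry of the bracket formula \eqref{bracket_on_C}, and the rest of the argument is formal. Note also that the resulting Lie algebroid structure on $C$ is splitting-independent (see the remark after \eqref{bracket_on_C}), so the conclusion does not depend on the chosen splittings $\Sigma$, $\Sigma'$.
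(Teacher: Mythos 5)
Your proposal is correct and follows essentially the same route as the paper's own proof: the anchor identity via $\rho_C=\rho_A\circ\partial_A$ and \eqref{comp1}, and the bracket compatibility by applying \eqref{comp2} to both $2$-representation morphisms so that the two correction terms $\phi(\partial_Ac_1,\partial_Bc_2)$ cancel, leaving exactly \eqref{bracket_on_C} in $(D',A',B',M)$. The bookkeeping point you flag is handled in the paper exactly as you anticipate.
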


\begin{proof}
  Let $\phi_A\colon A\to A'$ and $\phi_B\colon B\to B'$ be the side
  morphisms of $\Phi$. By the considerations above, they are Lie
  algebroid morphisms. Choose
linear splittings
\[ \Sigma\colon A\times_MB\to D, \qquad \Sigma'\colon A'\times_MB'\to
  D',
\] of $D$ and $D'$, respectively, and consider the form
$\phi\in\Gamma(A^*\otimes B^*\otimes C')$ defined as in
\eqref{eq_def_phi_interchange} by $\Phi$ and the linear
splittings. Consider as above the four $2$-representations\footnote{For the sake of
  simplicity, all the eight connections are denoted by $\nabla$. It is
   clear from the indexes which connection is meant.} defined by
the VB-algebroid and the splittings.
Then for $c_1,c_2\in\Gamma(C)$:
\begin{equation*}
  \begin{split}
    \phi_C[c_1,c_2]&=\phi_C\left(\nabla_{\da_Ac_1}c_2-\nabla_{\da_Bc_2}c_1\right)\\
    &=(\phi_A^*\nabla)_{\da_Ac_1}(\phi_Cc_2)+\omega(\da_Ac_1,\da_Bc_2)-(\phi_B^*\nabla)_{\da_Bc_2}(\phi_Cc_1)-\omega(\da_Ac_1,\da_Bc_2)\\
    &=\nabla_{\phi_A\da_Ac_1}(\phi_Cc_2)-\nabla_{\phi_B\da_Bc_2}(\phi_Cc_1)\\
    &=\nabla_{\da_{A'}\phi_Cc_1}(\phi_Cc_2)-\nabla_{\da_{B'}\phi_Cc_2}(\phi_Cc_1)=[\phi_Cc_1,\phi_Cc_2].
     \end{split}
  \end{equation*}
  The anchor condition is checked as follows using \eqref{comp1}
  \[ \rho_{C'}\circ\phi_C= \rho_{A'}\circ\da_{A'}\circ \phi_C=\rho_{A'}\circ\phi_A\circ\da_A=\rho_A\circ\da_A=\rho_C. \qedhere
    \]
  \end{proof}

  \section{Transitive double Lie algebroids and transitive core diagrams}\label{func_trdLA_codiag}
  This section discusses \emph{transitive double Lie algebroids}, and
  establishes a functor from the category of transitive double Lie
  algebroids, to the category of transitive core diagrams.
  
Recall from \cite[4.4.1]{Mackenzie05} that a morphism of Lie algebroids $\phi\co A'\to A$
over a surjective submersion $f\co M'\to M$ is a \textbf{fibration of
Lie algebroids} if and only if the associated map $a'\mapsto (q_{A'}a',\phi(a'))$
into the vector bundle pullback $f^!A\to M'$ is a surjection.

\begin{prop}
Let $(D;A,B;M)$ be a double Lie algebroid with core $C$. 
Assume that the anchors $\rho_A\colon A\to TM$ and $\rho_B\colon B\to TM$
are both surjective.

{\rm (i)} The morphism $\da_A\co C\to A$ is surjective if and only if
the anchor $\Theta_A\co D\to TA$ is a fibration of Lie algebroids.   

{\rm (ii)} The morphism $\da_B\co C\to B$ is surjective if and only if
the anchor $\Theta_B\co D\to TB$ is a fibration of Lie algebroids.   
\end{prop}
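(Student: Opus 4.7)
The plan is to characterize the fibration condition via the fiberwise behavior of $\Theta_A$ as a morphism of double vector bundles. Recall that $\Theta_A\colon D\to TA$ is a morphism of DVBs $(D;A,B;M)\to(TA;A,TM;M)$ covering $\mathrm{id}_A$ on the $A$-side and $\rho_B\colon B\to TM$ on the $B$-side, with core morphism $\partial_A\colon C\to A$. Viewing $\Theta_A$ as a morphism of the Lie algebroids $D\to B$ and $TA\to TM$ over the surjective submersion $\rho_B$, the fibration condition reduces to the surjectivity of the associated map $(\pi_B,\Theta_A)\colon D\to B\times_{TM}TA$.

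The central computation is as follows: for each $(a,b)\in A\times_M B$ with $m=q_A(a)$, the restriction
\[\Theta_A\colon (\pi_A,\pi_B)^{-1}(a,b)\longrightarrow (p_A,Tq_A)^{-1}(a,\rho_B(b))\]
is an affine map between an affine space over $C_m$ and an affine space over $A_m$, whose associated linear map is $\partial_A|_{C_m}$. This is verified by the identity $\Theta_A(d+_B c^\dagger_B(b))=\Theta_A(d)+_{TM}\Theta_A(c^\dagger_B(b))$, combined with the computation $\Theta_A(c^\dagger_B(b))=T0^A(\rho_B(b))+_A\overline{\partial_A c}$ obtained from the DVB-morphism property of $\Theta_A$ and the core formula $\Theta_A(\overline c)=\overline{\partial_A c}$; together these show that translating $d$ by the core element $c\in C_m$ shifts $\Theta_A(d)$ by the vertical lift of $\partial_A c$ at $a$.

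Granted this, both directions of (i) follow quickly. If $\partial_A$ is surjective, then each fiberwise affine map is surjective, hence so is $(\pi_B,\Theta_A)$, and $\Theta_A$ is a fibration of Lie algebroids. Conversely, if $\Theta_A$ is a fibration, then for any $a_0\in A_m$ the element $(0^B_m,\overline{a_0})\in B\times_{TM}TA$ (which lies in the pullback as $Tq_A(\overline{a_0})=0=\rho_B(0^B_m)$) admits a preimage $d\in D$; the condition $\Theta_A(d)=\overline{a_0}\in T_{0^A_m}A$ forces $\pi_A(d)=0^A_m$, and together with $\pi_B(d)=0^B_m$ this places $d$ in the core of $D$, $d=\overline c$, whence $\overline{\partial_A c}=\Theta_A(\overline c)=\overline{a_0}$ yields $\partial_A c=a_0$. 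Part (ii) is obtained by swapping the roles of $A$ and $B$ throughout. The main obstacle is the central computation: the bookkeeping of the mixed additions $+_A,+_B,+_{TM}$ across $D$, $TA$, and $c^\dagger_B(b)$ requires care, but once the affine-ness of the fiberwise restriction over $\partial_A$ is in hand, everything else reduces to affine linear algebra.
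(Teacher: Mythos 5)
Your proof is correct and follows essentially the same route as the paper's: both reduce the fibration condition to surjectivity of $(\pi_B,\Theta_A)\colon D\to B\times_{TM}TA$ and then exploit that $\Theta_A$ is a morphism of double vector bundles with core morphism $\da_A$, so that translating a preimage $d'$ by a core element $c$ translates $\Theta_A(d')$ by the core element $\da_A(c)$ of $TA$. Your packaging of this as a fiberwise affine map with linear part $\da_A|_{C_m}$, and your explicit treatment of the converse (which the paper dismisses as straightforward), are just a slightly more systematic presentation of the same argument.
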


\pf In order to prove (i), assume first that $\da_A$ is
surjective. Take $b\in B$ and $\xi\in TA$, projecting to the same
point $v\in TM$: $Tq_A\xi=\rho_B(b)$. Write $a$ for the projection of
$\xi$ to $A$. Take any $d'\in D$ which projects to $a$ and $b$ and
write $\xi' = \Theta_A(d') \in TA$. Then $\xi'$ and $\xi$ project to
the same elements of $A$ and $TM$ and therefore there is a core
element $\da_A(c)\in A$ such that
$$
\xi = \xi' +_A ( 0^{TA}_a +_{TM}\overline{\da_A(c)}).
$$
Now define $d := d' +_A ( 0_a +_B\overline{c})$. Clearly $\Theta_A(d) = \xi$. This
shows that $\Theta_A$ is a fibration. The converse is straightforward since $\da_A$ is the core morphism of $\Theta_A$.
\epf

Although the considerations above are the authors' original intuition
for transitive double Lie algebroids, the definition below drops the
surjectivity of the anchors of $A$ and $B$ because they are not needed
for the main results of this paper. Hence, a transitive double Lie
algebroid $(D;A,B;M)$ as in the following definition has two
surjective anchors $D\to TA$ and $D\to TB$ if and only if the two base
anchors $\rho_A\colon A\to TM$ and $\rho_B\colon B\to TM$ are
surjective.
\begin{df}
  A double Lie algebroid $(D;A,B;M)$ is \textbf{transitive} if its two
  core-anchors $\da_A$ and $\da_B$ are surjective.
\end{df}

\begin{df}[\cite{Androulidakis05}]
\label{df:xm}
A \textbf{crossed module of Lie algebroids} $(A,C,\da, \rho)$ comprises a Lie algebroid 
$A$ on base $M$, 
a totally intransitive Lie algebroid $C$ on the same base, a representation
$\nabla\co A\to \Der(C)$, and a morphism $\da\co C\to A$ of Lie algebroids over $M$, such that
\begin{subequations}
\begin{gather}
\nabla_a[c_1,c_2]=[\nabla_ac_1, c_2]+[c_1, \nabla_ac_2],\label{old_2a}\\
\nabla_{\da(c_1)}c_2=[c_1, c_2],\label{old_2b}\\
\da(\nabla_ac)=[a,\da(c)].\label{old_2c}
\end{gather}
\end{subequations}
for all $c,c_1,c_2\in\Gamma(C)$ and $a\in\Gamma(A)$. 
\end{df}

Here $\Der(C)$ denotes the Lie algebroid of derivations of the vector
bundle $C$: sections of $\Der(C)$ are $\mathbb R$-linear maps
$D\colon\Gamma(C)\to\Gamma(C)$ for which there is a vector field
$X\in\mx(M)$ such that $D(fc) = fD(c) + X(f)c$ for all $f\in\sfn{M}$
and $c\in\Gamma(C)$. For $C$ a totally intransitive Lie algebroid,
$\Derb(C)$ is the Lie subalgebroid of $\Der(C)$ the sections of 
which are also derivations of the bracket,
\[D[c_1, c_2] = [D(c_1), c_2] + [c_1, D(c_2)]\] for
$D\in\Gamma(\Derb(C))$ and $c_1, c_2\in\Ga(C)$. That is, $r$ in
Definition \ref{df:xm} is a morphism of Lie algebroids
$A\to\Derb(C)$. For clarity, sections of $\Derb(C)$ are called
\textbf{Lie derivations of $C$}.

\medskip Consider now a transitive double Lie algebroid $(D;A,B;M)$
with core $C$.  Write $C^A = \ker(\da_B)$ and $C^B =
\ker(\da_A)$. Then $C^A$ is a smooth subbundle of $B$ and $C^B$ is a
subbundle of $A$.

\begin{prop}
\label{prop:xm}
Let $(D;A,B;M)$ be a transitive double Lie algebroid with core $C$.
For $a\in\Gamma(A)$ and $\gamma\in\Gamma(C^A)$ set
$\nabla^A_a\gamma = [c,\gamma]$ where $c\in\Ga(C)$ is such that
$\da_A(c) = a$.  Then $\nabla^A_a\gamma$ is well-defined and, together
with the restriction of $\da_A$ to $C^A$, gives
$(A,C^A,\da_A,\nabla^A)$ the structure of a crossed module of Lie
algebroids.
\end{prop}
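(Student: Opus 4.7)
The plan is to reduce everything to three facts about the core Lie algebroid $C\to M$: (i) Proposition \ref{das_morphisms}, which says $\da_A\co C\to A$ and $\da_B\co C\to B$ are morphisms of Lie algebroids; (ii) Proposition \ref{prop:kercomm}, which gives $[\Ga(\ker\da_A),\Ga(\ker\da_B)]=0$; and (iii) the identity $\rho_C=\rho_A\circ\da_A=\rho_B\circ\da_B$, which forces the anchor of $C$ to vanish on both $C^A=\ker\da_B$ and $C^B=\ker\da_A$. In particular $C^A$ is a totally intransitive Lie subalgebroid of $C$, and since $\da_A$ is a surjective vector-bundle morphism with subbundle kernel $C^B$, every $a\in\Ga(A)$ admits a lift $c\in\Ga(C)$ with $\da_A(c)=a$ (by a partition-of-unity argument on local splittings of $\da_A$).

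First I would check well-definedness of $\nabla^A_a\gamma:=[c,\gamma]$. If $c,c'\in\Ga(C)$ both lift $a$, then $c-c'\in\Ga(\ker\da_A)=\Ga(C^B)$, so by (ii), $[c-c',\gamma]=0$ for every $\gamma\in\Ga(C^A)$. Next, $\nabla^A_a\gamma$ lies in $\Ga(C^A)$: by (i), $\da_B[c,\gamma]=[\da_Bc,\da_B\gamma]=0$. The $C^\infty(M)$-linearity in $a$ uses (iii): from $[fc,\gamma]=f[c,\gamma]-\rho_C(\gamma)(f)\,c$ and $\rho_C(\gamma)=\rho_B\da_B(\gamma)=0$, one gets $\nabla^A_{fa}\gamma=f\nabla^A_a\gamma$. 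The Leibniz rule $\nabla^A_a(f\gamma)=f\nabla^A_a\gamma+(\rho_A(a)f)\gamma$ follows from $[c,f\gamma]=f[c,\gamma]+(\rho_C(c)f)\gamma$ together with $\rho_C(c)=\rho_A(a)$.

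For the crossed-module axioms, \eqref{old_2a} is the Jacobi identity $[c,[\gamma_1,\gamma_2]]=[[c,\gamma_1],\gamma_2]+[\gamma_1,[c,\gamma_2]]$, which also shows that each $\nabla^A_a$ is a Lie derivation of the bracket on $C^A$. For \eqref{old_2b}, take $\gamma_1$ itself as the lift of $\da_A(\gamma_1)$, yielding $\nabla^A_{\da_A(\gamma_1)}\gamma_2=[\gamma_1,\gamma_2]$. For \eqref{old_2c}, apply (i): $\da_A[c,\gamma]=[\da_Ac,\da_A\gamma]=[a,\da_A\gamma]$. Flatness of $\nabla^A$, implicit in the phrase "a representation $\nabla\colon A\to\Derb(C^A)$", follows from one more use of Jacobi: if $c_i$ lifts $a_i$, then $[c_1,c_2]$ lifts $[a_1,a_2]$ by (i), and
\[
\nabla^A_{[a_1,a_2]}\gamma=[[c_1,c_2],\gamma]=[c_1,[c_2,\gamma]]-[c_2,[c_1,\gamma]]=\nabla^A_{a_1}\nabla^A_{a_2}\gamma-\nabla^A_{a_2}\nabla^A_{a_1}\gamma.
\]

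There is no genuine obstacle: connection-ness, flatness and the three crossed-module identities all drop out of the Jacobi identity in $C$ combined with (i) and (ii). The only point requiring care is the well-definedness of $\nabla^A_a\gamma$, and this is precisely where the commutation of $\ker\da_A$ with $\ker\da_B$ from Proposition \ref{prop:kercomm} is indispensable; without it, the formula $[c,\gamma]$ would depend on the choice of lift $c$.
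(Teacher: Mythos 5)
Your proof is correct and follows essentially the same route as the paper's: well-definedness from the commuting kernels (Proposition \ref{prop:kercomm}), the value landing in $\Gamma(C^A)$ because $\da_B$ is a Lie algebroid morphism, the connection and Leibniz properties from $\rho_C=\rho_A\circ\da_A=\rho_B\circ\da_B$, and the three crossed-module identities plus flatness from the Jacobi identity together with $\da_A$ being a morphism. The only (harmless) addition is your explicit remark on the existence of lifts of sections of $A$, which the paper leaves implicit.
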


\begin{proof}
  First, $\da_B[c,\gamma] = [\da_B c, 0]=0$ since $\da_B$ is a Lie algebroid morphism by Proposition \ref{das_morphisms}.
  Therefore, $[c,\gamma]$ is a section of $C^A$.
If $c,c'$ are two sections of $C$ satisfying $\da_A(c)=\da_A(c')=a$, 
then $c-c'\in\ker(\da_A)=C^B$ and so, by Proposition \ref{prop:kercomm}, 
$[c-c',\gamma]=0$ for all $\gamma\in C^A=\ker(\da_B)$.

The Lie algebroid $C^A$ is totally intransitive as the kernel of the
morphism $\da_B$ of Lie algebroids. Then
$\nabla^A_a(f\gamma)=[c,f\gamma]=f[c,\gamma]+\rho_C(c)(f)\gamma=f
\nabla^A_a(c)+\rho_A(a)(f)\gamma$, since
$\rho_C=\rho_A\circ\partial_A$.  Likewise,
$\nabla^A_{fa}(\gamma) = [fc,\gamma]=f \nabla^A_a(\gamma)$ because
$C^A$ is totally intransitive. Hence,
$\nabla^A\colon\Gamma(A)\times\Gamma(C^A)\to\Gamma(C^A)$ is a linear
connection.  \eqref{old_2b} and \eqref{old_2c} are immediate by
the definition of $\nabla^A$ and the fact that $\da_A\colon C\to A$ is a
Lie algebroid morphism.

For the flatness of $\nabla^A$, take $a_1,a_2\in\Gamma(A)$ and
$\gamma\in\Gamma(C^A)$, as well as $c_1,c_2\in\Gamma(C)$ with
$\da_A(c_1)=a_1$ and $\da_A(c_2)=a_2$.
Then $\da_A[c_1,c_2]=[a_1,a_2]$ since $\da_A$ is a morphism of Lie algebroids, and
\begin{align*}
  \begin{split}
    \nabla^A_{a_1}\nabla^A_{a_2}\gamma-\nabla^A_{a_2}\nabla^A_{a_1}\gamma-\nabla^A_{[a_1,a_2]}\gamma
    =[c_1,[c_2,\gamma]]-[c_2,[c_1,\gamma]]-[[c_1,c_2],\gamma]=0.
  \end{split}
\end{align*}
Similarly,
\[\nabla_a^A[\gamma_1,\gamma_2]-[\nabla^A_a\gamma_1,\gamma_2]-[\gamma_1,\nabla^A_a\gamma_2]=
  [c,[\gamma_1,\gamma_2]]-[[c,\gamma_1],\gamma_2]-[\gamma_1,[c,\gamma_2]]=0
\]
for $a\in\Gamma(A)$, $c\in\Gamma(C)$ with $\da_A(c)=a$ and
$\gamma_1,\gamma_2\in \Gamma(C^A)$, shows the equality \eqref{old_2a}.
\end{proof}

Let $(D;A,B;M)$ be a transitive double Lie algebroid with core $C$.
The core $C$ and the two crossed modules $(A,C^A,\da_A,\nabla^A)$ and
$(B,C^B,\da_B,\nabla^B)$ constructed in Proposition \ref{prop:xm} are
shown in Figure~\ref{fig:cd}, picturing the \emph{transitive core
  diagram of the transitive double Lie algebroid $(D;A,B;M)$}. The
remainder of this paper shows that a transitive double Lie algebroid
can be reconstructed from its (transitive) core diagram.
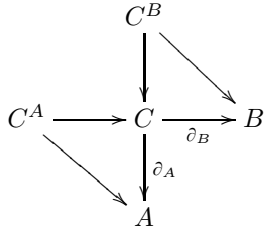
\begin{figure}[h]
$$ 
\xymatrix{
&C^B\ar[d]\ar[dr]&\\
C^A\ar[r]\ar[dr]&C\ar[r]_{\da_B}\ar[d]^{\da_A}&B\\
&A&
}
$$ 
\caption{The transitive core diagram of a transitive double Lie
  algebroid.\label{fig:cd}}
\end{figure}

\begin{df}\label{df:cd}
  Let $A$ and $B$ be Lie algebroids over a same base $M$. A
  \textbf{core diagram for $A$ and $B$} is a Lie algebroid $C$ on $M$
  together with  morphisms $\da_A\co C\to A$ and
  $\da_B\co C\to B$ of Lie algebroids such that the kernels
  $C^A:= \ker(\da_B)$ and $C^B:=\ker(\da_A)$ commute in $C$.  A
  \textbf{core diagram for $A$ and $B$} is \textbf{transitive} if the
  morphisms $\da_A$ and $\da_B$ are surjective.
\end{df}

In the transitive case the proof of Proposition \ref{prop:xm} applies,
and there are representations
\begin{equation}
\label{eq:cdxm}
\nabla^A\co A\to\Der(C^A),\quad \nabla^B\co B\to\Der(C^B),   
\end{equation}
defined as in \ref{prop:xm}, which together with the restrictions of
$\da_A$ and $\da_B$, respectively, form crossed modules as in
Figure~\ref{fig:cd}.

This paper shows that every transitive core diagram is the core
diagram of a transitive double Lie algebroid.  The construction
follows the general outline of the analogous results for double Lie
groupoids and \LAgpds \cite{BrMa92,Mackenzie92}: first, Section
\ref{sec:comma} defines a `large' double Lie algebroid in which the
two structures are an action Lie algebroid and a pullback, and then a
quotient is taken in Section \ref{sec:quotient}.

\bigskip Let now $(D, A, B, M)$ and $(D', A', B', M)$ be two
transitive double Lie algebroids over a common double base
$M$. Consider a morphism $\Phi\colon D\to D'$ of double Lie
algebroids, with side morphisms $\phi_A\colon A\to A'$ and
$\phi_B\colon B\to B'$, and with core morphism $\phi_C\colon C\to
C'$. Then these two side morphisms and the core morphism are morphisms
of Lie algebroids over $M$ and, using \eqref{comp1} and Proposition
\ref{mor_dla_induces_mor_C}, the triple $(\phi_A, \phi_B, \phi_C)$ is
a morphism of the core diagrams of $D$ and $D'$, as in the following
definition.

\begin{df}\label{df:mcd}
  Let $A,A',B,B'$ be Lie algebroids over a base $M$ and consider core
  diagrams $(C,\da_A,\da_B)$ and $(C', \da_{A'}, \da_{B'})$ for $A$
  and $B$ and for $A'$ and $B'$, respectively.  A morphism
  \[\Phi\colon (C,\da_A,\da_B)\to (C', \da_{A'}, \da_{B'})\] of core
  diagrams is a triple of Lie algebroid morphisms
  $\phi_A\colon A\to A'$, $\phi_B\colon B\to B'$ and
  $\phi_C\colon C\to C'$ such that $\da_{A'}\circ\phi_C=\phi_A\circ\da_A$
  and $\da_{B'}\circ\phi_C=\phi_B\circ \da_B$.
\end{df}

\begin{figure}[h]
$$ 
\xymatrix{
  & C^B\ar[rrr]^{\phi_C\an{C^B}}\ar[dd] &&& C^{B'}\ar[dd]& \\
  C^A\ar[rrr]^{\phi_C\an{C^A}}\ar[rd]& &&C^{A'}\ar[rd]&&\\
  &C\ar[dd]_{\da_A}\ar[rd]^{\da_B}\ar[rrr]^{\phi_C}&&&C'\ar[dd]\ar[rd]^{\da_{B'}}&\\
  &&B\ar[rrr]^{\phi_B}&&&B'\\
  &A\ar[rrr]^{\phi_A}&&&A'&
}
$$ 
\caption{A morphism of core diagrams.\label{fig:mcd}}
\end{figure}
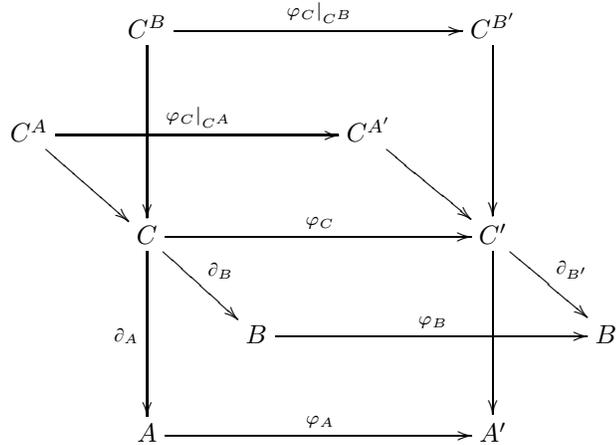

Given a morphism
$\Phi\colon (C,\da_A,\da_B)\to (C', \da_{A'}, \da_{B'})$ of transitive
core diagrams as in Figure \ref{fig:mcd}, the morphism
$\phi_C\colon C\to C'$ restricts to morphisms $C^A\to C^{A'}$ and
$C^B\to C^{B'}$ of Lie algebroids. Therefore, all squares in
\ref{fig:mcd} are commutative squares of Lie algebroid morphisms over
$M$.  A computation yields as well for $a\in\Gamma(A)$,
$c\in\Gamma(C)$ such that $\da_Ac=a$ and $\gamma\in\Gamma(C^A)$:
\begin{equation*}
  \begin{split}
    \phi_C(\nabla^A_a\gamma)=\phi_C[c,\gamma]=[\phi_C(c),\phi_C(\gamma)]=\nabla^{A'}_{\da_{A'}\phi_C(c)}(\phi_C(\gamma))=\nabla^{A'}_{\phi_A(a)}(\phi_C(\gamma))
    =(\phi_A^!\nabla^{A'})_a(\phi_C(\gamma)).
  \end{split}
  \end{equation*}

\section{The comma double Lie algebroid}\label{sec:comma}

This section constructs the \textbf{comma double Lie algebroid} defined
by a morphism of Lie algebroids. It constructs more precisely a
functor from morphisms of Lie algebroids over a fixed base, to double
Lie algebroids over the fixed double base.
\subsection{Action of $TC\to TM$ on $A\to TM$}\label{sec:action}

Let $C$ and $A$ be Lie algebroids over a manifold $M$ and let
$\da\co C\to A$ be a morphism of Lie algebroids over $M$. This section
and the next construct a double Lie algebroid $\ldbl$ from this
data, 
the \emph{comma double Lie algebroid defined by $\da\co C\to A$}. The
construction follows the one of the \emph{comma double groupoids} in
\cite[2.5]{Mackenzie92} and \cite{BrMa92}.

First an action of the Lie algebroid $TC\to TM$ is defined on the
anchor $\rho_A\co A\to TM$.  Recall that the Lie algebroid structure
on $TC\to TM$ may be defined as follows, using tangent sections $Tc$ and core
sections $c^\dagger$, for $c\in\Ga(C)$. For sections
$c_1, c_2\in\Ga(C)$, the bracket on $\tmtc$ is given by
\begin{equation}
[Tc_1, Tc_2] = T[c_1, c_2],\quad
[Tc_1, c_2^\dagger] = [c_1, c_2]^\dagger,\quad
[c_1^\dagger, c_2^\dagger] = 0. 
\end{equation}The anchor of $TC\to TM$ is
$\Theta:= \sigma_M\circ T\rho_C$ where $\sigma_M\co T^2M\to T^2M$ is the
canonical involution (denoted $J_M$ in \cite{Mackenzie05}).

An action of $TC\to TM$ on $\rho_A\co A\to TM$ is a map $\Phi\co\Gamma_{TM}(TC)\to \mx(A)$ 
such that 
\begin{subequations}
\begin{gather}
\Phi[\xi_1, \xi_2] = [\Phi(\xi_1), \Phi(\xi_2)],\label{eq:actproj_a}\\
\Phi(\xi_1+\xi_2) = \Phi(\xi_1) + \Phi(\xi_2),\label{eq:actproj_b}\\
\Phi(F\xi) = (\rho_A^*F)\Phi(\xi),\label{eq:actproj_c}\\
\ldr{\Phi(\xi)}(\rho_A^*F) = \rho_A^*(\ldr{\Theta(\xi)}F), \label{eq:actproj}
\end{gather}
\end{subequations}
for $\xi,\xi_1,\xi_2\in\tmtc$ and $F\in\sfn{TM}$ \cite[4.1.1]{Mackenzie05}.

\medskip

\begin{prop}
  Let $C$ and $A$ be Lie algebroids over a manifold $M$ and let
  $\da\co C\to A$ be a morphism of Lie algebroids over $M$.
  For $c\in\Ga(C)$, set
 \begin{equation}
 \label{eq:Phi}
 \Phi(Tc) := \Hat{[\partial c,\cdot]}\in \mx^l(A),\qquad  \text{ and } \qquad   
 \Phi(c^\dagger):=(\partial c)^\uparrow\in\mx^c(A). 
 \end{equation}
 Then \eqref{eq:Phi} extends to a unique action $\Phi$  of
 $TC\to TM$ on $\rho_A\colon A\to TM$.
\end{prop}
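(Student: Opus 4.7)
The plan is to define $\Phi$ on the generating families $\{Tc\mid c\in\Gamma(C)\}$ and $\{c^\dagger\mid c\in\Gamma(C)\}$ by \eqref{eq:Phi}, extend it $\sfn{TM}$-linearly as dictated by \eqref{eq:actproj_c}, and then verify the bracket axiom \eqref{eq:actproj_a} and the anchor compatibility \eqref{eq:actproj} on those generators. Uniqueness is automatic: locally, a frame $c_1,\dots,c_r$ of $C$ on $U\subseteq M$ produces a frame $\{Tc_i, c_i^\dagger\}$ of $\tmtc$ over $p_M^{-1}(U)$ as a $\sfn{p_M^{-1}(U)}$-module, so any action extending \eqref{eq:Phi} is completely determined by \eqref{eq:actproj_b} and \eqref{eq:actproj_c}. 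Existence then reduces to checking the axioms on generators and propagating them by additivity and $\sfn{TM}$-scaling.

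The first step is to check \eqref{eq:actproj_a} on generators by computing the $\Phi$-images of the three structure brackets of $TC$, using \eqref{Lie_bracket_over_VB} and the standard fact that $D\mapsto\Hat{D}$ intertwines commutators of derivations of $\Gamma(A)$ with Lie brackets of linear vector fields on $A$:
\begin{align*}
\bigl[\Hat{[\partial c_1,\cdot]},\Hat{[\partial c_2,\cdot]}\bigr] &= \Hat{[[\partial c_1,\partial c_2],\cdot]} = \Hat{[\partial[c_1,c_2],\cdot]}, \\
\bigl[\Hat{[\partial c_1,\cdot]},(\partial c_2)^\uparrow\bigr] &= \bigl([\partial c_1,\partial c_2]\bigr)^\uparrow = (\partial[c_1,c_2])^\uparrow, \\
\bigl[(\partial c_1)^\uparrow,(\partial c_2)^\uparrow\bigr] &= 0.
\end{align*}
The crucial inputs are the Jacobi identity in $A$ and the Lie algebroid morphism property $\partial[c_1,c_2]=[\partial c_1,\partial c_2]$; the three right-hand sides are exactly $\Phi(T[c_1,c_2])$, $\Phi([c_1,c_2]^\dagger)$ and $\Phi(0)$, matching the three defining brackets on $TC$.

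Next, I would show the anchor axiom on generators, i.e.~that $\Phi(\xi)$ is $\rho_A$-related to $\Theta(\xi)$ for $\xi=Tc$ and $\xi=c^\dagger$. The anchor $\Theta$ of $TC\to TM$ satisfies $\Theta(Tc)=\Hat{[\rho_Cc,\cdot]}$ and $\Theta(c^\dagger)=(\rho_Cc)^\uparrow$. Since $\partial$ is a Lie algebroid morphism one has $\rho_A\circ\partial=\rho_C$; thus $\rho_A\bigl([\partial c,b]\bigr)=[\rho_Cc,\rho_Ab]$ for every $b\in\Gamma(A)$, and because $\rho_A\colon A\to TM$ is itself a Lie algebroid morphism the linear vector field $\Phi(Tc)=\Hat{[\partial c,\cdot]}$ on $A$ is $\rho_A$-related to $\Hat{[\rho_Cc,\cdot]}$ on $TM$. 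For the core generator, $\Phi(c^\dagger)=(\partial c)^\uparrow$ is $\rho_A$-related to $(\rho_A\partial c)^\uparrow=(\rho_Cc)^\uparrow$ by fibrewise linearity of $\rho_A$. This gives \eqref{eq:actproj} on generators.

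Finally, both axioms propagate to arbitrary sections. The anchor axiom \eqref{eq:actproj} is stable under $\sfn{TM}$-scaling in $\xi$, since $\Theta(F\xi)=F\Theta(\xi)$ and $\Phi(F\xi)=(\rho_A^*F)\Phi(\xi)$; and, granted \eqref{eq:actproj} on each argument, the Leibniz expansions of $[F_1\xi_1,F_2\xi_2]$ in $\tmtc$ and of $[(\rho_A^*F_1)\Phi(\xi_1),(\rho_A^*F_2)\Phi(\xi_2)]$ in $\mx(A)$ agree coefficient by coefficient, so \eqref{eq:actproj_a} upgrades from generators to arbitrary sections. The main obstacle I anticipate is the first bracket identity above: it is the point where the Lie algebroid morphism hypothesis on $\partial$ is really needed, combined with the less-than-trivial fact that $D\mapsto\Hat{D}$ is a Lie algebra morphism; once this is settled, the remaining verifications are routine manipulations with \eqref{Lie_bracket_over_VB}.
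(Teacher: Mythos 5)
Your overall strategy --- define $\Phi$ on the generators $Tc$, $c^\dagger$ by \eqref{eq:Phi}, verify the bracket and anchor axioms there, and propagate by $\sfn{TM}$-bilinearity --- is the paper's strategy, and your generator-level computations are correct. But the step you dispose of with ``existence then reduces to checking the axioms on generators'' hides the one verification that does not come for free, and which the paper spends roughly half its proof on: well-definedness of the $\sfn{TM}$-linear extension. The generators $Tc$ and $c^\dagger$, for $c$ ranging over \emph{all} of $\Gamma(C)$, satisfy the module relations $T(fc)=\ell_{\dr f}\cdot c^\dagger+_{TM}p_M^*f\cdot Tc$ and $(fc)^\dagger=p_M^*f\cdot c^\dagger$ (see \eqref{eq:TfZ}), so before any axiom can be checked one must show that \eqref{eq:Phi} is compatible with these relations, i.e.\ that $\Hat{[\da(fc),\cdot]}=\ell_{\dr_Af}\cdot(\da c)^\uparrow+q_A^*f\cdot\Hat{[\da c,\cdot]}$ and $(\da(fc))^\uparrow=q_A^*f\cdot(\da c)^\uparrow$. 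The first of these is a genuine identity between linear and core vector fields on $A$ (it encodes $[f\,\da c,a]=f[\da c,a]-(\ldr{\rho_A(a)}f)\,\da c$ and is checked in the paper by evaluating on $\ell_\alpha$ and $q_A^*f$). Your local-frame observation only gives \emph{uniqueness}; if instead you construct $\Phi$ by choosing a frame $\{c_i\}$ and extending linearly, you do get a map, but your subsequent verifications ``on generators'' invoke formula \eqref{eq:Phi} for \emph{arbitrary} $c$, which for the constructed map is exactly the missing compatibility (and is also what makes different frames yield the same map). So the existence half of the proposition is not actually proved as written.

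The rest is sound. Your bracket computations on generators coincide with the paper's, resting as you say on the Jacobi identity in $A$, on $\da[c_1,c_2]=[\da c_1,\da c_2]$, and on $[\Hat{D_1},\Hat{D_2}]=\Hat{[D_1,D_2]}$. Your route to \eqref{eq:actproj} is different from, and arguably cleaner than, the paper's: rather than computing $\sigma_M(T(\rho_C(c))\rho_A(a_m))$ explicitly with flows, you identify $\Theta(Tc)$ with the linear vector field $\Hat{\ldr{\rho_C(c)}}$ on $TM$ and deduce $\rho_A$-relatedness of $\Hat{[\da c,\cdot]}$ and $\Hat{\ldr{\rho_C(c)}}$ from the fact that $\rho_A$ intertwines the two derivations (which uses both $\rho_A\circ\da=\rho_C$ and the bracket-preservation of $\rho_A$); this is valid via \eqref{explicit_hat_D} and avoids coordinates and flows, at the cost of quoting the standard facts $\sigma_M\circ T X=\Hat{\ldr{X}}$ and the relatedness criterion for linear vector fields. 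The propagation of both axioms from generators to arbitrary sections by the Leibniz expansions is the same (largely implicit) argument as in the paper.
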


Note that $\Hat{[\partial c,\cdot]}\in\mx^l(A)$ is linear over $\rho_A(\da c)=\rho_C(c)\in\mx(M)$.

 \begin{proof}
 An arbitrary section $\chi$ of $TC\to TM$
 can be written 
 \[
 \chi=\sum_{i=1}^kF_i\, Tc_i+\sum_{j=1}^nG_j{\gamma_j}^\dagger
 \]
 with $F_1,\ldots,F_k,G_1,\ldots,G_n\in C^\infty(TM)$ and 
 $c_1,\ldots,c_k, \gamma_1,\ldots,\gamma_n\in\Gamma(C)$. 
 Then set 
 \begin{equation}\label{def_phi}
   \begin{split}
     \Phi(\chi)&=\sum_{i=1}^k\rho_A^*F_i\,\Phi(Tc_i)+\sum_{j=1}^n \rho_A^*G_j\,\Phi({\gamma_j}^\dagger)
     =\sum_{i=1}^k\rho_A^*F_i\,\Hat{[\partial c_i,\cdot]}+\sum_{j=1}^n \rho_A^*G_j\,(\partial \gamma_j)^\uparrow.
     \end{split}
\end{equation}

\medskip
\noindent\underline{\textbf{\eqref{def_phi} defines a map $\Phi\colon \Gamma_{TM}(TC)\to\mx(A)$:}}

For $c\in\Gamma(C)$ and $f\in C^\infty(M)$ \begin{equation}
 \label{eq:TfZ}
 T(fc)=\ell_{\dr f}\cdot c^\dagger+_{TM} p_{M}^*f\cdot Tc,\qquad
 (fc)^\dagger=p_{M}^*f\cdot c^\dagger,
 \end{equation}
 where $+_{TM}$ and $\cdot$ are the addition and scalar
 multiplication in $TC\to TM$, and $p_M\colon TM\to M$ is the
 canonical projection.

 Hence the following identities need to hold:
 \begin{equation}\label{phi_welldef1}
 \Phi(T(fc))=\Phi(\ell_{\dr f}\cdot c^\dagger +_{TM} p_{M}^*f\cdot Tc)
 \end{equation}
 and 
 \begin{equation}\label{phi_welldef2}
 \Phi((fc)^\dagger)=\Phi(p_{M}^*f\cdot c^\dagger)
 \end{equation}
 for all $c\in\Gamma(C)$ and $f\in C^\infty(M)$. 
 For the first identity, compute
 \begin{align*}
   \Phi(\ell_{\dr f}\cdot c^\dagger +_{TM} p_{M}^*f\cdot Tc)
   &=\rho_A^*\ell_{\dr f}\cdot (\partial c)^\uparrow+\rho_A^*p_{M}^*f\cdot \Hat{[\partial c,\cdot]}=\ell_{\dr_Af}\cdot (\partial c)^\uparrow+q_A^*f\cdot \Hat{[\partial c,\cdot]}\\
   &=\Hat{[\partial (fc),\cdot]}=\Phi(T(fc)).
 \end{align*}
 Here $\dr_A$ is
 the de Rham like differential
 $\Omega^\bullet(A)\to\Omega^{\bullet +1}(A)$ defined by $A$, and
 $\rho_A^t\colon T^*M\to A^*$ is the pointwise transpose of $\rho_A$.
 The notation $\rho_A^t$ avoids confusions with the pullback
 $\rho_A^*\colon \Gamma\left(\bigwedge^\bullet
   T^*M\right)\to\Gamma\left(\bigwedge^\bullet T^*A\right)$.  It is
 easy to check that
 $\rho_A^*\ell_{\dr f}=\ell_{\rho_A^t\dr f}=\ell_{\dr_A f}$ for all
 $f\in C^\infty(M)$.  The second equality is then immediate with
 $q_A=p_{M}\circ \rho_A$.  The third equality can be checked easily on
 linear functions $\ell_\alpha$ for $\alpha\in\Gamma(A^*)$ and $q_A^*f$,
 for $f\in C^\infty(M)$.

 For \eqref{phi_welldef2}, compute
 \begin{equation*}
   \Phi(p_{M}^*f\cdot c^\dagger)=\rho_A^*p_M^*f \cdot (\da c)^\uparrow=q_A^* f\cdot (\da c)^\uparrow=(f\da c)^\uparrow=(\da(fc))^\uparrow=\Phi\left((fc)^\dagger\right).
   \end{equation*}

 \medskip

 \noindent\underline{\textbf{ $\Phi\co\Gamma_{TM}(TC)\to\mx(A)$ is an action:}}
 The identities \eqref{eq:actproj_b} and \eqref{eq:actproj_c} hold by
 construction.  By the construction in \eqref{def_phi} as well, it is
 sufficient to check \eqref{eq:actproj} on tangent and core sections
 $\xi$ of $TC\to TM$.  Consider first $\xi = Tc$ where
 $c\in \Gamma(C)$. Then 
\eqref{eq:actproj} reads 
 \[ 
 \ldr{\Hat{[\partial c,\cdot]}}(\rho_A^*F)=\rho_A^*(\ldr{(\sigma_{M}\circ T(\rho_C(c)))}F)
 \]
 for all $F\in C^\infty(TM)$.  Here again, it is enough to check this
 equality on pullbacks $p_{M}^*f$ for $f\in C^\infty(M)$ and linear
 functions $\ell_\theta$ for $\theta\in\Omega^1(M)$.

 For $a_m\in A$ and any $a\in\Gamma(A)$ such that $a(m)=a_m$, 
 \begin{align*}
 \ldr{\Hat{[\partial c,\cdot]}}(\rho_A^*p_{M}^*f)
 &=\ldr{\Hat{[\partial c,\cdot]}}(q_A^*f)=q_A^*(\ldr{(\rho_A\circ\partial )(c)}f)
 =q_A^*(\ldr{\rho_C(c)}f),\\
 \ldr{\Hat{[\partial c,\cdot]}}(\rho_A^*\ell_\theta)(a_m)
 &=\Hat{[\partial c,\cdot]}(\ell_{\rho_A^t\theta})(a_m)=\ell_{\ldr{\partial c}\rho_A^t\theta}(a_m)
 =\rho_C(c_m)\langle \rho_A^t\theta, a\rangle -\langle \rho_A^t\theta_m, [\partial c,a](m)\rangle\\
 &=\rho_C(c_m)\langle \theta, \rho_A(a)\rangle -\langle \theta_m, [\rho_C(c),\rho_A(a)](m)\rangle= \ell_{\alpha}(a_m),
 \end{align*}
 where $\alpha := \rho_A^t\ldr{\rho_C(c)}(\theta)\in\Gamma(A^*)$.

 Now consider the right hand side of (\ref{eq:actproj}).  Choose
 $a_m\in A$, let $\gamma$ be a path through $m$ with
 $\dot \gamma(0)=\rho_A(a_m)$ and let $\phi^{\rho_C(c)}_\cdot$ be the
 flow of $\rho_C(c)\in\mx(M)$.  Then
 \begin{align*}
 \sigma_{M}(T(\rho_C(c))\rho_A(a_m))
 &=\sigma_{M}\left(\left.\frac{d}{dt}\right\an{t=0}\left.\frac{d}{ds}\right\an{s=0} 
 \phi^{\rho_C(c)}_s(\gamma(t))\right)\\
 &=\left.\frac{d}{ds}\right\an{s=0}\left.\frac{d}{dt}\right\an{t=0} \phi^{\rho_C(c)}_s(\gamma(t))
 =\left.\frac{d}{ds}\right\an{s=0}T_m\phi^{\rho_C(c)}_s(\rho_A(a_m)),
 \end{align*}
 and so 
 \begin{align*}
 \sigma_{M}(T(\rho_C(c))\rho_A(a_m))(p_{M}^*f)&=\left.\frac{d}{ds}\right\an{s=0}(f\circ\phi^{\rho_C(c)}_s)(m)
 =(\ldr{\rho_C(c)}f)(m)=q_A^*(\ldr{\rho_C(c)}f)(a_m),\\
 \sigma_{M}(T(\rho_C(c))\rho_A(a_m))(\ell_\theta)&=\left.\frac{d}{ds}\right\an{s=0}\langle
 \theta(\phi^{\rho_C(c)}_s(m)), T_m\phi^{\rho_C(c)}_s(\rho_A(a_m))\rangle\\
 &=\langle \ldr{\rho_C(c)}\theta, \rho_A(a_m)\rangle
 =\ell_{\alpha}(a_m),
 \end{align*} 
 where $\alpha = \rho_A^t\ldr{\rho_C(c)}(\theta)$ as before. This completes the proof of 
 \eqref{eq:actproj} for $\xi = Tc$. 

 Now consider $c^\dagger\in\Gamma_{TM}(TC)$.
 The equality $\Theta(c^\dagger)=(\rho_C(c))^\uparrow\in \mx(TM)$
 can be verified using the same method as above.
 It only remains to prove that
 \[
 \ldr{(\partial c)^\uparrow}(\rho_A^*F) = \rho_A^*(\ldr{\rho_C(c)^\uparrow}F)
 \]
which is immediate for $F$ a pullback or a linear function on $TM$.
\medskip

Finally the compatibility of $\Phi$ with the Lie algebroid brackets
\eqref{eq:actproj_a} needs to be checked:
 \[
 [\Phi(V), \Phi(W)]=\Phi[V,W]
 \]
 for all $V,W\in \Gamma_{TM}(TC)$.  Because of \eqref{eq:actproj}, it
 is sufficient to prove \eqref{eq:actproj_a} on tangent and core
 sections of $TC\to TM$.  Computations yield for $c,c'\in\Gamma(C)$:
 \begin{align*}
 \left[\Phi(Tc),\Phi(Tc')\right]
 &=\left[\Hat{[\partial c, \cdot]}, \Hat{[\partial c',\cdot]}\right]
 =\Hat{[\da c,\cdot]\circ [\partial c',\cdot]
   -[\partial c',\cdot]\circ [\partial c,\cdot]}\\
 &=\Hat{[\partial [c,c'],\cdot]}=\Phi\left(T[c,c']\right)
   =\Phi([Tc, Tc']),
 \end{align*}
 by the Jacobi identity for the Lie algebroid bracket on $A$, and 
 \begin{align*}
 \left[\Phi(Tc),\Phi\left({c'}^\dagger\right)\right]
 &=\left[\Hat{[\partial c,\cdot]}, (\partial c')^\uparrow\right]  
   =\left([\partial c,\cdot] (\partial c')\right)^\uparrow\\
 &=[\partial c, \partial c']^\uparrow
  =(\partial [c,c'])^\uparrow
  =\Phi\left([c,c']^\dagger\right)
  =\Phi\left(\left[Tc,  {c'}^\dagger\right]\right).
 \end{align*}
 The Lie algebroid brackets of core sections of $TC\to TM$, and of
 vertical vector fields on $A$ vanish, so the proof is complete.
 \end{proof}

 \subsection{The double vector bundle $R$ }\label{lie_algebroid_structures}
 Consider two Lie algebroids $A$ and $C$ over a common base $M$ and
 construct the pullback manifold
 $\ldbl:=TC\oplus_{TM}A:=\{(V,a)\in TC\times A\mid
 Tq_C(V)=\rho_A(a)\}$, as shown in Figure~\ref{fig:Om}(a).  This
 section shows that $R$ has a double vector bundle structure with
 sides $A$ and $C$ and with core $C$.

 \begin{figure}[h]
 $$
 \subfloat[The pullback manifold]{
 \xymatrix@=15mm{
 \ldbl \ar[r] \ar[d]& A\ar[d]^{\rho_A\phantom{XXXXX}}\\
 TC\ar[r]^{Tq_C}&TM
 }}
 \qquad\qquad
 \subfloat[The double Lie algebroid]{
 \xymatrix@=15mm{
 \ldbl \ar[r]^{\text{pullback}} \ar[d]_{\text{action}}& C \ar[d]^{\phantom{XXXX}}\\
 A\ar[r]&M
 }}
 $$
 \caption{\label{fig:Om}}
\end{figure}

The space $R$ is equipped with two projections $\pi_A\colon R\to A$,
$(V,a)\to a$ and $\pi_C\colon R\to C$, $(V,a)\to p_C(V)$, where as
always, $p_C\colon TC\to C$ is the canonical projection.  Define
additions
  \begin{equation}\label{add_A} +_A\colon R\times_A R\to R, \qquad (V_1,a)+_A(V_2,a)=(V_1+_{TM}V_2,a),
  \end{equation}
  and
  \begin{equation}\label{add_C}
    +_C\colon R\times_C R\to R, \qquad (V_1,a_1)+_C(V_2,a_2)=(V_1+_{C}V_2,a_1+a_2).
  \end{equation}

\begin{prop}\label{dvb_R}
  An $A$-connection
  $\nabla\colon\Gamma(A)\times\Gamma(C)\to \Gamma(C)$ on $C$
  defines a bijection
  \[ I_\nabla\colon A\times_MC\times_MC \to R,\qquad 
  (a,c,\gamma)\mapsto \left(\widehat{\nabla_a}(c)+_C\gamma^\uparrow(c),  a\right),
  \]
  such that
  \[ \pi_C(I_\nabla(a,c,\gamma))=c \quad \text{ and } \quad  \pi_A(I_\nabla(a,c,\gamma))=a
  \]
  for $(a,c,\gamma)\in A\times_MC\times_MC$, 
  and 
  \[ I_\nabla(a_1+a_2,c, \gamma_1+\gamma_2)=I_\nabla(a_1,c, \gamma_1)+_CI_\nabla(a_2,c, \gamma_2)
  \]
  as well as
  \[I_\nabla(a,c_1+c_2, \gamma_1+\gamma_2)=I_\nabla(a,c_1, \gamma_1)+_AI_\nabla(a,c_2, \gamma_2)
  \]
  for $(a_1,c, \gamma_1), (a_2,c, \gamma_2)$ and
  $ (a,c_1, \gamma_1), (a,c_2, \gamma_2)\in A\times_MC\times_M C$.
\end{prop}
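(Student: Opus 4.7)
\medskip\noindent\textbf{Proof plan for Proposition \ref{dvb_R}.}
The plan is to show that $\Sigma_\nabla\colon A\times_M C\to R$, $(a,c)\mapsto (\widehat{\nabla_a}(c),a)$, is a linear splitting of $R$ as a double vector bundle with sides $A$ and $C$ and core $C$, and to identify $I_\nabla$ with the resulting decomposition $\Sigma_\nabla$ plus core.

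First I would check that $I_\nabla$ really does land in $R$ with the asserted projections. For $(a,c,\gamma)\in A\times_M C\times_M C$ over $m\in M$, pick any section $\tilde a\in\Gamma(A)$ with $\tilde a(m)=a$; since $\nabla_{f\tilde a}c=f\nabla_{\tilde a}c$, the value $(\nabla_{\tilde a}c)(m)$ and hence $\widehat{\nabla_a}(c)\in T_cC$ depend only on $a$, not on the extension. The linear vector field $\widehat{\nabla_{\tilde a}}$ covers $\rho_A\circ \tilde a\in\mx(M)$, so $Tq_C(\widehat{\nabla_a}(c))=\rho_A(a)$; since $\gamma^\uparrow(c)$ is vertical, $Tq_C(\gamma^\uparrow(c))=0$, and therefore
$$
Tq_C\bigl(\widehat{\nabla_a}(c)+_C\gamma^\uparrow(c)\bigr)=\rho_A(a),
$$
which shows $I_\nabla(a,c,\gamma)\in R$. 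The projection identities $\pi_C(I_\nabla(a,c,\gamma))=c$ and $\pi_A(I_\nabla(a,c,\gamma))=a$ are immediate from the definitions of $\pi_A$ and $\pi_C$ and the fact that $p_C(\widehat{\nabla_a}(c)+_C\gamma^\uparrow(c))=c$.

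Next I would construct the inverse. Given $(V,a)\in R$, set $c:=p_C(V)$ and $m:=q_C(c)=q_A(a)$, and form
$$
W:=V-_C\widehat{\nabla_a}(c)\in T_cC.
$$
Then $Tq_C(W)=Tq_C(V)-Tq_C(\widehat{\nabla_a}(c))=\rho_A(a)-\rho_A(a)=0$, so $W$ is a vertical tangent vector at $c$, and hence there is a unique $\gamma\in C_m$ with $W=\gamma^\uparrow(c)$. Setting $J_\nabla(V,a):=(a,c,\gamma)$ yields, by construction, $J_\nabla\circ I_\nabla=\id$ and $I_\nabla\circ J_\nabla=\id$. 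Hence $I_\nabla$ is a bijection.

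Finally, the two linearity identities follow from transport along $I_\nabla$ of the standard linearities of $\widehat{\nabla_\cdot}$ and of the vertical lift in the double vector bundle $TC$ (with sides $TM$ and $C$ and core $C$), combined with the interchange law \eqref{add_add}. For fixed $c$, the map $a\mapsto \widehat{\nabla_a}(c)$ is linear in $a$ because $\nabla\co A\to\Der(C)$ is a vector bundle morphism and the identification of $\Der(C)$ with linear vector fields on $C$ preserves fiberwise addition; for fixed $a$, the map $c\mapsto\widehat{\nabla_a}(c)$ is a vector bundle morphism $C\to TC$ (over $\rho_A(a)\in TM$), by the very definition of linear vector field. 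Together with the bilinearity of the vertical lift, these identities lift to the claimed ones on $R$ using \eqref{add_A}, \eqref{add_C} and the compatibility of $+_C$ and $+_{TM}$ in $TC$. The only mild obstacle is the bookkeeping of the two additions of $TC$ when the formula for $I_\nabla$ is written with $+_C$ but the addition $+_A$ on $R$ is inherited from $+_{TM}$; this is resolved by a single application of the interchange law \eqref{add_add}.
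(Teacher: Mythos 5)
Your proposal is correct and follows essentially the same route as the paper: the paper's surjectivity argument is exactly your "subtract $\widehat{\nabla_a}(c)$, the remainder is vertical, hence a unique $\gamma^\uparrow(c)$" step, and its injectivity argument is the uniqueness you invoke, so packaging both as an explicit inverse $J_\nabla$ is only a cosmetic streamlining. You also supply the well-definedness check and the linearity/interchange bookkeeping that the paper leaves to the reader, and those details are right.
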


\begin{proof}
  Take $ (a_1,c_1, \gamma_1)$ and
  $(a_2,c_2, \gamma_2)\in A\times_MC\times_M C$ such that
  $I_\nabla(a_1,c_1, \gamma_1)=I_\nabla(a_2,c_2, \gamma_2)\in R$.
  Then
  \[\left(\widehat{\nabla_{a_1}}(c_1)+_C\gamma_1^\uparrow(c_1),
      a_1\right)=\left(\widehat{\nabla_{a_2}}(c_2)+_C\gamma_2^\uparrow(c_2),
      a_2\right),
  \]
  and so $a_1=a_2$, as well as
  $c_1=p_C(\widehat{\nabla_{a_1}}(c_1)+_C\gamma_1^\uparrow(c_1))=p_C(\widehat{\nabla_{a_2}}(c_2)+_C\gamma_2^\uparrow(c_2))=c_2$.
  Then
  $\widehat{\nabla_{a_1}}(c_1)+_C\gamma_1^\uparrow(c_1)=\widehat{\nabla_{a_1}}(c_1)+_C\gamma_2^\uparrow(c_1)$
  implies $\gamma_1^\uparrow(c_1)=\gamma_2^\uparrow(c_1)$, and so
  $\gamma_1=\gamma_2$. This shows that $I_\nabla$ is injective.

  Now consider $(V,a)\in R$ and set $p_C(V)=c$. Then
  $Tq_C(\widehat{\nabla_a}(c))=\rho_A(a)=Tq_C(V)$, which implies the
  existence of $\gamma\in q_C^{-1}(q_C(c))$ such that
  $V=\widehat{\nabla_a}(c)+_C\gamma^\uparrow(c)$.  Hence
  $(V,a)=I_\nabla(a,c,\gamma)$, which shows that $I_\nabla$ is surjective.

  The compatibilities of $I_\nabla$ with the projections and additions
  are immediate and left to the reader.
  \end{proof}
  This implies immediately that $R$ is a double vector bundle with
  sides $A$ and $C$, with core $C$ and the projections and additions
  above.

\begin{cor}\label{dvb_R_cor}
  $R$ is a double vector bundle with side projections
  $\pi_A\colon R\to A$, $(V,a)\to a$ and $\pi_C\colon R\to C$,
  $(V,a)\to p_C(V)$ with additions in \eqref{add_A} and \eqref{add_C}
  and with core $C$.
 
  \end{cor}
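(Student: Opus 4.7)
The plan is to bootstrap the double vector bundle structure on $R$ from the ``model'' decomposed double vector bundle $A\times_M C\times_M C$ of Example~\ref{trivial_dvb}, using the explicit bijection $I_\nabla$ from Proposition~\ref{dvb_R}. Concretely, I would first choose an $A$-connection $\nabla$ on $C$ (these always exist since $M$ admits partitions of unity), which gives the map $I_\nabla\colon A\times_MC\times_MC\to R$.

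The steps I would carry out in order are as follows. First, I would verify that $I_\nabla$ is a diffeomorphism: smoothness of $I_\nabla$ is immediate from the smoothness of $\widehat{\nabla_\cdot}$, $\cdot^\uparrow$ and the fibrewise addition $+_C$ in $TC\to C$; smoothness of its inverse follows by writing $(V,a)\mapsto (a,\, p_C(V),\, \gamma)$ where $\gamma\in C_{q_C(p_C(V))}$ is uniquely characterised by $V-_C\widehat{\nabla_a}(p_C(V))=\gamma^\uparrow(p_C(V))$, and this assignment is smooth because subtraction in $TC\to C$ and the vertical lift isomorphism $C\times_M C\cong (Tq_C)^{-1}(0^{TM})$ are smooth. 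Second, I would invoke Proposition~\ref{dvb_R}, which already records that $I_\nabla$ intertwines the two projections $\pi_A,\pi_C$ on $R$ with the projections of the decomposed double vector bundle, and intertwines $+_A$ and $+_C$ on $R$ with the decomposed additions. Since $(A\times_M C\times_M C;A,C;M)$ is a double vector bundle (Example~\ref{trivial_dvb}), transport of structure along the diffeomorphism $I_\nabla$ makes $(R;A,C;M)$ into a double vector bundle with the prescribed projections and additions, in particular the compatibility condition \eqref{add_add} is inherited.

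Third, I would observe that the resulting double vector bundle structure on $R$ is independent of the auxiliary choice of $\nabla$: the maps $\pi_A$, $\pi_C$, $+_A$, and $+_C$ are defined intrinsically in \eqref{add_A}--\eqref{add_C} with no reference to $\nabla$, so the connection serves only to exhibit a smooth trivialisation. Finally, I would identify the core. By definition the core is $\pi_A^{-1}(0^A)\cap \pi_C^{-1}(0^C)$, which consists of pairs $(V,0^A_m)\in R$ with $p_C(V)=0^C_m$ and $Tq_C(V)=\rho_A(0^A_m)=0^{TM}_m$; these are exactly the vertical tangent vectors to $C$ at the zero section, canonically identified with elements of $C_m$ via the vertical lift $c\mapsto \bar c$. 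Under $I_\nabla$ this corresponds to the embedding $c\mapsto (0^A_m,0^C_m,c)$ of the core of the decomposed model, confirming that the core of $R$ is $C$ as claimed.

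The only non-formal point is the smoothness of $I_\nabla^{-1}$, but this is routine once one notes that the $(Tq_C)$-fibre over $0^{TM}$ at a point $c\in C$ is canonically isomorphic to $C_{q_C(c)}$ through the vertical lift; everything else is a direct consequence of Proposition~\ref{dvb_R} and Example~\ref{trivial_dvb}.
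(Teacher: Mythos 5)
Your proposal is correct and follows essentially the same route as the paper, which simply records the corollary as an immediate consequence of Proposition~\ref{dvb_R}: the bijection $I_\nabla$ transports the decomposed double vector bundle structure of Example~\ref{trivial_dvb} onto $R$, and the intrinsic definitions of $\pi_A$, $\pi_C$, $+_A$, $+_C$ make the result independent of $\nabla$. The extra details you supply (smoothness of $I_\nabla^{-1}$ via the vertical lift, and the explicit identification of the core with $C$) are exactly the routine verifications the paper leaves implicit.
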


  In the situation of Proposition \ref{dvb_R}, the isomorphism
  $I_\nabla$ of double vector bundles is a decomposition of $R$.
  Recall that $\nabla$ defines as a consequence linear horizontal
  lifts $\sigma^A_\nabla\colon\Gamma(A)\to\Gamma_C^l(R)$ and
  $\sigma^C_\nabla\colon \Gamma(C)\to\Gamma_A^l(R)$. More precisely,
  given a section $a\in \Gamma(A)$, the section
   $\widehat{a}:=\sigma^A_\nabla(a)\in\Gamma_{C}(\ldbl)$ is defined by
 \begin{equation}
\label{eq:ahat}
 \widehat{a}(c_m)=(\Hat{\nabla_a}(c_m), a(m)),\quad c_m\in C.
\end{equation}
Given a section $c\in\Gamma(C)$, the section $\Check c:=\sigma^C_\nabla(c)\in\Gamma_{A}(\ldbl)$ is defined by
\begin{equation}
\label{eq:ccheck}
\Check c(a_m) =  (\Hat{\nabla_a}(c(m)),  a_m),
\end{equation}
for $a_m\in A$ and any $a\in\Gamma(A)$ such that $a(m)=a_m$.
Given a section $\gamma\in\Gamma(C)$, the corresponding core section
  $\gamma^\times\in\Gamma_{C}(\ldbl)$ is given by
 \begin{equation}\label{gamma_times}
 \gamma^\times(c_m)=(\gamma^\uparrow(c_m), 0^A_m), \quad c_m\in C.
\end{equation}
The corresponding core section 
 $\gamma^\ddagger\in\Gamma_{A}(\ldbl)$ is given by
 \begin{equation}\label{gamma_ddagger}
 \gamma^\ddagger(a_m)=(\gamma^\dagger(\rho_A(a_m)), a_m)=(T0^C(\rho_A(a_m))+_C\gamma^\uparrow(0^C_m),a_m) \quad a\in A.
\end{equation}

\subsection{The comma double Lie algebroid}\label{ruths}
Consider again two Lie algebroids $A$ and $C$ over a common base $M$
and assume a Lie algebroid morphism $\da:=\da_A\colon C\to A$ as in Section
\ref{sec:action}. This section equips $\ldbl$ constructed from $A$ and
$C$ as in Section \ref{lie_algebroid_structures} with two Lie
algebroid structures, over $C$ and over $A$, and shows that they
constitute together a double Lie algebroid $(\ldbl, A, C, M)$ with
core $C$.

As a Lie
algebroid over $C$, $\ldbl$ is the pullback Lie algebroid structure of
$A$ over $q_C\co C\to M$, denoted $q_C^{!!}A$ in \cite{Mackenzie05}.
As a Lie algebroid over $A$, $\ldbl$ is the action Lie algebroid
structure for the action $\Phi$ of $TC\to TM$ on $\rho_A$ as defined
in \eqref{eq:Phi} and \eqref{def_phi}. The remainder of this section
describes these structures in detail.

\begin{prop}\label{lie_algebroid_R_C1}
  Consider a Lie algebroid morphism $\da\colon C\to A$ over $M$
  and equip $R=TC\oplus_{TM}A$ with the
  pullback Lie algebroid structure of $A$ over $q_C$. Consider an $A$-connection
  $\nabla\colon\Gamma(A)\times\Gamma(C)\to \Gamma(C)$ on $C$ and write
  $\Theta_C$ for the anchor $\ldbl\to TC$. Then
  $\Theta_C\circ \widehat{a}=\Hat{\nabla_a}\in \mx^l(C)$ and
  $\Theta_C\circ \gamma^\times=\gamma^\uparrow\in\mx(C)$ for
  $a\in\Gamma(A)$ and $\gamma\in\Gamma(C)$.  Furthermore, the Lie
  algebroid bracket on $\ldbl\to C$ is given by:
 \begin{equation}
 \label{eq:RC}
 \left[\widehat{a_1},
   \widehat{a_2}\right] =
 \widehat{[a_1,a_2]}-\widetilde{R_\nabla(a_1,a_2)},\quad
 \left[\widehat{a}, \gamma^\times\right] = (\nabla_a\gamma)^\times,\quad
 \left[\gamma_1^\times\,,\gamma_2^\times\right] = 0
 \end{equation}
 for $a,a_1,a_2\in\Gamma(A)$ and  $\gamma,\gamma_1,\gamma_2\in\Gamma(C)$.
\end{prop}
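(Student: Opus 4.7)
The proof is a direct verification that splits naturally into two parts: the anchor identities, and then the three bracket formulas.

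For the anchor identities, I would use that by Mackenzie's construction of the pullback Lie algebroid $q_C^{!!}A\to C$, the anchor $\Theta_C\colon R\to TC$ is the first projection $(V,a)\mapsto V$. Substituting this into the explicit formulas \eqref{eq:ahat} for $\widehat{a}$ and \eqref{gamma_times} for $\gamma^\times$ immediately yields $\Theta_C\circ\widehat{a}=\widehat{\nabla_a}$ and $\Theta_C\circ\gamma^\times=\gamma^\uparrow$.

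For the bracket formulas, my strategy would use two characteristic properties of the pullback Lie algebroid: (i) the anchor $\Theta_C\colon\Gamma_C(R)\to\mx(C)$ is a Lie algebra morphism, and (ii) $\pi_A\colon R\to A$ is a fibration of Lie algebroids over $q_C$. Since $R$ is the fibre product $TC\times_{TM}A$, any section of $R\to C$ is uniquely determined by its anchor in $\mx(C)$ together with its post-composition $C\to A$ with $\pi_A$. Hence, to prove each of the three formulas in \eqref{eq:RC} it suffices to compare the anchors and the $\pi_A$-images of the two sides. Noting $\pi_A\circ\widehat{a}=a\circ q_C$ and $\pi_A\circ\gamma^\times=0$, the morphism property (ii) gives the $\pi_A$-images of the three left-hand sides as $[a_1,a_2]\circ q_C$, $0$, and $0$, matching those of the three right-hand sides. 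The anchors of the left-hand sides are obtained directly from \eqref{Lie_bracket_over_VB}:
\[
[\widehat{\nabla_{a_1}},\widehat{\nabla_{a_2}}]=\widehat{\nabla_{[a_1,a_2]}}-\widetilde{R_\nabla(a_1,a_2)},\qquad[\widehat{\nabla_a},\gamma^\uparrow]=(\nabla_a\gamma)^\uparrow,\qquad[\gamma_1^\uparrow,\gamma_2^\uparrow]=0,
\]
and these coincide with the anchors of the three right-hand sides, using that the core-linear section $\widetilde{R_\nabla(a_1,a_2)}$ of $R\to C$ anchors to the homonymous vertical linear vector field on $C$ (a consequence of the anchor identity $\Theta_C\circ\gamma^\times=\gamma^\uparrow$, i.e.\ of the core-anchor of $R\to C$ being $\id_C$).

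The main technical point is to correctly invoke property (ii): it yields the bracket-projection rule for sections that are $\pi_A$-related via $q_C$, which applies directly to each of the pairs above since $\widehat{a}\sim_{\pi_A}a$ and $\gamma^\times\sim_{\pi_A}0$, so no Leibniz-type adjustments enter. Everything else is bookkeeping.
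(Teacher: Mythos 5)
Your proof is correct and follows essentially the same route as the paper: the paper likewise computes the anchor and bracket componentwise using Mackenzie's explicit formulas (19)--(20) for the pullback Lie algebroid $q_C^{!!}A$ (which say precisely that both projections $\Theta_C$ and $\pi_A$ determine a section and preserve brackets), and then evaluates the $TC$-component via \eqref{Lie_bracket_over_VB} exactly as you do. Your repackaging of formula (20) as ``a section of the fibre product is determined by its two projections, both of which are bracket-preserving'' is a valid justification of the same computation.
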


\begin{cor}
  In the situation of Proposition \ref{lie_algebroid_R_C1},
  the Lie algebroid structure on $R\to C$ is linear over $A\to M$. In other words,
  $(R\to C, A\to M)$ is a VB-algebroid.
\end{cor}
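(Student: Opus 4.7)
The plan is to apply Theorem \ref{rajan} in the converse direction, by reading off a $2$-representation of $A$ from the anchor and bracket formulas in Proposition \ref{lie_algebroid_R_C1} relative to the linear splitting $I_\nabla$ of Proposition \ref{dvb_R}. Note that $(R;A,C;M)$ is already a double vector bundle with core $C$ by Corollary \ref{dvb_R_cor}, so only the VB-algebroid axioms remain to be established.

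First I identify the $2$-representation data. From $\Theta_C\circ\gamma^\times = \gamma^\uparrow$ for $\gamma\in\Gamma(C)$, the core-anchor is $\partial = \mathrm{id}_C\colon C\to C$. From $\Theta_C\circ\widehat{a} = \widehat{\nabla_a}$, the side connection of the $2$-representation (the $A$-connection on the side $C$ of the VB-algebroid) is $\nabla$. From $[\widehat{a},\gamma^\times] = (\nabla_a\gamma)^\times$, the $A$-connection on the core $C$ is also $\nabla$. Finally, $[\widehat{a_1},\widehat{a_2}] = \widehat{[a_1,a_2]} - \widetilde{R_\nabla(a_1,a_2)}$ identifies the curvature form $R\in\Omega^2(A,\mathrm{End}(C))$ with the curvature $R_\nabla$ of $\nabla$.

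Next I verify that the quadruple $(\mathrm{id}_C,\nabla,\nabla,R_\nabla)$ is a $2$-representation of $A$ on $C[0]\oplus C[1]$. Since $\partial = \mathrm{id}$ and the two connections coincide with $\nabla$, the relations $\partial\circ\nabla^0 = \nabla^1\circ\partial$, $R_{\nabla^0} = R\circ\partial$ and $R_{\nabla^1} = \partial\circ R$ all reduce to tautologies. The remaining axiom $\dr_{\nabla^{\mathrm{Hom}}}R = 0$ is the standard (second) Bianchi identity for the curvature of $\nabla$ viewed as an $\mathrm{End}(C)$-valued $2$-form on $A$.

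Theorem \ref{rajan} then yields a VB-algebroid structure on $(R\to C, A\to M)$ whose anchor and bracket are prescribed by the very formulas \eqref{eq:RC} that define the pullback Lie algebroid structure in Proposition \ref{lie_algebroid_R_C1}. The two structures therefore coincide, proving the corollary. The argument is essentially a repackaging of Proposition \ref{lie_algebroid_R_C1} through the Gracia-Saz--Mehta correspondence, so no substantive obstacle arises; the only non-cosmetic ingredient is the Bianchi identity.
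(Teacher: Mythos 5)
Your argument is correct, but it takes a different route from the one the paper intends. The corollary is meant to follow by reading the formulas of Proposition \ref{lie_algebroid_R_C1} directly against the definition of a VB-algebroid in the preliminaries: $\Theta_C$ sends the linear sections $\widehat a$ to the linear vector fields $\widehat{\nabla_a}$ over $\rho_A(a)$ and the core sections $\gamma^\times$ to the core vector fields $\gamma^\uparrow$, so it is a morphism of double vector bundles over $\rho_A$; and \eqref{eq:RC} exhibits exactly the three bracket conditions $[\Gamma^\ell_C(R),\Gamma^\ell_C(R)]\subset\Gamma^\ell_C(R)$ (a linear section minus a core-linear section is again linear), $[\Gamma^\ell_C(R),\Gamma^c_C(R)]\subset\Gamma^c_C(R)$ and $[\Gamma^c_C(R),\Gamma^c_C(R)]=0$. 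Nothing else is needed; in particular the Jacobi identity comes for free from the pullback construction, so the Bianchi identity plays no role in the direct verification. Your route instead packages the data as the $2$-representation $(\id_C,\nabla,\nabla,R_\nabla)$ and invokes the converse direction of Theorem \ref{rajan}; this is sound, provided you (i) verify the $2$-representation axioms, which as you say reduce to the Bianchi identity $\dr_{\nabla^{\rm Hom}}R_\nabla=0$, and (ii) observe that the resulting Lie algebroid structure coincides with the pullback structure because the two agree on the anchor and on the bracket of the generating set of linear and core sections, whence everywhere by the Leibniz rule. You state (ii) only in passing, but it is the step that actually transfers linearity from the Rajan-constructed structure to the pullback structure, so it deserves a sentence. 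The payoff of your approach is that it simultaneously establishes the identification \eqref{ruth_A} of this VB-algebroid with the $2$-representation $(\id_C,\nabla,\nabla,R_\nabla)$, which the paper records separately and uses later; the cost is the extra input of the Bianchi identity that the direct check avoids.
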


 \begin{proof}[Proof of Proposition \ref{lie_algebroid_R_C1}]
 The proof follows from the basic properties of pullbacks of Lie
 algebroids \cite[p.156-157]{Mackenzie05}. Following the formula (19) there
 \[\Theta_C(\widehat{a})=\Theta_C(\widehat{\nabla_a},a)=\widehat{\nabla_a}\in\mx^l(C)
 \]
 for all $a\in\Gamma(A)$, and
 \[\Theta_C(\gamma^\times)=\Theta_C(\gamma^\uparrow, 0^A)=\gamma^\uparrow\in\mx^c(C)
   \]
   for all $\gamma\in\Gamma(C)$.

   Following (20) in \cite[p.157]{Mackenzie05},
   \begin{equation*}
     \begin{split}
       \left[\widehat{a_1},\widehat{a_2}\right]&=\left[\left(\widehat{\nabla_{a_1}},a_1\right), \left(\widehat{\nabla_{a_2}},a_2\right)\right]
       =\left(\left[\widehat{\nabla_{a_1}}, \widehat{\nabla_{a_2}}\right], [a_1,a_2]\right)\\
       &=\left(\widehat{\nabla_{[a_1,a_2]}}-\widetilde{R_\nabla(a_1,a_2)}, [a_1,a_2]\right)=\widehat{[a_1,a_2]}-\widetilde{R_\nabla(a_1,a_2)},
     \end{split}
   \end{equation*}
\[ \left[\widehat{a},\gamma^\times\right]=\left[\left(\widehat{\nabla_{a}},a\right), \left(\gamma^\uparrow,0^A\right)\right]
     =\left(\left[\widehat{\nabla_{a}}, \gamma^\uparrow\right], [a,0^A]\right)
     =\left( (\nabla_a\gamma)^\uparrow, 0^A\right)=(\nabla_a\gamma)^\times 
   \]
   and 
   \[ \left[\gamma_1^\times, \gamma_2^\times\right]=\left[\left(\gamma_1^\uparrow,0^A\right),\left(\gamma_2^\uparrow,0^A\right)\right]
     =\left(\left[\gamma_1^\uparrow, \gamma_2^\uparrow\right], [0^A,0^A]\right)
     =\left( 0^{TC}_C,0^A\right)=0
   \]
   for all $a,a_1,a_2\in\Gamma(A)$ and $\gamma,\gamma_1,\gamma_2\in\Gamma(C)$.
   \end{proof}

Define now the connections 
 $\nabla^{\da}\colon \Gamma(C)\times\Gamma(A)\to \Gamma(A)$, 
\[\nabla^{\da}_ca=[\partial c,a]+\partial(\nabla_ac)
\]
and 
$\nabla^{\da}\colon \Gamma(C)\times\Gamma(C)\to \Gamma(C)$, 
\[\nabla^{\da}_cc'=[c,c']+\nabla_{\partial c'}c,
\]
and the tensor $R_\nabla^{\rm
  bas}\in\Omega^2(C,\operatorname{Hom}(A,C))$:
\begin{equation}
  R_\nabla^{\da}(c,c')(a)
  =-\nabla_a[c,c']+[\nabla_ac,c']
  +[c,\nabla_ac']-\nabla_{\nabla^{\da}_ca}c'+\nabla_{\nabla^{\da}_{c'}a}c.
\end{equation}

\begin{prop}\label{lie_algebroid_R_C2}
Consider a Lie algebroid morphism $\da\colon C\to A$ over $M$
  and equip $R=TC\oplus_{TM}A$ with the
  action Lie algebroid structure over $A$. 
  Consider an $A$-connection
  $\nabla\colon\Gamma(A)\times\Gamma(C)\to \Gamma(C)$ on $C$ and write
  $\Theta_A$ for the anchor $\ldbl\to TA$. Then
  $\Theta_A(\Check{c})=\widehat{\nabla^{\da}_c}\in\mx^l(A)$ and
  $\Theta_A(\gamma^\ddagger )=(\partial \gamma)^\uparrow\in\mx(A)$ for
  $c,\gamma\in\Gamma(C)$.  Furthermore, the Lie algebroid bracket on
  $\ldbl\to A$ is given by:
 \begin{equation}
 \label{eq:RA}
 \left[\Check{c_1},
   \Check{c_2}\right] =
 \widecheck{[c_1,c_2]}-\widetilde{R_\nabla^{\da}(c_1,c_2)},\quad
 \left[\Check{c}, \gamma^\ddagger\right] = (\nabla^{\da}_c\gamma)^\ddagger,\quad
 \left[\gamma_1^\ddagger\, \gamma_2^\ddagger\right] = 0
 \end{equation}
 for $c,c_1,c_2,\gamma,\gamma_1,\gamma_2\in\Gamma(C)$.
\end{prop}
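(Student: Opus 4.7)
The Lie algebroid on $\ldbl\to A$ is defined in Section~\ref{ruths} as the action Lie algebroid of the action $\Phi$ of $TC\to TM$ on $\rho_A\co A\to TM$ constructed in Section~\ref{sec:action}. Sections of such an action Lie algebroid are generated over $\sfn{A}$ by the pullback sections $\rho_A^*\xi$ for $\xi\in\Gamma_{TM}(TC)$, and the anchor and bracket are fully determined by
\[
\Theta_A(\rho_A^*\xi)=\Phi(\xi), \qquad [\rho_A^*\xi_1,\rho_A^*\xi_2]=\rho_A^*[\xi_1,\xi_2],
\]
together with the Leibniz rule. My plan is therefore to rewrite $\Check c$ and $\gamma^\ddagger$ in terms of such pullbacks and apply these three rules.

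The formula \eqref{gamma_ddagger} shows immediately that $\gamma^\ddagger=\rho_A^*(\gamma^\dagger)$. For $\Check c$, the explicit formula \eqref{explicit_hat_D} for the linear vector field $\widehat{\nabla_a}$ gives
\[
\widehat{\nabla_a}(c(m))=T_mc\cdot\rho_A(a_m)-(\nabla_ac(m))^\uparrow(c(m)),
\]
so $\Check c$ and $\rho_A^*(Tc)$ are both linear sections of $\ldbl\to A$ over the same base section $c\in\Gamma(C)$, and their difference is the core-linear section associated to $-\nabla_{\cdot}c\in\Gamma(A^*\otimes C)$. In a local frame $\{e_i\}$ of $A$ with dual frame $\{\alpha_i\}\subseteq\Gamma(A^*)$, this yields the local expression
\[
\Check c=\rho_A^*(Tc)-\sum_i\ell_{\alpha_i}\cdot\rho_A^*\bigl((\nabla_{e_i}c)^\dagger\bigr),
\]
using the identity $q_A^*f\cdot\rho_A^*(\eta^\dagger)=\rho_A^*((f\eta)^\dagger)$, valid for core sections of $TC\to TM$ and functions $f\in\sfn{M}$.

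The anchor formula $\Theta_A(\gamma^\ddagger)=\Phi(\gamma^\dagger)=(\da\gamma)^\uparrow$ is immediate, and applying $\Theta_A$ to the decomposition of $\Check c$ gives $\Theta_A(\Check c)=\widehat{[\da c,\cdot]}-\sum_i\ell_{\alpha_i}(\da\nabla_{e_i}c)^\uparrow$. Testing this against pullback functions $q_A^*f$ and linear functions $\ell_\alpha$ via \eqref{ableitungen}, and using $\nabla^{\da}_ca=[\da c,a]+\da(\nabla_ac)$ together with the frame identity $\sum_i\alpha_i(a_m)\nabla_{e_i}c(m)=\nabla_{a}c(m)$ for $a=\sum_i\alpha_i(a_m)e_i$, confirms that this vector field equals $\widehat{\nabla^{\da}_c}$. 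The bracket $[\gamma_1^\ddagger,\gamma_2^\ddagger]=\rho_A^*[\gamma_1^\dagger,\gamma_2^\dagger]=0$ is immediate from the commutativity of core sections in $TC\to TM$. For $[\Check c,\gamma^\ddagger]$, expanding $\Check c$ and applying Leibniz reduces the computation to $\rho_A^*[Tc,\gamma^\dagger]+\sum_iq_A^*\langle\alpha_i,\da\gamma\rangle\cdot\rho_A^*((\nabla_{e_i}c)^\dagger)=[c,\gamma]^\ddagger+(\nabla_{\da\gamma}c)^\ddagger=(\nabla^{\da}_c\gamma)^\ddagger$.

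The main obstacle is the computation of $[\Check{c_1},\Check{c_2}]$: after inserting the frame decomposition of both sections, Leibniz generates a sizeable family of terms that must be regrouped using $[Tc_1,Tc_2]=T[c_1,c_2]$ and $[Tc_1,c_2^\dagger]=[c_1,c_2]^\dagger$ in $TC\to TM$, the dual-derivation action of $\widehat{[\da c_i,\cdot]}$ on the linear functions $\ell_{\alpha_j}$, and the defining formulas of $\nabla^{\da}_c$ and of the curvature-like form $R^{\da}_\nabla$. After this bookkeeping the expression collapses to exactly $\widecheck{[c_1,c_2]}-\widetilde{R^{\da}_\nabla(c_1,c_2)}$; the frame-independence of the identification then follows from the splitting-independence of the associated core-linear section.
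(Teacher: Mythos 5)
Your proposal is correct and follows essentially the same route as the paper: both express $\gamma^\ddagger$ and $\Check c$ as $C^\infty(A)$-combinations of pullback sections of $TC\to TM$ (the paper writes $\chi^!$ for your $\rho_A^*\chi$ and decomposes $\nabla_\cdot c=\sum_j\alpha_j\otimes c_j$ globally rather than via a local frame), and then compute anchor and brackets from $\Theta_A(\chi^!)=\Phi(\chi)$, $[\chi_1^!,\chi_2^!]=[\chi_1,\chi_2]^!$ and the Leibniz rule, with the $[\Check{c_1},\Check{c_2}]$ case reduced to the same bookkeeping against the definitions of $\nabla^\da$ and $R^\da_\nabla$.
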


\begin{cor}
  In the situation of Proposition \ref{lie_algebroid_R_C2},
  the Lie algebroid structure on $R\to A$ is linear over $C\to M$. In other words,
  $(R\to A, C\to M)$ is a VB-algebroid.
  \end{cor}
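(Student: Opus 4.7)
The plan is to use the action Lie algebroid description of $R \to A$: its sections are generated over $\sfn{A}$ by pullback sections $\rho_A^*\xi \co A \to R$, $a_m \mapsto (\xi(\rho_A(a_m)), a_m)$ for $\xi \in \tmtc$; the anchor satisfies $\Theta_A(\rho_A^*\xi) = \Phi(\xi)$ and the bracket satisfies $[\rho_A^*\xi_1, \rho_A^*\xi_2] = \rho_A^*[\xi_1, \xi_2]_{TC}$, extended by Leibniz. The strategy is first to express the distinguished sections $\Check c$ and $\gamma^\ddagger$ of $R \to A$ as combinations of pullback sections, then to read off anchors and brackets.

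For core sections, the identity $\gamma^\ddagger = \rho_A^*(\gamma^\dagger)$ is immediate from \eqref{gamma_ddagger}, so $\Theta_A(\gamma^\ddagger) = \Phi(\gamma^\dagger) = (\da\gamma)^\uparrow$ is given by \eqref{eq:Phi}, and $[\gamma_1^\ddagger, \gamma_2^\ddagger] = \rho_A^*[\gamma_1^\dagger, \gamma_2^\dagger]_{TC} = 0$ follows from the vanishing of the bracket of core sections in $TC \to TM$. For linear sections, a pointwise computation with \eqref{explicit_hat_D} yields $T_m c(\rho_A(a_m)) = \widehat{\nabla_a}(c(m)) +_{TM} (\nabla_a c)(m)^\dagger(\rho_A(a_m))$ in the fiber of $TC \to TM$ over $\rho_A(a_m)$, which in $R \to A$ translates into the decomposition
\[
\rho_A^*(Tc) = \Check c +_A \widetilde{\nabla_\cdot c},
\]
where $\widetilde{\nabla_\cdot c}$ is the core-linear section of $R \to A$ corresponding to the tensor $\nabla_\cdot c \in \Gamma(A^* \otimes C)$. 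Evaluating $\Theta_A$, using $\Theta_A(\rho_A^*(Tc)) = \widehat{[\da c, \cdot]}$ from \eqref{eq:Phi} and $\Theta_A(\widetilde{\nabla_\cdot c}) = \widetilde{\da \circ \nabla_\cdot c}$ (obtained by picking local extensions of $\nabla_{a_m}c$ to sections of $C$ and applying $\Phi$ to their core sections), followed by identifying $\widehat{[\da c, \cdot]} - \widetilde{\da \circ \nabla_\cdot c}$ with $\widehat{\nabla^{\da}_c}$ via the defining formula $\nabla^{\da}_c a = [\da c, a] + \da(\nabla_a c)$, gives $\Theta_A(\Check c) = \widehat{\nabla^{\da}_c}$.

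The brackets are then obtained by substituting $\Check{c_i} = \rho_A^*(Tc_i) -_A \widetilde{\nabla_\cdot c_i}$, expanding with $\mathbb R$-bilinearity, using $[Tc_1, Tc_2]_{TC} = T[c_1, c_2]$ and $[Tc, \gamma^\dagger]_{TC} = [c,\gamma]^\dagger$ in $\tmtc$, and the Leibniz rule applied to the $\sfn{A}$-coefficients arising when the core-linear sections $\widetilde{\nabla_\cdot c_i}$ are written locally as sums $\sum_j \ell_{\alpha^j}\cdot (\nabla_{a_j}c_i)^\ddagger$ over a local frame $(a_j)$ of $A$ with dual frame $(\alpha^j)$. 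The mixed bracket $[\Check c, \gamma^\ddagger]$ then collapses to $(\nabla^{\da}_c\gamma)^\ddagger$ after using $\nabla^{\da}_c\gamma = [c,\gamma] + \nabla_{\da\gamma}c$, and $[\gamma_1^\ddagger, \gamma_2^\ddagger] = 0$ is as above.

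The main obstacle is the identification of the core-linear residual in $[\Check{c_1}, \Check{c_2}]$: the four expanded brackets produce a sum of core-linear terms which must reorganise, using the anchor formula $\Theta_A(\Check{c_i}) = \widehat{\nabla^{\da}_{c_i}}$ together with the defining formula for $\nabla^{\da}$, into precisely the tensor $R_\nabla^{\da}(c_1, c_2)$. The cleanest verification evaluates both sides of the claimed identity on linear functions $\ell_\alpha$ ($\alpha \in \Gamma(A^*)$) and pullbacks $q_A^* f$ ($f \in \sfn{M}$), reducing the problem to an algebraic manipulation involving $\nabla$, $[\cdot,\cdot]_A$, $[\cdot,\cdot]_C$ and $\da$.
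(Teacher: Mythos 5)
Your proposal is correct and follows essentially the same route as the paper: the paper also realises $R\to A$ as the action Lie algebroid with generating sections $\chi^!=\rho_A^*\chi$, identifies $\gamma^\ddagger=(\gamma^\dagger)^!$ and $\widecheck c=(Tc)^!-\sum_j\ell_{\alpha_j}(c_j^\dagger)^!$ (your $\rho_A^*(Tc)=\widecheck c+_A\widetilde{\nabla_\cdot c}$), and then reads off the anchor and bracket formulae of Proposition \ref{lie_algebroid_R_C2}, from which linearity of the structure over $C\to M$ is immediate. Your suggestion to pin down the core-linear residual $\widetilde{R_\nabla^{\da}(c_1,c_2)}$ by testing on $\ell_\alpha$ and $q_A^*f$ is just one way of carrying out the same ``straightforward computation'' the paper performs directly on the expanded brackets.
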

 \begin{proof}[Proof of Proposition \ref{lie_algebroid_R_C2}]
   The action Lie algebroid defined by the action \eqref{eq:Phi} of
   $TC\to TM$ on $A\to TM$ lives on the pullback vector bundle of $TC\to TM$
   across $\rho_A$ \cite[Prop.~4.1.2]{Mackenzie05}. As a manifold this
   is $R$, and the bundle projection is the projection onto the second
   factor, $(V,a)\mapsto a$. As with any pullback vector bundle, the
   sections of $R\to A$ are finite sums $\sum_i F_i\, \chi_i^!$ where
   $F_i\in\sfn{A}$ and $\chi_i\in\tmtc$.  Here, for
   $\chi\in\Gamma_{TM}(TC)$, the section $\chi^!\in\Gamma_A(R)$ is
   given by $\chi^!(a)=(\chi(\rho_A(a)),a)$ for all $a\in A$.

   The anchor $\Theta_A\co R\to TA$ is then defined by
   $\Theta_A(\sum_i F_i\, \chi_i) = \sum_i F_i\, \Phi(\chi_i)$. The Lie bracket on $\Gamma_A(R)$ is defined by
 \begin{equation}
 \label{eq:RA1}
 \left[\chi_1^!,\chi_2^!\right] = [\chi_1, \chi_2]^!
 \end{equation}
 for all $\chi_1,\chi_2\in\Gamma_{TM}(TC)$.

 By Definition, the section $\gamma^\ddagger\in\Gamma^c_A(R)$ defined
 by $\gamma\in\Gamma(C)$ equals
 $(\gamma^\dagger)^!\in\Gamma_A(R)$. Consider $c\in\Gamma(C)$, and set
 \[\Gamma(\Hom(A,C))\ni \nabla_\cdot c=\sum_{j=1}^n\alpha_j\otimes c_j\] with
 $\alpha_1,\ldots,\alpha_n\in\Gamma(A^*)$ and $c_1,\ldots, c_n\in\Gamma(C)$.
 It is then easy to check that
 \[ \Gamma^l_A(R)\ni\widecheck{c}=(Tc)^!-\sum_{j=1}^n\ell_{\alpha_j}\cdot(c_j^\dagger)^!.
 \]

 Simple computations yield  $\Theta_A(\gamma^\ddagger)=\Phi(\gamma^\dagger)=(\da\gamma)^\uparrow$
 and
 \[\Theta_A(\widecheck{c})=\Phi(Tc)-\sum_{j=1}^n\ell_{\alpha_j}\cdot\Phi(c_j^\dagger)
   =\widehat{[\da c,\cdot]}-\sum_{j=1}^n\ell_{\alpha_j}\cdot(\da c_j)^\uparrow=\widehat{\nabla^\da_c}
 \]
 for $\gamma,c\in\Gamma(C)$.  The Lie bracket on $\Gamma_A(R)$ is then given by
 \[ \left[ \gamma_1^\ddagger,\gamma_2^\ddagger\right]=\left[\gamma_1^\dagger, \gamma_2^\dagger\right]^!=0,
   \]
   \begin{equation*}
     \begin{split}
       \left[\widecheck{c}, \gamma^\ddagger\right]&=\left[(Tc)^!-\sum_{j=1}^n\ell_{\alpha_j}\cdot(c_j^\dagger)^!, (\gamma^\dagger)^!\right]
       =[c,\gamma]^\ddagger+\sum_{j=1}^n\ldr{\Theta_A((\gamma^\dagger)^!)}(\ell_{\alpha_j})\cdot c_j^\ddagger\\
       &=[c,\gamma]^\ddagger+\sum_{j=1}^n\ldr{(\da\gamma)^\uparrow}(\ell_{\alpha_j})\cdot c_j^\ddagger
       =[c,\gamma]^\ddagger+\sum_{j=1}^nq_A^*\langle\da\gamma,\alpha_j\rangle\cdot c_j^\ddagger
       =([c,\gamma]+\nabla_{\da \gamma}c)^\ddagger=(\nabla^\da_c\gamma)^\ddagger,
     \end{split}
   \end{equation*}
   and for $\widecheck c=(Tc)^!-\sum_{j=1}^n\ell_{\alpha_j}\cdot(c_j^\dagger)^!$ and $\widecheck{c'}=(Tc)^!-\sum_{k=1}^n\ell_{\alpha_k'}\cdot(c_k^\dagger)^!$,
   \begin{equation*}
     \begin{split}
       \left[\widecheck{c},\widecheck{c'}\right]&=\left[(Tc)^!, (Tc')^! \right]-\left[\widecheck{c}, \sum_{k=1}^n\ell_{\alpha_k'}\cdot c_k^\ddagger\right]
       -\left[\sum_{j=1}^n\ell_{\alpha_j}\cdot c_j^\ddagger, \widecheck{c'}\right]-\left[\sum_{j=1}^n\ell_{\alpha_j}\cdot c_j^\ddagger, \sum_{k=1}^n\ell_{\alpha_k'}\cdot c_k^\ddagger\right]\\
       &=(T[c,c'])^!-\sum_{k=1}^n\ell_{(\nabla^\da)^*_c\alpha_k'}\cdot c_k^\ddagger-\sum_{k=1}^n\ell_{\alpha_k'}\cdot(\nabla^\da_cc_k)^\ddagger
       +\sum_{j=1}^n\ell_{(\nabla^\da)^*_{c'}\alpha_j} \cdot c_j^\ddagger+\sum_{j=1}^n\ell_{\alpha_j}\cdot(\nabla^\da_{c'}c_j)^\ddagger\\
       &\qquad\qquad -\sum_{j,k=1}^n\ell_{\alpha_j}\cdot q_A^*\langle \da c_j,\alpha_k'\rangle \cdot c_k^\ddagger+\sum_{j,k=1}^n\ell_{\alpha_k'}\cdot q_A^*\langle \da c_k,\alpha_j\rangle \cdot c_j^\ddagger.
     \end{split}
   \end{equation*}
   A straightforward computation yields that this is $\left[\widecheck{c},\widecheck{c'}\right]=\widecheck{[c,c']}-\widetilde{R_\nabla^\da(c,c')}$.
\end{proof}

\bigskip

Let $A$ and $C$ be two Lie algebroids over $M$ and
$\partial\colon C\to A$ a Lie algebroid morphism over $M$. Choose an
$A$-connection on $C$:
\[\nabla\colon\Gamma(A)\times\Gamma(C)\to \Gamma(C).
\]
\eqref{eq:RC} and \eqref{eq:RA} give the representations up to
homotopy describing the VB-algebroid structures $R\to C$ and
respectively $R\to A$ in the splitting defined by $\nabla$ as in
Proposition \ref{dvb_R}. 
Via the splitting given by $\nabla$, the Lie algebroid $R\to C$ is described by
\begin{equation}\label{ruth_A}
  \id\colon C\to C,\quad \nabla\colon\Gamma(A)\times\Gamma(C)\to\Gamma(C), \quad
R_\nabla\in\Omega^2(A,\operatorname{Hom}(C,C)).
\end{equation}
The Lie algebroid $R\to A$ is described by
\begin{equation}\label{ruth_C}
  \partial\colon C\to A,\quad \nabla^{\da}\colon\Gamma(C)\times\Gamma(C)\to\Gamma(C), \quad \nabla^{\da}\colon\Gamma(C)\times\Gamma(A)\to\Gamma(A),
  \end{equation}
and
\begin{equation}\label{ruth_C_1}
  R^{\da}_\nabla\in\Omega^2(C,\operatorname{Hom}(A,C)).
\end{equation}
Straightforward computations (carried out in Appendix
\ref{proof_of_R_lba}) yield that these two representations up to
homotopy are matched in the sense of \cite{GrJoMaMe18} and so that
$R=TC\oplus_{TM}A$, with its two VB-Lie algebroid structures, is a
double Lie algebroid with sides $A$ and $C$, and with core $C$.
This completes the proof of the following theorem.
\begin{thm}\label{main_comma}
  Let $A$ and $C$ be Lie algebroids over $M$ and let
  $\partial\colon C\to A$ be a morphism of Lie algebroids.  Then the
  double vector bundle $(R;A,C;M)$ constructed in Section
  \ref{lie_algebroid_structures} inherits a double Lie algebroid
  structure with the following two Lie algebroid structures on $R\to A$ and
  $R\to C$:
  \begin{enumerate}
  \item As a Lie
algebroid over $C$, $\ldbl$ is the pullback Lie algebroid structure of
$A$ over $q_C\co C\to M$.
\item As a Lie algebroid over $A$, $\ldbl$ is the action Lie algebroid
structure for the action $\Phi$ of $TC\to TM$ on $\rho_A$ as defined
in \eqref{eq:Phi} and \eqref{def_phi}.
  \end{enumerate}
  The core of $(R;A,C;M)$ is $C$ and its
  core diagram is as in Figure~\ref{fig:R}.
As a consequence, $\ldbl$ is transitive if and only if $\da$ is surjective.
\end{thm}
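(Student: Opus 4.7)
\medskip

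\noindent\textbf{Proof proposal.}
The double vector bundle structure on $R$ is already settled by Corollary \ref{dvb_R_cor}, and the two VB-algebroid structures on $R\to C$ (as the pullback of $A$ along $q_C$) and on $R\to A$ (as the action algebroid defined by $\Phi$) are described explicitly, in the splitting induced by a choice of $A$-connection $\nabla$ on $C$, in Propositions \ref{lie_algebroid_R_C1} and \ref{lie_algebroid_R_C2}. By the correspondence recalled before Definition \ref{matched_pair_2_rep} (from \cite{GrJoMaMe18}), it remains only to show that, for some (equivalently, for every) linear splitting, the induced 2-representation \eqref{ruth_A} of $A$ on $\mathrm{id}\colon C\to C$ and the induced 2-representation \eqref{ruth_C}--\eqref{ruth_C_1} of $C$ on $\partial\colon C\to A$ form a matched pair (M1)--(M7).

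The plan is to verify (M1)--(M7) by direct substitution, using throughout the defining formulas $\nabla^{\partial}_c c'=[c,c']+\nabla_{\partial c'}c$ and $\nabla^{\partial}_c a=[\partial c,a]+\partial(\nabla_a c)$, together with the fact that $\partial$ is a morphism of Lie algebroids. Conditions (M1), (M2) and (M3) follow from a one-line cancellation each: for instance (M1) with $\partial_A=\partial$ and $\partial_B=\mathrm{id}$ reduces to $[c_1,c_2]=[c_1,c_2]$ after substituting the formula for $\nabla^{\partial}$; (M2) reduces to $[a,\partial c]=[a,\partial c]$; and (M3), in which $b$ is itself a section of $C$, reduces to $[c',c]=[c',c]$. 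Conditions (M5) and (M6) are Bianchi-type identities for $R_\nabla$ and $R_\nabla^{\partial}$ respectively; they follow by expanding the curvature terms and invoking the Jacobi identity of the Lie bracket on $A$, respectively on $C$, plus the morphism property $\partial[c_1,c_2]=[\partial c_1,\partial c_2]$. Condition (M4) is the mixed compatibility of the two connections with the two curvatures: it reduces, after replacing $\nabla^{\partial}_{c'}$ by $[c',\cdot]+\nabla_{\partial(\cdot)}c'$ and $\nabla^{\partial}_{c'}a$ by $[\partial c',a]+\partial(\nabla_a c')$, to the definition of $R_\nabla(a,\partial c')c'\!\cdot\!c$ and can be rearranged into the required identity using the Jacobi identity in $C$. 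Finally (M7) is an identity in $\Omega^2(A,\wedge^2 C^*\otimes C)=\Omega^2(C,\wedge^2 A^*\otimes C)$ that follows from the same ingredients after one more application of the Jacobi identity and the morphism property of $\partial$. These verifications are essentially bookkeeping and are what will appear in Appendix~\ref{proof_of_R_lba}.

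Once matchedness is in place, the induced Lie algebroid structure on the core $C$ given by \eqref{bracket_on_C} reads
\[
[c_1,c_2]_{\text{core}}=\nabla_{\partial_A c_1}c_2-\nabla^{\partial}_{c_2}c_1
=\nabla_{\partial c_1}c_2-\bigl([c_2,c_1]+\nabla_{\partial c_1}c_2\bigr)=[c_1,c_2],
\]
so the core Lie algebroid agrees with the given $C$. The two core-anchors of $R$ are, by the constructions in Sections \ref{sec:action} and \ref{lie_algebroid_structures}, precisely $\partial\colon C\to A$ on the $A$-side and $\mathrm{id}\colon C\to C$ on the $C$-side, yielding the core diagram of Figure \ref{fig:R}. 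Transitivity requires both core-anchors to be surjective; since $\mathrm{id}$ always is, the double Lie algebroid $R$ is transitive if and only if $\partial$ is surjective.

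The only real obstacle is the verification of (M4) and (M7), since the computations are longer and mix both curvatures and both connections; everything else collapses immediately once $\nabla^{\partial}$ is expanded using its definition.
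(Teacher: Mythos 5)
Your proposal is correct and follows essentially the same route as the paper: the double vector bundle and the two VB-algebroid structures are taken from Corollary \ref{dvb_R_cor} and Propositions \ref{lie_algebroid_R_C1}--\ref{lie_algebroid_R_C2}, and the proof is completed by verifying that the 2-representations \eqref{ruth_A} and \eqref{ruth_C}--\eqref{ruth_C_1} form a matched pair, exactly as the paper does in Appendix \ref{proof_of_R_lba}. Your reductions of (M1)--(M3) and the core computations match the paper's; the only cosmetic difference is that (M6) holds by the very definition of $R_\nabla^{\da}$ rather than by a Jacobi argument.
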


\begin{figure}[h]
$$ 
\xymatrix{
&\operatorname{kern}{\da}\ar[d]\ar[dr]&\\
0\ar[r]\ar[dr]&C\ar[r]_{\id}\ar[d]^{\da}&C\\
&A&
}
$$ 
\caption{The core diagram of R.\label{fig:R}}
\end{figure}
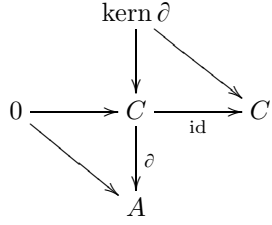

\subsection{Morphisms of comma-double Lie algebroids}
Consider in this section a square of Lie algebroid morphisms over $M$
as in Figure \ref{fig:square}.
\begin{figure}[h]
$$ 
\xymatrix{
  C\ar[r]^{\da_A}\ar[d]_{\phi_C}&A\ar[d]^{\phi_A}\\
  C'\ar[r]_{\da_{A'}}&A'
}
$$ 
\caption{Square of Lie algebroid morphisms.\label{fig:square}}
\end{figure}
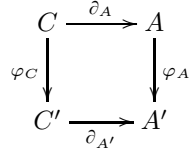
This section proves that such a square induces a morphism of the
corresponding comma double Lie algebroids
$TC\oplus_{TM}A\to TC'\oplus_{TM'}A'$, with side morphisms $\phi_A$ and
$\phi_C$ and with core morphism $\phi_C$.

\begin{prop}\label{mor_dvb}
  Consider a square of Lie algebroid morphisms as in Figure
  \ref{fig:square}.  Then
  \[ \Phi\colon TC\oplus_{TM}A\to TC'\oplus_{TM'} A', \qquad (V,a)\mapsto (T\phi_CV, \phi_A(a))
  \]
  defines a morphism of double vector bundles with side morphisms
  $\phi_A\colon A\to A'$ and $\phi_C\colon C\to C'$ and with core
  morphism $\varphi_C\colon C\to C'$.
\end{prop}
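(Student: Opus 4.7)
The proof is a routine verification of the four axioms of a morphism of double vector bundles (well-definedness into the fibred product, compatibility with the two projections, linearity over each of the two additions, and identification of the core morphism). Since all the Lie algebroids involved are over the same base $M$ and the morphisms $\phi_A, \phi_C, \da_A, \da_{A'}$ cover $\id_M$, the verifications reduce to standard functorial properties of the tangent functor and of fibred products.

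First I would check well-definedness: for $(V,a)\in TC\oplus_{TM}A$ one has $Tq_C(V)=\rho_A(a)$. Because $\phi_C\colon C\to C'$ covers $\id_M$, the identity $q_{C'}\circ\phi_C=q_C$ holds, hence $Tq_{C'}\circ T\phi_C=Tq_C$. Since $\phi_A$ is a Lie algebroid morphism over $\id_M$, one also has $\rho_{A'}\circ\phi_A=\rho_A$. Combining these two equalities gives
\[
Tq_{C'}(T\phi_C V)\;=\;Tq_C(V)\;=\;\rho_A(a)\;=\;\rho_{A'}(\phi_A(a)),
\]
so $(T\phi_C V,\phi_A(a))\in TC'\oplus_{TM}A'$ and $\Phi$ is well defined. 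Compatibility with the side projections $\pi_{A'}$ and $\pi_{C'}$ is then immediate: the first factor maps to $\phi_A(a)=\phi_A\circ\pi_A(V,a)$ and the second maps to $p_{C'}(T\phi_C V)=\phi_C(p_C(V))=\phi_C\circ\pi_C(V,a)$, using that $T\phi_C$ is a vector bundle morphism $TC\to TC'$ over $\phi_C$.

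Next I would verify the two linearities. The addition $+_A$ on $R$ is inherited from $+_{TM}$ on $TC\to TM$; since $T\phi_C\colon TC\to TC'$ is linear over $TM=TM$, one has $T\phi_C(V_1+_{TM}V_2)=T\phi_C V_1+_{TM}T\phi_C V_2$, which yields $\Phi$-linearity over $A$. The addition $+_C$ on $R$ is inherited from $+_C$ on $TC\to C$ (and from $+$ on $A$); since $T\phi_C$ is a double vector bundle morphism, it is linear over $\phi_C$ on the fibres of $TC\to C$, and $\phi_A$ is linear on $A$, so $\Phi$ is linear over $C$ as well.

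Finally, the core of $R$ is the set of elements $(V,0^A)$ with $V$ in the core of $TC$, i.e.\ $V=\bar\gamma$ for some $\gamma\in C$, which yields the standard identification $C\ni\gamma\mapsto(\bar\gamma,0^A)\in R$. Under $\Phi$ this element maps to $(T\phi_C\bar\gamma,0^{A'})$, and the tangent functor gives $T\phi_C(\bar\gamma)=\tfrac{d}{dt}\bigl|_{t=0}\phi_C(t\gamma)=\overline{\phi_C(\gamma)}$, so the induced core morphism is exactly $\phi_C\colon C\to C'$. There is no real obstacle here; the only point requiring care is to keep track of which of the three additions ($+_{TM}$, $+_C$ on $TC$, and $+$ on $A$) is active at each step, and to use that all vector bundle morphisms cover $\id_M$ so that no base-change corrections appear.
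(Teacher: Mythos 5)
Your proof is correct, and it takes a genuinely different (and more elementary) route than the paper. You verify the double vector bundle morphism axioms directly: well-definedness (identical to the paper's first step), compatibility with the two side projections, linearity for both additions via the functoriality of $T$ applied to the linear map $\phi_C$ (which makes $T\phi_C$ a morphism of the double vector bundles $(TC;TM,C;M)\to(TC';TM,C';M)$), and the identification of the core morphism via $T\phi_C(\overline{\gamma})=\left.\frac{d}{dt}\right\an{t=0}\phi_C(t\gamma)=\overline{\phi_C(\gamma)}$. The paper instead chooses $A$- and $A'$-connections $\nabla$, $\nabla'$, forms the decompositions $I_\nabla$ and $I_{\nabla'}$ of Proposition \ref{dvb_R}, and computes $I_{\nabla'}\inv\circ\Phi\circ I_\nabla$ explicitly, showing it equals $(a,c,\gamma)\mapsto(\phi_A(a),\phi_C(c),\phi_C(\gamma)+\omega_{\nabla,\nabla'}(a,c))$ with $\omega_{\nabla,\nabla'}$ as in \eqref{omega_nabla_nabla'}; being a morphism of decomposed double vector bundles, this forces $\Phi$ to be one too. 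Your argument is connection-free and shorter, which is a genuine advantage at this stage; what the paper's computation buys is the explicit formula \eqref{Phi_in_decs} and the form $\omega_{\nabla,\nabla'}$, which are reused immediately afterwards to prove that $\Phi$ is a morphism of double \emph{Lie} algebroids (Theorem \ref{comma_mor}) and to descend $\Phi$ to the quotient in \eqref{1-form_red_Phi}. So if you continued with your approach you would still need to carry out essentially the paper's decomposition computation later anyway.
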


\begin{proof}
  First of all, $\Phi$ is well-defined since
  for $(V,a)\in TC\oplus_{TM}A$, the image
  $\Phi(V,a)=(T\phi_CV, \phi_A(a))$ satisfies
  \[ Tq_{C'}T\phi_CV=Tq_CV=\rho_A(a)=\rho_{A'}\phi_A(a)
  \]
  and is hence an element of $TC'\oplus_{TM}C'$.
Choose an $A$-connection
$\nabla\colon\Gamma(A)\times\Gamma(C)\to \Gamma(C)$ on $C$, and
an $A'$-connection $\nabla'\colon\Gamma(A')\times\Gamma(C')\to \Gamma(C')$ on $C'$.
Consider the induced linear splittings as in Proposition \ref{dvb_R}
  \[ I_\nabla\colon A\times_MC\times_MC \to TC\oplus_{TM}A,\qquad 
  (a,c,\gamma)\mapsto \left(\widehat{\nabla_a}(c)+_C\gamma^\uparrow(c),  a\right)
\]
and
\[ I_{\nabla'}\colon A'\times_MC'\times_MC' \to TC'\oplus_{TM}A',\qquad 
  (a',c',\gamma')\mapsto \left(\widehat{\nabla'_{a'}}(c')+_{C'}{\gamma'} ^\uparrow(c'),  a'\right).
\]
Define $\omega_{\nabla,\nabla'}\in\Gamma(A^*\otimes C^*\otimes C')$ by
\begin{equation}\label{omega_nabla_nabla'} \omega_{\nabla,\nabla'}(a,c)=\nabla'_{\phi_A(a)}(\phi_C(c))-\phi_C(\nabla_ac)
\end{equation}
for $a\in\Gamma(A)$ and $c\in\Gamma(C)$.
Then an easy computation proves that
\[ T_c\phi_c\widehat{\nabla_a}(c)-\widehat{\nabla'_{\phi(a)}}(\phi_C(c))=\omega_{\nabla,\nabla'}(a,c)^\uparrow(\phi_C(c))
\]
for all $c\in C$ and $a\in A$. Therefore,
\begin{equation}\label{Phi_in_decs}
  \begin{split}
    (I_{\nabla'}\inv\circ \Phi\circ I_\nabla)(a,c,\gamma)&=\left(I_{\nabla'}\inv\circ \Phi)(\widehat{\nabla_a}(c)+_C\gamma^\uparrow(c), a\right)=
    I_{\nabla'}\inv\left(T\phi_C(\widehat{\nabla_a}(c))+_CT\phi_C(\gamma^\uparrow(c)), \phi_A(a)\right)\\
    &=I_{\nabla'}\inv\left(\widehat{\nabla'_{\phi_A(a)}}(\phi(c))+_C(\phi_C\gamma+\omega_{\nabla,\nabla'}(a,c))^\uparrow(\phi_C(c)), \phi_A(a)\right)\\
    &=(\phi_A(a),\phi_C(c), \phi_C(\gamma)+\omega_{\nabla,\nabla'}(a,c)).
    \end{split}
  \end{equation}
  This shows that
  $I_{\nabla'}\inv\circ \Phi\circ I_\nabla\colon
  A\times_MC\times_MC\to A'\times_MC'\times_MC'$ is a double vector
  bundle morphism, and consequently that $\Phi$ is a double vector
  bundle morphism.
\end{proof}

The following theorem is then easily proved in decompositions, see Appendix \ref{morphisms_proofs}.
\begin{thm}\label{comma_mor}
  Consider a square of Lie algebroid morphisms as in Figure
  \ref{fig:square}. Then  the morphism 
  \[ \Phi\colon TC\oplus_{TM}A\to TC'\oplus_{TM'} A', \qquad (V,a)\mapsto (T\phi_CV, \phi_A(a))
  \]
  of double vector bundles as in Proposition \ref{mor_dvb}
  is a morphism of the double Lie algebroids constructed in Theorem \ref{main_comma}.
\end{thm}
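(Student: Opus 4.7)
The plan is to verify the morphism condition using the criterion of Section \ref{sec:mor_dla}, which reduces checking that $\Phi$ is a morphism of double Lie algebroids to checking that certain triples define morphisms of the $2$-representations associated to chosen linear splittings. The splittings are already in place from the proof of Proposition \ref{mor_dvb}: pick any $A$-connection $\nabla$ on $C$ and any $A'$-connection $\nabla'$ on $C'$, and use the induced decompositions $I_\nabla$ and $I_{\nabla'}$. From \eqref{Phi_in_decs} the form $\phi\in\Gamma(A^*\otimes C^*\otimes C')$ associated to $\Phi$ in these splittings is already computed explicitly as $\omega_{\nabla,\nabla'}$ defined in \eqref{omega_nabla_nabla'}. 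For the other VB-algebroid structure, the symmetry of the decomposition means that the same form $\omega_{\nabla,\nabla'}$, viewed as an element of $\Gamma(C^*\otimes A^*\otimes C')\simeq \Omega^1(C,\operatorname{Hom}(A,C'))$, plays the analogous role.

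The first step is to verify \eqref{dla_mor_1}, i.e.\ that $(\phi_C,\phi_C,\omega_{\nabla,\nabla'})$ is a morphism of the $2$-representation of $A$ described by \eqref{ruth_A} to the pullback along $\phi_A$ of the corresponding $2$-representation of $A'$. Condition \eqref{comp1} is trivial since both core-anchors are identities. Condition \eqref{comp2} unwinds to the identity $\nabla'_{\phi_A(a)}\phi_C(c)-\phi_C(\nabla_a c)=\omega_{\nabla,\nabla'}(a,c)$, which is precisely the definition \eqref{omega_nabla_nabla'}. Condition \eqref{comp3} reads
\[
\dr_{\nabla^{\operatorname{Hom}}}\omega_{\nabla,\nabla'}(a_1,a_2)(c)=R_{\nabla'}(\phi_A(a_1),\phi_A(a_2))\phi_C(c)-\phi_C\bigl(R_\nabla(a_1,a_2)c\bigr),
\]
which follows from a direct expansion of $\dr_{\nabla^{\operatorname{Hom}}}\omega$ together with the fact that $\phi_A$ is a Lie algebroid morphism (so $\phi_A[a_1,a_2]=[\phi_A(a_1),\phi_A(a_2)]$).

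The second step is to verify \eqref{dla_mor_2}, i.e.\ that $(\phi_A,\phi_C,\omega_{\nabla,\nabla'})$ is a morphism of the $2$-representation of $C$ on $\partial_A\colon C\to A$ described by \eqref{ruth_C}--\eqref{ruth_C_1} to the pullback along $\phi_C$ of the corresponding $2$-representation of $C'$. Here \eqref{comp1} is the hypothesis $\phi_A\circ\partial_A=\partial_{A'}\circ\phi_C$ built into the square, while \eqref{comp2} splits into two identities, one on $\Gamma(C)$ and one on $\Gamma(A)$: both reduce, after substituting the explicit formulas $\nabla^\partial_c c'=[c,c']+\nabla_{\partial c'}c$ and $\nabla^\partial_c a=[\partial c,a]+\partial(\nabla_a c)$, to combinations of the Lie algebroid morphism properties of $\phi_C,\phi_A,\partial_A,\partial_{A'}$ together with the definition of $\omega_{\nabla,\nabla'}$.

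The main obstacle, and the only part that is not entirely mechanical, is verifying condition \eqref{comp3} for the second VB-algebroid structure, because $R^\partial_\nabla$ has the complicated form displayed just before Proposition \ref{lie_algebroid_R_C2}. The strategy here is to push all calculations into the primed side using condition \eqref{comp2} already established (to trade $\nabla$-covariant derivatives for $\nabla'$-ones modulo $\omega_{\nabla,\nabla'}$), and then to recognise the resulting expression as the pullback of $R^\partial_{\nabla'}$ by $\phi_C$. Since the double Lie algebroid axioms for $R$ and $R'$ themselves, proved in Appendix \ref{proof_of_R_lba}, already package the consistency of $R_\nabla$, $R^\partial_\nabla$ and the connections involved, the verification of \eqref{comp3} on the $C$-side is a lengthy but routine computation, entirely analogous to the one on the $A$-side, and for brevity is relegated to Appendix \ref{morphisms_proofs}.
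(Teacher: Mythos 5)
Your proposal follows essentially the same route as the paper's own proof in Appendix \ref{morphisms_proofs}: reduce to the criterion of Section \ref{sec:mor_dla}, identify the connecting form as $\omega_{\nabla,\nabla'}$ from \eqref{Phi_in_decs}, note that \eqref{comp1} and \eqref{comp2} on the $A$-side are immediate from the definitions, check \eqref{comp2} on the $C$-side via the explicit formulas for $\nabla^{\da}$, and carry out the long but routine verification of \eqref{comp3} for the $C$-side $2$-representation (which the paper does in \eqref{long_comp_mor}, using \eqref{comp2} at the final step exactly as you indicate). The argument is correct and matches the paper's.
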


\section{The quotient}\label{sec:quotient}
Let now $A$ and $B$ be Lie algebroids over a common base $M$ and
consider a transitive core diagram for $A$ and $B$ as in Figure
\ref{fig:calC} (see Definition \ref{df:cd}). 
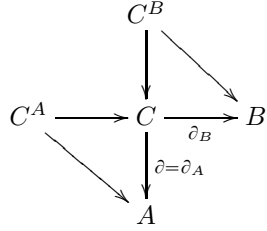
\begin{figure}[h]
$$ 
\xymatrix{
&C^B\ar[d]\ar[dr]&\\
C^A\ar[r]\ar[dr]&C\ar[r]_{\da_B}\ar[d]^{\da=\da_A}&B\\
&A&
}
$$ 
\caption{Setting of Section \ref{sec:quotient}, the transitive core
  diagram $\mathcal C$. \label{fig:calC}}
\end{figure}
Recall the representation $\nabla^A\co A\to\Der(C^A)$ defined in (\ref{eq:cdxm}).
 For $a\in\Ga(A)$,  consider the vector field 
\[\widehat{\nabla^A_a}\in\mx^l(C^A)\] 
corresponding to the derivation $\nabla^A_a$ of $C^A$ over $\rho_A(a)$.

 \begin{lem}\label{useful1}
   In the situation above, choose
   $c\in\Gamma(C)$ and set
   $a=\partial_Ac$.  Then the vector field
   $\Hat{[c,\cdot]}\in\mx^l(C)$ restricts to
   $\widehat{\nabla^A_a}$ on points of
   $C^A$. In other words, if $\iota\colon C^A\hookrightarrow
   C$ is the inclusion, then
   $\widehat{\nabla^A_a}\sim_\iota\Hat{[c,\cdot]}$.
 \end{lem}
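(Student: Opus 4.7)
The plan is to verify the relation $\widehat{\nabla^A_a} \sim_\iota \widehat{[c,\cdot]}$ by checking the defining equality $\xi(F\circ\iota)=\eta(F)\circ\iota$ on a generating set of functions on $C$. Since $C^\infty(C)$ is generated, as a $C^\infty(M)$-algebra, by pullbacks $q_C^*f$ (with $f\in C^\infty(M)$) and linear functions $\ell_\varepsilon$ (with $\varepsilon\in\Gamma(C^*)$), and because the relation $\sim_\iota$ is preserved by products, it suffices to verify the two identities
\[
\widehat{\nabla^A_a}(q_C^*f\circ \iota)= \bigl(\widehat{[c,\cdot]}(q_C^*f)\bigr)\circ \iota, \qquad \widehat{\nabla^A_a}(\ell_\varepsilon\circ \iota)= \bigl(\widehat{[c,\cdot]}(\ell_\varepsilon)\bigr)\circ \iota.
\]

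For the first, I would use that $q_C^*f\circ\iota = q_{C^A}^*f$, and apply the characterization \eqref{ableitungen}: the left-hand side equals $q_{C^A}^*(\rho_A(a)f)$ because $\nabla^A_a$ has symbol $\rho_A(a)$, while the right-hand side equals $q_{C^A}^*(\rho_C(c)f)$ because $[c,\cdot]$ has symbol $\rho_C(c)$. These coincide since $\rho_C=\rho_A\circ\partial_A$, so $\rho_C(c)=\rho_A(a)$. For the second identity, use that $\ell_\varepsilon\circ\iota=\ell_{\iota^*\varepsilon}$ and apply \eqref{ableitungen} again; the claim becomes the pointwise equality of dual derivations $(\nabla^A_a)^*(\iota^*\varepsilon)=\iota^*\bigl([c,\cdot]^*\varepsilon\bigr)$ in $\Gamma((C^A)^*)$. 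Pairing with an arbitrary $\gamma\in\Gamma(C^A)$ and using the definition of the dual derivation gives
\[
\rho_A(a)\langle\varepsilon,\iota\gamma\rangle-\langle\varepsilon,\iota(\nabla^A_a\gamma)\rangle \quad\text{versus}\quad \rho_C(c)\langle\varepsilon,\iota\gamma\rangle-\langle\varepsilon,[c,\iota\gamma]\rangle,
\]
and these agree because $\iota(\nabla^A_a\gamma)=[c,\iota\gamma]$ by the very definition of $\nabla^A$ in Proposition \ref{prop:xm}, together with $\rho_C(c)=\rho_A(a)$.

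No genuine obstacle is expected: the statement is essentially the tautology that the linear vector field attached to a derivation is natural under restriction to an invariant subbundle, and the only thing to verify is that the two notions of ``symbol'' and ``dual derivation'' align under the inclusion $\iota$, which is immediate from $\rho_C=\rho_A\circ\partial_A$ and from the definition $\nabla^A_a\gamma=[c,\gamma]$ lifted through $\iota$. A brief remark will note that the well-definedness of $\nabla^A_a\gamma$ (independent of the choice of lift $c$ of $a$) has already been established in Proposition \ref{prop:xm}, so the formula used here is unambiguous.
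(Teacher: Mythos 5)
Your proof is correct and follows essentially the same route as the paper: both arguments verify the relation on the functions $q_C^*f$ and $\ell_\varepsilon$, using $\rho_C(c)=\rho_A(a)$, the identity $(\nabla^A_a)^*(\iota^t\varepsilon)=\iota^t(\ldr{c}\varepsilon)$, and the defining equation $\nabla^A_a\gamma=[c,\gamma]\in\Gamma(C^A)$ from Proposition \ref{prop:xm}. The only organizational difference is that the paper first checks separately that $\Hat{[c,\cdot]}$ is tangent to $C^A$ (via $\ell_\mu$ for $\mu\in\Gamma((C^A)^\circ)$), whereas in your version this tangency is subsumed by verifying $\iota$-relatedness on a family of functions whose differentials span $T^*C$ at each point, which is fine.
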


\begin{proof}
  Check first that $\Hat{[c,\cdot]}(\gamma_m)\in T_{\gamma_m}C^A$ for
  all $\gamma_m\in C^A$. In order to do this, it is sufficient to show
  $\Hat{[c,\cdot]}(F)=0$ for all $F\in C^\infty(C)$ such that
  $F\an{C^A}$ is constant. Since $C^A\subseteq C$ is a subbundle, it
  suffices to check this on the linear functions
  $\ell_\mu\in C^\infty(C)$ for
  $\mu\in\Gamma\left((C^A)^\circ\right)\subseteq \Gamma(C^*)$.  But
  this is immediate since
\[\Hat{[c,\cdot]}(\ell_\mu)(\gamma)=\rho_C(c)\langle \mu, \gamma\rangle -\langle \mu, [c,\gamma]\rangle=0
\]
for all $\gamma\in\Gamma(C^A)$, since $[c,\gamma]\in\Gamma(C^A)$.

\medskip

 Then
$\iota^*q_C^*f=q_{C^A}^*f$ for all $f\in C^\infty(M)$ and 
$\iota^*\ell_\mu=\ell_{\iota^t\mu}$ for all $\mu\in\Gamma(C^*)$.
The definitions of $\Hat{[c,\cdot]}$ and 
$\widehat{\nabla^A_a}$ yield easily $(\nabla_a^A)^*(\iota^t\mu)=\iota^t\ldr{c}\mu$ and so
\[ \ldr{\widehat{\nabla^A_a}} (\iota^*\ell_{\mu})
  =\ldr{\widehat{\nabla^A_a}} \ell_{\iota^t\mu}=\ell_{(\nabla^A_a)^*\iota^t\mu}=\iota^*\ell_{\ldr{c}\mu}
  =\iota^*(\Hat{[c,\cdot]}(\ell_\mu))\]
for $\mu\in\Gamma(C^*)$.
Similarly, together with $\rho_C(c)=\rho_A(\da c)=\rho_A(a)$, the definitions of $\Hat{[c,\cdot]}$ and 
$\widehat{\nabla^A_a}$ yield
    \[
\ldr{\widehat{\nabla^A_a}} (\iota^*q_C^*f)=\ldr{\widehat{\nabla^A_a}}(q_{C^A}^*f)=q_{C^A}^*(\ldr{\rho_A(a)}(f))=\iota^*q_C^*(\ldr{\rho_C(c)}(f))
=\iota^*(\ldr{\Hat{[c,\cdot]}}(q_C^*f))
\]
for $f\in C^\infty(M)$.
\end{proof}

\medskip

 Define the subset $\Pi\subseteq \ldbl$ by
 \begin{equation}\label{def_pi}
 \Pi:=\left\{\left.\left(\widehat{\nabla^A_{a}} (\gamma_m), a(m)\right)\in TC^A\oplus_{TM} A\,\right|
       a\in \Ga(A),\ \gamma_m\in C^A\right\}.
 \end{equation}
 By  Lemma \ref{useful1}, it equals
     \begin{equation*}  
\left\{\left.\left(\widehat{[c,\cdot]} (\gamma_m), \da c(m)\right)\in TC^A\oplus_{TM} A\,\right|
 c\in \Ga(C),\ \gamma_m\in C^A\right\}.
\end{equation*}

\begin{prop}
  Choose any connection
  $\nabla\colon \Gamma(A)\times\Gamma(C)\to\Gamma(C)$ such that
  $\nabla_a\gamma=\nabla^A_a\gamma$ for all $a\in\Gamma(A)$ and
  $\gamma\in\Gamma(C^A)$.  Then the isomorphism
  $I_\nabla\colon A\times_MC\times_MC\to R$ of double vector bundles
  in Proposition \ref{dvb_R} and Corollary \ref{dvb_R_cor} does
  \[ I_\nabla(A\times_MC^A\times_M0^C)=\Pi.
  \]
  As a consequence, $\Pi$ is a sub-double vector bundle of $R$ with
  sides $A$ and $C^A$ and with trivial core.
\end{prop}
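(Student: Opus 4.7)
The plan is to unpack $I_\nabla$ explicitly on the subset $A\times_M C^A\times_M 0^C$ and match the resulting formula with the definition \eqref{def_pi} of $\Pi$. By Proposition \ref{dvb_R}, for $a\in\Gamma(A)$ and $\gamma_m\in C^A$,
\[
I_\nabla(a(m),\gamma_m,0^C_m)=\left(\widehat{\nabla_a}(\gamma_m)+_C (0^C_m)^\uparrow(\gamma_m),\, a(m)\right)=\left(\widehat{\nabla_a}(\gamma_m),\, a(m)\right),
\]
since the vertical lift of the zero section vanishes. So the only nontrivial point is to identify the vector $\widehat{\nabla_a}(\gamma_m)\in T_{\gamma_m}C$ with $\widehat{\nabla^A_a}(\gamma_m)\in T_{\gamma_m}C^A$, viewed in $TC$ via the inclusion $\iota\colon C^A\hookrightarrow C$.

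For this, I would argue that the linear vector field $\widehat{\nabla_a}\in\mx^l(C)$ is tangent to $C^A$ at every point of $C^A$ and that its restriction there coincides with $\widehat{\nabla^A_a}$. This follows by exactly the argument of Lemma \ref{useful1}: the derivation $\nabla_a$ preserves $\Gamma(C^A)$ (by hypothesis on the extension $\nabla$ of $\nabla^A$), so $\ldr{\widehat{\nabla_a}}\ell_\mu=0$ for all $\mu\in\Gamma((C^A)^\circ)$, showing tangency; and then testing $\iota^*\ell_\mu$ and $\iota^*q_C^*f$ exactly as in the proof of that lemma identifies the restriction with $\widehat{\nabla^A_a}$. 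Alternatively, one can use the explicit formula \eqref{explicit_hat_D} with a section $\gamma\in\Gamma(C^A)$ extending $\gamma_m$: both terms $T_m\gamma(\rho_A(a(m)))$ and $\left.\frac{d}{dt}\right|_{t=0}(\gamma_m-t(\nabla_a\gamma)(m))$ then manifestly lie in $T_{\gamma_m}C^A$ and are intertwined by $T\iota$ with the same formula computed for $\nabla^A_a$.

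Given this identification, the image of $A\times_M C^A\times_M 0^C$ under $I_\nabla$ is exactly the set $\Pi$ as defined in \eqref{def_pi}. For the sub-double vector bundle statement, I would note that $A\times_M C^A\times_M 0^C$ is plainly a sub-double vector bundle of the decomposed double vector bundle $A\times_M C\times_M C$ with sides $A$ and $C^A$ and trivial core (Example \ref{trivial_dvb}). Since $I_\nabla$ is an isomorphism of double vector bundles over the identity on the sides $A$ and $C$, its image $\Pi$ inherits this sub-double vector bundle structure in $R$, with sides $A$ and $C^A$ and trivial core.

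The only point requiring care is the identification of $\widehat{\nabla_a}|_{C^A}$ with $\widehat{\nabla^A_a}$, but this is essentially a recap of Lemma \ref{useful1}, and presents no real obstacle.
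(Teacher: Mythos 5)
Your proposal is correct and follows essentially the same route as the paper: both reduce the statement to the identification $\widehat{\nabla_a}|_{C^A}=\widehat{\nabla^A_a}$ via the argument of Lemma \ref{useful1}, and then read off $I_\nabla(a_m,\gamma_m,0^C_m)=\bigl(\widehat{\nabla^A_a}(\gamma_m),a_m\bigr)\in\Pi$. Your added remarks on the explicit formula \eqref{explicit_hat_D} and on transporting the sub-double-vector-bundle structure of $A\times_M C^A\times_M 0^C$ through the isomorphism $I_\nabla$ are consistent with, and slightly more detailed than, what the paper records.
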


\begin{proof}
  As in the proof of Lemma \ref{useful1}, prove that
  $\mx^l(C^A)\ni\widehat{\nabla^A_a}\sim_\iota\widehat{\nabla_a}\in\mx^l(C)$.
  Then given $(a_m,\gamma_m,0^C_m)\in A\times_MC^A\times_M0^C$ and any $a\in\Gamma(A)$ such that $a(m)=a_m$:
  \[ I_\nabla(a_m,\gamma_m,0^C_m)=\left(\widehat{\nabla_a}(\gamma_m), a_m\right)
    =\left(\widehat{\nabla^A_a}(\gamma_m), a_m\right)\in \Pi\subseteq TC^A\oplus_{TM} A.
    \qedhere\]
  \end{proof}

  The following corollary is immediate, using the notation \eqref{eq:ccheck}.
  \begin{cor}\label{spanning_R_A}
    Choose any connection
  $\nabla\colon \Gamma(A)\times\Gamma(C)\to\Gamma(C)$ such that
  $\nabla_a\gamma=\nabla^A_a\gamma$ for all $a\in\Gamma(A)$ and
  $\gamma\in\Gamma(C^A)$.  Then
the subbundle $\Pi\to A$ of $R\to A$ is spanned by the sections 
\[ \sigma_\nabla^C(\gamma)=\Check\gamma\in\Gamma^l_A(R)
  \]
 for all $\gamma\in\Gamma(C^A)$.
\end{cor}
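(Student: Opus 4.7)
The plan is to verify two things: first, that $\Check\gamma\in\Gamma_A(\Pi)$ whenever $\gamma\in\Gamma(C^A)$; and second, that at every point $a_m\in A$, the evaluation map $\gamma\mapsto \Check\gamma(a_m)$ from $\Gamma(C^A)$ already surjects onto the fiber $\Pi_{a_m}$. Once these are in hand, since $\Pi\to A$ is a smooth vector subbundle of $R\to A$ whose fibers are pointwise filled by these sections, a standard partition-of-unity argument shows that $\{\Check\gamma\mid \gamma\in\Gamma(C^A)\}$ generates $\Gamma_A(\Pi)$ as a $C^\infty(A)$-module.

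For the first step, fix $\gamma\in\Gamma(C^A)$ and $a_m\in A$, and choose any $a\in\Gamma(A)$ with $a(m)=a_m$. By \eqref{eq:ccheck},
\[
\Check\gamma(a_m)=\bigl(\widehat{\nabla_a}(\gamma(m)),\,a_m\bigr).
\]
The hypothesis $\nabla_a\gamma=\nabla^A_a\gamma$ on $\Gamma(C^A)$ means that the derivation $\nabla_a$ of $\Gamma(C)$ preserves $\Gamma(C^A)$ and restricts there to $\nabla^A_a$. A verbatim repetition of the argument of Lemma \ref{useful1}, with the derivation $[c,\cdot]$ replaced by $\nabla_a$, then yields the relatedness $\widehat{\nabla^A_a}\sim_\iota \widehat{\nabla_a}$. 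In particular, $\widehat{\nabla_a}(\gamma(m))=T\iota\bigl(\widehat{\nabla^A_a}(\gamma(m))\bigr)$ lies in the image of $T\iota\colon TC^A\to TC$, so $\Check\gamma(a_m)\in\Pi$.

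For the second step, by \eqref{def_pi} every element of $\Pi_{a_m}$ has the form $\bigl(\widehat{\nabla^A_a}(\gamma_m),a_m\bigr)$ for some $\gamma_m\in C^A_m$. Choosing any section $\gamma\in\Gamma(C^A)$ with $\gamma(m)=\gamma_m$, the computation of the previous paragraph shows that $\Check\gamma(a_m)=\bigl(\widehat{\nabla^A_a}(\gamma_m),a_m\bigr)$. Hence the evaluation $\gamma\mapsto \Check\gamma(a_m)$ is indeed surjective onto $\Pi_{a_m}$.

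There is no real obstacle in this argument: the corollary is essentially an unwinding of the definition \eqref{def_pi} of $\Pi$ together with the explicit formula \eqref{eq:ccheck} for $\Check\gamma$. The only non-trivial input is the relatedness $\widehat{\nabla^A_a}\sim_\iota\widehat{\nabla_a}$, and this is already handled (in a slightly different guise) by the computation used to prove Lemma \ref{useful1}.
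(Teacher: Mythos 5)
Your argument is correct and follows essentially the same route as the paper: the paper deduces the corollary immediately from the preceding proposition, whose proof establishes exactly the relatedness $\widehat{\nabla^A_a}\sim_\iota\widehat{\nabla_a}$ (as in Lemma \ref{useful1}) and the identity $I_\nabla(a_m,\gamma_m,0^C_m)=(\widehat{\nabla^A_a}(\gamma_m),a_m)$ that you use. Your write-up merely makes explicit the pointwise surjectivity and the partition-of-unity step that the paper leaves implicit.
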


\subsection{The quotient of $R$ by $\Pi$.}
This section defines the quotient $R/\Pi$ and shows that it inherits a
double vector bundle structure. In the situation above define $\sim$
on $R$ by
\[ (V,a)\sim (V', a'):\Leftrightarrow a=a' \text{ and } \,\,\exists
  \,W\in TC^A: (W,a)\in \Pi \text{ and } V+_{TM}W=V'.\] Then it is
easy to check that $\sim$ defines an equivalence relation on $R$. Two
pairs $(V,a)$ and $(V', a)\in R$ are equivalent if and only if there
exists $(W,a)\in \Pi$ such that $(V,a)+_A(W,a)=(V',a)$.  Hence, it is
easy to see that $R/_\sim=: R/\Pi$ has a vector bundle structure over
$A$: the quotient of the vector bundle $R\to A$ by its subbundle
$\Pi\to A$. The reader should think about $R/\Pi$ as this
quotient. The following theorem shows that the vector bundle structure
on $R\to C$ induces a vector bundle structure on $R/\Pi\to C/C^A$, such that
$(R/\Pi,A,C/C^A,M)$ becomes a double vector bundle.

\begin{thm}\label{main_thm_quotient}
  \begin{enumerate}
\item  The space $R/\Pi$ inherits from $R$ a double vector bundle structure
  with core $C$ and with side $C/C^A\simeq B$.
\item Choose any connection
  $\nabla\colon \Gamma(A)\times\Gamma(C)\to\Gamma(C)$ such that
  $\nabla_a\gamma=\nabla^A_a\gamma$ for all $a\in\Gamma(A)$ and
  $\gamma\in\Gamma(C^A)$. Then the induced decomposition
  $I_\nabla\colon A\times_MC\times_MC\to R$ induces a decomposition
  \[ \overline{I_\nabla}\colon A\times_M(C/C^A)\times_MC\to R/\Pi,
    \qquad \overline{I_\nabla}(a,\overline c,\gamma):=
    I_\nabla(a,c,\gamma)+_A\Pi.
    \]
\end{enumerate}
\end{thm}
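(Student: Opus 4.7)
The plan is to reduce both statements to a decomposed picture by means of the double-vector-bundle isomorphism $I_\nabla$ from Proposition \ref{dvb_R}. Under $I_\nabla^{-1}$, the double vector bundle $R = TC \oplus_{TM} A$ identifies with the decomposed double vector bundle $A \times_M C \times_M C$ (with sides $A, C$ and core $C$), and by the proposition immediately preceding the theorem, $\Pi$ identifies with the sub-double vector bundle $A \times_M C^A \times_M 0$. The defining equivalence $\sim$ on $R$ uses the addition $+_A$, which in the decomposition acts fibrewise on the second and third factors; since $\Pi$ contributes only to the second factor, $\sim$ reduces to $(a, c, \gamma) \sim (a, c + \kappa, \gamma)$ for $\kappa \in C^A$ over the same base point. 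Hence $R/\Pi$ is in canonical bijection with $A \times_M (C/C^A) \times_M C$, which carries the obvious decomposed double vector bundle structure with sides $A$ and $C/C^A \simeq B$ (the isomorphism arising from the surjection $\da_B \colon C \to B$ with kernel $C^A$) and core $C$.

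Transporting this structure back along $I_\nabla$ yields the double vector bundle structure on $R/\Pi$ claimed in part (1), and the map $\overline{I_\nabla}(a, \overline{c}, \gamma) := [I_\nabla(a, c, \gamma)]$, for any lift $c \in C$ of $\overline c \in C/C^A$, is the desired decomposition of part (2). Well-definedness follows from the identity $I_\nabla(a, c + \kappa, \gamma) = I_\nabla(a, c, \gamma) +_A I_\nabla(a, \kappa, 0)$ together with $I_\nabla(a, \kappa, 0) \in \Pi$ for $\kappa \in C^A$, and bijectivity is inherited from that of $I_\nabla$. Compatibility with the side projections and with the core is straightforward to read off the decomposition.

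The main obstacle is showing that this structure on $R/\Pi$ is intrinsic, i.e.\ independent of $\nabla$, and in particular that the vector bundle structure $R \to C$ descends along the quotient projection $R \to R/\Pi$ covering $C \to C/C^A$. The vector bundle $R/\Pi \to A$ is plainly the quotient of $R \to A$ by the sub-vector bundle $\Pi \to A$, so that side is immediate. For the other side, the projection $[V, a] \mapsto \overline{p_C(V)}$ is intrinsic, but $\sim$ is defined only via $+_A$, so compatibility with $+_C$ is not automatic. The decomposed picture above resolves this cleanly, since $\sim$ modifies only the second factor and only within $C^A$, which is closed under the fibrewise linear structures over $C$. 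Independence from $\nabla$ then follows, as two admissible connections yield decompositions that differ by a linear automorphism of $A \times_M (C/C^A) \times_M C$ inducing the same double vector bundle structure on the quotient.
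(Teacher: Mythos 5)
Your argument is correct, and its engine is the same as the paper's: everything is read off from the decomposition $I_\nabla$ of Proposition \ref{dvb_R} together with the identification $I_\nabla(A\times_MC^A\times_M0)=\Pi$, under which $+_A$-translation by $\Pi$ only shifts the middle factor by $C^A$, so that $R/\Pi$ becomes $A\times_M(C/C^A)\times_MC$. Where you diverge is in how the vector bundle structure over $B\simeq C/C^A$ is legitimised. The paper first introduces the involutive subbundle $F_C\subseteq TC$ and the flat partial connection $\nabla^i$ on $R\to C$, proves that the resulting infinitesimal-ideal-system equivalence $\sim_{\nabla^i}$ coincides with $\sim$ (Proposition \ref{eq_of_eqs}), and uses parallel representatives to define $\pi_B$ and $+_B$ intrinsically before checking that $\overline{I_\nabla}$ respects them; you instead transport the decomposed structure through $\overline{I_\nabla}$ and then argue independence of the choice of $\nabla$, noting that two admissible connections differ by an element of $\Omega^1(A,(C^A)^\circ\otimes C)$, whence the induced change of decomposition of $A\times_M(C/C^A)\times_MC$ is a genuine automorphism of the decomposed double vector bundle and the transported structures agree. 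Your route is shorter and self-contained for the double-vector-bundle statement, and your observation that $\sim$ only moves the middle factor within $C^A$ is exactly the right reason the $+_C$-structure descends (after re-choosing representatives over a common point of $C$, as the paper also does). What the paper's detour buys is the infinitesimal ideal system $(F_C,0,\nabla^i)$ itself, which is not disposable scaffolding: it is reused verbatim to descend the Lie algebroid structure $R\to C$ to $R/\Pi\to B$ in Corollary \ref{quotient_IM}, and it makes Corollary \ref{cor_pi_mor_dvb} (that $R\to R/\Pi$ is a morphism of double vector bundles) immediate. If you wanted your version to feed into the rest of Section \ref{sec:quotient}, you would still have to introduce $\nabla^i$ at that later stage.
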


The remainder of this section proves this theorem.  Define
$F_C\subseteq TC$ as the subbundle over $C$ spanned by the vertical
vector fields corresponding to sections of $C^A$, i.e. for all
$c_m\in C$:
 \[F_C(c_m)=\left\{\gamma^\uparrow(c_m)\mid \gamma\in\Gamma(C^A)\right\}
 \]
 and so
 \[
   \Gamma(F_C)=\operatorname{span}_{C^\infty(C)}\left\{\gamma^\uparrow\mid
     \gamma\in\Gamma(C^A)\right\}.\] The subbundle $F_C\subseteq TC$
 is involutive and simple since its leaf space is $C/C^A\simeq B$.

 Choose a linear $A$-connection
 $\nabla\colon \Gamma(A)\times\Gamma(C)\to\Gamma(C)$ that restricts to
 $\nabla^A$ on $\Gamma(C^A)$, and consider the induced sections
 $\widehat a\in\Gamma_C^l(R)$ for all $a\in\Gamma(A)$, as in
 \eqref{eq:ahat}. Recall also from \eqref{gamma_times} that each
 section $\gamma\in\Gamma(C)$ defines a core section
 $\gamma^\times\in\Gamma^c_C(R)$.
 \begin{prop}
   In the situation above, there is a unique flat connection
\[ \nabla^i\colon\Gamma_C(F_C)\times\Gamma_C(R)\to\Gamma_C(R),
\]
such that
\[ \nabla^i\widehat a=0=\nabla^i\gamma^\times
\]
for all $a\in\Gamma(A)$ and all $\gamma\in\Gamma(C)$. The connection
$\nabla^i$ does not depend on the choice of $\nabla$ extending
$\nabla^A$.
\end{prop}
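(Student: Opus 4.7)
The plan is to fix any connection $\nabla\co\Ga(A)\times\Ga(C)\to\Ga(C)$ extending $\nabla^A$ and to use the induced decomposition $I_\nabla$ of Proposition \ref{dvb_R} to identify $R\to C$ with the pullback of $A\oplus C\to M$ along $q_C\co C\to M$. Under this identification $\widehat a$ corresponds to the pullback section $(q_C^*a,0)$ and $\gamma^\times$ to $(0,q_C^*\gamma)$, and together these span $\Ga_C(R)$ as a $C^\infty(C)$-module. The existence of an extension $\nabla$ is standard (choose any complement of $C^A$ in $C$ and use a partition of unity), so this setup is always available.

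\textbf{Existence and uniqueness.} A pullback vector bundle carries a canonical flat partial connection along the involutive vertical subbundle $\ker Tq_C\subseteq TC$, characterised by the requirement that pullback sections be parallel. Since $F_C\subseteq\ker Tq_C$, restriction to $F_C$ yields a flat $F_C$-connection $\nabla^i$ on $R$ with $\nabla^i\widehat a=0=\nabla^i\gamma^\times$. Uniqueness follows immediately from the Leibniz rule together with the spanning statement of the previous paragraph.

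\textbf{Independence of $\nabla$.} Let $\nabla'$ be a second extension of $\nabla^A$ and set $\phi:=\nabla'-\nabla\in\Om^1(A,\operatorname{End}(C))$. Since $\nabla$ and $\nabla'$ agree on $\Ga(C^A)$, $\phi(a)(\gamma)=0$ for all $a\in\Ga(A)$ and $\gamma\in\Ga(C^A)$. A direct calculation gives $\widehat{a}' = \widehat{a}+\widetilde{\phi(a)}$ in the $\nabla$-decomposition, where $\widetilde{\phi(a)}$ is the core-linear section corresponding to $\phi(a)\in\Ga(C^*\otimes C)$. Along the flow $c_m\mapsto c_m+t\gamma(m)$ of $\gamma^\uparrow$ with $\gamma\in\Ga(C^A)$, the core component of $\widetilde{\phi(a)}$ reads $\phi(a)(c_m+t\gamma(m)) = \phi(a)(c_m) + t\phi(a)(\gamma(m)) = \phi(a)(c_m)$, which is constant in $t$. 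Therefore $\widetilde{\phi(a)}$ is $\nabla^i$-flat, hence so is $\widehat{a}'$. The core sections $\gamma^\times$ do not depend on the choice of $\nabla$, so by the uniqueness already proved the $F_C$-connection built from $\nabla'$ coincides with $\nabla^i$.

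The main obstacle is this final independence step: the decomposition $I_\nabla$ and the description of the spanning sections $\widehat a$ both depend on $\nabla$, so different extensions of $\nabla^A$ \emph{a priori} give different prescriptions. The computation succeeds precisely because the difference tensor $\phi$ vanishes on $C^A$, which makes the core-linear correction $\widetilde{\phi(a)}$ invisible to parallel transport along $F_C$.
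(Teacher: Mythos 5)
Your proof is correct and follows essentially the same route as the paper: you define $\nabla^i$ by declaring the spanning sections $\widehat a$ and $\gamma^\times$ parallel, justify consistency via the fact that $F_C$ is tangent to the fibres of $q_C$ (packaged as the canonical flat partial connection of the pullback bundle $q_C^!(A\oplus C)$ where the paper instead checks well-definedness on the module relations by hand), and derive independence of $\nabla$ from the vanishing of the difference tensor $\phi=\nabla'-\nabla$ on $C^A$, which makes the core-linear correction $\widetilde{\phi(a)}$ parallel. Your flow computation for that last point is the paper's observation, in dual form, that $\phi(a)\in\Gamma\left((C^A)^\circ\otimes C\right)$ and that the linear functions $\ell_\nu$ with $\nu\in\Gamma\left((C^A)^\circ\right)$ are $F_C$-invariant.
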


\begin{proof}
  The space of sections of the vector bundle $R\to C$ is spanned as a
  $C^\infty(C)$-module by sections of the form $\widehat{a}$ and
  $\gamma^\times$ for $a\in\Gamma(A)$ and $\gamma\in\Gamma(C)$. Define
  \[\nabla^i\colon\Gamma_C(F_C)\times\Gamma_C(R)\to\Gamma_C(R)
  \]
  by
  \[ \nabla^i_\chi\left(\sum_{j=1}^sF_j\widehat{a_j}+\sum_{l=1}^tG_l\gamma_l^\times\right)
    =\sum_{j=1}^s\ldr{\chi}(F_j)\cdot \widehat{a_j}+\sum_{l=1}^t\ldr{\chi}(G_l)\cdot\gamma_l^\times
  \]
  for $F_1,\ldots,F_s,G_1,\ldots,G_t\in C^\infty(C)$,
  $a_1,\ldots,a_s\in\Gamma(A)$ and $\gamma_1,\ldots,\gamma_t\in\Gamma(C)$.

  For $f\in C^\infty(M)$, $a\in\Gamma(A)$ and $\gamma\in\Gamma(C)$
  \[ \widehat{fa}=q_C^*f\cdot\widehat{a} \qquad \text{ and }\qquad (f\gamma)^\times =q_C^*f\cdot\gamma^\times.
  \]
  Hence $\nabla^i$ is well-defined if
  $\nabla^i_\cdot
  \widehat{fa}=\nabla^i_\cdot\left(q_C^*f\cdot\widehat{a}\right)$ and
  $\nabla^i_\cdot
  (f\gamma)^\times=\nabla^i_\cdot\left(q_C^*f\cdot\gamma^\times\right)$
  for all $f\in C^\infty(M)$, $a\in\Gamma(A)$ and
  $\gamma\in\Gamma(C)$.  On the one hand,
  $\nabla^i_\chi\widehat{fa}=0=\nabla^i_\chi(f\gamma)^\times$ and on
  the other hand
  \[\nabla^i_\chi\left(q_C^*f\cdot\widehat{a}\right)=\ldr{\chi}(q_C^*f)\cdot\widehat{a}=0
  \]
  and
  \[\nabla^i_\chi\left(q_C^*f\cdot\gamma^\times\right)=\ldr{\chi}(q_C^*f)\cdot\gamma^\times=0
  \]
  since $\chi\in\Gamma(F_C)$ and $F_C\subseteq T^{q_C}C$.

  The connection $\nabla^i$ is obviously flat with the required flat
  sections. Let $\nabla'\colon\Gamma(A)\times\Gamma(C)\to\Gamma(C)$ be
  a second extension of
  $\nabla^A\colon\Gamma(A)\times\Gamma(C^A)\to\Gamma(C^A)$.  Then
  $\phi:=\nabla-\nabla'\in\Omega^1(A,(C^A)^\circ\otimes C)$ does
  $\sigma_A^{\nabla'}(a)=\sigma_A^\nabla(a)-\widetilde{\varphi(a)}=\widehat{a}-\widetilde{\varphi(a)}$
  for all $a\in\Gamma(A)$.  Writing
  $\varphi(a)=\sum_{j=1}^k\nu_j\otimes c_j$ with
  $\nu_1,\ldots,\nu_k\in\Gamma((C^A)^\circ)$ and
  $c_1,\ldots,c_k\in\Gamma(C)$ yields
  \[\nabla^i_\chi\left(\sigma_A^{\nabla'}(a)\right)=\nabla^i_\chi\left(\widehat{a}-\widetilde{\varphi(a)}\right)
    =\nabla^i_\chi\left(\sum_{j=1}^k\ell_{\nu_j}\cdot c_j^\times\right)=\sum_{j=1}^k\ldr{\chi}(\ell_{\nu_j})\cdot c_j^\times=0
  \]
  since $\chi\in\Gamma(F_C)$ and $\ell_{\nu_j}\in C^\infty(C)^{F_C}$.
  \end{proof}

 The last proposition is reformulated as follows.
  \begin{cor}
    The triple $(F_C,0,\nabla^i)$ is an infinitesimal ideal system in
    the vector bundle $R\to C$ considered as a trivial Lie algebroid.
  \end{cor}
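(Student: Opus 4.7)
The plan is to observe that this corollary is an immediate instance of Example \ref{geom_red}, which already covers exactly the situation of a flat connection along an involutive subbundle defining an infinitesimal ideal system in the underlying vector bundle viewed as a trivial Lie algebroid. The task reduces to checking that the triple $(F_C, 0, \nabla^i)$ fits this template.

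First I would verify the data. The involutive subbundle $F_C \subseteq TC$ has already been identified as involutive in the paragraph preceding the statement, where it is even shown to be simple with leaf space $C/C^A \simeq B$. The subbundle of $R$ in question is the zero subbundle $0 \subseteq R$, which is trivially a smooth subbundle; the anchor compatibility $\rho(0) \subseteq F_C$ is automatic because the anchor of the trivial Lie algebroid $R \to C$ vanishes. Finally, the preceding proposition established that $\nabla^i$ is a well-defined flat $F_C$-connection on $R/0 \cong R$.

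The two remaining axioms in Definition \ref{iis_def} are vacuous in this setting. Axiom (1) asserts that for every $\nabla^i$-parallel $r \in \Gamma(R)$ and every $j \in \Gamma(J)$ the bracket $[r,j]$ lies in $\Gamma(J)$; since $J = 0$, the only section of $J$ is the zero section and the condition holds trivially. Axiom (2) requires $[r_1, r_2]$ to be $\nabla^i$-parallel whenever $r_1, r_2$ are, and this is immediate because the bracket on the trivial Lie algebroid $R \to C$ is identically zero, and $\nabla^i$ annihilates the zero section by $\R$-linearity.

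There is no genuine obstacle here: the whole substance of the corollary is the flatness of $\nabla^i$ and the involutivity of $F_C$, both of which have been established above. The corollary is therefore just a repackaging of the preceding proposition in the language of infinitesimal ideal systems, which will allow Theorem \ref{red_lie_alg}(1) to be applied in the next step in order to produce the quotient vector bundle $(R/0)/\nabla^i \to C/F_C \cong B$ carrying the structure of $R/\Pi$ announced in Theorem \ref{main_thm_quotient}.
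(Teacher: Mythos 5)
Your proposal is correct and matches the paper's intent exactly: the paper offers no separate proof, stating only that the corollary is a reformulation of the preceding proposition via the observation (Example \ref{geom_red}) that a flat connection along an involutive subbundle is automatically an infinitesimal ideal system in the underlying trivial Lie algebroid. Your explicit verification that the two axioms of Definition \ref{iis_def} are vacuous for $J=0$ and zero bracket simply spells out what the paper leaves implicit.
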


  The quotient $R/\nabla^{i}\to C/F_C$ is construted as follows.  Two
  vectors $(V,a)$ and $(V',a')\in R$ are equivalent if $c_m=p_C(V)$
  and $c_m'=p_C(V')$ lie in the same leaf of $F_C$, i.e. they can be
  joined by an integral path of $F_C$ and there exists a parallel
  section along this path taking values $(V,a)$ at the point $c_m$ and
  $(V',a')$ at $c_m'$. Note that here $c_m=p_C(V)$ and $c_m'=p_C(V')$
  lie in the same leaf of $F_C$ if an only if $c_m-c_m'\in C^A$.
  Recall as well that $(V,a)$ and $(V',a') \in R$ project to the same
  element in $R/\Pi$ if
 \[a=a', \qquad p_C(V)-p_C(V')\in C^A \quad \text{ and }\quad
 V-_{TM}V'=\widehat{\nabla_a^A}(p_C(V)-p_C(V')).
 \]

  \begin{prop}\label{eq_of_eqs}
    In the situation above, the two equivalence relations $\sim$ and
    $\sim_{\nabla^i}$ coincide.
    \end{prop}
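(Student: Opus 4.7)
The plan is to fix an extension $\nabla\co\Gamma(A)\times\Gamma(C)\to\Gamma(C)$ of $\nabla^A$ as in Theorem~\ref{main_thm_quotient} and to transport both equivalence relations to the decomposed double vector bundle $A\times_M C\times_M C$ via the isomorphism $I_\nabla$ of Proposition~\ref{dvb_R}; it will then suffice to verify that they produce the same relation there. The only data I will need in the decomposition are the identity $I_\nabla\inv(\Pi)=A\times_M C^A\times_M\{0\}$ established above, together with the two readings $\widehat a(c_m)\longleftrightarrow (a(m),c_m,0)$ and $\gamma^\times(c_m)\longleftrightarrow (0^A_m,c_m,\gamma(m))$ of the lifts and core sections that define $\nabla^i$.

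The relation $\sim$ unwinds immediately: a short application of the DVB interchange law gives $(a_0,c,\gamma)+_A(a_0,\beta,0)=(a_0,\,c+\beta,\,\gamma)$ in the decomposition, so that $(a_0,c,\gamma)\sim(a_0',c',\gamma')$ if and only if $a_0=a_0'$, $\gamma=\gamma'$ and $c-c'\in C^A_m$ for the common base point $m=q_A(a_0)$. For $\sim_{\nabla^i}$, first observe that each leaf of $F_C$ is the affine subspace $c_m+C^A_m$ inside a fibre $C_m$; such leaves are convex, hence simply connected, so $\nabla^i$-parallel transport along a path in a leaf is path-independent. By the definition of $\nabla^i$, any $\nabla^i$-flat section along a path $t\mapsto c(t)$ in a leaf is a combination $\sum_i F_i\,\widehat{a_i}+\sum_j G_j\,\gamma_j^\times$ whose coefficients $F_i,G_j\in \cinf{C}$ are $F_C$-basic, and therefore constant along $c(t)$. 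Translating via $I_\nabla$, such a flat section takes the form $(a_0,c(t),\gamma_0)$ with $a_0\in A_m$ and $\gamma_0\in C_m$ independent of $t$, so $(a_0,c,\gamma)\sim_{\nabla^i}(a_0',c',\gamma')$ holds exactly when $a_0=a_0'$, $\gamma=\gamma'$ and $c-c'\in C^A_m$, which is the same relation as $\sim$.

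The main obstacle, such as it is, is keeping the various additions $+_A,+_C$ in $R$ and $+_{TM},+_C$ in $TC$ straight while reading off the decomposed form of a $\nabla^i$-parallel section. Once one observes that the coefficients in a $\nabla^i$-parallel combination of the $\widehat{a_i}$'s and $\gamma_j^\times$'s must be $F_C$-basic and hence restrict to constants on any given leaf of $F_C$, the equality of the two relations follows, and the proof reduces to the two short decomposition computations sketched above.
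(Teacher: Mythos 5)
Your proof is correct and follows essentially the same route as the paper's: both arguments reduce to the facts that $\Pi$-translates shift only the $C$-component by $C^A$, and that $\nabla^i$-flat sections are combinations of the $\widehat{a_i}$ and $\gamma_j^\times$ with leaf-constant coefficients, hence read as $(a_0,c,\gamma_0)$ with fixed $a_0,\gamma_0$ along a leaf. Phrasing everything in the decomposition $I_\nabla$ rather than with $V-_{TM}V'$ directly is only a cosmetic difference, and your parenthetical that leaves of $F_C$ are affine (so parallel transport is path-independent) is a pleasant extra check.
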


\begin{proof}
  Assume first that $(V,a)$ and $(V',a')$ are equivalent via
  $\sim_{\nabla^i}$. Let $c_m=p_C(V)$ and $d_{m'}=p_C(V')$.  Since
  $c_m$ and $d_{m'}$ are in the same leaf of $F_C$, they must be in
  the same fiber of $q_C\colon C\to M$ and hence $m=m'$.  Furthermore
  $c_m-d_m\in C^A(m)$ and there exists without loss of generality
one $\nabla^{i}$-parallel
  section $\chi$ of $R\to C$ such that $\chi(c_m)=(V, a)$ and
  $\chi(d_m)=(V', a')$.  Write
\[\chi=\sum_{i=1}^kF_i\widehat{a_i} +\sum_{j=1}^lG_jc_j^\times
\]
with $a_1,\ldots,a_k\in\Gamma(A)$, $c_1,\ldots,c_l\in\Gamma(C)$ and 
$F_1,\ldots,F_k,G_1,\ldots,G_l\in C^\infty(C)^{F_C}$.
Since the functions $F_i,G_j$ are all $F_C$-invariant, 
$F_i(c_m)=F_i(d_m)=:\alpha_i$ and $G_j(c_m)=G_j(d_m)=\beta_j$
for $i=1,\ldots,k$ and $j=1,\ldots,l$.

Thus
\begin{align*}
  (V,a)&=\sum_{i=1}^k\alpha_i\widehat{a_i}(c_m) +\sum_{j=1}^l\beta_jc_j^\times(c_m)=\left(\sum_{i=1}^k\alpha_i\widehat{\nabla_{a_i}}(c_m)+\sum_{j=1}^l\beta_jc_j^\uparrow(c_m), 
         \sum_{i=1}^k\alpha_ia_i(m)\right) 
\end{align*}
and 
\begin{align*}
  (V',a')&=\sum_{i=1}^k\alpha_i\widehat{a_i}(d_m) +\sum_{j=1}^l\beta_jc_j^\times(d_m)=\left(\sum_{i=1}^k\alpha_i\widehat{\nabla_{a_i}}(d_m)+\sum_{j=1}^l\beta_jc_j^\uparrow(d_m), 
           \sum_{i=1}^k\alpha_ia_i(m)\right) 
\end{align*}
This implies $a=\sum_{i=1}^k\alpha_ia_i(m)=a'$ and the difference
$V-_{TM}V'$ in the fiber of $TC\to TM$ over $\rho_A(a)$ equals by the interchange law
\begin{align*}
  V-_{TM}V'&=\left(\sum_{i=1}^k\alpha_i\widehat{\nabla_{a_i}}(c_m)-_{TM}\sum_{i=1}^k\alpha_i\widehat{\nabla_{a_i}}(d_m)\right)+_C\left(\sum_{j=1}^l\beta_jc_j^\uparrow(c_m)
  -_{TM}\sum_{j=1}^l\beta_jc_j^\uparrow(d_m)\right)\\
&=\left(\sum_{i=1}^k\alpha_i\Hat{\nabla_{a_i}}(c_m-d_m)\right)+_{C}0^{TC}_{c_m-d_m}=\left(\sum_{i=1}^k\alpha_i\Hat{\nabla^A_{a_i}}(c_m-d_m)\right)=\Hat{\nabla_a^A}(c_m-d_m).
\end{align*}
This shows that $(V,a)$ and $(V',a')$ are equivalent in $R/\Pi$.

\medskip

Conversely, consider $(V,a_m)$ and $(V',a_m)\in R$, set $c_m=p_C(V)$
and $d_{m}=p_C(V')$ and choose a section $a\in\Gamma(A)$ with
$a(m)=a_m$.  If $(V,a_m)$ and $(V',a_m)$ are equivalent in $R/\Pi$,
i.e.~if $c_m-d_m\in C^A(m)$ and $V-V'=\Hat{\nabla^A_a}(c_m-d_m)$,
then, since $T{q_C}V=\rho_A(a_m)=T{q_C}\Hat{\nabla_a}(c_m)$, there
exists $\gamma\in\Gamma(C)$ such that
$V=\Hat{\nabla_a} (c_m)+\gamma^\uparrow(c_m)$.  In the same manner,
there is a section $\gamma'\in\Gamma(C)$ such that
$V'=\Hat{\nabla_a} (d_m)+{\gamma'}^\uparrow(d_m)$.  Furthermore, since
$V-V'=\Hat{\nabla_a^A}(c_m-d_m)=\Hat{\nabla_a} (c_m-d_m)$ by the
choice of $\nabla$, $V-V'=\Hat{\nabla_a} (c_m)-\Hat{\nabla_a} (d_m)$
and hence $\gamma(m)=\gamma'(m)$.  Set
$\chi=\widehat{a}+\gamma^\times$.  Then $\chi$ is a
$\nabla^{i}$-parallel section of $R\to C$ and
\[(V,a_m)=\chi(c_m)\qquad \text{ and }\qquad (V',a_m)=\chi(d_m).
\qedhere\] 
\end{proof}

The previous proposition shows that there is a well-defined
projection
\[ \pi_B\colon R/\Pi\to B, \qquad (V,a)+\Pi\mapsto \da_B(p_C(V))
\]
and
a well-defined addition
\[ +_B\colon (R/\Pi)\times_B(R/\Pi)\to R/\Pi
\]
in the fibers of $\pi_B$ which is defined as follows.  If $(V,a_m)$
and $(V',a'_m)\in R$ are such that
$\pi_B((V,a_m)+\Pi)=\da_B(p_C(V))=\da_B(p_C(V'))=\pi_B((V',a'_m)+\Pi)$,
then $p_C(V)-p_C(V')\in C^A_m$.  Set $c_m=p_C(V)$ and $c'_m=p_C(V')$. Choose a section
$a'\in\Gamma(A)$ such that $a'(m)=a'_m$, and $\gamma\in\Gamma(C)$ such
that $(V',a'_m)=\widehat{a'}(c_m')+_C\gamma^\times(c_m')$.
Since $\widehat{a'}$ and $\gamma^\times$ are $\nabla^i$-flat and $c_m-c_m'\in C^A$, the vectors
$(V',a')$ and $(\widehat{a'}(c_m)+\gamma^\times(c_m)$ are $\nabla^i$-equivalent and
so
\[(V',a')\sim \left(\widehat{a'}(c_m)+\gamma^\times(c_m), a'\right)
\]
by Proposition \ref{eq_of_eqs}. Then set
\[  ((V,a)+_A\Pi)+_B((V',a')+_A\Pi):=\Bigl((V,a)+_C\left((\widehat{a'}(c_m)+_C\gamma^\times(c_m)\right)\Bigr)+_A\Pi.
\]

\begin{proof}[Proof of Theorem \ref{main_thm_quotient}]
  Choose as before a connection
  $\nabla\colon \Gamma(A)\times\Gamma(C)\to\Gamma(C)$ such that
  $\nabla_a\gamma=\nabla^A_a\gamma$ for all $a\in\Gamma(A)$ and
  $\gamma\in\Gamma(C^A)$.  Consider the induced decomposition
  $I_\nabla\colon A\times_MC\times_MC\to R$. Since
  $I_\nabla(a,c,0)\in \Pi$ for $(a,c,0)\in A\times_MC^A\times_M0$, the
  morphism
  \begin{equation*}
\begin{xy}
\xymatrix{
A\times_M C \times_M C\ar[r]^{\qquad I_\nabla}\ar[d]_{\pi_A}& R\ar[d]^{\pi_A}\\
A\ar[r]_{\id_A} & A}
\end{xy}
\end{equation*}
of vector bundles over $A$
factors to a  morphism
 \begin{equation*}
\begin{xy}
\xymatrix{
A\times_M C/C^A \times_M C\ar[r]^{\qquad \overline{I_\nabla}}\ar[d]_{\pi_A}& R/\Pi\ar[d]^{\pi_A}\\
A\ar[r]_{\id_A} & A}
\end{xy}
\end{equation*}
which is given by
  \[ \overline{I_\nabla}\colon A\times_M(C/C^A)\times_MC\to R/\Pi,
    \qquad \overline{I_\nabla}(a,\overline c,\gamma):=
    I_\nabla(a,c,\gamma)+_A\Pi.
  \]
  Since $I_\nabla(A\times_MC^A\times_M0)=\Pi$, the reduced morphism
  $\overline{I_\nabla}$ is again an isomorphism of vector bundles over
  $A$.

  Consider $b_m=\da_Bc_m=\da_B c'_m\in C/C^A$,
  $\gamma,\gamma'\in \Gamma(C)$ and $a,a'\in \Gamma(A)$.  Then
  \begin{equation*}
    \begin{split}
      &\overline{I_\nabla}(a(m),\overline{c_m},\gamma_m)+_B\overline{I_\nabla}(a'(m),\overline{c_m'},\gamma'_m)\\
    =\,&\left(\left(\widehat{a}(c_m)+_C\gamma^\times(c_m)\right)+_A\Pi\right)+_B\left(\left(\widehat{a'}(c'_m)+_C{\gamma'} ^\times(c_m')\right)+_A\Pi\right)\\
    =\,&\left(\left(\widehat{a}(c_m)+_C\gamma^\times(c_m)\right)+_A\Pi\right)+_B\left(\left(\widehat{a'}(c_m)+_C{\gamma'}^\times(c_m)\right)+_A\Pi\right)\\
    =\,&\left(\widehat{a+a'}(c_m)+_C(\gamma+\gamma')^\times(c_m)\right)+_A\Pi=\overline{I_\nabla}(a(m)+a'(m),\overline{c_m},\gamma(m)+\gamma'(m)).
  \end{split}
\end{equation*}
Hence $\overline{I_\nabla}$ is compatible with the addition over
$B\simeq C/C^A$ in $A\times_M C/C^A\times_MC$ and with the addition
$+_B$ in $R/\Pi$. Since $\overline{I_\nabla}$ is bijective and linear
over $A$ as well, $R/\Pi$ is a double vector bundle,
 of which $\overline{I_\nabla}$ is a decomposition.
\end{proof}

From now on the quotient $C/ C^A$ is identified with $B$ via the
factorisation of the morphism $\da_B\colon C\to B$ of vector
bundles. The linear splitting
$\overline{I_\nabla}\colon A\times_MB\times_MC\to R/\Pi$ defines
a linear horizontal lift $\overline{\sigma^\nabla_B}\colon \Gamma(B)\to\Gamma^l_A(R/\Pi)$: for all $b\in\Gamma(B)$ and $a_m\in A$
\begin{equation}
  \begin{split}
    \widecheck{b}(a_m):= \overline{\sigma^\nabla_B}(b)(a_m)&=\overline{I_\nabla}(a_m,b(m),0^C_m)=I_\nabla(a_m,c(m),0^C_m)+_A\Pi=\left(\widehat{\nabla_a}(c(m)), a(m)\right)+_A\Pi\\
    &=\widecheck{c}(a_m)+\Pi
  \end{split}
\end{equation}
for any $a\in\Gamma(A)$ such that $a(m)=a_m$ and $c\in\Gamma(C)$ such that $\da_Bc=b$.
The core sections of $R/\Pi\to A$ are $\gamma^\ddagger\in\Gamma_A^c(R/\Pi)$ defined by sections $\gamma\in\Gamma(C)$: for $a_m\in A$
\begin{equation}
  \begin{split}
    \gamma^\ddagger(a_m)&=\overline{I_\nabla}(a_m,0^B_m,\gamma(m))=I_\nabla(a_m,0^C,\gamma(m))+_A\Pi=\left(\gamma^\dagger(\rho_A(a_m)), a_m\right)+_A\Pi\\
    &=\gamma^\ddagger(a_m)+_A\Pi.
 \end{split}
\end{equation}
Note that the same notation is used for
$\gamma^\ddagger\in\Gamma_A^c(R/\Pi)$ and
$\gamma^\ddagger\in\Gamma_A^c(R)$. It is always clear from the context
which section is meant by the notation.

The linear splitting
$\overline{I_\nabla}\colon A\times_MB\times_MC\to R/\Pi$ defines as well
a linear horizontal lift $\overline{\sigma^\nabla_A}\colon \Gamma(A)\to\Gamma^l_B(R/\Pi)$: for all $a\in\Gamma(B)$ and $b_m\in B$
\begin{equation}\label{sec_a_red}
  \begin{split}
    \widehat{a}(b_m):= \overline{\sigma^\nabla_A}(a)(b_m)&=\overline{I_\nabla}(a(m),b_m,0^C_m)=I_\nabla(a(m),c_m,0^C_m)+_A\Pi
    =\left(\widehat{\nabla_a}(c_m), a(m)\right)+_A\Pi\\
    &=\widehat{a}(c_m)+_A\Pi
  \end{split}
\end{equation}
for any  $c_m\in C$ such that $\da_Bc_m=b_m$.
The core sections of $R/\Pi\to B$ are $\gamma^\times\in\Gamma_B^c(R/\Pi)$ defined by sections $\gamma\in\Gamma(C)$: for $b_m\in B$ and any $c_m\in C$ such that $\da_Bc_m=b_m$
\begin{equation}\label{sec_gamma_red}
  \begin{split}
    \gamma^\times(b_m)&=\overline{I_\nabla}(0^A_m,b_m,\gamma(m))=I_\nabla(0^A_m,c_m,\gamma(m))+_A\Pi=\left(\gamma^\uparrow(c_m), 0^A_m\right)+_A\Pi\\
    &=\gamma^\times(c_m)+_A\Pi.
 \end{split}
\end{equation}

The following corollary of Theorem \ref{main_thm_quotient} is
immediate.

\begin{cor}\label{cor_pi_mor_dvb}
  In the situation of Theorem \ref{main_thm_quotient}, the map
  $\pi\colon R\to R/\Pi$ is a double vector bundle morphism with sides
  $\id_A$ and $\da_B$, and with core $\id_C$.

  More precisely, choose a connection
  $\nabla\colon \Gamma(A)\times\Gamma(C)\to\Gamma(C)$ such that
  $\nabla_a\gamma=\nabla^A_a\gamma$ for all $a\in\Gamma(A)$ and
  $\gamma\in\Gamma(C^A)$. Then in the induced decompositions
  $I_\nabla\colon A\times_MC\times_MC\to R$ of $R$ and
  $\overline{I_\nabla}\colon A\times_M(C/C^A)\times_MC\to R/\Pi$ of
  $R/\Pi$, the map $\pi$ reads
  \begin{equation}\label{pi_in_dec}
    \begin{split}
    \overline{I_\nabla}\inv\circ \pi\circ I_\nabla\colon A\times_MC\times_MC&\to A\times_MB\times_MC, \qquad 
    (a,c,\gamma)\mapsto (a,\da_Bc,\gamma).
    \end{split}
    \end{equation}
 \end{cor}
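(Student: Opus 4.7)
The plan is to establish the explicit formula \eqref{pi_in_dec} first, and then to deduce the assertion that $\pi$ is a morphism of double vector bundles as a manifest consequence. The substantive work was already done in Theorem~\ref{main_thm_quotient}: the quotient $R/\Pi$ and its decomposition $\overline{I_\nabla}$ were engineered precisely so that the present computation reduces to unwinding definitions.

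For the formula, I take an arbitrary $(a,c,\gamma)\in A\times_M C\times_M C$. By Proposition~\ref{dvb_R}, $I_\nabla(a,c,\gamma) = (\widehat{\nabla_a}(c)+_C \gamma^\uparrow(c), a)\in R$. Applying $\pi$ produces the $\Pi$-class of this element. By the defining formula for $\overline{I_\nabla}$ given in Theorem~\ref{main_thm_quotient}(2), this class is exactly $\overline{I_\nabla}(a,\da_B c,\gamma)$ under the identification $C/C^A\simeq B$ via $\da_B$ that was fixed just after the proof of that theorem. Hence $\overline{I_\nabla}\inv\circ \pi\circ I_\nabla$ sends $(a,c,\gamma)$ to $(a,\da_B c,\gamma)$, which is \eqref{pi_in_dec}.

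The map $(a,c,\gamma)\mapsto (a,\da_B c,\gamma)$ from $A\times_M C\times_M C$ (with sides $A$, $C$ and core $C$) to $A\times_M B\times_M C$ (with sides $A$, $B$ and core $C$) is visibly a morphism of the decomposed double vector bundles, with side morphisms $\id_A$ and $\da_B$ and with core morphism $\id_C$: compatibility with the two fibered additions and scalar multiplications is immediate by inspection, and the core morphism is read off by restricting to $\{0^A\}\times_M\{0^C\}\times_M C$. Since $I_\nabla$ and $\overline{I_\nabla}$ are themselves isomorphisms of double vector bundles (Proposition~\ref{dvb_R} and Theorem~\ref{main_thm_quotient}(2)), these properties transfer to $\pi$, yielding the first assertion. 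There is no genuine obstacle here; the only small point deserving verification is the consistency of the identification $C/C^A\simeq B$ via $\da_B$ with the quotient taken by $\overline{I_\nabla}$, and this is immediate from the definition of $\da_B$ as the induced map $C\to C/C^A$ modulo the said identification.
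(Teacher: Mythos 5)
Your proposal is correct and is exactly the argument the paper has in mind: the corollary is stated as an immediate consequence of Theorem \ref{main_thm_quotient}, since $\overline{I_\nabla}(a,\overline c,\gamma):=I_\nabla(a,c,\gamma)+_A\Pi$ makes $\pi$ read as $(a,c,\gamma)\mapsto(a,\da_Bc,\gamma)$ in the two decompositions by construction. Your unwinding of the definitions and the transfer of the morphism property through the decomposition isomorphisms is precisely the implicit reasoning the paper leaves to the reader.
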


  \bigskip

  Finally, consider a morphism of transitive core diagrams as in
  Figure \ref{fig:mcd} and consider the morphism
  $\Phi\colon TC\oplus_{TM}A\to TC'\oplus_{TM}A'$ as in Proposition
  \ref{mor_dvb}.  Then for all $a\in\Gamma(A)$ and
  $\gamma\in \Gamma(C^A)$,
  \begin{equation}
    \begin{split}
    T\phi_C\widehat{\nabla^A_a}(\gamma(m))&=T\phi_CT\gamma\rho_A(a_m)-\left.\frac{d}{dt}\right\an{t=0}\phi_C(\gamma(m))+t\phi_C((\nabla^A_a\gamma)(m))\\
    &=T(\phi_C(\gamma))\rho_{A'}(\phi_A(a_m))-\left.\frac{d}{dt}\right\an{t=0}\phi_C(\gamma)(m)+t\nabla^{A'}_{\phi_A(a)}(\phi_C\gamma)(m))\\
    &=\widehat{\nabla^{A'}_{\phi_A(a)}}(\phi_C(\gamma(m))),
    \end{split}
  \end{equation}
  since for a section $c\in\Gamma(C)$ such that $\da_Ac=a$, the equality $\da_{A'}(\phi_Cc)=\phi_A(\da_Ac)=\phi_A(a)$ follows and so
  \begin{equation}\label{omega_van_CA}
    \phi_C(\nabla^A_a\gamma)=\phi_C[c,\gamma]=[\phi_Cc,\phi_C\gamma]=\nabla^{A'}_{\phi_A(a)}(\phi_C\gamma).
  \end{equation}
  Hence, $\Phi(\Pi)\subseteq \Pi'$ and the vector bundle morphism $\Phi\colon TC\oplus_{TM}A\to TC'\oplus_{TM}A'$ over $\phi_A\colon A\to A'$ quotients
  to a vector bundle morphism
  \[\overline{\Phi}\colon \frac{TC\oplus_{TM}A}{\Pi}\to \frac{TC'\oplus_{TM}A'}{\Pi'}
  \]
  over $\phi_A\colon A\to A'$; such that
  $\pi\circ\Phi=\overline{\Phi}\circ\pi'$ where
  $\pi\colon TC\oplus_{TM} A\to \frac{TC\oplus_{TM}A}{\Pi}$ and
  $\pi'\colon TC'\oplus_{TM} A'\to \frac{TC'\oplus_{TM}A'}{\Pi'}$ are the projections.

  As before, the most effective way to
  prove that $\overline{\Phi}$ is a double vector bundle morphism from
  $R:=\frac{TC\oplus_{TM}A}{\Pi}$ to
  $R':=\frac{TC'\oplus_{TM}A'}{\Pi'}$ with side morphisms $\phi_A$ and
  $\phi_B$ and with core morphism $\phi_C$ is to show that the induced
  map in decompositions is a double vector bundle morphism.  Choose a
  linear $A$-connection $\nabla$ on $C$ extending
  $\nabla^A\colon \Gamma(A)\times\Gamma(C^A)\to\Gamma(C^A)$ and a
  linear $A'$-connection $\nabla'$ on $C'$ extending
  $\nabla^{A'}\colon \Gamma(A')\times\Gamma(C^{A'})\to\Gamma(C^{A'})$.
  Then \eqref{omega_van_CA} shows that the induced form
  $\omega_{\nabla,\nabla'}\in\Gamma(A^*\otimes C^*\otimes C')$ as in
  \eqref{omega_nabla_nabla'} vanishes on $C^A$. Hence it induces
  $\overline{\omega_{\nabla,\nabla'}}\in\Gamma(A^*\otimes B^*\otimes C')$:
  \[\overline{\omega_{\nabla,\nabla'}}(a,\da_Bc)=\omega_{\nabla,\nabla'}(a,c)
  \]
  for all $(a,c)\in A\times_M C$.
  A computation yields then
  \begin{equation}\label{1-form_red_Phi}
    \begin{split}
      \overline{\Phi}(\overline{I_{\nabla}}(a,\da_Bc,\gamma))&\,=\overline{\Phi}(I_\nabla(a,c,\gamma)+_A\Pi)
      =\, \Phi(I_\nabla(a,c,\gamma))+_{A'}\Pi'\\
      &\overset{\eqref{Phi_in_decs}}{=}I_{\nabla'}(\phi_A(a),\phi_C(c),\phi_C(\gamma)+\omega_{\nabla,\nabla'}(a,c))+_{A'}\Pi'\\
      &\,=\,\overline{I_{\nabla'}}(\phi_A(a),\da_{B'}\phi_C(c),\phi_C(\gamma)+\omega_{\nabla,\nabla'}(a,c))\\
      &\,=\,\overline{I_{\nabla'}}(\phi_A(a),\phi_B(\da_Bc),\phi_C(\gamma)+\overline{\omega_{\nabla,\nabla'}}(a,\da_Bc))
    \end{split}
  \end{equation}
  for $(a,c,\gamma)\in A\times_MC\times_MC$.
  Hence
  \[ \overline{I_{\nabla'}}\inv\circ \overline{\Phi}\circ\overline{I_{\nabla}}\colon A\times_MB\times_MC\to A'\times_MB'\times_MC', \quad
    (a,b,c)\mapsto (\phi_A(a),\phi_B(b),\phi_C(\gamma)+\overline{\omega_{\nabla,\nabla'}}(a,b)),
  \]
  which shows the following theorem.

  \begin{thm}\label{thm_mor_quot_dvb}
    Given a morphism of transitive core diagrams as in Figure
    \ref{fig:mcd}, the induced morphism
    $\Phi\colon TC\oplus_{TM}A\to TC'\oplus_{TM}A'$ of double vector
    bundles as in Proposition \ref{mor_dvb} quotients to a double
    vector bundle morphism
    \[\overline{\Phi}\colon
      \frac{TC\oplus_{TM}A}{\Pi}\to\frac{TC'\oplus_{TM}A'}{\Pi'},\]
    i.e.~with $\overline{\Phi}\circ \pi=\pi'\circ\Phi$. The morphism
    $\overline{\Phi}$ has side morphisms $\phi_A$ and $\phi_B$ and
    core morphism $\phi_C$.
\end{thm}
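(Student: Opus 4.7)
The plan is to proceed in two stages: first show that $\Phi$ descends to a well-defined vector bundle morphism $\overline{\Phi}\colon R/\Pi\to R'/\Pi'$ over $\phi_A$, and then verify that $\overline{\Phi}$ is a morphism of double vector bundles with the claimed side morphisms $\phi_A,\phi_B$ and core morphism $\phi_C$.

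For the first stage, I would check directly that $\Phi(\Pi)\subseteq \Pi'$. By Lemma \ref{useful1}, elements of $\Pi$ are of the form $(\widehat{\nabla^A_a}(\gamma_m), a(m))$ with $a\in\Gamma(A)$ and $\gamma_m\in C^A$. The key computation is to show that $T\phi_C\bigl(\widehat{\nabla^A_a}(\gamma_m)\bigr) = \widehat{\nabla^{A'}_{\phi_A(a)}}\bigl(\phi_C(\gamma_m)\bigr)$. Using the explicit formula \eqref{explicit_hat_D} for the linear vector field associated with a derivation, this reduces to the intertwining identity $\phi_C(\nabla^A_a\gamma) = \nabla^{A'}_{\phi_A(a)}(\phi_C\gamma)$ for $a\in\Gamma(A)$ and $\gamma\in\Gamma(C^A)$. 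That identity follows from the definition of $\nabla^A$ and the fact that $\phi_C$ is a Lie algebroid morphism respecting the core diagrams: writing $\nabla^A_a\gamma = [c,\gamma]$ for any $c\in\Gamma(C)$ with $\da_A c = a$, one has $\da_{A'}(\phi_C c) = \phi_A(a)$, so $\phi_C[c,\gamma] = [\phi_C c, \phi_C \gamma] = \nabla^{A'}_{\phi_A(a)}(\phi_C\gamma)$. Since $\Phi$ is a vector bundle morphism $R\to R'$ over $\phi_A\colon A\to A'$, the containment $\Phi(\Pi)\subseteq \Pi'$ then yields a unique quotient morphism $\overline{\Phi}$ of vector bundles over $\phi_A$ with $\overline{\Phi}\circ\pi = \pi'\circ \Phi$.

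For the second stage, I would work in decompositions. Choose an $A$-connection $\nabla$ on $C$ extending $\nabla^A$ on $\Gamma(C^A)$, and similarly $\nabla'$ on $C'$ extending $\nabla^{A'}$, so that by Theorem \ref{main_thm_quotient}(2) the decompositions $I_\nabla$ and $I_{\nabla'}$ descend to decompositions $\overline{I_\nabla}$ of $R/\Pi$ and $\overline{I_{\nabla'}}$ of $R'/\Pi'$. The compatibility form $\omega_{\nabla,\nabla'}\in\Gamma(A^*\otimes C^*\otimes C')$ from \eqref{omega_nabla_nabla'} measures the failure of $\phi_C$ to intertwine $\nabla$ and $\nabla'$; by the intertwining identity above, $\omega_{\nabla,\nabla'}(a,\gamma)=0$ whenever $\gamma\in C^A$, so $\omega_{\nabla,\nabla'}$ factors through $B\simeq C/C^A$ as a form $\overline{\omega_{\nabla,\nabla'}}\in\Gamma(A^*\otimes B^*\otimes C')$. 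Substituting the decomposition formula \eqref{Phi_in_decs} for $\Phi$ into the equality $\overline\Phi(\overline{I_\nabla}(a,\da_B c,\gamma))=\Phi(I_\nabla(a,c,\gamma))+_{A'}\Pi'$, the computation collapses to $\overline{I_{\nabla'}}\bigl(\phi_A(a),\phi_B(\da_B c),\phi_C(\gamma)+\overline{\omega_{\nabla,\nabla'}}(a,\da_B c)\bigr)$. This exhibits $\overline\Phi$, in the chosen decompositions, as the map $(a,b,c)\mapsto (\phi_A(a),\phi_B(b),\phi_C(c)+\overline{\omega_{\nabla,\nabla'}}(a,b))$ on $A\times_M B\times_M C$, which is manifestly a morphism of decomposed double vector bundles with the claimed sides and core.

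The main obstacle is the intertwining identity for the crossed module representations: although short to state, it is the hinge of the entire argument, crucially using both the Lie algebroid morphism property of $\phi_C$ (via Proposition \ref{mor_dla_induces_mor_C}) and the compatibility condition $\da_{A'}\circ\phi_C = \phi_A\circ \da_A$ built into Definition \ref{df:mcd}. Once this identity is secured, both the containment $\Phi(\Pi)\subseteq\Pi'$ and the vanishing of $\omega_{\nabla,\nabla'}$ on $C^A$ follow, and the remainder of the proof is formal manipulation in linear splittings.
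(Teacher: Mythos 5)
Your proposal is correct and follows essentially the same route as the paper: establish the intertwining identity $\phi_C(\nabla^A_a\gamma)=\nabla^{A'}_{\phi_A(a)}(\phi_C\gamma)$ for $\gamma\in\Gamma(C^A)$ (the paper's \eqref{omega_van_CA}), deduce $\Phi(\Pi)\subseteq\Pi'$ via the explicit formula for $\widehat{\nabla^A_a}$, and then verify the double vector bundle property in the decompositions $\overline{I_\nabla}$, $\overline{I_{\nabla'}}$ using the vanishing of $\omega_{\nabla,\nabla'}$ on $C^A$. The only nitpick is your citation of Proposition \ref{mor_dla_induces_mor_C}: in the setting of Definition \ref{df:mcd} the map $\phi_C$ is a Lie algebroid morphism by hypothesis, so no appeal to that proposition is needed.
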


 \subsection{$\Pi$ as an ideal of $\ldbl\to A$}
 This section shows that $\Pi$ defined in \eqref{def_pi} is an ideal
 in $R\to A$: i.e.~it is totally intransitive and
 \[ [\Gamma_A(\Pi), \Gamma_A(\ldbl)]\subseteq \Gamma_A(\Pi).
 \]

 \begin{prop}\label{prop_ruth_0}
   Choose any connection
  $\nabla\colon \Gamma(A)\times\Gamma(C)\to\Gamma(C)$ such that
  $\nabla_a\gamma=\nabla^A_a\gamma$ for all $a\in\Gamma(A)$ and
  $\gamma\in\Gamma(C^A)$.
  Then \[\nabla^\da_\gamma a=0, \qquad 
 \nabla^\da_\gamma c=0 \quad \text{ and } \quad 
    R^\da_\nabla(\gamma,c)=0\]
    for all $a\in\Gamma(A)$, $c\in\Gamma(C)$ and $\gamma\in\Gamma(C^A)$.
  \end{prop}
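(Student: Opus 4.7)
The plan is to verify each of the three identities directly from the definitions of $\nabla^{\da}$ and $R^{\da}_\nabla$ in Section~\ref{ruths}, using two key inputs. First, since $\gamma\in\Gamma(C^A)$, the chosen extension satisfies $\nabla_a\gamma=\nabla^A_a\gamma=[c_a,\gamma]$ for any section $c_a\in\Gamma(C)$ with $\da_A c_a=a$; this is the crossed-module description of $\nabla^A$ from Proposition~\ref{prop:xm}, whose independence-of-lift statement will be used repeatedly. Second, both $\da_A$ and $\da_B$ are morphisms of Lie algebroids by Proposition~\ref{das_morphisms}.

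For the first identity, the morphism property of $\da_A$ yields $\da_A(\nabla_a\gamma)=\da_A[c_a,\gamma]=[a,\da_A\gamma]$, so antisymmetry of the bracket on $A$ gives $\nabla^{\da}_\gamma a=[\da_A\gamma,a]+[a,\da_A\gamma]=0$. For the second identity, taking the tautological lift $c$ of $\da_A c$ gives $\nabla_{\da_A c}\gamma=[c,\gamma]$, so $\nabla^{\da}_\gamma c=[\gamma,c]+[c,\gamma]=0$.

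For the third identity, the term $\nabla_{\nabla^{\da}_\gamma a}c$ vanishes by (1). The key auxiliary observation is $[\gamma,c]\in\Gamma(C^A)$, which holds because $\da_B$ is a morphism and $\da_B\gamma=0$; this lets me rewrite $\nabla_a[\gamma,c]=[c_a,[\gamma,c]]$. To handle $\nabla_{\nabla^{\da}_c a}\gamma$, I exploit independence of lift and choose $c':=[c,c_a]+\nabla_a c$, which satisfies $\da_A c'=[\da_A c,a]+\da_A(\nabla_a c)=\nabla^{\da}_c a$, obtaining $\nabla_{\nabla^{\da}_c a}\gamma=[[c,c_a],\gamma]+[\nabla_a c,\gamma]$. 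The summand $[\nabla_a c,\gamma]$ cancels $[\gamma,\nabla_a c]$ by antisymmetry, and the three remaining terms
\[
-[c_a,[\gamma,c]]+[[c_a,\gamma],c]+[[c,c_a],\gamma]
\]
sum to zero by the Jacobi identity for $[\,\cdot\,,\,\cdot\,]$ on $\Gamma(C)$ applied to $c_a,\gamma,c$.

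The main obstacle is the curvature step: identifying the lift $c'=[c,c_a]+\nabla_a c$ of $\nabla^{\da}_c a$ that makes the three residual terms Jacobi-cancel. Everything else reduces to antisymmetry and morphism-chasing; transitivity of the core diagram enters only through surjectivity of $\da_A$, which guarantees that the crossed-module description of Proposition~\ref{prop:xm} is available.
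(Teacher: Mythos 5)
Your proof is correct and follows the same route as the paper: the first identity is computed exactly as in the paper's proof (using that $\da_A$ is a Lie algebroid morphism and $\nabla_a\gamma=[c_a,\gamma]$), and the second and third identities, which the paper declares immediate and leaves to the reader, are worked out by you with the intended argument — in particular your choice of the lift $[c,c_a]+\nabla_a c$ of $\nabla^{\da}_c a$ and the final Jacobi cancellation are exactly the computation being alluded to. No gaps.
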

  \begin{proof}Choose any section $a\in\Gamma(A)$ and any section
    $c\in\Gamma(C)$ with $\da c=a$. Then
    $\nabla^\da_\gamma a=[\da\gamma,\da
    c]+\da(\nabla_a\gamma)=-\da[c,\gamma]+\da(\nabla^{A}_a\gamma)
    =\da(\nabla_a^{A}\gamma)+\da(\nabla^{A}_a\gamma) =0$.  The second
    identity is immediate and the last one is easy and left to the
    reader.
    \end{proof}

 \begin{prop}\label{prop_intransitive}
 The anchor $\Theta_{A}\co \ldbl\to TA$ sends $\Pi$ to the zero section of $TA\to A$. 
 \end{prop}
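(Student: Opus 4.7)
The plan is to reduce the assertion to a fibrewise statement on a spanning set of sections. By Corollary \ref{spanning_R_A}, once the connection $\nabla\colon\Gamma(A)\times\Gamma(C)\to\Gamma(C)$ is chosen to extend the crossed-module connection $\nabla^A$ on $C^A$, the subbundle $\Pi\to A$ of $R\to A$ is generated by the linear sections $\Check\gamma$ with $\gamma\in\Gamma(C^A)$. Since the anchor $\Theta_A\colon R\to TA$ is a vector bundle morphism over $\id_A$, it suffices to show that $\Theta_A(\Check\gamma)$ is the zero vector field on $A$ for every $\gamma\in\Gamma(C^A)$; $C^\infty(A)$-linearity then propagates this vanishing to every element of the fibres of $\Pi$.

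Next, I would invoke Proposition \ref{lie_algebroid_R_C2} to identify $\Theta_A(\Check\gamma)$ with the linear vector field $\widehat{\nabla^\da_\gamma}\in\mx^l(A)$ associated with the derivation $\nabla^\da_\gamma$ of $\Gamma(A)$ defined by $\nabla^\da_\gamma a=[\da\gamma,a]+\da(\nabla_a\gamma)$. The problem thus becomes proving that this derivation vanishes identically on $\Gamma(A)$ whenever $\gamma\in\Gamma(C^A)$.

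The vanishing is already recorded in Proposition \ref{prop_ruth_0}, but for transparency I would reproduce the one-line direct verification: given $a\in\Gamma(A)$, pick $c\in\Gamma(C)$ with $\da_A c=a$; since $\nabla$ restricts to $\nabla^A$ on $C^A$ one has $\nabla_a\gamma=\nabla^A_a\gamma=[c,\gamma]\in\Gamma(C^A)$, and since $\da=\da_A$ is a Lie algebroid morphism (Proposition \ref{das_morphisms}) one computes
\[
\nabla^\da_\gamma a=[\da\gamma,a]+\da[c,\gamma]=[\da\gamma,a]+[a,\da\gamma]=0
\]
by antisymmetry of the bracket on $A$. Hence $\widehat{\nabla^\da_\gamma}=0$, so every generating section of $\Pi\to A$ is sent by $\Theta_A$ to the zero section of $TA\to A$, and the conclusion follows by $C^\infty(A)$-linearity.

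I do not anticipate a genuine obstacle: the statement is essentially a bookkeeping consequence of the deliberate choice of $\nabla$ to reproduce the crossed module bracket on $C^A$. The only mild subtlety is to remember that the spanning set for $\Pi\to A$ consists of the \emph{linear} sections $\Check\gamma$ rather than the core sections $\gamma^\ddagger$: since $\Theta_A(\gamma^\ddagger)=(\da\gamma)^\uparrow$ is in general nonzero, the vanishing would fail if $\Pi$ were described in terms of core sections, so using Corollary \ref{spanning_R_A} in the correct form is the key input.
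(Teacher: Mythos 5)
Your proof is correct and follows essentially the same route as the paper's: reduce to the spanning sections $\Check\gamma$, $\gamma\in\Gamma(C^A)$, via Corollary \ref{spanning_R_A}, identify $\Theta_A(\Check\gamma)=\widehat{\nabla^\da_\gamma}$ via Proposition \ref{lie_algebroid_R_C2}, and observe that $\nabla^\da_\gamma=0$ as in Proposition \ref{prop_ruth_0}. The only difference is that you spell out the one-line computation from Proposition \ref{prop_ruth_0} rather than citing it, which is harmless.
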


 \begin{proof}
   Choose any connection
   $\nabla\colon \Gamma(A)\times\Gamma(C)\to\Gamma(C)$ such that
   $\nabla_a\gamma=\nabla^A_a\gamma$ for all $a\in\Gamma(A)$ and
   $\gamma\in\Gamma(C^A)$.  By Propositions \ref{lie_algebroid_R_C2}
   and \ref{prop_ruth_0}
   $\Theta_A(\Check\gamma)=\widehat{\nabla^\da_\gamma}=0\in\mx^l(A)$
   for all $\gamma\in\Gamma(C^A)$. This completes the proof with
   Corollary \ref{spanning_R_A}.
 \end{proof}

\begin{prop}\label{Pi_ideal}
 The subundle $\Pi$ of $R$ over $A$ is an ideal in $R\to A$.
 \end{prop}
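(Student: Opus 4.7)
The plan is to verify that $\Pi\subseteq R$ is totally intransitive and closed under the Lie bracket of $R\to A$ with arbitrary sections. Total intransitivity is already handed down by Proposition \ref{prop_intransitive}, so the substance of the argument consists in establishing the bracket-closure condition $[\Gamma_A(\Pi), \Gamma_A(R)] \subseteq \Gamma_A(\Pi)$.

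First I would fix a connection $\nabla\colon \Gamma(A)\times\Gamma(C)\to\Gamma(C)$ extending $\nabla^A$ on $\Gamma(C^A)$, so that by Corollary \ref{spanning_R_A} the module $\Gamma_A(\Pi)$ is generated over $C^\infty(A)$ by the linear sections $\widecheck\gamma$ for $\gamma\in\Gamma(C^A)$. Since $\Theta_A(\widecheck\gamma)=0$ by Proposition \ref{prop_intransitive}, the Leibniz rule immediately reduces the ideal condition to proving $[\widecheck\gamma,\xi]\in \Gamma_A(\Pi)$ for $\gamma\in\Gamma(C^A)$ and $\xi$ running over a spanning set of $\Gamma_A(R)$, which for the chosen decomposition we may take to consist of the linear sections $\widecheck c$ and the core sections $\eta^\ddagger$ with $c,\eta\in\Gamma(C)$.

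Next I would apply the bracket formulas of Proposition \ref{lie_algebroid_R_C2}. For $[\widecheck\gamma, \eta^\ddagger]=(\nabla^\da_\gamma\eta)^\ddagger$, Proposition \ref{prop_ruth_0} gives $\nabla^\da_\gamma\eta=0$, so the bracket vanishes. For $[\widecheck\gamma, \widecheck c]=\widecheck{[\gamma,c]}-\widetilde{R^\da_\nabla(\gamma,c)}$, Proposition \ref{prop_ruth_0} again makes the core-linear correction vanish, and what remains is to show that $[\gamma,c]\in\Gamma(C^A)$. This last step uses only that $\da_B\colon C\to B$ is a Lie algebroid morphism (built into Definition \ref{df:cd}), so $\da_B[\gamma,c]=[\da_B\gamma,\da_B c]=0$; therefore $\widecheck{[\gamma,c]}\in\Gamma_A(\Pi)$.

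The only potentially awkward point would be the book-keeping of the anchor contributions that Leibniz produces when multiplying generators by arbitrary functions $F\in C^\infty(A)$, but Proposition \ref{prop_intransitive} trivialises these, so the verification genuinely reduces to the three purely algebraic identities above. Consequently the proposition is a quick consequence of the vanishing statements of Proposition \ref{prop_ruth_0} together with the ideal property of $C^A\subseteq C$ intrinsic to any core diagram.
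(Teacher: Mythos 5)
Your proposal is correct and follows essentially the same route as the paper: reduce via the Leibniz rule and the total intransitivity of $\Pi\to A$ (Proposition \ref{prop_intransitive}) to the brackets $[\widecheck\gamma,\widecheck c]$ and $[\widecheck\gamma,\eta^\ddagger]$ for $\gamma\in\Gamma(C^A)$, then invoke the bracket formulas of Proposition \ref{lie_algebroid_R_C2}, the vanishing statements of Proposition \ref{prop_ruth_0}, and the fact that $[\gamma,c]\in\Gamma(C^A)$ because $\da_B$ is a Lie algebroid morphism. Your write-up simply makes explicit the spanning-set and function-coefficient bookkeeping that the paper leaves implicit.
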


 \begin{proof}
   Choose any connection
  $\nabla\colon \Gamma(A)\times\Gamma(C)\to\Gamma(C)$ such that
  $\nabla_a\gamma=\nabla^A_a\gamma$ for all $a\in\Gamma(A)$ and
  $\gamma\in\Gamma(C^A)$.
  By the Leibnitz identity and since $\Pi\to A$ is totally intransitive by Proposition \ref{prop_intransitive},
   it is sufficient to check that
   \[ [\Check\gamma,\Check c]\in\Gamma_A(\Pi) \quad \text{ and } \quad [\Check\gamma, c^\ddagger]\in\Gamma_A(\Pi)
   \]
   for all $\gamma\in\Gamma(C^A)$ and $c\in\Gamma(C)$. But this
   follows immediately from Proposition \ref{prop_ruth_0}, Proposition \ref{lie_algebroid_R_C2}
   and $[\gamma,c]\in\Gamma(C^A)$ for all $\gamma\in\Gamma(C^A)$ and $c\in\Gamma(C)$.
   \end{proof}

   By Proposition \ref{prop_ruth_0}, the connections
   $\nabla^{\da}\colon\Gamma(C)\times\Gamma(C)\to\Gamma(C)$ and
   $\nabla^{\da}\colon\Gamma(C)\times\Gamma(A)\to\Gamma(A)$ define
   connections
   $\nabla^{\rm red}\colon\Gamma(B)\times\Gamma(C)\to\Gamma(C)$ and
   $\nabla^{\rm red}\colon\Gamma(B)\times\Gamma(A)\to\Gamma(A)$:
\[\nabla^{\rm red}_{\da_Bc_1}c_2=\nabla^{\da}_{c_1}c_2\qquad \nabla^{\rm red}_{\da_Bc}a=\nabla^{\da}_{c}a
\]
for $c_1,c_2\in\Gamma(C)$ and $a\in\Gamma(A)$. 
By the same proposition, $R_\nabla^{\da}$ defines a new tensor
\[R^{\rm red}_\nabla\in\Omega^2(B,\operatorname{Hom}(A,C)), \qquad
R^{\rm red}_\nabla(\da_Bc,\da_Bc')=R_\nabla^{\da}(c,c'),
\]
for $c,c'\in\Gamma(C)$.

 \begin{cor}\label{reduced_VB_A}
   The vector bundle $R/\Pi\to A$ has a linear Lie algebroid structure
   over the Lie algebroid $B\to M$.  Choose any connection
   $\nabla\colon \Gamma(A)\times\Gamma(C)\to\Gamma(C)$ such that
   $\nabla_a\gamma=\nabla^A_a\gamma$ for all $a\in\Gamma(A)$ and
   $\gamma\in\Gamma(C^A)$. Then the Lie algebroid structure on
   $R/\Pi\to A$ can be described as follows:
   \[ \Theta_A(\Check{b})=\widehat{\nabla^{\rm red}_b}\in\mx^l(A) \quad \text{ and }\quad 
  \Theta_A(\gamma^\ddagger )=(\partial \gamma)^\uparrow\in\mx(A),\]
   \begin{equation}
     \label{eq:RA}
     [\Check b_1, \Check b_1]
     =\widecheck{[b_1,b_2]_B}-\widetilde{R^{\rm red}_\nabla(b_1,b_2)},\quad
 \left[\Check{b}, \gamma^\ddagger\right] = (\nabla^{\rm red}_b\gamma)^\ddagger,\quad
 \left[\gamma_1^\ddagger\, \gamma_2^\ddagger\right] = 0
 \end{equation}
 for $b,b_1,b_2\in\Gamma(B)$ and $\gamma,\gamma_1,\gamma_2\in\Gamma(C)$.

 In other words, via the splitting $\overline{I_\nabla}$ of $R/\Pi$
 given by $\nabla$, the Lie algebroid $R/\Pi\to A$ is described by the
 following representation up to homotopy of $B$:
\begin{equation}\label{ruth_B_D}
  \partial\colon C\to A,\quad \nabla^{\rm red}\colon\Gamma(B)\times\Gamma(C)\to\Gamma(C), \quad \nabla^{\rm red}\colon\Gamma(B)\times\Gamma(A)\to\Gamma(A),
  \end{equation}
and
\begin{equation}\label{ruth_B_D1}
  R^{\rm red}_\nabla\in\Omega^2(B,\operatorname{Hom}(A,C)).
\end{equation}
\end{cor}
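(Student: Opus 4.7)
The plan is to descend the Lie algebroid structure on $R\to A$ given by Proposition \ref{lie_algebroid_R_C2} through the quotient map $\pi\colon R\to R/\Pi$. The two key inputs are already at hand: $\Pi$ is totally intransitive (Proposition \ref{prop_intransitive}) and an ideal in $R\to A$ (Proposition \ref{Pi_ideal}). Total intransitivity lets the anchor $\Theta_A$ factor uniquely to $\overline{\Theta_A}\colon R/\Pi\to TA$, and the ideal property $[\Gamma_A(\Pi),\Gamma_A(R)]\subseteq\Gamma_A(\Pi)$ lets the bracket descend via $[\pi\xi_1,\pi\xi_2]:=\pi[\xi_1,\xi_2]$.

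Next I would verify that the resulting Lie algebroid is a VB-algebroid with side $B\to M$. By Corollary \ref{cor_pi_mor_dvb}, $\pi$ is a double vector bundle morphism with side morphisms $(\id_A,\da_B)$; since $\Theta_A\colon R\to TA$ is a DVB-morphism over $\rho_C\colon C\to TM$ (the VB-algebroid condition on $R\to A$), and $\rho_C=\rho_B\circ\da_B$ because $\da_B$ is a Lie algebroid morphism, the factored $\overline{\Theta_A}$ is a DVB-morphism over $\rho_B\colon B\to TM$. Linearity of the descended bracket is then witnessed by the explicit formulas below, which show that brackets of linear sections are linear and brackets of linear with core sections are core.

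To obtain the explicit formulas in the splitting $\overline{I_\nabla}$, I would use the identifications
\[
\pi\circ\widecheck c=\widecheck{\da_B c},\qquad \pi\circ\gamma^\ddagger=\gamma^\ddagger,
\]
for $c,\gamma\in\Gamma(C)$, which read off directly from the formula \eqref{pi_in_dec} of Corollary \ref{cor_pi_mor_dvb}. Applying $\pi$ to the bracket identities of Proposition \ref{lie_algebroid_R_C2} and using that $\da_B\colon C\to B$ is a Lie algebroid morphism then yields the claimed formulas. Well-definedness, i.e.\ independence of the choice of lift $c\in\Gamma(C)$ with $\da_B c=b$, is exactly the content of Proposition \ref{prop_ruth_0}, which lets $\nabla^{\da}$ and $R_\nabla^{\da}$ factor through $B$ as $\nabla^{\rm red}$ and $R_\nabla^{\rm red}$, respectively.

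The structural content is concentrated in the three preceding propositions; the only mildly subtle point is checking that the side Lie algebroid structure on $B$ appearing in the linear-linear bracket $[\widecheck{b_1},\widecheck{b_2}]$ coincides with the given Lie algebroid structure on $B$ from the transitive core diagram, but this reduces immediately to $\da_B$ being a Lie algebroid morphism, which is a hypothesis.
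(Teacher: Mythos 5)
Your proposal is correct and follows essentially the same route as the paper: descend the anchor and bracket of $R\to A$ through the quotient using that $\Pi$ is totally intransitive and an ideal (Propositions \ref{prop_intransitive} and \ref{Pi_ideal}), identify $\pi\circ\widecheck{c}=\widecheck{\da_Bc}$ and $\pi\circ\gamma^\ddagger=\gamma^\ddagger$, and read off the formulas from Proposition \ref{lie_algebroid_R_C2}, with Proposition \ref{prop_ruth_0} guaranteeing that $\nabla^{\da}$ and $R_\nabla^{\da}$ factor through $\da_B$. No gaps.
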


\begin{proof}
  Since $\Pi\to A$ is an ideal in $R\to A$, the anchor $\Theta_A$ of
  $R/\Pi\to A$ is given by
  \[\Theta_A(\xi+\Pi)=\Theta_A(\xi)\]
  for all $\xi\in\Gamma_A(R)$, and the bracket is given by
  \[\left[\xi_1+_A\Pi, \xi_2+_A\Pi\right]=[\xi_1,\xi_2]+_A\Pi
  \]
  for $\xi_1,\xi_2\in\Gamma_A(R)$.
  This yields
  \[\Theta_A(\Check b)=\Theta_A(\Check c+_A\Pi)=\Theta_A(\Check c)=\widehat{\nabla^\da_c}=\widehat{\nabla_b^{\rm red}}\in\mx^l(A)
  \]
  for $b\in\Gamma(B)$ and a section $c\in\Gamma(C)$ such that
  $\da_Bc=b$, and
  \[\Theta_A(\gamma^\ddagger)=\Theta_A(\gamma^\ddagger+_A\Pi)=(\da\gamma)^\uparrow
  \]
  for all $\gamma\in\Gamma(C)$. Further computations show for $b,b_1,b_2\in\Gamma(B)$ and $\gamma,\gamma_1,\gamma_2\in\Gamma(C)$:
  \[ \left[\Check{b_1},\Check{b_2}\right]=\left[\Check{c_1}+_A\Pi,\Check{c_2}+_A\Pi\right]=\left(\Check{[c_1,c_2]}-\widetilde{R_\nabla^\da(c_1,c_2)}\right)+_A\Pi
    =\Check{[b_1,b_2]}-\widetilde{R_\nabla^\da(b_1,b_2)},
  \]
  where $c_1,c_2\in\Gamma(C)$ are such that $\da_B c_1=b_1$ und
  $\da_B c_2=b_2$,
  \[\left[\Check{b},\gamma^\ddagger\right]=(\nabla^\da_c\gamma)^\uparrow
  \]
  where $c\in\Gamma(C)$ is such that $\da_B c=b$,
  and $\left[\gamma_1^\ddagger,\gamma_2^\ddagger\right]=0$ is immediate.
  \end{proof}

 \subsection{The VB-algebroid $R/\Pi\to B$}
 Recall that the setting of this section is the transitive core diagram $\mathcal C$
 in Figure \ref{fig:calC}: $A$ and $B$ are Lie algebroids over a
 common base $M$ and $C$ is a Lie algebroid on $M$ together with
 morphisms $\da:=\da_A\co C\to A$ and $\da_B\co C\to B$, both of which are
 surjective and such that the kernels $C^A:= \ker(\da_B)$ and
 $C^B:=\ker(\da_A)$ commute in $C$.  Recall as well the representation
 $\nabla^A\co A\to\Der(C^A)$ defined in (\ref{eq:cdxm}):
 $\nabla^A_a\gamma=[c,\gamma]$ for any $c\in\Gamma(C)$ such that
 $\da c=a$.

 By its choice, the connection
 $\nabla\colon \Gamma(A)\times\Gamma(C)\to\Gamma(C)$ restricts as
 before to $\nabla^A\colon\Gamma(A)\times\Gamma(C^A)\to\Gamma(C^A)$.
 That is, it induces a connection
\[\nabla^B\colon \Gamma(A)\times\Gamma(B)\to\Gamma(B), \qquad
\nabla^B_a(\da_Bc)=\partial_B(\nabla_ac)
\]
for $c\in\Gamma(C)$ and $a\in\Gamma(A)$.  Since $\nabla^A$ is flat,
the curvature $R_\nabla$ of $\nabla$ vanishes on sections of $C^A$
and induces similarly a tensor
$\overline{R_\nabla}\in\Omega^2(A,\operatorname{Hom}(B,C))$,
\[\overline{R_\nabla}(a_1,a_2)(\da_Bc)=R_\nabla(a_1,a_2)c
\]
for $a_1,a_2\in\Gamma(A)$ and $c\in\Gamma(C)$. The following lemma is
easy to prove using \eqref{explicit_hat_D}.
\begin{lem}\label{lem_vfC_vfB}
  In the situation above,
  $\mx^l(C)\ni
  \widehat{\nabla_a}\sim_{\da_B}\widehat{\nabla_a^B}\in\mx^l(B)$ for
  all $a\in\Gamma(A)$ and
  $\mx^c(C)\ni\gamma^\uparrow\sim_{\da_B}(\da_B\gamma)^\uparrow\in\mx^c(B)$
  for all $\gamma\in\Gamma(C)$.
  \end{lem}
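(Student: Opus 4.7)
The plan is to verify the two $\da_B$-relatedness claims directly from the explicit formulas for linear and vertical vector fields, leveraging that $\da_B\colon C\to B$ covers the identity on $M$ and hence maps sections of $C$ over $M$ to sections of $B$ over $M$.

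For the vertical claim, I would fix $c_m\in C$ and compute $T_{c_m}\da_B(\gamma^\uparrow(c_m))$ directly. Using the curve definition $\gamma^\uparrow(c_m) = \left.\frac{d}{dt}\right\an{t=0}(c_m + t\gamma(m))$ and the fact that $\da_B\colon C\to B$ is a vector bundle morphism over the identity, one gets
\[
T_{c_m}\da_B(\gamma^\uparrow(c_m)) = \left.\frac{d}{dt}\right\an{t=0}\da_B(c_m + t\gamma(m)) = \left.\frac{d}{dt}\right\an{t=0}\bigl(\da_B c_m + t(\da_B\gamma)(m)\bigr) = (\da_B\gamma)^\uparrow(\da_B c_m),
\]
which is the desired identity.

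For the linear claim, I would apply the explicit formula \eqref{explicit_hat_D}. Pick $c_m\in C$ and a local section $c\in\Gamma(C)$ with $c(m)=c_m$; then
\[
\widehat{\nabla_a}(c_m) = T_m c(\rho_A(a)(m)) +_C \left.\frac{d}{dt}\right\an{t=0}\bigl(c_m - t(\nabla_a c)(m)\bigr).
\]
Pushing forward by $T\da_B$ and using that $\da_B$ is a fiberwise linear map over the identity, the right-hand side becomes
\[
T_m(\da_B\circ c)(\rho_A(a)(m)) +_B \left.\frac{d}{dt}\right\an{t=0}\bigl(\da_B c_m - t\,\da_B(\nabla_a c)(m)\bigr).
\]
Setting $b:=\da_B c\in\Gamma(B)$, the key identity $\da_B\circ\nabla_a=\nabla_a^B\circ\da_B$ (which is precisely how $\nabla^B$ was defined from $\nabla$, and which makes sense because $\nabla$ was chosen to restrict to $\nabla^A$ on $C^A=\ker\da_B$) gives $\da_B(\nabla_a c)(m) = (\nabla_a^B b)(m)$. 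Reapplying \eqref{explicit_hat_D} to the right-hand side then identifies it with $\widehat{\nabla_a^B}(\da_B c_m)$.

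There is no serious obstacle here; the only point requiring a moment of care is that the formula \eqref{explicit_hat_D} depends on a local extension $c$ of $c_m$, and one must check that the extension $\da_B\circ c$ of $\da_B c_m$ behaves correctly under the curvature-free ``correction'' term, which is exactly what the well-definedness of $\nabla^B$ on $B\simeq C/C^A$ ensures. Alternatively, one could run the whole argument through the dual characterization \eqref{ableitungen}: testing on $\da_B^*\ell_\beta=\ell_{\da_B^t\beta}$ and $\da_B^*q_B^*f=q_C^*f$ reduces $\da_B$-relatedness to the two identities $\nabla_a^*\circ\da_B^t = \da_B^t\circ(\nabla_a^B)^*$ and $\rho_A(a)(f)=\rho_A(a)(f)$, both of which are immediate.
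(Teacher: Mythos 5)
Your proof is correct and follows exactly the route the paper indicates: the paper gives no written proof, only the remark that the lemma ``is easy to prove using \eqref{explicit_hat_D}'', and your computation with \eqref{explicit_hat_D} together with the defining identity $\da_B\circ\nabla_a=\nabla_a^B\circ\da_B$ (and the curve computation for the vertical lift) is precisely that argument. Both your main argument and the alternative via \eqref{ableitungen} are sound, and you correctly identify that the well-definedness of $\nabla^B$ rests on $\nabla$ preserving $C^A=\ker\da_B$.
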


  Recall that the flat connection
\[ \nabla^i\colon\Gamma_C(F_C)\times\Gamma_C(R)\to\Gamma_C(R),
\]
is defined by
$\nabla^i\widehat a=0=\nabla^ic^\times$
for all $a\in\Gamma(A)$ and all $c\in\Gamma(C)$.

 \begin{prop}
   The triple $( F_C, 0, \nabla^{i})$ is an infinitesimal ideal system
   in $R\to C$. That is, $F_C\subseteq TC$ is an involutive subbundle,
    and $\nabla^{i}$ a flat 
   $F_C$-connection on $R\to C$ with the following properties:
 \begin{enumerate}
 \item If $\chi_1,\chi_2\in\Gamma_C(R)$ are $\nabla^{i}$-parallel, then $[\chi_1,\chi_2]_C$ is also parallel.
 \item If $\chi\in\Gamma_C(R)$ is $\nabla^{i}$-parallel, then
   $[\Theta_C(\chi), X]\in\Gamma(F_C)$ for all $X\in\Gamma(F_C)$. That
   is, $\Theta_C(\chi)$ is $\nabla^{F_C}$-parallel, where
   $\nabla^{F_C}$ is the Bott connection associated to
   $F_C\subseteq TC$.
 \end{enumerate}
\end{prop}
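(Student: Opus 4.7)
\emph{Proof proposal.} The flatness of $\nabla^i$ and the fact that it is well-defined have already been established in the preceding proposition, so the plan is to verify the remaining three statements: the involutivity of $F_C$, property (1), and property (2). In all three cases, the relevant conditions are $C^\infty(C)$-linear or $C^\infty(C)^{F_C}$-linear in the appropriate arguments, and so reduce via the Leibniz rule to checks on the generators $\{\gamma_0^\uparrow \st \gamma_0\in\Gamma(C^A)\}$ of $\Gamma(F_C)$ and on the generating set $\{\widehat a,\gamma^\times \st a\in\Gamma(A),\,\gamma\in\Gamma(C)\}$ of $\nabla^i$-flat sections of $R\to C$.

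The involutivity of $F_C$ is immediate: vertical lifts of sections of any vector bundle commute, so $[\gamma_1^\uparrow,\gamma_2^\uparrow]=0$ for $\gamma_1,\gamma_2\in\Gamma(C^A)$, and the Leibniz rule extends this to arbitrary $C^\infty(C)$-combinations. For property (2), I compute $\Theta_C$ on each generator via Proposition \ref{lie_algebroid_R_C1}: $\Theta_C(\widehat a)=\widehat{\nabla_a}$ and $\Theta_C(\gamma^\times)=\gamma^\uparrow$. The identities \eqref{Lie_bracket_over_VB} then give $[\widehat{\nabla_a},\gamma_0^\uparrow]=(\nabla_a\gamma_0)^\uparrow=(\nabla^A_a\gamma_0)^\uparrow$ for $\gamma_0\in\Gamma(C^A)$, which lies in $\Gamma(F_C)$ because $\nabla^A$ preserves $C^A$; and $[\gamma^\uparrow,\gamma_0^\uparrow]=0\in\Gamma(F_C)$. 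A Leibniz expansion (using that $F_C$-invariant functions are preserved by such vector fields) then shows that $\Theta_C(\chi)$ normalizes $F_C$ for every $\nabla^i$-flat $\chi$, which is exactly the statement that $\Theta_C(\chi)$ is $\nabla^{F_C}$-parallel.

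For property (1), I invoke the bracket formulas \eqref{eq:RC}: $[\widehat{a_1},\widehat{a_2}]=\widehat{[a_1,a_2]}-\widetilde{R_\nabla(a_1,a_2)}$, $[\widehat a,\gamma^\times]=(\nabla_a\gamma)^\times$, and $[\gamma_1^\times,\gamma_2^\times]=0$. The sections $\widehat{[a_1,a_2]}$, $(\nabla_a\gamma)^\times$, and $0$ are $\nabla^i$-flat by the very definition of $\nabla^i$. The delicate term is the core-linear section $\widetilde{R_\nabla(a_1,a_2)}$: writing $R_\nabla(a_1,a_2)=\sum_j\mu_j\otimes c_j$ with $\mu_j\in\Gamma(C^*)$ and $c_j\in\Gamma(C)$, one has $\widetilde{R_\nabla(a_1,a_2)}=\sum_j\ell_{\mu_j}\cdot c_j^\times$, which is $\nabla^i$-flat iff each $\ell_{\mu_j}$ is $F_C$-invariant, iff $\mu_j\in\Gamma((C^A)^\circ)$. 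This is exactly the condition that $R_\nabla(a_1,a_2)$ vanishes on $\Gamma(C^A)$, which holds because $\nabla|_{C^A}=\nabla^A$ is flat. The extension from generators to arbitrary $\nabla^i$-flat sections $\chi_i=\sum F_{ij}\eta_{ij}$ with $F_{ij}\in C^\infty(C)^{F_C}$ uses the Leibniz rule together with the fact, already proved in (2), that $\Theta_C(\eta_{ij})$ preserves $F_C$-invariant functions.

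The main obstacle is the mild coupling between (1) and (2) via the Leibniz rule: the bracket of two general flat sections involves Lie derivatives of the coefficient functions along vector fields $\Theta_C(\eta)$, which are $F_C$-invariant only because (2) holds. Handling (1) and (2) in tandem resolves this, and the only genuinely non-formal input is the vanishing of $R_\nabla$ on $C^A$, which comes directly from the flatness of $\nabla^A$ built into the choice of extension $\nabla$.
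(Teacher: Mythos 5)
Your proof is correct and follows essentially the same route as the paper: reduction to the generators $\widehat a$ and $\gamma^\times$ with $F_C$-invariant coefficients, the bracket formulas of Proposition \ref{lie_algebroid_R_C1}, and the observation that flatness of $\nabla^A$ forces $R_\nabla(a_1,a_2)$ to annihilate $C^A$, so that $\widetilde{R_\nabla(a_1,a_2)}$ is $\nabla^i$-flat. The only organizational difference is that the paper deduces (2) from (1) via the general property of infinitesimal ideal systems recorded after Definition \ref{iis_def}, whereas you verify (2) directly on generators and then feed it into the Leibniz terms of (1) — the paper does that same work through Lemma \ref{lem_vfC_vfB}.
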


\begin{proof}
  By the properties of an infinitesimal ideal system, (2) follows from
  (1).  Since sections $\widehat{a}\in\Gamma_C^l(R)$ and
  $\gamma^\times$ for $a\in\Gamma(A)$ and $\gamma\in\Gamma(C)$ are
  $\nabla^i$-parallel and span $\Gamma_C(R)$ as a
  $C^\infty(C)$-module, it suffices to show (1) for $\chi_1$ and
  $\chi_2$ of the type $F\cdot \widehat{a}$ and $F\cdot \gamma^\times$
  with $F\in C^\infty(C)^{F_C}$ and $a\in\Gamma(A)$,
  $\gamma\in\Gamma(C)$.  Since $C/F_C=C/C^A=B$, the space
  $C^\infty(C)^{F_C}$ equals $\partial_B^*C^\infty(B)$.

  Choose $F_1,F_2\in C^\infty(B)$, $a,a_1,a_2\in\Gamma(A)$ and 
  $\gamma,\gamma_1,\gamma_2\in\Gamma(C)$. Then using  Lemma
  \ref{lem_vfC_vfB},
  \begin{equation*}
    \begin{split}
      \left[\da_B^*F_1\cdot\widehat{a_1},
        \da_B^*F_2\cdot\widehat{a_2}\right]&=\da_B^*(F_1F_2)\cdot\left[\widehat{a_1},\widehat{a_2}\right]
      +\da_B^*F_1\cdot\ldr{\widehat{\nabla_{a_1}}}(\da_B^*F_2)\cdot\widehat{a_2}-\da_B^*F_2\cdot\ldr{\widehat{\nabla_{a_2}}}(\da_B^*F_1)\cdot\widehat{a_1}\\
      &=\da_B^*(F_1F_2)\cdot\left(\widehat{[a_1,a_2]}-\widetilde{R_\nabla(a_1,a_2)}\right)\\
      &\qquad
      +\da_B^*\left(F_1\cdot\ldr{\widehat{\nabla^B_{a_1}}}F_2\right)\cdot\widehat{a_2}-\da_B^*\left(F_2\cdot\ldr{\widehat{\nabla^B_{a_2}}}F_1\right)\cdot\widehat{a_1}
    \end{split}
  \end{equation*}
  shows that
  $\left[\da_B^*F_1\cdot\widehat{a_1},
    \da_B^*F_2\cdot\widehat{a_2}\right]$ is a $\nabla^i$-flat section
  of $R\to C$ if $\widetilde{R_\nabla(a_1,a_2)}$ is
  $\nabla^i$-flat. Since $\nabla^A$ is flat,
  $R_\nabla(a_1,a_2)\in\Gamma((C^A)^\circ\otimes C)=\Gamma(B^*\otimes
  C)$. Hence it equals without loss of generality
  $(\da_B^t\nu)\otimes c$ with $\nu\in\Gamma(B)$ and $c\in\Gamma(C)$,
  and consequently
  $\widetilde{R_\nabla(a_1,a_2)}=\ell_{\da_B^t\nu}\cdot\gamma^\times=\da_B^*\ell_\nu\cdot\gamma^\times$
  is a $\nabla^i$-flat section of $R\to C$.

  Similarly,
   \begin{equation*}
    \begin{split}
      \left[\da_B^*F_1\cdot\widehat{a},
        \da_B^*F_2\cdot\gamma^\times\right]&=\da_B^*(F_1F_2)\cdot\left[\widehat{a},\gamma^\times\right]
      +\da_B^*F_1\cdot\ldr{\widehat{\nabla_{a}}}(\da_B^*F_2)\cdot\gamma^\times-\da_B^*F_2\cdot\ldr{\gamma^\uparrow}(\da_B^*F_1)\cdot\widehat{a}\\
      &=\da_B^*(F_1F_2)\cdot(\nabla_a\gamma)^\times
      +\da_B^*\left(F_1\cdot\ldr{\widehat{\nabla^B_{a_1}}}F_2\right)\cdot\gamma^\times-\da_B^*\left(F_2\cdot\ldr{(\da_B\gamma)^\uparrow}F_1\right)\cdot\widehat{a_1}
    \end{split}
  \end{equation*}
  and
   \begin{equation*}
    \begin{split}
      \left[\da_B^*F_1\cdot\gamma_1^\times, \da_B^*F_2\cdot\gamma_2^\times\right]&=\da_B^*(F_1F_2)\cdot\left[\gamma_1^\times, \gamma_2^\times\right]
      +\da_B^*F_1\cdot\ldr{\gamma_1^\uparrow}(\da_B^*F_2)\cdot\gamma_2^\times-\da_B^*F_2\cdot\ldr{\gamma_2^\uparrow}(\da_B^*F_1)\cdot\gamma_1^\times\\
      &=\da_B^*\left(F_1\cdot\ldr{(\da_B\gamma_1)^\uparrow}F_2\right)\cdot\gamma_2^\times-\da_B^*\left(F_2\cdot\ldr{(\da_B\gamma_2)^\uparrow}F_1\right)\cdot\gamma_1^\times
    \end{split}
  \end{equation*}
  show that
  $\left[\da_B^*F_1\cdot\widehat{a},
    \da_B^*F_2\cdot\gamma^\times\right]$ and
  $\left[\da_B^*F_1\cdot\gamma_1^\times,
    \da_B^*F_2\cdot\gamma_2^\times\right]$ are $\nabla^i$-flat
  sections of $R\to C$.
\end{proof}

The proof above shows as well the following lemma.
\begin{lem}\label{proj_core_linear}
  For $\phi\in\Gamma\left( (C^A)^\circ\otimes C\right)$, the
  corresponding core-linear section $\widetilde{\phi}\in\Gamma^l_C(R)$
  is $\nabla^i$-flat. It projects to
  $\widetilde{\bar\phi}\in\Gamma_B^l(R/\Pi)$, where
  $\bar\phi\in\Gamma(B^*\otimes C)$ is defined by
  $\bar\phi(\partial_Bc)=\phi(c)$ for all $c\in C$.
  
  \end{lem}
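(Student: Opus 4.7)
The plan is to unpack both claims in a decomposition. Write $\phi$ locally as a finite sum
$\phi = \sum_{j=1}^k \nu_j \otimes c_j$ with $\nu_j \in \Gamma((C^A)^\circ) \subseteq \Gamma(C^*)$ and $c_j \in \Gamma(C)$. In the linear splitting $I_\nabla$ used throughout this section, a core-linear section of $R \to C$ associated to such a tensor has the standard form
\[
\widetilde{\phi} = \sum_{j=1}^k \ell_{\nu_j} \cdot c_j^\times \in \Gamma^l_C(R),
\]
and this will be the starting point for both parts of the statement.

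For the $\nabla^i$-flatness, I would apply $\nabla^i_\chi$ with $\chi \in \Gamma(F_C)$ to this expression. Since each $c_j^\times$ is $\nabla^i$-flat by the very definition of $\nabla^i$, the Leibniz rule reduces the computation to
\[
\nabla^i_\chi \widetilde{\phi} = \sum_{j=1}^k \ldr{\chi}(\ell_{\nu_j}) \cdot c_j^\times.
\]
Since $F_C$ is spanned over $C^\infty(C)$ by the vertical lifts $\gamma^\uparrow$ with $\gamma \in \Gamma(C^A)$, and $\gamma^\uparrow(\ell_{\nu_j}) = q_C^*\langle \nu_j, \gamma\rangle = 0$ because $\nu_j$ annihilates $C^A$, each $\ell_{\nu_j}$ is $F_C$-invariant, and hence $\nabla^i_\chi \widetilde{\phi} = 0$.

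For the projection, I use that $\nabla^i$-flatness implies $\widetilde{\phi}$ descends through $\pi\colon R \to R/\Pi$ (by Proposition \ref{eq_of_eqs}, which identifies the equivalence relations $\sim$ and $\sim_{\nabla^i}$). The canonical identification $(C^A)^\circ \simeq B^*$ via $\partial_B^t$ produces unique $\bar\nu_j \in \Gamma(B^*)$ with $\nu_j = \partial_B^t \bar\nu_j$, so $\ell_{\nu_j} = \partial_B^*\ell_{\bar\nu_j}$ as functions on $C$. By Corollary \ref{cor_pi_mor_dvb}, $\pi$ is a double vector bundle morphism with side $\partial_B$ and core $\id_C$, so each core section $c_j^\times \in \Gamma^c_C(R)$ is $\pi$-related to the core section $c_j^\times \in \Gamma^c_B(R/\Pi)$ defined by the same $c_j$, and the linear function $\ell_{\nu_j}$ on $C$ is $\pi$-related to $\ell_{\bar\nu_j}$ on $B$. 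Combining these, $\widetilde\phi$ projects to $\sum_j \ell_{\bar\nu_j}\cdot c_j^\times \in \Gamma^l_B(R/\Pi)$, which is exactly the core-linear section $\widetilde{\bar\phi}$ associated to $\bar\phi := \sum_j \bar\nu_j \otimes c_j \in \Gamma(B^*\otimes C)$. A final pointwise check gives $\bar\phi(\partial_B c) = \sum_j \langle \bar\nu_j, \partial_B c\rangle c_j = \sum_j \langle \nu_j, c\rangle c_j = \phi(c)$.

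The argument is essentially bookkeeping inside a chosen splitting. The only subtle point is keeping track of the two different uses of the notation $c^\times$ on $R\to C$ versus $R/\Pi\to B$; since these are related by the double vector bundle morphism $\pi$ explicitly described in \eqref{pi_in_dec}, no genuine obstacle appears.
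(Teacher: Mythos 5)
Your proof is correct and follows essentially the same route as the paper: the paper's own justification is the computation embedded in the two preceding proofs, namely writing $\phi=\sum_j\nu_j\otimes c_j$ with $\nu_j\in\Gamma((C^A)^\circ)=\Gamma(\da_B^t B^*)$, so that $\widetilde\phi=\sum_j\ell_{\nu_j}\cdot c_j^\times=\sum_j\da_B^*\ell_{\bar\nu_j}\cdot c_j^\times$ is a combination of $\nabla^i$-flat sections with $F_C$-invariant coefficients, which then visibly descends to $\sum_j\ell_{\bar\nu_j}\cdot c_j^\times=\widetilde{\bar\phi}$. Your explicit verification that $\gamma^\uparrow(\ell_{\nu_j})=q_C^*\langle\nu_j,\gamma\rangle=0$ is a nice touch the paper leaves implicit.
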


 \begin{cor}\label{quotient_IM}
   There is a linear Lie algebroid structure on $R/\Pi\simeq  R/\nabla^{i} \to C/F_C\simeq B$ such that
   \begin{equation}\label{pic_fibration}
  \begin{xy}
    \xymatrix{
      R \ar[r]^{\pi_{\nabla^i}}\ar[d]_{\pi_C}&   R/\nabla^i\ar[d]^{q}\\
       C\ar[r]_{\partial_B}                   &  B\\
    }
  \end{xy}
\end{equation}
is a fibration of VB-Lie algebroids.
\end{cor}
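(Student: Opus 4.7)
The plan is to apply Theorem \ref{red_lie_alg}(2) to the infinitesimal ideal system $(F_C, 0, \nabla^i)$ of the (VB-)Lie algebroid $R \to C$ established in the preceding proposition, and then promote the resulting Lie algebroid structure on $R/\Pi \to B$ to a VB-algebroid structure by tracking linear, core, and core-linear sections through the projection.

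First I would verify the topological hypotheses of Theorem \ref{red_lie_alg}(2). The leaves of $F_C \subseteq TC$ are the affine cosets $c_m + C^A_m$ inside the fibres of $q_C\colon C\to M$, so $C/F_C$ is smooth and the factored morphism $\partial_B$ identifies it with $B$. To see that $\nabla^i$ has trivial holonomy, note that $\Gamma_C(R)$ is generated as a $C^\infty(C)$-module by the globally defined $\nabla^i$-flat sections $\widehat{a}$ and $\gamma^\times$ for $a\in\Gamma(A)$ and $\gamma\in\Gamma(C)$, so $\nabla^i$-parallel transport along a path inside a leaf reduces to evaluating such a global flat section at the endpoints, independently of the path. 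Combined with Proposition \ref{eq_of_eqs}, this identifies $R/\nabla^i$ with $R/\Pi$ as vector bundles over $B$, and Theorem \ref{red_lie_alg}(2) equips $R/\Pi \to B$ with a Lie algebroid structure making \eqref{pic_fibration} a fibration of Lie algebroids over $\partial_B\colon C\to B$.

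Next I would promote this Lie algebroid to a VB-algebroid over $A\to M$ by showing that the three distinguished classes of sections of $R \to C$ descend correctly through $\pi_{\nabla^i}$: the linear sections $\widehat{a}$ project to linear sections of $R/\Pi \to B$ by \eqref{sec_a_red}; the core sections $\gamma^\times$ project to core sections by \eqref{sec_gamma_red}; and, by Lemma \ref{proj_core_linear}, the $\nabla^i$-flat core-linear sections $\widetilde{\phi}$ with $\phi\in\Gamma((C^A)^\circ \otimes C)$ project to core-linear sections of $R/\Pi \to B$. The bracket computations carried out in the proof of the preceding proposition already express the brackets of the flat representatives $\partial_B^*F_i \cdot \widehat{a_i}$ and $\partial_B^*F_j \cdot \gamma_j^\times$ as $\partial_B^*$-linear combinations of flat linear, core, and core-linear sections. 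Applying $\pi_{\nabla^i}$ yields the analogous bracket relations on $R/\Pi$, so $\Gamma^l_B(R/\Pi)$ is closed under the bracket, its brackets with core sections are core, and core-core brackets vanish.

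The main technical obstacle is this last step: one must check that the core-linear correction term $\widetilde{R_\nabla(a_1,a_2)}$ produced when bracketing two flat linear sections descends to a genuine core-linear section of $R/\Pi \to B$. This relies on $R_\nabla$ taking values in $(C^A)^\circ \otimes C$, which is exactly the flatness of $\nabla^A$ already invoked above, and yields the quotient tensor $\overline{R_\nabla} \in \Omega^2(A, \operatorname{Hom}(B,C))$ governing the bracket on $R/\Pi \to B$. Granted this, the anchor $\Theta_A$ and bracket on $R/\Pi\to B$ are linear by construction, and \eqref{pic_fibration} is a fibration of VB-Lie algebroids.
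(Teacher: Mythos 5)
Your proposal is correct and follows essentially the same route as the paper: apply Theorem \ref{red_lie_alg}(2) to the infinitesimal ideal system $(F_C,0,\nabla^i)$ to obtain the Lie algebroid structure on $R/\Pi\to B$ and the fibration property, then verify linearity by tracking the $\nabla^i$-flat linear sections $\widehat{a}$, core sections $\gamma^\times$, and the core-linear correction $\widetilde{R_\nabla(a_1,a_2)}$ (which descends to $\widetilde{\overline{R_\nabla}(a_1,a_2)}$ precisely because flatness of $\nabla^A$ forces $R_\nabla(a_1,a_2)\in\Gamma((C^A)^\circ\otimes C)$, as in Lemma \ref{proj_core_linear}). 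Your explicit check of the topological hypotheses (smooth leaf space, trivial holonomy via global flat generators) is a welcome addition that the paper delegates to Theorem \ref{main_thm_quotient} and Proposition \ref{eq_of_eqs}; the only small omission is the anchor computation $\Theta_B(\widehat{a})=\widehat{\nabla^B_a}$, $\Theta_B(\gamma^\times)=(\da_B\gamma)^\uparrow$ via the $\da_B$-relatedness of Lemma \ref{lem_vfC_vfB}, which you assert as ``linear by construction'' rather than verifying.
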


\begin{proof}
  By Theorem \ref{main_thm_quotient}, $R/\Pi$ has a double vector
  bundle structure with sides $A$ and $B$ and with core $C$. Since
  $R/\Pi\simeq R/\nabla^i\to C/F_C\simeq B$, and $(F_C,0,\nabla^i)$ is
  an infinitesimal ideal system in $R\to C$, the quotient $R/\Pi$ has a
  Lie algebroid structure such that \eqref{pic_fibration} is a
  fibration of Lie algebroids, see Theorem \ref{red_lie_alg},(2). It
  remains to show that the Lie algebroid structure on $R/\Pi\to B$ is
  linear.

  Consider a linear $A$-connection $\nabla$ on $C$ that preserves
  $C^A$. Then the sections of the type $\widehat{a}$ and
  $\gamma^\times$ for $a\in\Gamma(A)$ and $\gamma\in \Gamma(C)$ are
  $\nabla^i$ invariant and project to the sections $\widehat{a}$ and
  $\gamma^\times$ of $R/\Pi\to B$ as in \eqref{sec_a_red} and
  \eqref{sec_gamma_red}:
\[ \pi_{\nabla^i}\circ \widehat{a}=\widehat{a}\circ\da_B\qquad
    \text{ and }\qquad \pi_{\nabla^i}\circ \gamma^\times=\gamma^\times\circ\da_B
\]

  The anchor $\Theta_B\colon R/\Pi\to TB$ is defined by
  \[ \Theta_B(\widehat{a})=\widehat{\nabla_a^B}\in\mx^l(B) \qquad
    \text{ and }\qquad
    \Theta_B(\gamma^\times)=(\da_B\gamma)^\uparrow\in\mx^c(B),
  \]
  since by Lemma \ref{lem_vfC_vfB},
  $\Theta_C(\widehat{a})=\widehat{\nabla_a}\in\mx(C)$ is $\da_B$
  related to $\widehat{\nabla_a^B}\in\mx(B)$ and
  $\Theta_C(\gamma^\times)=\gamma^\uparrow\in\mx(C)$ is
  $\da_B$-related to $(\da_B\gamma)^\uparrow\in\mx(B)$.

  The Lie algebroid bracket of core sections of $R/\Pi\to B$ is
  defined by
    \[ \left[\gamma_1^\times, \gamma_2^\times\right]_{R/\Pi}=0
    \]
    for $\gamma_1,\gamma_2\in\Gamma(C)$, since the bracket
    $\left[\gamma_1^\times, \gamma_2^\times\right]$ of
    $\gamma_1^\times,\gamma_2^\times\in\Gamma_C(R)$ vanishes and
    projects hence to the zero section of $R/\Pi\to B$.
    Similarly,
    \[ \left[\widehat{a}, \gamma^\times\right]_{R/\Pi}=(\nabla_a\gamma)^\times
    \]
    for $a\in\Gamma(A)$ and $\gamma\in\Gamma(C)$, since the bracket
    $\left[\widehat{a}, \gamma^\times\right]$ of $\widehat{a}$ and
    $\gamma^\times\in\Gamma_C(R)$ equals $(\nabla_a\gamma)^\times$ and
    projects hence to $(\nabla_a\gamma)^\times\in\Gamma_B^c(R/\Pi)$.
    Finally, as in Lemma \ref{proj_core_linear}, for
    $a_1,a_2\in\Gamma(A)$ the curvature $R_\nabla(a_1,a_2)$ gives a
    $\nabla^i$-flat core-linear section
    $\widetilde{R_\nabla(a_1,a_2)}\in\Gamma^l_C(R)$, which projects
    under $\pi_{\nabla^i}$ and $\partial_B$ to
    $\widetilde{\overline{R_\nabla}(a_1,a_2)}\in\Gamma^l_B(R/\Pi)$.
    As a consequence, the Lie bracket
    $\left[\widehat{a_1},\widehat{a_2}\right]=\widehat{[a_1,a_2]}-\widetilde{R_\nabla(a_1,a_2)}\in\Gamma^l_C(R)$
    of $\widehat{a_1}$ and $\widehat{a_2}\in\Gamma^l_C(R)$ projects to
    \[\left[\widehat{a_1},\widehat{a_2}\right]_{R/\Pi}:=\widehat{[a_1,a_2]}-\widetilde{\overline{R_\nabla}(a_1,a_2)}\in\Gamma^l_B(R/\Pi). \qedhere\]
    \end{proof}

The proof of Corollary \ref{quotient_IM} shows as well the following Proposition.
  \begin{prop}
    Let $R/\Pi$ be constructed as above and consider as before an
    $A$-connection $\nabla\colon\Gamma(A)\times\Gamma(C)\to \Gamma(C)$
    on $C$ that extends $\nabla^A$.   Then the anchor
    $\ldbl\to TB$ is defined by 
    $\Theta_B\circ \widehat{a}=\Hat{\nabla^B_a}\in \mx^l(B)$ and
    $\Theta_B\circ \gamma^\times=(\da_B\gamma)^\uparrow\in\mx(B)$ for
    $a\in\Gamma(A)$ and $\gamma\in\Gamma(C)$.  Furthermore, the Lie
    algebroid bracket on $\ldbl/\Pi\to B$ is given by:
 \begin{equation}
 \label{eq:RC}
 \left[\widehat{a_1},
   \widehat{a_2}\right] =
 \widehat{[a_1,a_2]}-\widetilde{\overline{R_\nabla}(a_1,a_2)},\quad
 \left[\widehat{a}, \gamma^\times\right] = (\nabla_a\gamma)^\times,\quad
 \left[\gamma_1^\times\,,\gamma_2^\times\right] = 0
 \end{equation}
 for $a,a_1,a_2\in\Gamma(A)$ and  $\gamma,\gamma_1,\gamma_2\in\Gamma(C)$.

 In other words, via the splitting $\overline{I_\nabla}$ of $R/\Pi$
 given by $\nabla$, the Lie algebroid $R/\Pi\to B$ is described by the
 following representation up to homotopy of $A$:
\begin{equation}\label{ruth_A_D}
  \da_B\colon C\to B,\quad \nabla\colon\Gamma(A)\times\Gamma(C)\to\Gamma(C),  \quad\nabla^B\colon\Gamma(A)\times\Gamma(B)\to\Gamma(B),
\end{equation}
and
\begin{equation}\label{ruth_A_D1}
  \overline{R_\nabla}\in\Omega^2(A,\operatorname{Hom}(B,C)).
\end{equation}
    \end{prop}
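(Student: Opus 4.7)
The plan is that all of the analytic content of the proposition has already been obtained in the course of proving Corollary~\ref{quotient_IM}; what remains is to package the resulting formulas as a $2$-representation via Theorem~\ref{rajan}. Since $(R/\Pi\to B, A\to M)$ is a VB-algebroid by Corollary~\ref{quotient_IM} and the splitting $\overline{I_\nabla}\colon A\times_M B\times_M C\to R/\Pi$ induces a horizontal lift $\overline{\sigma^\nabla_A}\colon\Gamma(A)\to\Gamma^\ell_B(R/\Pi)$, by Theorem~\ref{rajan} it suffices to identify the images of $\widehat{a}:=\overline{\sigma^\nabla_A}(a)$ and of the core sections $\gamma^\times$ under the anchor $\Theta_B$, and to compute the three types of brackets $[\widehat{a_1},\widehat{a_2}]$, $[\widehat{a},\gamma^\times]$, $[\gamma_1^\times,\gamma_2^\times]$ in the splitting.

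First I would exploit the fact that, by equations \eqref{sec_a_red} and \eqref{sec_gamma_red}, the projection $\pi_{\nabla^i}\colon R\to R/\Pi$ covering $\partial_B\colon C\to B$ satisfies $\pi_{\nabla^i}\circ\widehat{a}=\widehat{a}\circ\partial_B$ and $\pi_{\nabla^i}\circ\gamma^\times=\gamma^\times\circ\partial_B$, where the left-hand sections live in $\Gamma_C(R)$ and the right-hand sections in $\Gamma_B(R/\Pi)$. Since $\pi_{\nabla^i}$ is a fibration of Lie algebroids (Corollary~\ref{quotient_IM}), the anchor and bracket on $R/\Pi\to B$ are determined by $\partial_B$-relatedness from those of $R\to C$ given in Proposition~\ref{lie_algebroid_R_C1}. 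Combining this with Lemma~\ref{lem_vfC_vfB}, which states that $\widehat{\nabla_a}\in\mx^l(C)$ is $\partial_B$-related to $\widehat{\nabla_a^B}\in\mx^l(B)$ and that $\gamma^\uparrow\in\mx^c(C)$ is $\partial_B$-related to $(\partial_B\gamma)^\uparrow\in\mx^c(B)$, yields the anchor formulas $\Theta_B(\widehat{a})=\widehat{\nabla_a^B}$ and $\Theta_B(\gamma^\times)=(\partial_B\gamma)^\uparrow$.

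Next I would transfer the bracket formulas from Proposition~\ref{lie_algebroid_R_C1} to $R/\Pi\to B$. The relations $[\widehat{a},\gamma^\times]=(\nabla_a\gamma)^\times$ and $[\gamma_1^\times,\gamma_2^\times]=0$ project directly. For $[\widehat{a_1},\widehat{a_2}]=\widehat{[a_1,a_2]}-\widetilde{R_\nabla(a_1,a_2)}$ the nontrivial point is the treatment of the curvature term: by the flatness of the restriction $\nabla^A$ of $\nabla$ to $C^A$, the form $R_\nabla(a_1,a_2)$ vanishes on $C^A$ and therefore belongs to $\Gamma((C^A)^\circ\otimes C)$. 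Lemma~\ref{proj_core_linear} then produces exactly the induced element $\overline{R_\nabla}(a_1,a_2)\in\Gamma(B^*\otimes C)$ and asserts that $\widetilde{R_\nabla(a_1,a_2)}\in\Gamma^\ell_C(R)$ projects to $\widetilde{\overline{R_\nabla}(a_1,a_2)}\in\Gamma^\ell_B(R/\Pi)$. Combining these observations gives the claimed bracket formulas on $R/\Pi\to B$.

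The resulting data $(\partial_B,\nabla^B,\nabla,\overline{R_\nabla})$ is then automatically a $2$-representation of $A$ on $C[0]\oplus B[1]$ by Theorem~\ref{rajan}, since it was extracted from a genuine VB-algebroid structure using the linear splitting $\overline{I_\nabla}$. There is no real obstacle: the only point requiring care is the vanishing of $R_\nabla(a_1,a_2)$ on $C^A$, which is ensured by the compatibility between $\nabla$ and $\nabla^A$ built into the hypothesis, and is precisely what makes the reduction of the curvature tensor to $\overline{R_\nabla}$ well-defined.
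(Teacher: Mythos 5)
Your proposal is correct and follows essentially the same route as the paper: the paper explicitly derives this proposition from the proof of Corollary~\ref{quotient_IM}, which is exactly the argument you give — projecting the $\nabla^i$-flat sections $\widehat{a}$ and $\gamma^\times$ along the fibration $\pi_{\nabla^i}$, using Lemma~\ref{lem_vfC_vfB} for the anchor and Lemma~\ref{proj_core_linear} (with the vanishing of $R_\nabla$ on $C^A$ coming from the flatness of $\nabla^A$) for the curvature term. Nothing is missing.
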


 \begin{rem}
   The sections above show that the quotient $R/\Pi$ can be defined
   as the ``double quotient'' of the double Lie algebroid $R$ by the
   two IM-foliations
 \[(0,\Pi,0)\qquad \text{ and }\qquad (F_C,0,\nabla^{i})
 \]
 in $R\to A$ and $R\to C$, respectively.
 \end{rem}

\subsection{The quotient $R/\Pi$ as a double Lie algebroid.}

In this section, the quotient double vector bundle is written
$R/\Pi=R/\nabla^i=:\tilde D$ for simplicity. The next theorem shows
that the two `reduced' VB-algebroid structures found in Corollary
\ref{reduced_VB_A} and Corollary \ref{quotient_IM} on the sides of
$\tilde D$ are still compatible, so that $\tilde D=R/\Pi$ is again a
double Lie algebroid.

The setting of this section is the same as in the rest of Section
\ref{sec:quotient}. Recall from Corollary \ref{reduced_VB_A} that
$\tilde D\to A$ inherits a reduced Lie algebroid as the quotient of
$R\to A$ by the ideal $\Pi\to A$. By Corollary \ref{quotient_IM},
$\tilde D\to B$ inherits a Lie algebroid structure, as the quotient of
$R\to C$ by the infinitesimal ideal system $(F_C,0,\nabla^i)$.
\begin{thm}\label{tildeD_dla}
  With these two VB-Lie algebroid structures, the quotient
  $\tilde D=R/\Pi$ has the structure of a transitive double Lie
  algebroid over the sides $A$ and $B$, with core $C$ and (transitive)
  core diagram $\mathcal C$ in Figure \ref{fig:calC}.
\end{thm}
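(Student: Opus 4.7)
The plan is to apply the criterion recalled before Definition~\ref{matched_pair_2_rep}: the double vector bundle $\tilde D$ with its two VB-algebroid structures is a double Lie algebroid if and only if, in some linear splitting, the associated $2$-representations of $A$ and of $B$ form a matched pair. I would fix a linear $A$-connection $\nabla\colon \Gamma(A)\times \Gamma(C)\to \Gamma(C)$ extending $\nabla^A$ and use the splitting $\overline{I_\nabla}$ of Theorem~\ref{main_thm_quotient}(2). Corollary~\ref{reduced_VB_A} and the proposition immediately preceding the current theorem describe the two $2$-representations explicitly: one for $B$ acting on $C[0]\oplus A[1]$ with structural data $(\partial_A,\nabla^{\rm red},\nabla^{\rm red},R^{\rm red}_\nabla)$, and one for $A$ acting on $C[0]\oplus B[1]$ with data $(\partial_B,\nabla,\nabla^B,\overline{R_\nabla})$.

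Next I would verify the matched pair axioms (M1)--(M7) for this pair by deducing them from the corresponding identities for the comma double Lie algebroid $R$, which are known by Theorem~\ref{main_comma}. The key bridge is the dictionary
\begin{equation*}
  \nabla^{\rm red}_{\partial_B c}=\nabla^\partial_c,\quad \nabla^B_a\circ \partial_B=\partial_B\circ\nabla_a,\quad R^{\rm red}_\nabla(\partial_Bc_1,\partial_Bc_2)=R^\partial_\nabla(c_1,c_2),\quad \overline{R_\nabla}(a_1,a_2)\circ\partial_B=R_\nabla(a_1,a_2),
\end{equation*}
each entry of which holds by the definitions of the reduced operators and is well-posed thanks to Proposition~\ref{prop_ruth_0} and the fact that $\nabla^A$ preserves $C^A$. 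Since $\partial_B\colon C\to B$ is surjective, substituting $b_i=\partial_B c_i$ into each axiom for the reduced pair converts it, via the dictionary, into the corresponding axiom for $R$. For (M7) one additionally checks that $\dr_{\nabla^A}\overline{R_\nabla}$ and $\dr_{\nabla^B}R^{\rm red}_\nabla$ are exactly the reductions along $\partial_B$ of the analogous objects for $R$, which again follows from the dictionary.

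The remaining assertions are immediate: the core of $\tilde D$ as a double vector bundle is $C$ by Theorem~\ref{main_thm_quotient}(1); the core-anchors of its two VB-algebroid structures are $\partial_A$ and $\partial_B$ by Corollary~\ref{reduced_VB_A} and the aforementioned proposition; and the induced Lie algebroid bracket on $C$ coming from \eqref{bracket_on_C} is $[c_1,c_2]=\nabla_{\partial_Ac_1}c_2-\nabla^\partial_{c_2}c_1$, i.e.\ the same bracket already obtained for $R$, and hence the original bracket on $C$. Therefore the core diagram of $\tilde D$ is exactly $\mathcal C$, and transitivity is immediate from the surjectivity of $\partial_A$ and $\partial_B$. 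The hardest step I anticipate is the bookkeeping for (M4)--(M7), where several mixed terms involving both curvatures and connections must be matched with their counterparts in $R$; the calculations of Appendix~\ref{proof_of_R_lba} for the comma algebroid provide a template for this.
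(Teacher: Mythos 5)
Your proposal is correct and follows essentially the same route as the paper: the paper also fixes a connection $\nabla$ extending $\nabla^A$, reads off the two $2$-representations \eqref{ruth_B_D}--\eqref{ruth_B_D1} and \eqref{ruth_A_D}--\eqref{ruth_A_D1} in the splitting $\overline{I_\nabla}$, and verifies (M1)--(M7) in Appendix \ref{proof_of_R_lba} by substituting $b_i=\da_Bc_i$ and reducing the corresponding identities for $R$ along $\da_B$, using its surjectivity exactly as you indicate. The identification of the core bracket, core-anchors and transitivity is likewise the same.
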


\begin{proof}
  Let as before $\nabla\colon \Gamma(A)\times\Gamma(C)\to\Gamma(C)$ be
  a linear connection extending the representation $\nabla^A$ of $A$
  on $C^A$. In the induced linear splitting
  $\overline{I_\nabla}\colon A\times_MB\to \tilde D$ of $\tilde D$,
  the two VB-algebroids $\tilde D\to A$ and $\tilde D\to B$ are
  represented by the two representations up to homotopy in
  \eqref{ruth_B_D}, \eqref{ruth_B_D1}, and in \eqref{ruth_A_D},
  \eqref{ruth_A_D1}, respectively.  It is not difficult to check that
  these two representations up to homotopy are matched (see Appendix
  \ref{proof_of_R_lba}). Hence, $\tilde D$ is a double Lie algebroid
  with sides $A$ and $B$ and with core $C$.

  \medskip By \eqref{bracket_on_C}, the Lie algebroid structure
  induced on the core $C$ by the double Lie algebroid $\tilde D$ is
  given by the anchor $\rho_A\circ\da=\rho_B\circ\da_B$, which is
  $\rho_C$, and by the bracket
  \[ \nabla_{\da_A c_1}c_2-\nabla^{\rm red}_{\da_B c_2}c_1=\nabla_{\da_A c_1}c_2-\nabla^\da_{c_2}c_1=-[c_2,c_1]=[c_1,c_2]
  \]
  for all $c_1,c_2\in\Gamma(C)$. Hence, the Lie algebroid $C$ in the
  core diagram $\mathcal C$ is the Lie algebroid $C$ as the core of
  the double Lie algebroid $\tilde D$.  By \eqref{ruth_B_D}, the
  core-anchor of $\tilde D\to A$ is $\da=\da_A\colon C\to A$, and by
  \eqref{ruth_A_D}, the core-anchor of $\tilde D\to B$ is
  $\da_B\colon C\to B$. Therefore, the core diagram $\mathcal C$ is
  the core-diagram of $\tilde D$.
\end{proof}

\begin{prop}
  In the situation of Theorem \ref{tildeD_dla}, the morphism
  $\pi\colon R\to \tilde D$ of double vector bundles is a morphism of
  double Lie algebroids.
\end{prop}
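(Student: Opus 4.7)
The plan is to use the framework of Section \ref{sec:mor_dla} to reduce the statement to verifying that $\pi$ induces morphisms of $2$-representations for both VB-algebroid structures. By Corollary \ref{cor_pi_mor_dvb}, $\pi\colon R\to \tilde D$ is already a morphism of double vector bundles with side morphisms $\id_A$ and $\da_B$ and core morphism $\id_C$; the side $\id_A$ is trivially a Lie algebroid morphism, and $\da_B\colon C\to B$ is a Lie algebroid morphism by the core diagram hypothesis. So the only content lies in checking compatibility with the two VB-Lie algebroid brackets.

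Concretely, I would fix a linear connection $\nabla\colon\Gamma(A)\times\Gamma(C)\to\Gamma(C)$ extending $\nabla^A$ on $C^A$, and use the compatible decompositions $I_\nabla$ of $R$ and $\overline{I_\nabla}$ of $\tilde D$. In these splittings, by \eqref{pi_in_dec}, $\pi$ reads $(a,c,\gamma)\mapsto(a,\da_Bc,\gamma)$; hence the $1$-form $\phi\in\Omega^1(\cdot,\Hom(\cdot,C))$ from \eqref{eq_def_phi_interchange} encoding the morphism in the splittings is identically zero in both VB-algebroid directions. By \cite[Theorem 4.11]{DrJoOr15}, it therefore suffices to check that the triples $(\id_A,\id_C,0)$ and $(\da_B,\id_C,0)$ define morphisms of the relevant $2$-representations.

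For the VB-algebroid morphism $(R\to A,C\to M)\to (\tilde D\to A,B\to M)$ over $\da_B\colon C\to B$, I compare the $2$-representation \eqref{ruth_C}--\eqref{ruth_C_1} of $C$ with the pullback under $\da_B$ of the $2$-representation \eqref{ruth_B_D}--\eqref{ruth_B_D1} of $B$. The definitions $\nabla^{\rm red}_{\da_Bc}(\cdot)=\nabla^\da_c(\cdot)$ and $R^{\rm red}_\nabla(\da_Bc_1,\da_Bc_2)=R_\nabla^\da(c_1,c_2)$, which are well-defined precisely by Proposition \ref{prop_ruth_0}, say exactly that the $C$-representation equals the $\da_B$-pullback of the $B$-representation. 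Conditions \eqref{comp1}--\eqref{comp3} then reduce to identities of the form $\id\circ\partial=\partial\circ\id$, the vanishing of $\nabla^{\rm Hom}_c\id$, and $R_\nabla^\da-\da_B^*R^{\rm red}_\nabla=0$, all of which are immediate. For the other VB-algebroid morphism $(R\to C,A\to M)\to (\tilde D\to B,A\to M)$ over $\id_A$, I compare \eqref{ruth_A} with \eqref{ruth_A_D}--\eqref{ruth_A_D1}; here \eqref{comp1} reads $\da_B\circ\id=\da_B\circ\id_C$, condition \eqref{comp2} for $\id_C$ is trivial and for $\da_B$ reads $\nabla^B_a(\da_Bc)=\da_B(\nabla_ac)$ which is the very definition of $\nabla^B$, and \eqref{comp3} reads $\overline{R_\nabla}(a_1,a_2)\circ\da_B=R_\nabla(a_1,a_2)$ which is the defining property of $\overline{R_\nabla}$.

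The main potential obstacle is nothing more than bookkeeping: matching the indices in the two pairs of $2$-representations to the sides and cores of $\pi$. Since the splittings were arranged so that $\phi=0$ and since the reduced structures on $\tilde D$ are defined exactly as the pushforwards under $\da_B$ of the original structures on $R$, no curvature-type terms obstruct the morphism conditions. Once both VB-algebroid morphism checks are in place, the conjunction is by definition a morphism of double Lie algebroids, completing the proof.
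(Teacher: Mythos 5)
Your proposal is correct and follows essentially the same route as the paper: fix a connection $\nabla$ extending $\nabla^A$, use the compatible splittings from Corollary \ref{cor_pi_mor_dvb} so that the interchange form $\phi$ of \eqref{eq_def_phi_interchange} vanishes, and then observe that conditions \eqref{comp1}--\eqref{comp3} for both triples reduce to the defining relations of $\nabla^B$, $\overline{R_\nabla}$, $\nabla^{\rm red}$ and $R^{\rm red}_\nabla$. The only (cosmetic) difference is the ordering of the components in your triples relative to the paper's convention $(\phi_0,\phi_1,\phi)$.
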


\begin{proof}
  By Corollary \ref{cor_pi_mor_dvb}, a choice of linear connection
  $\nabla\colon\Gamma(A)\times\Gamma(C)\to\Gamma(C)$ that extends
  $\nabla^A\colon \Gamma(A)\times\Gamma(C^A)\to \Gamma(C^A)$ yields
  linear splittings $\Sigma_\nabla\colon A\times_MC\to R$ and
  $\overline\Sigma_\nabla\colon A\times_MB\to \tilde D$ such that
  $\pi(\Sigma_\nabla(a,c))=\overline\Sigma_\nabla(a,\da_Bc)$ for all
  $(a,b)\in A\times_M B$.

  Therefore \eqref{comp1}, \eqref{comp2} and \eqref{comp3} need to be
  checked for \begin{enumerate}
  \item the triple $(\id_C,\da_B,0)$ from the $2$-representation
  $(\id_C\colon C[0]\to C[1], \nabla, \nabla, R_\nabla)$ of $A$ as in 
  \eqref{ruth_A} to the $2$-representation
  $(\da_B\colon C[0]\to B[1], \nabla, \nabla^B, \overline{R_\nabla})$ of $A$
  as in \eqref{ruth_A_D} and \eqref{ruth_A_D1}.
  \item
    the triple $(\id_C,\id_A,0)$ from the $2$-representation
  $(\da_A\colon C[0]\to A[1], \nabla^\da, \nabla^\da, R_\nabla^\da)$ of $C$ as in 
  \eqref{ruth_C} and \eqref{ruth_C_1} to the $2$-representation
  $\da_B^!(\da_A\colon C[0]\to A[1], \nabla^{\rm red}, \nabla^{\rm red}, R_\nabla^{\rm red})$ of $C$
  as in \eqref{ruth_B_D} and \eqref{ruth_B_D1}.
  \end{enumerate}
  In the first case, \eqref{comp1} is trivial and \eqref{comp2} and
  \eqref{comp3} follow immediately from the definitions of $\nabla^B$
  and $\overline{R_\nabla}$. In the second case,
  $\da_B^!(\da_A\colon C[0]\to A[1], \nabla^{\rm red}, \nabla^{\rm
    red}, R_\nabla^{\rm red})=(\da_A\colon C[0]\to A[1], \nabla^\da,
  \nabla^\da, R_\nabla^\da)$ by the definitions of $\nabla^{\rm red}$
  and $R^{\rm red}_\nabla$, so the three conditions are immediate.
  \end{proof}

\subsection{Equivalence of transitive double Lie algebroids with transitive core diagrams}\label{eq_cat_la}
Let $\CD(M)$ be the category of transitive core diagrams over $M$, and
let $\DLA(M)$ be the category of transitive double Lie algebroids over
$M$. As discussed in Section \ref{func_trdLA_codiag}, considering the
core diagram of a transitive double Lie algebroid with double base $M$
defines a functor \[\mathcal C\colon \DLA(M)\to \CD(M).\]

This section shows that the construction of a transitive double Lie
algebroid $\tilde D=R/\Pi$ from a given transitive core diagram defines a functor
\[  \D\colon \CD(M)\to\DLA(M),
  \]
  and proves the following theorem.

  \begin{thm}\label{eq_cat}
    Let $M$ be a smooth manifold. The two functors
    $\mathcal C\colon\DLA(M)\to \CD(M)$ and
    $\D\colon \CD(M)\to\DLA(M)$ establish an equivalence between the
    category of transitive core diagrams over $M$ and the one of
    transitive double Lie algebroids with double base $M$.
    \end{thm}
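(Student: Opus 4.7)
The strategy is to verify in turn: (i) $\D$ is a functor; (ii) $\mathcal C\circ \D = \id_{\CD(M)}$; and (iii) $\D\circ \mathcal C$ is naturally isomorphic to $\id_{\DLA(M)}$ via isomorphisms $\Psi_D\colon \D\mathcal C(D)\to D$. For (i), the action on objects is Theorem \ref{tildeD_dla}, and the action on morphisms is given by Theorem \ref{thm_mor_quot_dvb}; the remaining check that $\overline\Phi$ is a double Lie algebroid morphism reads off the form $\overline{\omega_{\nabla,\nabla'}}$ in \eqref{1-form_red_Phi} and verifies, via the criterion \eqref{dla_mor_1}--\eqref{dla_mor_2} applied to the $2$-representations \eqref{ruth_B_D}--\eqref{ruth_B_D1} and \eqref{ruth_A_D}--\eqref{ruth_A_D1}, that it satisfies \eqref{comp1}--\eqref{comp3}; this is the same bookkeeping as in Theorem \ref{tildeD_dla} transposed to morphisms, and functoriality follows from functoriality of the comma construction (Theorem \ref{comma_mor}) together with the universal property of the quotient by $\Pi$. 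Point (ii) follows on objects from the last paragraph of Theorem \ref{tildeD_dla} and on morphisms because $\overline\Phi$ carries the original side and core morphisms by Theorem \ref{thm_mor_quot_dvb}.

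The substantive step is (iii). Fix a transitive double Lie algebroid $D$ with core diagram $(C,\da_A,\da_B)$, and choose a linear splitting $\Sigma\colon A\times_MB\to D$ with associated $2$-representations $(\da_B,\nabla^{AB},\nabla^{AC},R_A)$ of $A$ on $C[0]\oplus B[1]$ and $(\da_A,\nabla^{BA},\nabla^{BC},R_B)$ of $B$ on $C[0]\oplus A[1]$. The key observation is that $\nabla^{AC}$ restricts to the crossed-module connection $\nabla^A$ of Proposition \ref{prop:xm} on $\Gamma(C^A)$: equation (M1) of Definition \ref{matched_pair_2_rep} gives, for $a=\da_A c$ and $\gamma\in\Gamma(C^A)$,
\[
\nabla^{AC}_a\gamma=[c,\gamma]+\nabla^{BC}_{\da_B\gamma}c=[c,\gamma]=\nabla^A_a\gamma,
\]
and $[c,\gamma]\in\Gamma(C^A)$ since $\da_B$ is a morphism. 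Hence $\nabla:=\nabla^{AC}$ is an admissible choice in Theorem \ref{main_thm_quotient}, yielding a decomposition $\overline{I_\nabla}\colon A\times_MB\times_MC\xrightarrow{\sim}\tilde D:=R/\Pi$. Using $\Sigma$ to decompose $D$ as $A\times_MB\times_MC$, I define $\Psi_D\colon \tilde D\to D$ to be the identity in these two parametrisations.

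The main obstacle is showing that $\Psi_D$ is a well-defined morphism of double Lie algebroids and is natural in $D$. For the algebroid structures, I compare the $2$-representations: the induced connection $\nabla^B$ on $B$ in \eqref{ruth_A_D} agrees with $\nabla^{AB}$ because \eqref{comp1} applied to the $A$-action on $C[0]\oplus B[1]$ gives $\da_B\circ\nabla^{AC}=\nabla^{AB}\circ\da_B$, and $\overline{R_\nabla}=R_A$ by (R3); similarly, from \eqref{ruth_B_D} the $B$-connection $\nabla^{\rm red}$ on $A$ agrees with $\nabla^{BA}$ by (M2), the $B$-connection $\nabla^{\rm red}$ on $C$ agrees with $\nabla^{BC}$ by (M1) (after swapping the two arguments as above), and $R^{\rm red}_\nabla=R_B$ by the remaining matched pair relations. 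Hence in the chosen decompositions the two $2$-representations describing $\tilde D$ coincide with those describing $D$, so the identity is a double Lie algebroid morphism with vanishing interchange form by the criterion leading to \eqref{dla_mor_1}--\eqref{dla_mor_2}. Invertibility of $\Psi_D$ is obvious. Splitting-independence follows by tracing how a change $\Sigma\to\Sigma'$, controlled by a gauge form $\phi\in\Omega^1(A,\Hom(B,C))$, induces the same coordinate change on $\tilde D$ (via the gauged connection $\nabla'^{AC}$), so the identity map is unchanged; more intrinsically, $\Psi_D$ is characterised by $\widehat a\mapsto \sigma_A(a)$ and $c^\times\mapsto c^\dagger$ on generating sections, which is manifestly splitting-independent, and naturality in $D$ then reduces to the fact that a double Lie algebroid morphism $F\colon D_1\to D_2$ intertwines the corresponding $2$-representations in compatible splittings.
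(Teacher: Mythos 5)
Your proposal is correct and follows essentially the same route as the paper: the paper also reduces (iii) to choosing linear splittings of $\D\mathcal C(D)$ and $D$ whose induced $A$-connections on $C$ agree (both restricting to the crossed-module connection on $C^A$), and then checks via (R2), (R3), (M1), (M2) and (M4) that the remaining $2$-representation data coincides, so the identity in these decompositions is a double Lie algebroid isomorphism; the only difference is that the paper packages this comparison as a standalone uniqueness statement (Theorem \ref{iso_dvbs}) for \emph{any} two transitive double Lie algebroids sharing a core diagram, together with its compatibility with morphisms, and then specialises. (Minor point: the identity $\da_B\circ\nabla^{AC}=\nabla^{AB}\circ\da_B$ is (R2) in the definition of a $2$-representation, not \eqref{comp1}.)
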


  \medskip

  First consider a morphism of transitive core diagrams as in Figure
  \ref{fig:mcd} and the induced vector bundle morphism
    \[\overline{\Phi}\colon
      \frac{TC\oplus_{TM}A}{\Pi}\to\frac{TC'\oplus_{TM}A'}{\Pi'},\] as
    in Theorem \ref{thm_mor_quot_dvb}, with side morphisms $\phi_A$
    and $\phi_B$ and core morphism $\phi_C$.
Choose a linear $A$-connection $\nabla$ on $C$ extending
    $\nabla^A$, and a linear $A'$-connection on $C'$ extending
    $\nabla^{A'}$.  Then by \eqref{1-form_red_Phi},
    \[
      \overline{\Phi}(\overline{I_{\nabla}}(a,b,\gamma))=\overline{I_{\nabla'}}(\phi_A(a),\phi_B(b),\phi_C(\gamma)+\overline{\omega_{\nabla,\nabla'}}(a,b))\]
    for $(a,b,c)\in A\times_MB\times_MC$. In order to check that
    $\overline{\Phi}$ is a double Lie algebroid morphim, 
     \eqref{comp1}, \eqref{comp2} and \eqref{comp3} need to be
  checked for \begin{enumerate}
  \item the triple $(\phi_C,\phi_B,\overline{\omega_{\nabla,\nabla'}})$ from the $2$-representation
  $(\da_B\colon C[0]\to B[1], \nabla, \nabla^B, \overline{R_\nabla})$ of $A$ as in 
   as in \eqref{ruth_A_D} and \eqref{ruth_A_D1} to the $2$-representation
  $\phi_A^*(\da_{B'}\colon C'[0]\to B'[1], \nabla', {\nabla'}^{B'}, \overline{R_{\nabla'}})$ of $A$.
  \item
    the triple $(\phi_C,\phi_A,\overline{\omega_{\nabla,\nabla'}})$ from the $2$-representation
  $(\da_A\colon C[0]\to A[1], \nabla^{\rm red}, \nabla^{\rm red}, R_\nabla^{\rm red})$ of $B$ as in 
  \eqref{ruth_B_D} and \eqref{ruth_B_D1} to the $2$-representation
  $\phi_B^*(\da_{A'} \colon C'[0]\to A'[1], {\nabla'}^{\rm red}, {\nabla'} ^{\rm red}, R_{\nabla'}^{\rm red})$ of $B$.
\end{enumerate}
This is done in Section \ref{proof_of_dla_mor}, yielding the following theorem.

\begin{thm}\label{dla_mor}
  Given a morphism of transitive core diagrams as in Figure
  \ref{fig:mcd}, then the induced double vector bundle morphism
    \[\overline{\Phi}\colon \frac{TC\oplus_{TM}A}{\Pi}\to\frac{TC'\oplus_{TM}A'}{\Pi'} ,\] as
    in Theorem \ref{thm_mor_quot_dvb} is a morphism of double Lie
    algebroids with side morphisms $\phi_A$ and $\phi_B$ and core
    morphism $\phi_C$. That is, the induced morphism of core diagrams
    is again $(\phi_A,\phi_B,\phi_C)$.
  \end{thm}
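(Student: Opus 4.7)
The plan is to reduce the statement, via the characterisation of double Lie algebroid morphisms recalled at the end of Section \ref{sec:mor_dla}, to verifying the three morphism-of-2-representations conditions \eqref{comp1}, \eqref{comp2}, \eqref{comp3} on each of the two sides of $\overline{\Phi}$, and to deduce them by descent from the corresponding conditions for $\Phi\colon TC\oplus_{TM}A\to TC'\oplus_{TM}A'$, which are already available thanks to Theorem \ref{comma_mor}.

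Concretely, I would need to show that the triple $(\phi_C,\phi_B,\overline{\omega_{\nabla,\nabla'}})$ defines a morphism of 2-representations from the $A$-side data $(\da_B,\nabla,\nabla^B,\overline{R_\nabla})$ to the pullback $\phi_A^*(\da_{B'},\nabla',{\nabla'}^{B'},\overline{R_{\nabla'}})$, and that $(\phi_C,\phi_A,\overline{\omega_{\nabla,\nabla'}})$ defines a morphism from the $B$-side data $(\da_A,\nabla^{\rm red},\nabla^{\rm red},R^{\rm red}_\nabla)$ to $\phi_B^*(\da_{A'},{\nabla'}^{\rm red},{\nabla'}^{\rm red},R^{\rm red}_{\nabla'})$. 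Condition \eqref{comp1} is immediate in both cases: it says $\phi_B\circ\da_B=\da_{B'}\circ\phi_C$ and $\phi_A\circ\da_A=\da_{A'}\circ\phi_C$, which is exactly the definition of a morphism of core diagrams (Definition \ref{df:mcd}).

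For \eqref{comp2} and \eqref{comp3} I would exploit that every ingredient of the quotient $2$-representations is by construction obtained by projection of the corresponding ingredient of the unquotiented $2$-representations of the comma double Lie algebroids: $\nabla^B$ is induced from $\nabla$ on the quotient $C/C^A\simeq B$ because $\nabla$ preserves $C^A$; $\overline{R_\nabla}$ descends from $R_\nabla$ since the latter vanishes on $C^A$ by flatness of $\nabla^A$; the connection $\nabla^{\rm red}$ and tensor $R^{\rm red}_\nabla$ descend from $\nabla^\da$ and $R^\da_\nabla$ thanks to Proposition \ref{prop_ruth_0}; finally $\overline{\omega_{\nabla,\nabla'}}$ descends from $\omega_{\nabla,\nabla'}$ by \eqref{omega_van_CA}. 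Thus each term of the quotient morphism equations is the image under the appropriate projection of the matching term in the morphism equations for $\Phi$, and Theorem \ref{comma_mor} supplies the latter identities in the unquotiented comma double Lie algebroids.

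I expect the main technical bookkeeping to concern the $B$-side, where the descent takes place along $\da_B\colon C\to B$ so that the base changes, whereas on the $A$-side the base is unchanged. In particular, verifying \eqref{comp3} for the $B$-side requires rewriting $\dr_{\nabla^{\rm Hom}}\overline{\omega_{\nabla,\nabla'}}$ as a $2$-form on $B$ and matching it to the image under $\da_B$ of $\dr_{\nabla^{\rm Hom}}\omega_{\nabla,\nabla'}$; the point is that all ingredients of the $B$-side 2-representation are $C^A$-insensitive by Proposition \ref{prop_ruth_0}, so the descent is automatic and the computation reduces to the identity already guaranteed by Theorem \ref{comma_mor}. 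No genuinely new obstruction arises beyond this careful indexing of the descended data.
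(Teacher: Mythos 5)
Your proposal is correct and follows essentially the same route as the paper's own proof in Appendix \ref{proof_of_dla_mor}: reduce to the conditions \eqref{comp1}--\eqref{comp3} for the two quotient $2$-representations, observe that \eqref{comp1} is exactly the definition of a morphism of core diagrams, and obtain \eqref{comp2} and \eqref{comp3} by descending the corresponding identities for the comma double Lie algebroids (Theorem \ref{comma_mor}) through $\da_B$, using precisely the vanishing/invariance facts you list (Proposition \ref{prop_ruth_0}, flatness of $\nabla^A$, and \eqref{omega_van_CA}). The paper merely writes out the $A$-side instance of \eqref{comp3} explicitly before invoking the comma-case computation for the $B$-side, so there is no substantive difference.
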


  Hence this paper has constructed the functor
  $\D\colon \CD(M)\to\DLA(M)$, sending a transitive core diagram
  $\mathcal C$ over $M$ as in Figure \ref{fig:cd} to the transitive
  double Lie algebroid $\frac{TC\oplus_{TM}A}{\Pi}$, and a morphism of
  transitive core diagrams as in Figure \ref{fig:mcd} to the
  corresponding double Lie algebroid morphism $\overline\Phi$ as in
  the last theorem.  By Theorem \ref{tildeD_dla}, the composition
  $\mathcal C\circ \D\colon \CD(M)\to \CD(M)$ is the identity functor.
  The remainder of this section proves that the composition
  $\D\circ\, \mathcal C\colon \DLA(M)\to \DLA(M)$ is a natural
  isomorphism, hence completing the proof of Theorem
  \ref{eq_cat}. Note that by construction, $\D\circ\, \mathcal C$
  sends a transitive double Lie algebroid to a transitive double Lie
  algebroid with the same core, see Theorem \ref{tildeD_dla}.
  
\medskip

The needed natural isomorphism follows from the following theorem.
\begin{thm}\label{iso_dvbs}
  \begin{enumerate}
  \item Let $(D_1;A,B,M)$ and $(D_2;A,B;M)$ be two transitive double
    Lie algebroids with the same core diagram $\mathcal C$, as in
    Figure \ref{fig:calC}.  Then there is a canonical isomorphism
    $\Phi_{D_1,D_2}\colon D_1\to D_2$ of double Lie algebroids, which
    is the identity on the sides and on the core.
  \item Let $(D_1;A,B,M)$ and $(D_2;A,B;M)$ be two transitive double
    Lie algebroids with the same core diagram $\mathcal C$, and let
    $(D'_1;A',B',M)$ and $(D_2';A',B';M)$ be two transitive double Lie
    algebroids with the same core diagram $\mathcal C'$. Consider two
    double Lie algebroid morphisms $\Phi_1\colon D_1\to D_1'$ and
    $\Phi_2\colon D_2\to D_2'$ that both induce the same morphism of
    core diagrams $\mathcal C\to\mathcal C'$. Then
  \[ \Phi_2\circ\Phi_{D_1,D_2}=\Phi_{D_1',D_2'}\circ\Phi_1.
    \]
  \end{enumerate}
  
\end{thm}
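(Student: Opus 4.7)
The plan is to reduce both assertions to the construction of a canonical isomorphism $\Psi_D\colon \D\mathcal{C}(D)\to D$ of double Lie algebroids, defined for every transitive double Lie algebroid $D$ with core diagram $\mathcal{C}$ and restricting to the identity on sides and on the core. Setting $\Phi_{D_1,D_2}:=\Psi_{D_2}\circ\Psi_{D_1}^{-1}$ then gives (1), and naturality of $\Psi_{\cdot}$ with respect to double Lie algebroid morphisms gives (2).

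To build $\Psi_D$, choose a linear splitting $\Sigma_D\colon A\times_MB\to D$ and let $(\da_B,\nabla^{AC},\nabla^{AB},R_A)$ be the $2$-representation of $A$ on $C[0]\oplus B[1]$ describing $(D\to B, A\to M)$ via Theorem~\ref{rajan}. The bracket formula \eqref{bracket_on_C}, applied to any $c\in\Gamma(C)$ with $\da_A c=a$ and to $\gamma\in\Gamma(C^A)$, yields $\nabla^{AC}_a\gamma=[c,\gamma]=\nabla^A_a\gamma$, so $\nabla:=\nabla^{AC}$ extends the representation $\nabla^A$ and is admissible as input to the construction of $\D\mathcal{C}(D)=R/\Pi$ from Section~\ref{sec:quotient}. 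This produces a matching splitting $\overline{I_\nabla}\colon A\times_MB\times_MC\to R/\Pi$ as in Theorem~\ref{main_thm_quotient}.

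The central step is to verify that in these matching splittings the two $2$-representations of $A$ describing $(D\to B,A\to M)$ and $(R/\Pi\to B,A\to M)$ are \emph{identical}, and similarly for the two $2$-representations of $B$. On the $A$-side, $\da_B$ and $\nabla^{AC}=\nabla$ match by construction; axiom (R2) $\da_B\circ\nabla^{AC}=\nabla^{AB}\circ\da_B$ combined with surjectivity of $\da_B$ identifies $\nabla^{AB}$ with the quotient connection $\nabla^B$ of \eqref{ruth_A_D}; and axiom (R3) combined with surjectivity of $\da_B$ forces $R_A=\overline{R_\nabla}$. On the $B$-side, the matched pair equations (M1) and (M2) of Definition~\ref{matched_pair_2_rep}, together with surjectivity of $\da_A$, identify $\nabla^{BC}$ and $\nabla^{BA}$ with the reduced connections $\nabla^{\rm red}$ of \eqref{ruth_B_D}, and the $B$-side analogue of (R3) matches the curvatures. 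Once all four pieces of data coincide, $\Psi_D$ is tautologically the identity in the chosen decompositions, and conditions \eqref{dla_mor_1}, \eqref{dla_mor_2} are satisfied with comparison form $\phi=0$, so $\Psi_D$ is a morphism of double Lie algebroids.

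Both the independence of $\Psi_D$ from the choice of $\Sigma_D$ and the naturality required for (2) rest on the same uniqueness lemma: \emph{any endomorphism of a transitive double Lie algebroid that is the identity on sides and core is the identity}. Indeed, such an endomorphism corresponds in a splitting to a form $\phi\in\Omega^1(A,\Hom(B,C))$ for which condition \eqref{comp2} forces $\phi_a\circ\da_B=0$; surjectivity of $\da_B$ then gives $\phi=0$. For (2), both $\Phi_i\circ\Psi_{D_i}$ and $\Psi_{D_i'}\circ\D(\varphi_A,\varphi_B,\varphi_C)$ are double Lie algebroid morphisms $\D\mathcal{C}(D_i)\to D_i'$ with the same side morphisms $\varphi_A,\varphi_B$ and core morphism $\varphi_C$; applying the uniqueness lemma to $D_i'$ shows they coincide, from which $\Phi_2\circ\Phi_{D_1,D_2}=\Phi_{D_1',D_2'}\circ\Phi_1$ follows by a short chase. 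The main obstacle is the key matching of the two $2$-representations; it is here that the matched pair axioms and the surjectivity of both core-anchors $\da_A,\da_B$ (i.e.\ transitivity) are essentially used.
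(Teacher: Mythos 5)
Your proposal is correct, and its technical core is the same as the paper's: once the linear splittings are chosen so that the induced $A$-connections on $C$ agree, all remaining data of the two $2$-representations are forced to coincide by (R2), (R3) and the matched-pair equations together with surjectivity of the core-anchors, and the comparison form of any morphism is pinned down by \eqref{comp2} plus surjectivity of $\da_B$. The organization is inverted, though: the paper compares $D_1$ and $D_2$ \emph{directly}, adjusting $\Sigma^2$ by a core-linear term so that $\nabla^1=\nabla^2$ and defining $\Phi_{D_1,D_2}=I_2\circ I_1^{-1}$, and only afterwards specializes to $D_1=\D\mathcal C(D)$, $D_2=D$ to get the natural isomorphism; you build $\Psi_D\colon\D\mathcal C(D)\to D$ first and set $\Phi_{D_1,D_2}=\Psi_{D_2}\circ\Psi_{D_1}^{-1}$, which buys a cleaner derivation of both independence of choices and of part (2) from a single uniqueness lemma (any endomorphism with identity sides and core is the identity — a valid statement, proved exactly as you say). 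One sub-step also differs: for matching the curvature $R_B$ the paper runs the (M4) identity, whereas you invoke the $B$-side instance of (R3), $R_{\nabla^{BC}}=R_B\circ\da_A$, together with surjectivity of $\da_A$; that is a legitimate and arguably slicker alternative. A small misattribution to fix: identifying $\nabla^{BC}$ and $\nabla^{BA}$ with the reduced connections via (M1)--(M2) parametrizes $b\in\Gamma(B)$ as $b=\da_Bc$ and therefore uses surjectivity of $\da_B$, not of $\da_A$; the latter is what you need for the $R_B$ step. This does not affect the validity of the argument, since both surjectivities are part of transitivity.
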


\begin{proof}
 \begin{enumerate}
    \item Choose a linear splitting $\Sigma^1\colon A\times_MB\to D_1$ of
  $D_1$ and $\Sigma^2\colon A\times_MB\to D_2$ of $D_2$. These two linear splittings induce
  four representations up to homotopy:
  \begin{enumerate}
    \item For $i=1,2$, the Lie algebroid $D_i\to A$ is described by the representation up to homotopy 
\begin{equation*}
  \partial_A\colon C\to A,\quad \nabla^{Bi}\colon\Gamma(B)\times\Gamma(C)\to\Gamma(C), \quad \nabla^{Bi}\colon\Gamma(B)\times\Gamma(A)\to\Gamma(A),
  \end{equation*}
and
\begin{equation*}
  R^{Bi}_\nabla\in\Omega^2(B,\operatorname{Hom}(A,C))
\end{equation*}
of $B\to M$ on $A[0]\oplus C[1]$.
\item For $i=1,2$, the Lie algebroid $D_i\to B$ is described by the representation up to homotopy 
\begin{equation*}
  \partial_B\colon C\to B,\quad \nabla^{i}\colon\Gamma(A)\times\Gamma(C)\to\Gamma(C), \quad \nabla^{Ai}\colon\Gamma(A)\times\Gamma(B)\to\Gamma(B),
  \end{equation*}
and
\begin{equation*}
  R^{Ai}_\nabla\in\Omega^2(A,\operatorname{Hom}(B,C))
\end{equation*}
of $A\to M$ on $B[0]\oplus C[1]$.
\end{enumerate}
Choose $\gamma\in\Gamma(C^A)=\Gamma(\ker\da_B)$ and $a\in\Gamma(A)$. Choose further a section $c\in\Gamma(C)$ with $\da c=a$.
By \eqref{bracket_on_C},
\[ \nabla^i_a\gamma=\nabla^i_{\da c}\gamma=[c,\gamma]+\nabla^{Bi}_{\da_B\gamma}c=[c,\gamma]
\]
for $i=1,2$. Hence,
$\nabla^1-\nabla^2\in\Omega^1(A, (C^A)^\circ\otimes
C)=\Gamma(A^*\otimes B^*\otimes C)$; i.e.~there exists a (unique) section
$\phi\in\Gamma(A^*\otimes B^*\otimes C)$ so dass
$\nabla^1_ac-\nabla^2_ac=\phi(a,\da_Bc)$ for all $a\in\Gamma(A)$ and
$c\in\Gamma(C)$. Set $\tilde\Sigma^2\colon A\times_MB\to D_2$,
\[ (a,b)\mapsto \Sigma^2(a,b)+_A(0^{D_2}_a+_B\overline{\phi(a,b)}).
\]
Then, by standard computations (see
e.g.~\cite[Remark 2.12]{GrJoMaMe18}), the connection
$\tilde\nabla^2\colon \Gamma(A)\times\Gamma(C)\to\Gamma(C)$ induced by the
VB-algebroid $D_2\to B$ and the splitting $\tilde \Sigma^2$ is given by
\[ \tilde\nabla^2_ac=\nabla^2_ac+\varphi(a,\da_Bc)=\nabla^2_ac+\nabla^1_ac-\nabla^2_ac=\nabla^1_ac.
  \]
  This shows that $\Sigma^1$ and $\Sigma^2$ can be chosen such that
  $\nabla:=\nabla^1=\nabla^2\colon
  \Gamma(A)\times\Gamma(C)\to\Gamma(C)$.  Consider then the linear
  decompositions $I_1\colon A\times_MB\times_MC\to D_1$ and
  $I_2\colon A\times_MB\times_MC\to D_2$ that are equivalent to
  $\Sigma^1$ and $\Sigma^2$, respectively, and the isomorphism
  $\Phi_{D_1,D_2}:=I_2\circ I_1\inv\colon D_1\to D_2$ of double vector
  bundles.  Choose two different linear splittings
  $\tilde \Sigma^1\colon A\times_M B\to D_1$ and $\tilde\Sigma^2\colon A\times_M B\to D_2$ of $D_1$ and $D_2$ such that
  the induced $A$-connections $\tilde\nabla^1$ and $\tilde \nabla^2$ on $C$ are equal: $\tilde \nabla^1=\tilde \nabla^2=:\tilde \nabla$.
  Then there exist forms $\omega_1,\omega_2\in\Gamma(A^*\otimes B^*\otimes C)$ such that
  $\tilde\Sigma^i(a,b)=\Sigma^i(a,b)+_B(0^{D_2}_b+_A\overline{\omega_i(a,b)})$
for $i=1,2$ and all $(a,b)\in A\times_M B$.
By \cite[Remark 2.12]{GrJoMaMe18},
\[ \nabla_ac+\omega_1(a,\da_Bc)=\tilde \nabla_ac=\nabla_ac+\omega_2(a,\da_Bc)
\]
for all $a\in\Gamma(A)$ and $c\in\Gamma(C)$. Since $\da_B$ is
surjective, this shows $\omega_1=\omega_2$. The equality
$\tilde I_2\circ\tilde I_1\inv=I_2\circ I_1\inv$ follows easily for
the decompositions $\tilde I_1$ and $\tilde I_2$ of $D_1$ and $D_2$
that are equivalent to $\tilde\Sigma^1$ and $\tilde\Sigma^2$.  Hence
$\Phi_{D_1,D_2}$ does not depend on the choice of the compatible
linear splittings of $D_1$ and $D_2$.

  \medskip

  It remains to show that $\Phi_{D_1,D_2}$ is an isomorphism of double
  Lie algebroids, i.e.~that the representations up to homotopy in (a)
  coincide and that the ones in (b) coincide, if
  $\nabla^1=\nabla^2=:\nabla$. Choose $a\in\Gamma(A)$, $b\in\Gamma(B)$
  and a section $c\in\Gamma(C)$ such that $\da_Bc=b$.  Then
\[
  \nabla^{A1}_ab=\nabla_a^{A1}\partial_Bc=\partial_B\nabla_ac=\nabla_a^{A2}\partial_Bc=\nabla_a^{A2}b
\]
by (R2) in the definition of a 2-representation, and 
\[\nabla^{B1}_ba=\nabla^{B1}_{\da_Bc}a=[\partial_Ac,a]+\partial_A(\nabla_ac)
=\nabla^{B2}_{\da_Bc}a
=\nabla^{B2}_ba
\]
by (M2).

Similarly, for  $b=\da_Bc\in\Gamma(B)$ as above  and $c'\in\Gamma(C)$:
\[\nabla^{B1}_bc'=\nabla^{B1}_{\da_Bc}c'=[c,c']+\nabla_{\partial_Ac'}c=\nabla^{B2}_{\partial_Bc}c'=\nabla^{B2}_{b}c'
\]
by \eqref{bracket_on_C} and (M1).
Furthermore, for $a_1,a_2\in\Gamma(A)$: 
\begin{align*}
R^{A1}(a_1,a_2)b&=R^{A1}(a_1,a_2)(\partial_Bc)
=R_{\nabla}(a_1,a_2)c\\
&=R^{A2}(a_1,a_2)(\partial_Bc)=R^{A2}(a_1,a_2)b
\end{align*}
by (R3). It remains to prove that $R^{B1}=R^{B2}$. This is done as follows using (M4).
Choose $a\in\Gamma(A)$, $b_1,b_2\in\Gamma(B)$ and $c_2\in\Gamma(C)$ such that $\partial_Bc_2=b_2$.
Then
\begin{align*}
R^{B1}(b_1,b_2)(a)&=R^{B1}(b_1,\partial_Bc_2)a\\
&=R^{A1}(a,\partial_Ac_2)b_1+\nabla^{B}_{b_1}\nabla_ac_2-\nabla_a\nabla^{B}_{b_1}c_2-\nabla_{\nabla_{b_1}^{B}a}c_2+\nabla^{B}_{\nabla_a^{A}b_1}c_2\\
&=R^{A2}(a,\partial_Ac_2)b_1+\nabla^{B}_{b_1}\nabla_ac_2-\nabla_a\nabla^{B}_{b_1}c_2-\nabla_{\nabla_{b_1}^{B}a}c_2+\nabla^{B}_{\nabla_a^{A}b_1}c_2\\
&= R^{B2}(b_1,\partial_Bc_2)a=R^{B2}(b_1,b_2)(a).
\end{align*}
\item Choose compatible linear splittings $\Sigma_1$ and $\Sigma_2$
  for $D_1$ and $D_2$ as in (1), and compatible linear splittings
  $\Sigma_1'$ and $\Sigma_2'$ for $D_1'$ and $D_2'$.  Then the
  $A$-connections on $C$ induced as in (1) by $\Sigma_1$ and
  $\Sigma_2$ are equal, and the $A'$-connections on $C'$ induced as in
  (1) by $\Sigma_1'$ and $\Sigma_2'$ are equal. Let
  $(\phi_A\colon A\to A',\phi_B\colon B\to B',\phi_C\colon C\to C')$
  be the morphism of core diagrams underlying both $\Phi_1$ and
  $\Phi_2$.  The linear splittings and $\Phi_1,\Phi_2$ define
  $\omega_1,\omega_2\in\Gamma(A^*\otimes B^*\otimes C')$ such that
  \begin{equation}\label{same_dec}
    \Phi_i(\Sigma_i(a,b))=\Sigma_i'(\phi_A(a),\phi_B(b))+_{B'}(0^{D_i'}_{\phi_B(b)}+_{A'}\overline{\omega_i(a,b)})
  \end{equation}
  for $i=1,2$ and $(a,b)\in A\times_MB$.
  Since $\Phi_i\colon D_i\to D_i'$ is a VB-algebroid morphism over $\phi_B\colon B\to B'$ for $i=1,2$,
  \eqref{comp2} yields 
  \[(\nabla^{\rm Hom}_a\phi_C)(c)=\omega_i(a,\da_Bc)
  \]
  for $i=1,2$ and $(a,c)\in A\times_MC$. Since $\da_B$ is surjective, this shows $\omega_1=\omega_2$.
  The equality
  $\Phi_2\circ\Phi_{D_1,D_2}=\Phi_{D_1',D_2'}\circ\Phi_1$
  follows then immediately from \eqref{same_dec} and the constructions of $\Phi_{D_1,D_2}$ and $\Phi_{D_1',D_2'}$.
  \end{enumerate}
\end{proof}

To conclude the proof of Theorem \ref{eq_cat},
consider for each transitive double Lie algebroid $D$ the isomorphism
\[ \mathcal I(D):=\Phi_{(\D\circ\,\mathcal C)(D),D}\colon (\D\circ\,\mathcal C)(D)\to D.
\]
By the last theorem it defines a natural isomorphism
\[\mathcal I\colon \D\circ\,\mathcal C\Rightarrow \id_{\DLA},
\]
since
 \begin{equation*}
  \begin{xy}
    \xymatrix{
      (\D\circ\,\mathcal C)(D_1)\ar[r]^{\qquad \mathcal I(D_1)}\ar[d]_{(\D\circ\,\mathcal C)(\Phi)}&   D_1\ar[d]^{\Phi}\\
       (\D\circ\,\mathcal C)(D_2)\ar[r]_{\qquad \mathcal I(D_2)}                   &  D_2\\
    }
  \end{xy}
\end{equation*}
commutes for any morphism $\Phi\colon D_1\to D_2$ of transitive double Lie algebroids.

    \section{Integration of transitive double Lie algebroids}\label{integration}
    Let $(\Gamma,G,H,M)$ be a double Lie groupoid with core $K$. Then
    taking the Lie algebroids $A_G(\Gamma)\to G$ and $A(H)\to M$
    yields an \emph{LA-groupoid}, see \cite{Mackenzie92}.
    \[\begin{tikzcd}
	{A_G(\Gamma)} & G \\
	{A(H)} & M
	\arrow[from=1-1, to=1-2]
	\arrow[shift left=1, from=1-1, to=2-1]
	\arrow[shift right=2, from=1-1, to=2-1]
	\arrow[shift right=1, from=1-2, to=2-2]
	\arrow[shift left=1, from=1-2, to=2-2]
	\arrow[from=2-1, to=2-2]
      \end{tikzcd}\] That is, the horizontal arrows in the diagram
    above carry Lie algebroid stuctures, and the vertical structures
    are Lie groupoids, with a compatibility condition. Taking the Lie
    algebroids of $A_G(\Gamma)\rr H$ and $G\rr M$ yields a double Lie algebroid
    \[\begin{tikzcd}
	{A_{A(H)}(A_G(\Gamma))} & {A(G)} \\
	{A(H)} & M
	\arrow[from=1-1, to=1-2]
	\arrow[from=1-1, to=2-1]
	\arrow[from=1-2, to=2-2]
	\arrow[from=2-1, to=2-2]
      \end{tikzcd}\]
    which is isomorphic to the double Lie algebroid
    \[\begin{tikzcd}
	{A_{A(G)}(A_H(\Gamma))} & {A(G)} \\
	{A(H)} & M
	\arrow[from=1-1, to=1-2]
	\arrow[from=1-1, to=2-1]
	\arrow[from=1-2, to=2-2]
	\arrow[from=2-1, to=2-2]
      \end{tikzcd}\] see \cite{Mackenzie00}. Any of these two double
    Lie algebroids is understood as the double Lie algebroid of the
    double Lie groupoid $(\Gamma,G,H,M)$
    \cite{Mackenzie00}. LA-algebroids have been integrated by
    \cite{BuCaHo16} to LA-groupoids, see also \cite{BrCaOr18}, and a
    certain class of LA-groupoids were integrated by
    \cite{Stefanini09} to double Lie groupoids, but as far as the
    authors know the ``direct'' integration of (integrable) double Lie
    algebroids to double Lie groupoids has not been discussed yet in
    the literature.

    Consider here a transitive double Lie algebroid
    \begin{equation*}
  \begin{xy}
    \xymatrix{
      D \ar[r]^{\pi_A}\ar[d]_{\pi_B}&   A\ar[d]^{q_A}\\
       B\ar[r]_{q_B}                   &  M\\
    }
  \end{xy}
\end{equation*}
with transitive core diagram as in Figure \ref{fig:cd_simple}. Assume
that $K\rr M$, $G\rr M$ and $H\rr M$ are source-simply connected Lie
groupoids with $A(K)=C\to M$, $A(G)=A\to M$ and $A(H)=B\to M$ -- that
is, assume that the Lie algebroids $A$, $B$ and $C$ are all
integrable, to $G$, $H$ and $K$, respectively. Then according to Lie's
second theorem for Lie algebroid morphisms (see the appendix of
\cite{MaXu00}), the core diagram in Figure \ref{fig:cd_simple}
integrates to a diagram of Lie groupoid morphisms as in Figure
\ref{cd_lg}. A study of the proof in \cite{MaXu00} reveals that the
surjectivity of $\da_A$ and $\da_B$ imply that the corresponding Lie
groupoid morphisms $\da_G\colon K\to G$ and $\da_H\colon K\to H$ are
surjective submersions.

For each $m\in M$, let $G_m$ be the source fiber
$\s\inv(m)\subseteq G$, let $K_m\subseteq K$ be similarly the source
fiber of $K$ through $1_m$, and let $F_m:=\ker(\da_G)_m\subseteq K_m$
be space of elements $h\in K_m$ with $\da_G(h)=1_m\in G$. Then it is
easy to see that $F_m$ is a Lie group, that acts smoothly, freely and
properly on the right on $K_m$ via the multiplication of $K$. The
orbit space of that action is $G_m$, and the quotient map the
surjective submersion $\da_G\an{K_m}\colon K_m \to G_m$.  That is,
$\da_G\an{K_m}\colon K_m \to G_m$ is a principal $F_m$-bundle.  It
satisfies therefore the homotopy lifting property and the long exact
sequence in homotopy yields that $F_m$ is connected\footnote{This can
  also be seen without using further general tools: take $k,k'\in F_m$
  and a path $\gamma\colon [0,1]\to F_m$ joining $k$ and $k'$. Then
  $\da_G\circ \gamma$ is a loop based at $1_m$ in $G_m$.  Since $G_m$
  is simply connected, this loop is homotopic (with fixed endpoints)
  to the constant loop at $1_m$. Lifting the homotopy to $K_m$ yields
  a homotopy between $\gamma$ and a \emph{path in $F_m$}, by paths all
  with endpoints in $F_m$. This shows that $k$ and $k'$ can be joined
  by a path in $F_m$.}  since $G_m$ and $K_m$ are simply connected
(see \cite[Chapter 4]{Hatcher02}).

That is, the Lie subgroupoid $\ker(\da_G)\subseteq K$ integrating
$\ker(\da_A)$ is source-connected, and similarly, the Lie subgroupoid
$\ker(\da_H)\subseteq K$ integrating $\ker(\da_B)$ is
source-connected. Hence the condition $[c_1,c_2]=0$ for all
$c_1\in\Gamma(\ker\da_A)$ and all $c_2\in\Gamma(\ker\da_B)$ integrates
to the elements of $\ker\da_G$ commuting with those of $\ker\da_H$. In
other words, the core diagram of the double Lie algebroid $(D,A,B,M)$
integrates to a core diagram of Lie groupoids as in Figure
\ref{cd_lg}.

According to the discussion in Section \ref{expand_BrMa92}, there
exists then a (up to isomorphism) unique transitive double Lie
groupoid \[\begin{tikzcd}
    \Theta & G \\
    H& M \arrow[shift left=1, from=1-2, to=2-2] \arrow[shift right=1,
    from=1-2, to=2-2] \arrow[shift right=1, from=2-1, to=2-2]
    \arrow[shift left=1, from=2-1, to=2-2] \arrow[shift right=1,
    from=1-1, to=1-2] \arrow[shift right=1, from=1-1, to=2-1]
    \arrow[shift left=1, from=1-1, to=2-1] \arrow[shift left=1,
    from=1-1, to=1-2]
  \end{tikzcd}\] with the core diagram in Figure
\ref{cd_lg}. Following Propositions 2.6 and 2.9 in \cite{Mackenzie00},
and Definition 5.1 in \cite{Mackenzie92}, the transitive
core diagram of the transitive double Lie algebroid
\[(A_{A(G)}(A_H(\Theta)), A(G), A(H), M)\simeq (A_{A(H)}(A_G(\Theta)),
  A(G), A(H), M)\] of the double Lie groupoid $(\Theta,G,H,M)$ is the
one in Figure \ref{fig:cd_simple}. Hence by Theorem \ref{iso_dvbs}
the double Lie algebroid of $(\Theta,G,H,M)$ is isomorphic to
$(D,A,B,M)$, and so \emph{$(\Theta,G,H,M)$ integrates $(D,A,B,M)$}.

Since by Lie's second theorem morphisms of transitive core diagrams
(of Lie algebroids) integrate to morphisms of
transitive core diagrams (of Lie groupoids), it is easy to see in the
same manner that a morphism of transitive double Lie algebroids over a
fixed double base integrates to a morphism of transitive double Lie
groupoids over the same double base.  This proves the following
theorem.
    \begin{thm}\label{integration_thm}
      Let \begin{equation*}
  \begin{xy}
    \xymatrix{
      D \ar[r]^{\pi_A}\ar[d]_{\pi_B}&   A\ar[d]^{q_A}\\
       B\ar[r]_{q_B}                   &  M\\
    }
  \end{xy}
\end{equation*} be a transitive double Lie algebroid with core diagram
$$ 
\xymatrix{
&C\ar[r]_{\da_B}\ar[d]^{\da_A}&B\\
&A&
}
$$ 
Assume that
$A$,$B$ and $C$ are integrable Lie algebroids, with
  $A\simeq A(G)$, $B\simeq A(H)$ and $C\simeq A(K)$ for
  source simply-connected Lie groupoids $G\rr M$, $H\rr M$ and
  $K\rr M$.
    Then the double Lie algebroid
$(D,A,B,M)$ is integrable to a transitive double Lie groupoid
$(\Theta,G,H,M)$, which is uniquely defined by its core diagram \[\begin{tikzcd}
	K & G \\
	H & M 
	\arrow[shift left=1, from=1-2, to=2-2]
	\arrow[shift right=1, from=1-2, to=2-2]
	\arrow[shift right=1, from=1-1, to=2-2]
	\arrow[shift left=1, from=1-1, to=2-2]
	\arrow[shift right=1, from=2-1, to=2-2]
	\arrow[shift left=1, from=2-1, to=2-2]
	\arrow["{\partial_G}", from=1-1, to=1-2]
	\arrow["{\partial_H}"', from=1-1, to=2-1]
      \end{tikzcd}\]
    such that $A(\da_G)=\da_A$ and $A(\da_H)=\da_B$.
    A morphism of integrable transitive double Lie algebroids over a
    fixed base $M$ integrates further to a morphism of transitive
    double Lie groupoids over the double base $M$.
  \end{thm}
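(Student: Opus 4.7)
The plan is to assemble the proof from results already in place: integrate the core diagram first at the groupoid level using Lie's second theorem, then invoke the equivalence between transitive core diagrams of Lie groupoids and transitive double Lie groupoids from Section \ref{expand_BrMa92}, and finally use Theorem \ref{iso_dvbs} to identify the double Lie algebroid of the resulting double Lie groupoid with $(D,A,B,M)$. The assumption that $G$, $H$, $K$ are source-simply-connected Lie groupoids integrating $A$, $B$, $C$ feeds directly into the appendix of \cite{MaXu00}, which asserts that morphisms of Lie algebroids integrate to morphisms of Lie groupoids provided the source is source-simply-connected. Applying this to $\da_A\colon C\to A$ and $\da_B\colon C\to B$ produces morphisms $\da_G\colon K\to G$ and $\da_H\colon K\to H$ over the identity on $M$.

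Next I would verify the three pieces of data needed to apply the equivalence of Section \ref{expand_BrMa92}: that $\da_G,\da_H$ are surjective submersions, and that $\ker(\da_G)$ and $\ker(\da_H)$ commute in $K$. The surjective submersion property follows from the corresponding infinitesimal surjectivity together with a careful inspection of the integration construction in \cite{MaXu00}, since a Lie algebroid fibration integrates to a Lie groupoid fibration in the source-simply-connected case. For the commuting kernels, the crucial point is source-connectedness of $\ker(\da_G)$ and $\ker(\da_H)$: for each $m\in M$, the source-fibre restriction $\da_G\an{K_m}\colon K_m\to G_m$ is a principal bundle with structure group the isotropy $F_m=\ker(\da_G)_m$, and since both $K_m$ and $G_m$ are simply connected, the long exact homotopy sequence of the fibration forces $F_m$ to be connected. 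Source-connectedness of the whole subgroupoid $\ker(\da_G)$ then follows (and similarly for $\ker(\da_H)$), so that the infinitesimal identity $[c_1,c_2]=0$ for $c_1\in\Gamma(\ker\da_A)$, $c_2\in\Gamma(\ker\da_B)$, integrates to actual commutativity in $K$.

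With the core diagram of Lie groupoids in hand, Section \ref{expand_BrMa92} produces a transitive double Lie groupoid $(\Theta,G,H,M)$ whose core diagram is exactly the given one. Applying the Lie functor twice (in either order, via \cite{Mackenzie00}) yields a transitive double Lie algebroid with sides $A,B$ and core $C$; a direct check using Propositions 2.6 and 2.9 in \cite{Mackenzie00} and the construction in Section \ref{expand_BrMa92} shows that its core diagram is the core diagram of $(D,A,B,M)$. By Theorem \ref{iso_dvbs}, any two transitive double Lie algebroids with the same core diagram are canonically isomorphic, so $(D,A,B,M)$ is identified with the double Lie algebroid of $(\Theta,G,H,M)$, establishing integrability. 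For morphisms, I would first observe that Lie's second theorem integrates any morphism of transitive core diagrams of Lie algebroids to a morphism of the corresponding core diagrams of Lie groupoids, then invoke the functoriality of the equivalence in Section \ref{expand_BrMa92} to lift this to a morphism of the associated transitive double Lie groupoids.

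The main obstacle I anticipate is the source-connectedness argument for $\ker(\da_G)$: without it, the infinitesimal commutativity of $\ker(\da_A)$ and $\ker(\da_B)$ would only guarantee that elements \emph{in a neighbourhood of the identity} of $\ker(\da_G)$ commute with the corresponding elements of $\ker(\da_H)$, which is not enough to invoke the Brown--Mackenzie equivalence. Everything else is mostly bookkeeping: once one has the core diagram of Lie groupoids satisfying the hypotheses of Section \ref{expand_BrMa92}, the equivalence does all the remaining work, and the uniqueness assertion is immediate from Theorem \ref{iso_dvbs}.
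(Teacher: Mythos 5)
Your proposal is correct and follows essentially the same route as the paper: integrate the core diagram via Lie's second theorem from \cite{MaXu00}, establish that $\da_G,\da_H$ are surjective submersions with commuting kernels via the principal-bundle/simple-connectivity argument showing $\ker(\da_G)$ and $\ker(\da_H)$ are source-connected, then apply the Brown--Mackenzie equivalence of Section \ref{expand_BrMa92} and Theorem \ref{iso_dvbs} to identify $(D,A,B,M)$ with the double Lie algebroid of $(\Theta,G,H,M)$. The source-connectedness step you flag as the main obstacle is exactly the point the paper also singles out and resolves in the same way.
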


\appendix 
\section{The Lie bialgebroid condition for $R$ and $\tilde D$}\label{proof_of_R_lba}
This section checks in detail the seven conditions in Definition 3.1
and Theorem 3.6 of \cite{GrJoMaMe18} for the two VB-Lie algebroid
structures on $R$, and verifies at the same time that the conditions
still hold for the ``quotient representations up to homotopy''
describing $\tilde D$. For clarity, each verification is numbered as
the equation is in \cite{GrJoMaMe18}.

Note that in the case where $A=TM$ and
$\partial=\rho_C$, this is an alternative proof for 
$(TC;TM,C;M)$ satisfying the Lie bialgebroid condition
\cite{Mackenzie11}, which is already given in \cite{GrJoMaMe18}.
Recall that, after a choice of linear $A$-connection $\nabla$ on $C$ extending $\nabla^A$,
the two representations up to homotopy encoding the sides of $R$ are
the one of $A$ given by
$(\id_C\colon C\to C, \nabla, \nabla, R_\nabla)$ and the one of $C$
given by
$(\partial\colon C\to A,\nabla^{\da}, \nabla^{\da}, R_\nabla^{\da})$.
The two representations up to homotopy encoding the sides of the
quotient $\tilde D$ are then  $(\partial_B\colon C\to B, \nabla, \nabla^B,
\bar R_\nabla)$ and 
$(\partial_A\colon C\to A,\nabla^{\rm red}, \nabla^{\rm red}, R_\nabla^{\rm
  red})$ -- now $\da\colon C\to A$ is written $\da_A$ for clarity.
\begin{enumerate}
\item[(M1)] \begin{itemize}
  \item[$R$:]
    By definition of $\nabla^{\da}$, 
\[\nabla_{\da_A c_1}c_2-\nabla^{\da}_{c_2}c_1=[c_1,c_2]=-\nabla_{\da_Ac_2}c_1+\nabla^{\da}_{c_1}c_2\]
for
all $c_1,c_2\in\Gamma(C)$.
\item[$\tilde D$:]
From this follows immediately
\[\nabla_{\da_Ac_1}c_2-\nabla^{\rm red}_{\da_B c_2}c_1=[c_1,c_2]=-\nabla_{\da_Ac_2}c_1+\nabla^{\rm red}_{\da_B c_1}c_2.\]
\end{itemize}
\item[(M2)] \begin{itemize}
  \item[$R$:]
    Choose $a\in\Gamma(A)$ and $c\in\Gamma(C)$. 
The equation $[a,\da_Ac]=\da_A(\nabla_ac)-\nabla^{\da}_{\id_Cc}a$ is the
definition of $\nabla^{\da}$.
\item[$\tilde D$:]
By definition
  of $\nabla^{\rm red}$, this yields:
  $[a,\da_Ac]=\da_A(\nabla_ac)-\nabla^{\rm red}_{\da_Bc}a$.
  \end{itemize}
\item[(M3)]\begin{itemize}
  \item[$R$:] $[c_1,\id_Cc_2]=\id_C(\nabla^{\rm  bas}_{c_1}c_2)-\nabla_{\da_Ac_2}c_1$ is again the
    definition of $\nabla^{\da}$.
    \item[$\tilde D$:] Applying $\da_B$ to both sides of
  this equation yields
$[b,\da_Bc]=\da_B(\nabla^{\rm red}_{b}c)-\nabla^B_{\da_Ac}b$
for all $b\in\Gamma(B)$ and $c\in\Gamma(C)$.
\end{itemize}

\item[(M4)] \begin{itemize}
  \item[$R$:] For $a\in\Gamma(A)$ and $c_1,c_2\in\Gamma(C)$:
\begin{align*}
\nabla^{\da}_{c_1}\nabla_ac_2-\nabla_a \nabla^{\da}_{c_1
}c_2-\nabla_{\nabla^{\da}_{c_1}a}c_2+\nabla^{\da}_{\nabla_ac_1}c_2
=&[c_1,\nabla_ac_2]+\nabla_{\partial_A\nabla_ac_2}c_1-\nabla_a
[c_1,c_2]-\nabla_a\nabla_{\da_Ac_2}c_1\\
&-\nabla_{\nabla^{\da}_{c_1}a}c_2+[\nabla_ac_1,c_2]+\nabla_{\da_Ac_2}\nabla_ac_1.
\end{align*}
Since
$\nabla_{\partial_A\nabla_ac_2}c_1=\nabla_{-[\da_Ac_2,a]+\nabla^{\da}_{c_2}a}c_1$, this yields
\begin{align*}
&\nabla^{\da}_{c_1}\nabla_ac_2-\nabla_a \nabla^{\da}_{c_1
}c_2-\nabla_{\nabla^{\da}_{c_1}a}c_2+\nabla^{\da}_{\nabla_ac_1}c_2=R_\nabla^{\da}(c_1,\id_Cc_2)a-R_\nabla(a,\da_Ac_2)c_1.
\end{align*}
\item[$\tilde D$:]
Set $c=c_2\in\Gamma(C)$ and $b=\da_Bc_1\in\Gamma(B)$. Then
\begin{align*}
&\nabla^{\rm red}_{b}\nabla_ac-\nabla_a \nabla^{\rm red}_{b
}c-\nabla_{\nabla^{\rm red}_{b}a}c+\nabla^{\rm
  red}_{\nabla^B_ab}c=R_\nabla^{\rm red}(b,\da_Bc)a-\bar R_\nabla(a,\da_Ac)b
\end{align*}
by definition of the $B$-connections $\nabla^{\rm red}$
and the $A$-connection $\nabla$ on $B$.
\end{itemize}
\item[(M5)] \begin{itemize}
  \item[$R$:]  Choose $c\in\Gamma(C)$ and $a_1,a_2\in\Gamma(A)$. Then
\begin{align*}
&-\nabla^{\da}_c[a_1,a_2]+[\nabla^{\da}_ca_1,a_2]+[a_1,\nabla^{\da}_ca_2]
+\nabla^{\da}_{\nabla_{a_2}c}a_1-\nabla^{\da}_{\nabla_{a_1}c}a_2\\
=&-[\da_Ac,[a_1,a_2]]-\da_A(\nabla_{[a_1,a_2]}c)+[[\da_Ac,a_1]+\cancel{\da_A(\nabla_{a_1}c)},a_2]\\
&+[a_1,[\da_Ac,a_2]+\cancel{\da_A(\nabla_{a_2}c)}]\\
&+\cancel{[\da_A(\nabla_{a_2}c),a_1]}+\da_A(\nabla_{a_1}\nabla_{a_2}c)-\cancel{[\da_A(\nabla_{a_1}c),a_2]}-\da_A(\nabla_{a_2}\nabla_{a_1}c)=\da_A(R_\nabla(a_1,a_2)c)
\end{align*}
by the Jacobi identity.
\item[$\tilde D$:] The equation 
\begin{align*}
&-\nabla^{\rm red}_b[a_1,a_2]+[\nabla^{\rm
  red}_ba_1,a_2]+[a_1,\nabla^{\rm red}_ba_2]
+\nabla^{\rm red}_{\nabla^B_{a_2}b}a_1-\nabla^{\rm
  red}_{\nabla^B_{a_1}b}a_2
=\da_A(\bar R_\nabla(a_1,a_2)b
\end{align*}
follows again immediately for $b=\da_Bc$. Since $\da_B$
is surjective, this shows (M5) for $\tilde D$.
\end{itemize}
\item[(M6)]\begin{itemize}
  \item[$R$:] By definition of $R_\nabla^{\da}$,
\[ \id_C(R_\nabla^{\da}(c_1,c_2)(a))
  =-\nabla_a[c_1,c_2]+[\nabla_ac_1,c_2]
  +[c_1,\nabla_ac_2]-\nabla_{\nabla^{\da}_{c_1}a}c_2+\nabla_{\nabla^{\da}_{c_2}a}c_1.
\]
\item[$\tilde D$:] 
Setting $b_i=\da_Bc_i\in\Gamma(B)$ for $i=1,2$ and applying $\da_B$ to
both sides of the equation yields then also
\[ \da_B(R_\nabla^{\rm red}(b_1,b_2)(a))
  =-\nabla^B_a[b_1,b_2]+[\nabla^B_ab_1,b_2]
  +[b_1,\nabla^B_ab_2]-\nabla^B_{\nabla^{\rm red}_{b_1}a}b_2+\nabla^B_{\nabla^{\rm red}_{b_2}a}b_1.
\]
\end{itemize}
\item[(M7)] \begin{itemize}
  \item[$R$:]
    Checking the last equation is long, but
  straightforward. Choose $c_1,c_2\in\Gamma(C)$ and
  $a_1,a_2\in\Gamma(A)$.
Then  $(d_{\nabla}R_\nabla^{\da})(a_1,a_2)$ evaluated on $c_1,c_2\in\Gamma(C)$ reads 
\begin{align*}
&-R_\nabla^{\da}(c_1,c_2)[a_1,a_2]-\nabla_{a_2}(R_\nabla^{\da}(c_1,c_2)a_1)+\nabla_{a_1}(R_\nabla^{\da}(c_1,c_2)a_2)\\
&+R_\nabla^{\da}(\nabla_{a_2}c_1,c_2)a_1 +R_\nabla^{\da}(c_1,\nabla_{a_2}c_2)a_1-R_\nabla^{\da}(\nabla_{a_1}c_1,c_2)a_2 -R_\nabla^{\da}(c_1,\nabla_{a_1}c_2)a_2,
\end{align*}
which equals
\begin{align*}
&\nabla_{[a_1,a_2]}[c_1,c_2]-[\nabla_{[a_1,a_2]} c_1,c_2]
  -[c_1,\nabla_{[a_1,a_2]} c_2]+\nabla_{\nabla^{\da}_{c_1}{[a_1,a_2]}}c_2-\nabla_{\nabla^{\da}_{c_2}{[a_1,a_2]}}c_1\\
&-\nabla_{a_2}\left(-\nabla_{a_1}[c_1,c_2]+\cancel{[\nabla_{a_1} c_1,c_2]}
  +\cancel{[c_1,\nabla_{a_1} c_2]}-\nabla_{\nabla^{\da}_{c_1}{a_1}}c_2+\nabla_{\nabla^{\da}_{c_2}{a_1}}c_1
\right)\\
&+\nabla_{a_1}\left(-\nabla_{a_2}[c_1,c_2]+\cancel{[\nabla_{a_2} c_1,c_2]}
  +\cancel{[c_1,\nabla_{a_2} c_2]}-\nabla_{\nabla^{\da}_{c_1}{a_2}}c_2+\nabla_{\nabla^{\da}_{c_2}{a_2}}c_1\right)\\
&-\cancel{\nabla_{a_1}[\nabla_{a_2}c_1,c_2]}+[\nabla_{a_1} \nabla_{a_2}c_1,c_2]
  +\cancel{[\nabla_{a_2}c_1,\nabla_{a_1} c_2]}-\nabla_{\nabla^{\da}_{\nabla_{a_2}c_1}{a_1}}c_2+\nabla_{\nabla^{\da}_{c_2}{a_1}}\nabla_{a_2}c_1\\
&-\cancel{\nabla_{a_1}[c_1,\nabla_{a_2}c_2]}+\cancel{[\nabla_{a_1} c_1,\nabla_{a_2}c_2]}
  +[c_1,\nabla_{a_1} \nabla_{a_2}c_2]-\nabla_{\nabla^{\da}_{c_1}{a_1}}\nabla_{a_2}c_2+\nabla_{\nabla^{\da}_{\nabla_{a_2}c_2}{a_1}}c_1\\
&+\cancel{\nabla_{a_2}[\nabla_{a_1}c_1,c_2]}-[\nabla_{a_2} \nabla_{a_1}c_1,c_2]
  -\cancel{[\nabla_{a_1}c_1,\nabla_{a_2} c_2]}+\nabla_{\nabla^{\da}_{\nabla_{a_1}c_1}{a_2}}c_2-\nabla_{\nabla^{\da}_{c_2}{a_2}}\nabla_{a_1}c_1\\
&+\cancel{\nabla_{a_2}[c_1,\nabla_{a_1}c_2]}-\cancel{[\nabla_{a_2} c_1,\nabla_{a_1}c_2]}
  -[c_1,\nabla_{a_2} \nabla_{a_1}c_2]+\nabla_{\nabla^{\da}_{c_1}{a_2}}\nabla_{a_1}c_2-\nabla_{\nabla^{\da}_{\nabla_{a_1}c_2}{a_2}}c_1.
\end{align*}
Twelve terms of this equation cancel pairwise, and a reordering 
of the remaining terms yields
\begin{align*}
&-R_\nabla(a_1,a_2)[c_1,c_2]+[R_\nabla(a_1,a_2)c_1,c_2]+[c_1,R_\nabla(a_1,a_2)c_2]\\
  &  +\nabla_{\nabla^{\da}_{c_1}{[a_1,a_2]}}c_2-\nabla_{\nabla^{\da}_{c_2}{[a_1,a_2]}}c_1-\nabla_{a_2}\left(-\nabla_{\nabla^{\da}_{c_1}{a_1}}c_2+\nabla_{\nabla^{\da}_{c_2}{a_1}}c_1\right)
    +\nabla_{a_1}\left(-\nabla_{\nabla^{\da}_{c_1}{a_2}}c_2+\nabla_{\nabla^{\da}_{c_2}{a_2}}c_1\right)\\
  &-\nabla_{\nabla^{\da}_{\nabla_{a_2}c_1}{a_1}}c_2+\nabla_{\nabla^{\da}_{c_2}{a_1}}\nabla_{a_2}c_1-\nabla_{\nabla^{\da}_{c_1}{a_1}}\nabla_{a_2}c_2+\nabla_{\nabla^{\da}_{\nabla_{a_2}c_2}{a_1}}c_1
  +\nabla_{\nabla^{\da}_{\nabla_{a_1}c_1}{a_2}}c_2-\nabla_{\nabla^{\da}_{c_2}{a_2}}\nabla_{a_1}c_1\\
& +\nabla_{\nabla^{\da}_{c_1}{a_2}}\nabla_{a_1}c_2-\nabla_{\nabla^{\da}_{\nabla_{a_1}c_2}{a_2}}c_1\\
=&-R_\nabla(a_1,a_2)[c_1,c_2]+[R_\nabla(a_1,a_2)c_1,c_2]+[c_1,R_\nabla(a_1,a_2)c_2]\\
&+R_\nabla(a_2,\nabla^{\da}_{c_1}{a_1})c_2+\nabla_{[a_2,\nabla^{\da}_{c_1}a_1]}c_2-R_\nabla(a_2,\nabla^{\da}_{c_2}{a_1})c_1-\nabla_{[a_2,\nabla^{\da}_{c_2}a_1]}c_1\\
&-R_\nabla(a_1,\nabla^{\da}_{c_1}{a_2})c_2-\nabla_{[a_1,\nabla^{\da}_{c_1}a_2]}c_2+R_\nabla(a_1,\nabla^{\da}_{c_2}{a_2})c_1+\nabla_{[a_1,\nabla^{\da}_{c_2}a_2]}c_1\\
&  +\nabla_{\nabla^{\da}_{c_1}{[a_1,a_2]}}c_2-\nabla_{\nabla^{\da}_{c_2}{[a_1,a_2]}}c_1-\nabla_{\nabla^{\da}_{\nabla_{a_2}c_1}{a_1}}c_2\\
&+\nabla_{\nabla^{\da}_{\nabla_{a_2}c_2}{a_1}}c_1+\nabla_{\nabla^{\da}_{\nabla_{a_1}c_1}{a_2}}c_2-\nabla_{\nabla^{\da}_{\nabla_{a_1}c_2}{a_2}}c_1.
\end{align*}
By (M5), this equals
\begin{align*}
&-R_\nabla(a_1,a_2)[c_1,c_2]-[c_2, R_\nabla(a_1,a_2)c_1]+[c_1,R_\nabla(a_1,a_2)c_2]\\
  &+R_\nabla(a_2,\nabla^{\da}_{c_1}{a_1})c_2-R_\nabla(a_2,\nabla^{\da}_{c_2}{a_1})c_1-R_\nabla(a_1,\nabla^{\da}_{c_1}{a_2})c_2\\
  &+R_\nabla(a_1,\nabla^{\da}_{c_2}{a_2})c_1+\nabla_{\da_AR_\nabla(a_1,a_2)c_2}c_1-\nabla_{\da_AR_\nabla(a_1,a_2)c_1}c_2,
\end{align*}
which is 
\begin{align*}
&-R_\nabla(a_1,a_2)[c_1,c_2]-\nabla^{\da}_{c_2}(R_\nabla(a_1,a_2)c_1)+\nabla^{\da}_{c_1}(R_\nabla(a_1,a_2)c_2)\\
&-R_\nabla(\nabla^{\da}_{c_1}{a_1},a_2)c_2+R_\nabla(\nabla^{\da}_{c_2}{a_1},a_2)c_1-R_\nabla(a_1,\nabla^{\da}_{c_1}{a_2})c_2+R_\nabla(a_1,\nabla^{\da}_{c_2}{a_2})c_1.
\end{align*}
Since this is $(d_{\nabla^\da}R_\nabla)(c_1,c_2)$ on $a_1,a_2\in\Gamma(A)$, 
 Condition (M7) is checked for $R$.
\item[$\tilde D$:] Setting $b_i=\da_Bc_i\in\Gamma(B)$ for $i=1,2$ and
  using the definitions of all the involved objects yields again
  immediately
\begin{align*}
&-R_\nabla^{\rm red}(b_1,b_2)[a_1,a_2]-\nabla_{a_2}(R_\nabla^{\rm
  red}(b_1,b_2)a_1)+\nabla_{a_1}(R_\nabla^{\rm
  red}(b_1,b_2)a_2)\\
&+R_\nabla^{\rm red}(\nabla^B_{a_2}b_1,b_2)a_1 +R_\nabla^{\rm red}(b_1,\nabla^B_{a_2}b_2)a_1-R_\nabla^{\rm red}(\nabla^B_{a_1}b_1,b_2)a_2 -R_\nabla^{\rm
  red}(b_1,\nabla^B_{a_1}b_2)a_2\\
=&-\bar R_\nabla(a_1,a_2)[b_1,b_2]-\nabla^{\rm
  red}_{b_2}\bar R_\nabla(a_1,a_2)b_1+\nabla^{\rm red}_{b_1}\bar R_\nabla(a_1,a_2)b_2\\
&-\bar R_\nabla(\nabla^{\rm
  red}_{b_1}{a_1},a_2)b_2+\bar R_\nabla(\nabla^{\rm
  red}_{b_2}{a_1},a_2)b_1-\bar R_\nabla(a_1,\nabla^{\rm
  red}_{b_1}{a_2})b_2+\bar R_\nabla(a_1,\nabla^{\rm
  red}_{b_2}{a_2})b_1,
\end{align*}
and the seventh condition is verified for $\tilde D$.
\end{itemize}
\end{enumerate}

\section{Double Lie algebroid morphisms -- proofs}\label{morphisms_proofs}

\subsection{Proof of Theorem \ref{comma_mor}}
  The double vector bundle morphism $\Phi$ has sides $\phi_C$ and
  $\phi_A$ and core $\phi_C$.  By the proof of Proposition
  \ref{mor_dvb}, a pair of choices of linear splittings
  $\Sigma^\nabla\colon A\times_MC\to TC\oplus_{TM}A$ and
  $\Sigma^{\nabla'}\colon A'\times_MC'\to TC'\oplus_{TM}A'$ defines
  the form
  $\omega_{\nabla,\nabla'}\in\Gamma(A^*\otimes C^*\otimes C')$ as in
  \eqref{omega_nabla_nabla'}.

  The two splittings define as well the following $2$-representations.
  \begin{enumerate}
  \item The VB-algebroid $(TC\oplus_{TM}A\to C, A\to M)$ induces
    $\mathcal D_A:=(\nabla,\nabla,R_\nabla)$ of $A$ on
    $\id\colon C\to C$, as in \eqref{ruth_A}.
  \item The VB-algebroid $(TC\oplus_{TM}A\to A, C\to M)$ induces the
    $2$-representation
    $\mathcal D_C:=(\nabla^{\da_A},\nabla^{\da_A},R_\nabla^{\da_A})$
    of $C$ on $\da_A\colon C\to A$, as in \eqref{ruth_C} and
    \eqref{ruth_C_1}.
    \item The VB-algebroid $(TC'\oplus_{TM}A'\to C', A'\to M)$ induces
    $\mathcal D_{A'}:=(\nabla',\nabla',R_{\nabla'})$ of $A'$ on
    $\id\colon C'\to C'$, as in \eqref{ruth_A}.
  \item The VB-algebroid $(TC'\oplus_{TM}A'\to A', C'\to M)$ induces the
    $2$-representation
    $\mathcal D_{C'}:=(\nabla^{\da_{A'}},\nabla^{\da_{A'}},R_{\nabla'}^{\da_{A'}})$
    of $C'$ on $\da_{A'}\colon C'\to A'$, as in \eqref{ruth_C} and
    \eqref{ruth_C_1}.
    \end{enumerate}
  
By Section
\ref{sec:mor_dla} it suffices to check that  $(\phi_C,\phi_C,\omega_{\nabla,\nabla'})$ defines a morphism
\[ \mathcal D_A\to \phi_A^*\mathcal D_{A'}
\]
and that
$(\phi_C,\phi_A,\omega_{\nabla,\nabla'})$ defines a morphism
\[ \mathcal D_C\to \phi_C^*\mathcal D_{C'}.
\]
\eqref{comp1} for $(\phi_C,\phi_C,\omega_{\nabla,\nabla'})$ is
immediate, and \eqref{comp2} is the definition of
$\omega_{\nabla,\nabla'}$. \eqref{comp3} is an easy computation, which
is left to the reader.
\eqref{comp1} for $(\phi_C,\phi_A,\omega_{\nabla,\nabla'})$ is
\[ \phi_A\circ \da_A=\da_{A'}\circ\phi_C
\]
and \eqref{comp2} is checked as follows.
For $c\in\Gamma(C)$, $a\in\Gamma(A)$ and $c'\in\Gamma(C)$:
\begin{equation*}
  \begin{split}
    (\nabla^{\rm Hom}_c\phi_A)(a)&=(\phi_C^*\nabla^{\da_{A'}})_c(\phi_A(a))-\phi_A(\nabla^{\da_A}_ca)=\nabla^{\da_{A'}}_{\phi_C(c)}(\phi_A(a))-\phi_A(\nabla^{\da_A}_ca)\\
    &=\cancel{[\da_{A'}\phi_C(c),\phi_A(a)]}+\da_{A'}\nabla'_{\phi_A(a)}(\phi_C(c))-\phi_A(\cancel{[\da_Ac,a]}+\da_A(\nabla_ac))\\
    &=\da_{A'}\left((\phi_A^*\nabla')_a(\phi_C(c))-\phi_C(\nabla_ac)\right)=\da_{A'}(\omega_{\nabla,\nabla'}(a,c)),
  \end{split}
\end{equation*}
and
\begin{equation*}
  \begin{split}
    (\nabla^{\rm Hom}_c\phi_C)(c')&=(\phi_C^*\nabla^{\da_{A'}})_c(\phi_C(c'))-\phi_C(\nabla^{\da_A}_cc')=\nabla^{\da_{A'}}_{\phi_C(c)}(\phi_C(c'))-\phi_C(\nabla^{\da_A}_cc')\\
    &=\cancel{[\phi_C(c),\phi_C(c')]}+\nabla'_{\da_{A'}\phi_C(c')}(\phi_C(c))-\phi_C(\cancel{[c,c']}+\nabla_{\da_Ac'}c)\\
    &=\nabla'_{\phi_A\da_A(c')}(\phi_C(c))-\phi_C(\nabla_{\da_Ac'}c)=\omega_{\nabla,\nabla'}(\da_Ac',c).
  \end{split}
\end{equation*}
Finally, \eqref{comp3} is
\begin{equation}\label{long_comp_mor}
  \begin{split}
    &(\dr_{\nabla^{\rm Hom}}\omega_{\nabla,\nabla'})(c_1,c_2)(a)-(\phi_C^*R_{\nabla'}^{\da_{A'}})(c_1,c_2)(\phi_Aa) +\phi_C(R_\nabla^{\da_A}(c_1,c_2)a)\\
    =\,&\nabla^{\rm Hom}_{c_1}(\omega_{\nabla,\nabla'}c_2)(a)-\nabla^{\rm Hom}_{c_2}(\omega_{\nabla,\nabla'}c_1)(a)-\cancel{\omega_{\nabla,\nabla'}(a,[c_1,c_2])}\\
    &   +\cancel{\nabla'_{\phi_Aa}[\phi_Cc_1,\phi_Cc_2]}-[\nabla'_{\phi_A(a)}\phi_Cc_1, \phi_Cc_2]-[\phi_Cc_1, \nabla'_{\phi_A(a)}\phi_Cc_2]\\
    &\qquad +\nabla'_{\nabla^{\da_{A'}}_{\phi_Cc_1}\phi_Aa}\phi_Cc_2-\nabla'_{\nabla^{\da_{A'}}_{\phi_Cc_2}\phi_Aa}\phi_Cc_1\\
    &+\phi_C\left(-\cancel{\nabla_a[c_1,c_2]}+[\nabla_ac_1,c_2]
      +[c_1,\nabla_ac_2]-\nabla_{\nabla^{\da_A}_{c_1}a}c_2+\nabla_{\nabla^{\da_A}_{c_2}a}c_1\right).
  \end{split}
\end{equation}
for $c_1,c_2\in\Gamma(C)$ and $a\in\Gamma(A)$.
Since \begin{equation*}
  \begin{split}
    &\nabla^{\rm Hom}_{c_1}(\omega_{\nabla,\nabla'}c_2)(a)-[
    \phi_Cc_1, \nabla'_{\phi_A(a)}\phi_Cc_2]+\phi_C[c_1,\nabla_ac_2]
    +\nabla'_{\nabla^{\da_{A'}}_{\phi_Cc_1}\phi_Aa}\phi_Cc_2-\phi_C\nabla_{\nabla^{\da_A}_{c_1}a}c_2\\
    =\,&\nabla^{\da_{A'}}_{\phi_Cc_1}(\omega_{\nabla,\nabla'}(a,c_2))-\omega_{\nabla,\nabla'}(\nabla^{\da_A}_{c_1}a,c_2)-[\phi_Cc_1,\nabla'_{\phi_A(a)}\phi_Cc_2]+[\phi_Cc_1, \phi_C\nabla_ac_2]\\
    &+\nabla'_{\nabla^{\da_{A'}}_{\phi_Cc_1}\phi_Aa}\phi_Cc_2-\phi_C\nabla_{\nabla^{\da_A}_{c_1}a}c_2\\
    =\,&\nabla^{\da_{A'}}_{\phi_Cc_1}(\omega_{\nabla,\nabla'}(a,c_2))-\nabla'_{\phi_A\nabla^{\da_A}_{c_1}a}\phi_Cc_2-[ \phi_Cc_1, \omega_{\nabla,\nabla'}(a,c_2)]
    +\nabla'_{\nabla^{\da_{A'}}_{\phi_Cc_1}\phi_Aa}\phi_Cc_2\\
    =\,&\nabla'_{\da_{A'}\omega_{\nabla,\nabla'}(a,c_2)}\phi_Cc_1-\nabla'_{\phi_A\nabla^{\da_A}_{c_1}a}\phi_Cc_2
    +\nabla'_{\nabla^{\da_{A'}}_{\phi_Cc_1}\phi_Aa}\phi_Cc_2,
  \end{split}
\end{equation*}
\eqref{long_comp_mor} becomes
\begin{equation*}
  \begin{split}
    &\nabla'_{\da_{A'}\omega_{\nabla,\nabla'}(a,c_2)}\phi_Cc_1-\nabla'_{\phi_A\nabla^{\da_A}_{c_1}a}\phi_Cc_2
    +\nabla'_{\nabla^{\da_{A'}}_{\phi_Cc_1}\phi_Aa}\phi_Cc_2\\
    &\,-\nabla'_{\da_{A'}\omega_{\nabla,\nabla'}(a,c_1)}\phi_Cc_2+\nabla'_{\phi_A\nabla^{\da_A}_{c_2}a}\phi_Cc_1
    -\nabla'_{\nabla^{\da_{A'}}_{\phi_Cc_2}\phi_Aa}\phi_Cc_1\\
    =\,&\nabla'_{\da_{A'}\omega_{\nabla,\nabla'}(a,c_2)-(\nabla^{\rm
        Hom}_{c_2}\phi_A)(a)}\phi_Cc_1
    -\nabla'_{\da_{A'}\omega_{\nabla,\nabla'}(a,c_1)-(\nabla^{\rm
        Hom}_{c_1}\phi_A)(a)}\phi_Cc_2\overset{\eqref{comp2}}{=}0.\qedhere
  \end{split}
  \end{equation*}

  \subsection{Proof of Theorem \ref{dla_mor}}\label{proof_of_dla_mor}
 Here \eqref{comp1}, \eqref{comp2} and \eqref{comp3} need to be
  checked for \begin{enumerate}
  \item the triple
    $(\phi_C,\phi_B,\overline{\omega_{\nabla,\nabla'}})$ from the
    $2$-representation
    $(\da_B\colon C[0]\to B[1], \nabla, \nabla^B,
    \overline{R_\nabla})$ of $A$ as in as in \eqref{ruth_A_D} and
    \eqref{ruth_A_D1} to the $2$-representation
    $\phi_A^*(\da_{B'}\colon C'[0]\to B'[1], \nabla', {\nabla'}^{B'},
    \overline{R_{\nabla'}})$ of $A$.
\item the triple $(\phi_C,\phi_A,\overline{\omega_{\nabla,\nabla'}})$
  from the $2$-representation
  $(\da_A\colon C[0]\to A[1], \nabla^{\rm red}, \nabla^{\rm red},
  R_\nabla^{\rm red})$ of $B$ as in \eqref{ruth_B_D} and
  \eqref{ruth_B_D1} to the $2$-representation
  $\phi_B^*(\da_{A'} \colon C'[0]\to A'[1], {\nabla'}^{\rm red},
  {\nabla'} ^{\rm red}, R_{\nabla'}^{\rm red})$ of $B$.
\end{enumerate}

\eqref{comp1} is given by the morphism of core diagrams since in the
first case it is $\phi_B\circ\da_B=\da_{B'}\circ\phi_C$, and in the
second case it is
$\phi_A\circ\da_A=\da_{A'}\circ\phi_C$. \eqref{comp2} is in both cases
an easy computation, which is left to the reader.  \eqref{comp1} is
checked in the first case as follows. Choose $a,a'\in\Gamma(A)$,
$b\in\Gamma(B)$ and $c\in\Gamma(C)$ such that $\da_Bc=b$. Then
with $\nabla^B_ab=\da_B\nabla_ac$ and $\da_{B'}\circ\phi_C=\phi_B\circ\da_B$:
\begin{equation*}
  \begin{split}
    (\dr_{\nabla^{\rm Hom}}\overline{\omega_{\nabla,\nabla'}})(a,a')(b)&=
    \nabla^{\rm Hom}_a(\overline{\omega_{\nabla,\nabla'}}(a'))(b)-\nabla^{\rm Hom}_{a'}(\overline{\omega_{\nabla,\nabla'}}(a))(b)-\overline{\omega_{\nabla,\nabla'}}([a,a'],b)\\
    &=\nabla'_{\phi_A(a)}(\omega_{\nabla,\nabla'}(a',c))-\omega_{\nabla,\nabla'}(a',\nabla_ac)\\
    &\quad -\nabla'_{\phi_A(a')}(\omega_{\nabla,\nabla'}(a,c))+\omega_{\nabla,\nabla'}(a,\nabla_{a'}c)-\omega_{\nabla,\nabla'}([a,a'],c)\\
    &=(\phi_A^*R_{\nabla'})(a,a')(\phi_C(c))-\phi_C(R_\nabla(a,a')c)\\
    &=(\phi_A^*\overline{R_{\nabla'}})(a,a')(\phi_B(b))-\phi_C(\overline{R_\nabla}(a,a')b).
    \end{split}
  \end{equation*}

  Finally, in order to check \eqref{comp3} in the second case, note
  that for $b,b'\in\Gamma(B)$, $a\in\Gamma(A)$ and
  $c,c'\in\Gamma(C)$ such that $\da_Bc=b$ and $\da_Bc'=b'$,
  \[(\dr_{\nabla^{\rm Hom}}\overline{\omega_{\nabla,\nabla'}})(b,b')(a)=(\dr_{\nabla^{\rm Hom}}\omega_{\nabla,\nabla'})(c,c')(a).
  \]
  By the proof of Theorem  \ref{comma_mor},
  the right-hand side
  is
  \[ (\phi_C^*R_{\nabla'}^{\da_{A'}})(c,c')(\phi_A(a))-\phi_C(R_\nabla^{\da_A}(c,c')a),
  \]
  which equals
  \[(\phi_B^*R_{\nabla'}^{\rm red})(b,b')(\phi_A(a))-\phi_C(R_\nabla^{\rm red}(b,b')a),
  \]
  by the definition of $R_{\nabla'}^{\rm red}$ and $R_\nabla^{\rm red}$.

\section{Notation for linear and core sections.}
The main notations of this paper are summarised here for the
convenience of the reader.

\begin{enumerate}
\item Let $(D,A,B,M)$ be any double vector bundle with sides $A$ and
  $B$ and with core $C$.  A morphism $\Phi\in\Gamma(\Hom(A,C))$
  defines a core-linear section $\widetilde\Phi\in\Gamma_A^l(D)$, which is linear over $0^B\in\Gamma(B)$:
  \[\widetilde\Phi(a)=0^D_{a}+_B\overline{\phi(a)}.
  \]
  Similarly, a morphism $\Phi\in\Gamma(\Hom(B,C))$
  defines a core-linear section $\widetilde\Phi\in\Gamma_B^l(D)$, which is linear over $0^A\in\Gamma(A)$:
  \[\widetilde\Phi(b)=0^D_{b}+_A\overline{\phi(b)}.\]
  Core-linear sections are always written with this notation, no
  matter over which side or in which double vector bundle. It is
  always clear from the context where they are defined.
\item Let $q_E\colon E\to M$ be a vector bundle.  Given
  $e\in\Gamma(E)$, then $e^\uparrow\in \mx^c(E)$ is the vertical
  vector field on $E$ defined by $e$; i.e.~the core section of
  $TE\to E$ defined by $e$. The core section of $TE\to TM$ defined by
  $e$ is written $e^\dagger\in\Gamma_{TM}^c(TE)$.

  Given a derivation $D$ of $E$ with symbol $X\in\mx(M)$, then
  $\widehat{D}\in\mx^l(E)$ is the linear vector field on $E$ that is
  equivalent to $D$. It is a linear section of $TE\to E$ over $X$. A
  section $e\in\Gamma(E)$ defines further the linear section
  $Te\in\Gamma^l_{TM}(TE)$. It is linear over $e$.
\end{enumerate}
Let
$\nabla\colon \Gamma(A)\times\Gamma(C)\to\Gamma(C)$ be a linear
$A$-connection on $C$.  Then $\nabla$ defines a linear splitting
$\Sigma^\nabla\colon A\times_M C\to R$ of $R:=\rho_A^!TC=TC\oplus_{TM}A$. 
\begin{enumerate}
  \setcounter{enumi}{2}
\item Core sections of $R\to C$
  are written $\gamma^\times$ for $\gamma\in\Gamma(C)$:
  \[ \gamma^\times(c_m)=(\gamma^\uparrow(c_m), 0^A_m)\] for all
  $c_m\in C$. The splitting $\Sigma^\nabla$ is equivalent to a linear
  horizontal lift $\sigma^\nabla_A\colon\Gamma(A)\to\Gamma_C^l(R)$:
  for all $a\in\Gamma(A)$ the section
  $\widehat{a}:=\sigma_A^\nabla(a)\in\Gamma^l_C(R)$ is defined by
\[\widehat{a}(c_m)=\left(\widehat{\nabla_a}(c_m), a(m)\right)\]
for all $c_m\in C$. By definition it is linear over $a\in\Gamma(A)$.

\item Core sections of $R\to A$ 
  are written $\gamma^\ddagger$  for $\gamma\in\Gamma(C)$,
  and defined by $\gamma^\ddagger=(\gamma^\dagger)^!\in\Gamma_A(R)=\Gamma_A(\rho_A^!TC)$:
  \[\gamma^\ddagger(a)=\left(\gamma^\dagger(\rho_A(a)), a\right)\]
  for all $a\in \Gamma(A)$.
  A section $c\in\Gamma(C)$ defines a linear section $(TC)^!$ of $R\to A$:
  \[(Tc)^!(a)=\left(Tc(\rho_A(a)), a\right)\] for all
  $a\in \Gamma(A)$. It is linear over $c\in C$.  The splitting
  $\Sigma^\nabla$ is equivalent to a linear horizontal lift
  $\sigma^\nabla_C\colon\Gamma(C)\to\Gamma_A^l(R)$: for all
  $c\in\Gamma(C)$ the section
  $\widecheck{c}:=\sigma_C^\nabla(c)\in\Gamma^l_A(R)$ is defined
  by
  \[\widecheck{c}(a(m))=\left(\widehat{\nabla_a}(c(m)), a(m)\right)\]
  for all $a\in\Gamma(A)$. By definition it is linear over
  $c\in\Gamma(C)$.
\end{enumerate}

\def\cprime{$'$} \def\polhk#1{\setbox0=\hbox{#1}{\ooalign{\hidewidth
  \lower1.5ex\hbox{`}\hidewidth\crcr\unhbox0}}} \def\cprime{$'$}
  \def\cprime{$'$}

\end{document}